\newenvironment{myequation}{\begin{equation}}{\end{equation}}
\newcounter{N}
\renewcommand{\theN}{\rm\alph{N})}
\newcommand{\nN}{\rm\refstepcounter{N}\theN}
\newcommand{\CC}{\mathbb{C}}
\newcommand{\ZZ}{\mathbb{Z}}
\newcommand{\PP}{\mathbb{P}}
\newcommand{\QQ}{\mathbb{Q}}
\newcommand{\FF}{\mathbb{F}}
\newcommand{\EEE}{\mathscr{E}}
\newcommand{\MMM}{{\mathscr{M}}}
\newcommand{\OOO}{{\mathscr{O}}}
\newcommand{\LLL}{{\mathscr{L}}}
\newcommand{\NNN}{{\mathscr{N}}}
\newcommand{\al}{h}
\newcommand{\rk}{\operatorname{rk}}
\newcommand{\SingnG}[1]{\operatorname{Sing}(#1)^{\mathrm{nG}}}
\newcommand{\Cl}{\operatorname{Cl}}
\newcommand{\Clsc}{\operatorname{Cl}^{\mathrm{sc}}}
\newcommand{\Index}{\operatorname{Index}}
\newcommand{\wt}{\operatorname{wt}}
\newcommand{\qq}{\mathbin{\sim_{\scriptscriptstyle{\QQ}}}}
\newcommand{\red}{\operatorname{red}}
\newcommand{\ord}{\operatorname{ord}}
\newcommand{\pr}{{\operatorname{pr}}}
\newcommand{\mult}{{\operatorname{mult}}}
\newcommand{\discr}{{\operatorname{discr}}}
\newcommand{\Supp}{{\operatorname{Supp}}}
\newcommand{\p}{{\operatorname{p}_{\operatorname{a}}}}
\newcommand{\mumu}{{\boldsymbol{\mu}}}
\newcommand{\type}[1]{$\mathrm{#1}$}
\newcommand{\typeDu}[2]{$\mathrm{#1}_{#2}$}
\newcommand{\typem}[1]{$\mathbf{#1}$}
\newcommand{\typeb}[1]{$\mathrm{(#1)}$}
\newcommand{\typek}[2]{$(\mathrm{#1}_{#2}^{\vee})$}
\newcommand{\typeci}[2]{$(\mathrm{#1}_{#2})$}
\newcommand{\typee}[2]{$\mathrm{#1}/{#2}$}
\newcommand{\NE}{\overline{\operatorname{NE}}}
\DeclareSymbolFont{cyrletters}{OT2}{wncyr}{m}{n}
\DeclareMathSymbol{\Sha}{\mathalpha}{cyrletters}{"58}
\newcommand{\dif}{\operatorname{d}_{\Sha}}
\newcommand{\comp}\circ
\newcommand{\xref}[1]{\textup{\ref{#1}}}
\theoremstyle{plain}
\newtheorem{theorem}[subsection]{Theorem}
\newtheorem{lemma}[subsection]{Lemma}
\newtheorem{proposition}[subsection]{Proposition}
\newtheorem{stheorem}[equation]{Theorem}
\newtheorem{corollary}[subsection]{Corollary}
\newtheorem{scorollary}[equation]{Corollary}
\newtheorem{sclaim}[equation]{Claim}
\newtheorem{slemma}[equation]{Lemma}
\newtheorem{sproposition}[equation]{Proposition}
\theoremstyle{definition}
\newtheorem{definition}[subsection]{Definition}
\newtheorem{setup}[subsection]{Setup}
\newtheorem{sdefinition}[equation]{Definition}
\newtheorem{notation}[subsection]{Notation}
\newtheorem{snotation}[equation]{Notation}
\newtheorem{example}[subsection]{Example}
\newtheorem{subexample}[equation]{Example}
\newtheorem{subexamples}[equation]{Examples}
\newtheorem{sremarks}[equation]{Remarks}
\newtheorem{sremark}[equation]{Remark}
\begin{document}

\title{Birational transformations of threefold $\QQ$-conic bundles
}
\author{Yuri Prokhorov}
\thanks{
The author  was partially supported by the HSE University Basic Research Program. }
\address{
Steklov Mathematical Institute of Russian Academy of Sciences, Moscow, Russia
\newline\indent
Department
of Algebra, Moscow State
University, Russia
\newline\indent
National Research University Higher School of Economics, Moscow, Russia
}
\email{prokhoro@mi.ras.ru}
\begin{abstract}
\textit{A $\QQ$-conic bundle} is a contraction $f: X\to Z$ of a three-dimensional  algebraic variety $X$
to a surface~$Z$ such that
the variety~$X$ has only terminal $\QQ$-factorial singularities, the anticanonical divisor $-K_X$ is~$f$-ample, and $\uprho(X/Z)=1$.
We provide an algorithm to transform a  $\QQ$-conic bundle to its standard form.
\end{abstract}
\maketitle

\section{Introduction}
\textit{A $\QQ$-conic bundle} is a contraction $f: X\to Z$ of a three-dimensional  algebraic variety $X$
to a surface~$Z$ such that
the variety~$X$ has only terminal $\QQ$-factorial singularities, the anticanonical divisor $-K_X$ is~$f$-ample, and $\uprho(X/Z)=1$.
Such fibrations are natural outcomes of the Minimal Model Program; they are very useful 
in the study rationality questions since their fibers are rational curves.
However typically the total space of a $\QQ$-conic bundle is singular
and this issue creates difficulty in application of the well developed techniques
of Prym varieties and intermediate Jacobians \cite{Shokurov:Prym}.
On the other hand, it is known that any $\QQ$-conic bundle can be birationally transformed to 
a standard form \cite[Theorem~1.13]{Sarkisov:82e}.
The aim of this paper is to decompose this birational transformation into 
some elementary ones, so-called Sarkisov links. Our decomposition 
follows certain algorithm which was basically outlined in ~\cite{Avilov:cb}.

A \textit{type \typem{I} Sarkisov link}, whose source
is a $\QQ$-conic bundle, is the following commutative diagram
\begin{myequation}
\label{eq:link:0}
\vcenter{
\xymatrix@C=4em@R=1.3em{
\tilde{X}\ar@{-->}[r]^\chi\ar[d]^p& X'\ar[d]^{f'}
\\
X\ar[d]^f & Z'\ar[dl]_{\al}
\\
Z
}}
\end{myequation}
where $p$, $\chi$, $f'$, and $\al$ satisfy the following conditions:
\begin{enumerate}
\item 
$p$ is an extremal blowup with center on a fiber  \textup(see Definition~\xref{def:ebl}\textup),
\item 
$\chi$ is a birational map that is an isomorphism in codimension $1$,

\item 
$f'$ is a $\QQ$-conic bundle,
and 
\item 
$\al$ is a birational morphism whose exceptional divisor is irreducible.
\end{enumerate}
The \textit{center of the link} is the center of the blowup $p$, i.e. the image $p(E)$ of its exceptional divisor $E \subset \tilde X$.
We say that the link \eqref{eq:link:0} is of \textit{md-type} (minimal discrepancy type) if the center of the link is a point 
of index $r>1$ and the exceptional divisor $E$ has minimal discrepancy $1/r$ over~$X$. In this situation we also say that \eqref{eq:link:0}
is an \textit{md-link}.

\begin{example}[{\cite[Lemma~8]{Avilov:cb}}]
\label{ex:ODP}
The variety~$X$ is given in $\PP^2_{x_1,x_2, x_3}\times
\CC^2_{u,v}$ by the equation
\[
x_1^2+x_2^2+uvx_3^2=0,
\]
and $f: X\to Z$ is the
projection to $Z:=\CC^2$. Such $\QQ$-conic bundles are said to be of type \typeci{IF_1}{} (see~\eqref{eq:cb:uv}).
The singular locus of~$X$ consists of one ordinary double point $P$, the discriminant divisor $\Delta_f$ is given by 
$uv=0$, and the singular fiber $C:=f^{-1}(0)_{\mathrm{red}}$
is a pair of lines.
Let $p : \tilde{X} \to X$ be the blowup of $P$. Then the variety $\tilde{X}$ is non-singular and the divisor
$-K_{\tilde{X}}$ is nef and big over~$Z$.
There exists a flop $\tilde{X} \dashrightarrow X'$ with center the proper transform of $C$ and there exists a $K$-negative extremal Mori contraction $f' : X' \to Z'$ over~$Z$.  Thus, we obtain a type 
\typem{I} Sarkisov link~\eqref{eq:link:0}, where 
$f'$ is a standard conic bundle and
$\al : Z' \to Z$ is the blowup of~$o$. Moreover,  the discriminant divisor $\Delta_{f'} \subset Z'$ of $f'$ is the proper transform of $\Delta_f$.
\end{example}

\begin{example}[{\cite[Example~11.4]{P:rat-cb:e}}]
\label{example-simplest-link}
The variety~$X$ is given in $\PP(1,1,1,2)_{x_1,x_2, x_3, x_4}\times
\CC^2_{u,v}$ by the following two equations (see \cite[\S~12]{MP:cb1} and Theorem~\ref{th-index=2})
\begin{equation*}
\left\{
\begin{array}{lllll}
x_1^2+x_2^2&+&ux_4 &+&q_1(x_1,x_2,x_3; u,v)=0
\\[7pt]
x_1x_2+x_3^2&+&vx_4 &+&q_2(x_1,x_2,x_3; u,v)=0
\end{array}
\right.
\end{equation*}
where $q_1$ and $q_2$ are general quadratic forms in $x_1,x_2,x_3$ with coefficients in the maximal ideal $(u,\, v)\subset \CC\{u,v\}$,
and $f: X\to Z$ is the
projection to
$Z:=\CC^2$ \cite[\S~12]{MP:cb1}. Then~$X$ has a unique singular point
$P=(0,0,0,1; 0,0)$,
which is terminal quotient of type $\frac12 (1,1,1)$.
The central curve has four components $C_1,\dots, C_4$ so that all of them pass through~$P$ and they do not meet each other
elsewhere.
The discriminant curve $\Delta_f$ is given by the vanishing of the determinant of the quadratic form $vq_1-uq_2$ (see Proposition~\ref{prop:comp-discrim}),
it has three smooth analytic branches passing through $0$ and intersecting each other transversely.
Let $p: \tilde{X}\to X$ be the
blowup of~$P$, let $E$ be the exceptional divisor, and let
$\tilde C_i\subset \tilde{X}$ be the proper transform of $C_i$. Then $\tilde{X}$ is smooth,
$E\simeq \PP^2$, and $\OOO_E(E)\simeq \OOO_{\PP^2}(-2)$.. 
Furthermore, the curves $\tilde C_1$, \dots $\tilde C_4$
are disjoint and $K_{\tilde{X}}\cdot \tilde C_i=0$. There exists a flop $\chi: \tilde{X} \dashrightarrow X'$ with center $\tilde C$,
which is the simplest Atiyah-Kulikov flop along each curve $\tilde C_i$, and there exists
a Mori extremal contraction $f': X'\to Z'$ over~$Z$.
We obtain an md-link~\eqref{eq:link:0}, where $f'$ is a standard conic bundle and
$\al: Z'\to Z$ is the blowup of~$o$.
Moreover,  the discriminant divisor $\Delta_{f'} \subset Z'$  is the proper transform of $\Delta_f$.
\end{example}

Our main result is the following theorem.

\begin{theorem} 
\label{thm:link:exist}
Let $f: X\to Z$ be a $\QQ$-conic bundle.
Fix a point $o\in Z$.
\begin{enumerate}
\item
\label{thm:link:exist1}
Then there exists a type~\typem{I} link
\eqref {eq:link:0} whose center is contained in the fiber $f^{-1}(o)$.
The morphism
$\al$ is crepant if the point $o\in Z$ is singular
or a weighted blowup \textup(in suitable coordinates\textup) with weights $(1,a)$, $a\ge 1$ if the point $o\in Z$ is smooth.

\item
\label{thm:link:exist2}
If furthermore~$X$ is not Gorenstein near the fiber $f^{-1}(o)$, then the blowup $p$ can be taken so that the 
link \eqref {eq:link:0} is of md-type. In this case
$\chi$ admits a decomposition
\[
\chi=\chi_{n-1}\comp\cdots\comp \chi_1, \qquad n\ge 2,
\]
where $\chi_1$ is either a flip or a flop and, in the case $n>2$, the maps $\chi_2,\dots, \chi_{n-1}$ are flips.
\end{enumerate}
\end{theorem}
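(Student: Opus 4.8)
The plan is to produce the link by running a suitable two-ray game on a carefully chosen extremal blowup of a point or curve in $f^{-1}(o)$, and then to analyze the minimal model program steps over $Z$.

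\medskip

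\noindent\textbf{Step 1: Choosing the blowup $p$.}
I would first treat part~\eqref{thm:link:exist1}. If $o\in Z$ is singular, then by the classification of $\QQ$-conic bundle germs (the work of Mori--Prokhorov) the fiber $f^{-1}(o)$ contains a point or a curve admitting an extremal blowup $p:\tilde X\to X$ such that $-K_{\tilde X}$ is nef and big over $Z$; here one uses that the discrepancy bookkeeping forces the induced contraction on the base to be crepant (this is where the hypothesis that $o$ is singular enters). If $o\in Z$ is smooth, one instead takes a weighted blowup of $Z$ at $o$ with weights $(1,a)$ and lifts it: the point is to choose $a$ large enough that the normalized base change / fiber product, after a terminalization, yields an extremal blowup $p$ of $X$ centered in $f^{-1}(o)$ with $-K_{\tilde X}$ relatively nef and big. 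In both cases the key numerical input is that $\rho(\tilde X/Z)=2$, so $\tilde X$ sits at the start of a two-ray game over $Z$.

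\medskip

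\noindent\textbf{Step 2: Running the two-ray game.}
Since $\rho(\tilde X/Z)=2$ and $-K_{\tilde X}$ is $f\circ p$-nef and big but not $p$-ample (the $p$-exceptional curves are $K$-trivial or $K$-positive relative to $Z$), the cone $\NE(\tilde X/Z)$ has exactly two extremal rays, one of which is contracted by $p$. Run the MMP over $Z$ on the other ray: since $-K_{\tilde X}$ is nef over $Z$, each step is either a flop (if $K_{\tilde X}$ is trivial on the ray) or a flip, and after finitely many such steps $\chi=\chi_{n-1}\circ\cdots\circ\chi_1$ we reach a variety $X'$ on which the transform of $-K$ is relatively ample on the remaining ray; contracting that ray gives $f':X'\to Z'$. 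One must check $f'$ is a $\QQ$-conic bundle, i.e.\ that $\dim Z' = 2$ and $f'$ is not birational; this follows because the composite $X'\to Z$ cannot contract a divisor (it is a small-modification-then-contraction picture over a surface) and $\rho(X'/Z)=2$ forces $\rho(Z'/Z)=1$ with $Z'\to Z$ birational, its exceptional divisor irreducible. That $\chi$ is an isomorphism in codimension one is automatic since flips and flops are. This yields the diagram \eqref{eq:link:0} and establishes~\eqref{thm:link:exist1}, with $\alpha$ crepant exactly when all $\chi_i$ are flops, which is arranged by the choice in Step~1 when $o$ is singular, and $\alpha$ the weighted blowup when $o$ is smooth.

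\medskip

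\noindent\textbf{Step 3: The md refinement, part~\eqref{thm:link:exist2}.}
Now assume $X$ is non-Gorenstein near $f^{-1}(o)$, so there is a point $P\in f^{-1}(o)$ of index $r>1$. Choose $p$ to be an extremal blowup of $P$ realizing the minimal discrepancy $1/r$ (such blowups exist by the analysis of terminal singularities on $\QQ$-conic bundles, e.g.\ Kawamata-type weighted blowups); this makes \eqref{eq:link:0} an md-link by definition. The decomposition statement for $\chi$ then comes from re-examining the two-ray game of Step~2 in this case: the first step $\chi_1$ is a flip or flop — it can be a flop only when the blown-up point lies on a $K$-trivial configuration (as in Examples~\ref{ex:ODP} and~\ref{example-simplest-link}), and otherwise $K_{\tilde X}$ is strictly negative on the non-$p$ ray so $\chi_1$ is a flip — while all subsequent steps $\chi_2,\dots,\chi_{n-1}$ are necessarily flips, because after the first step $-K$ can no longer be relatively trivial on an extremal ray (a flop would return us toward the contracted configuration, contradicting minimality/termination). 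The inequality $n\ge 2$ records that at least one small modification is needed, i.e.\ $\chi$ is never an isomorphism in the non-Gorenstein case.

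\medskip

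\noindent\textbf{Main obstacle.}
The hard part is Step~1 together with the verification in Step~3 that the game does \emph{not} immediately end in a divisorial contraction or a Fano fibration to a point, and that $f'$ really is again a $\QQ$-conic bundle rather than a Mori fibre space of smaller-dimensional base. This requires a careful case analysis of the extremal blowups of points and curves in $f^{-1}(o)$ according to the Mori--Prokhorov classification of $\QQ$-conic bundle germs, controlling discrepancies so that $-K_{\tilde X}$ is nef and big over $Z$ at the outset, and ruling out the degenerate endings via Noether-type computations on the surface $Z'$ (or by the fact that $\rho(X'/Z)=2$). The termination of the flips is standard in dimension three.
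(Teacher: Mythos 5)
Your Step 2/Step 3 skeleton (extremal blowup, two-ray game over $Z$, flops only possible at the first step) does match the paper's Proposition~\ref{prop:link}, but the proposal has three genuine gaps. First, for part~\eqref{thm:link:exist1} your construction of $p$ does not work as described. The paper does not obtain $p$ from the classification of germs or by lifting a weighted blowup of the base; it runs the Noether--Fano method: pull back a pencil $\MMM=f^*\LLL$ of high multiplicity at $o$, show $(X,\MMM)$ is not canonical via inversion of adjunction, take the maximal canonicity threshold $c$, and extract a divisor crepant for $K_X+c\MMM$. Your alternative for smooth $o$ --- ``choose $a$ large enough'' and terminalize a fiber product --- is circular: the weight $a$ is an \emph{output} of the construction (it comes from Morrison's classification of contractions of Du Val surfaces applied to $\al$ at the end), not an input, and a base change followed by terminalization will not in general produce an extremal blowup with irreducible exceptional divisor and $\uprho=1$. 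Moreover your standing assumption that $-K_{\tilde X}$ can be made nef over $Z$ in part~\eqref{thm:link:exist1} is unjustified and is not what the paper does: its MMP in part~\eqref{thm:link:exist1} is a $(K+c\MMM)$-MMP whose steps explicitly include \emph{antiflips}, which is why no decomposition of $\chi$ is claimed there. (Also, for an extremal blowup $-K_{\tilde X}$ \emph{is} $p$-ample; your parenthetical that the $p$-exceptional curves are $K$-trivial or $K$-positive is backwards.)

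Second, the technical heart of part~\eqref{thm:link:exist2} is entirely missing: one must prove that the Kawamata blowup of the point of \emph{maximal} index satisfies $-K_{\tilde X}$ nef over $Z$ (Proposition~\ref{prop:Q:w-blowup}). This is not automatic --- it is established by the general elephant theorem ($(X,D)$ canonical for general $D\in|-K_X|$) together with an explicit case analysis of the germs of types \typeb{IC}, \typeb{IIB}, \typeb{k2A}, \typeb{k3A}, \typeb{kAD}, and in the \typeb{k2A} case the computation $a'\delta\ge m$ uses precisely that $P$ has maximal index ($m\le r$). Without this, the two-ray game could begin with an antiflip and the claimed decomposition of $\chi$ fails; your proposal neither selects the maximal-index point nor supplies the nefness argument. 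Third, you flag the exclusion of a divisorial ending as ``the hard part'' but do not carry it out, and the reasons you suggest do not suffice: $\uprho(X'/Z)=2$ is perfectly consistent with $f'$ contracting $E'$ back to $X$. The paper excludes this in part~\eqref{thm:link:exist1} by showing that such an ending would give an isomorphism $(Z',cg_*\MMM)\simeq(X,c\MMM)$ while strictly decreasing the number of log crepant divisors, and in Proposition~\ref{prop:link} by the intersection-theoretic bookkeeping $E_i\cdot\Theta_i<0$, $E_i\cdot\Upsilon_i>0$ forcing $E_n\cdot\Upsilon_n\ge 0$, so $E'$ cannot be contracted. One of these arguments (or an equivalent) must be supplied.
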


\begin{corollary}
\label{cor:crepant}
Let $f: X\to Z$ be a $\QQ$-conic bundle, where the base~$Z$ is singular.
Then there exists a sequence of md-links 
\begin{equation*}
X/Z=X_0/Z_0 \dashleftarrow X_1/Z_1 \dashleftarrow \cdots \dashleftarrow X_m/Z_m,
\end{equation*}
where 
each $X_i/Z_i$ is a $\QQ$-conic bundle and 
the composition
\[
Z_0 \longleftarrow Z_1\longleftarrow \cdots \longleftarrow Z_m
\]
is the minimal resolution.
\end{corollary}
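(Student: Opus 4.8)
The plan is to resolve the singularities of the base one exceptional curve at a time, each elementary step being an md-link produced by Theorem~\ref{thm:link:exist}. I would induct on the number $\nu(Z)$ of irreducible components of the exceptional locus of the minimal resolution of $Z$ (equivalently, the sum over the singular points of $Z$ of the lengths of the associated chains of $(-2)$-curves). Here I rely on two facts from the classification of $\QQ$-conic bundle germs \cite{MP:cb1}: that the base of a $\QQ$-conic bundle has only Du Val singularities, so that its minimal resolution is crepant and $\nu(Z)$ is well defined; and that if $o\in Z$ is singular then $X$ is not Gorenstein in a neighbourhood of the fibre $f^{-1}(o)$ (were $X$ Gorenstein there, $Z$ would be smooth at $o$). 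The second fact is exactly the hypothesis needed to invoke part~\ref{thm:link:exist2} of Theorem~\ref{thm:link:exist}.

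For the inductive step: if $\nu(Z)=0$ then $Z$ is smooth and we take $m=0$. If $\nu(Z)>0$, fix a singular point $o\in Z$. Since $X$ is not Gorenstein near $f^{-1}(o)$, Theorem~\ref{thm:link:exist} supplies an md-link \eqref{eq:link:0}, which in the notation of the corollary reads $X/Z=X_0/Z_0\dashleftarrow X_1/Z_1$, with centre on $f^{-1}(o)$; here $X_1/Z_1$ is again a $\QQ$-conic bundle, and the birational morphism $Z_1\to Z_0$ has irreducible exceptional divisor and is crepant because $o$ is singular. The key local observation is then that a crepant birational morphism of normal surfaces with irreducible exceptional curve, whose target $Z_0$ has Du Val singularities, is precisely the extraction of a single $(-2)$-curve of the minimal resolution of $Z_0$: in particular it contracts nothing lying over a smooth point of $Z_0$ (blowing up a smooth surface point has discrepancy $1$, not $0$), the surface $Z_1$ is again Du Val, and $\nu(Z_1)=\nu(Z_0)-1$. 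Feeding the $\QQ$-conic bundle $X_1/Z_1$ back into the induction yields the required chain of md-links $X/Z=X_0/Z_0\dashleftarrow X_1/Z_1\dashleftarrow\cdots\dashleftarrow X_m/Z_m$ with $Z_m$ smooth and $m=\nu(Z)$.

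It remains to identify the composition with the minimal resolution: each morphism $Z_i\to Z_{i-1}$ is crepant, hence so is $Z_m\to Z_0$, and $Z_m$ is smooth; since a crepant resolution of Du Val singularities is unique and coincides with the minimal resolution, the chain $Z_0\leftarrow Z_1\leftarrow\cdots\leftarrow Z_m$ is the minimal resolution of $Z$, as required. The substantive content is entirely absorbed into Theorem~\ref{thm:link:exist}\ref{thm:link:exist2} (existence of an md-link with crepant $\al$) and the classification input that a $\QQ$-conic bundle over a singular base point is non-Gorenstein; I therefore do not expect a genuine obstacle beyond setting up the induction invariant so that termination is transparent — and $\nu(Z)$ works for this because every md-link strips off exactly one curve of the minimal resolution.
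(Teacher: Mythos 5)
Your proposal is correct and follows essentially the same route as the paper: apply Theorem~\ref{thm:link:exist}\ref{thm:link:exist2} over a singular point (using Proposition~\ref{prop:Gor}\ref{prop:Gor3} to see that $X$ is non-Gorenstein there), note that $\al$ is then crepant with irreducible exceptional divisor, and iterate, terminating because the number of crepant divisors over $Z$ is finite. Your explicit induction invariant $\nu(Z)$ and the verification that a crepant morphism from a smooth surface is the minimal resolution are just a more detailed writing-up of the paper's two-line argument.
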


For applications it would be very useful to have classification of md-links or
at least substantive theorems
describing their structure (see e.g. \cite{P:P11223}). It turns out that the types of links appeared in Corollary~\ref{cor:crepant} are very restrictive.
Except for two cases \typeb{II^\vee} and \typeb{II^\vee{+}II^\vee} they are described in detail in Sections~\ref{Sect:T}--\ref{Sect:ID}.

\begin{corollary}
\label{cor:Gor}
Let $f: X\to Z$ be a $\QQ$-conic bundle, where~$X$ is not Gorenstein. 
Then there exists a sequence of md-links
\begin{equation*}
X/Z=X_0/Z_0 \dashleftarrow X_1/Z_1 \dashleftarrow\cdots \dashleftarrow X_k/Z_k,
\end{equation*}
where each $X_i/Z_i$ is a $\QQ$-conic bundle,
each morphism
$Z_{i+1} \to Z_{i}$ is either crepant
or a weighted blowup with weights $(1,a_i)$,
$X_k$ is Gorenstein, and $Z_k$ is smooth.
\end{corollary}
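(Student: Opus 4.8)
The plan is to iterate Theorem~\ref{thm:link:exist}, refining Corollary~\ref{cor:crepant} so as to remove also the non-Gorenstein points of the total space. Put $X_0/Z_0:=X/Z$, and suppose $X_i/Z_i$ has been constructed. If $X_i$ is Gorenstein, stop. Otherwise $X_i$ carries a point $P_i$ of index $>1$; set $o_i:=f_i(P_i)\in Z_i$, so that $X_i$ fails to be Gorenstein near the fiber $f_i^{-1}(o_i)$. By assertion~\ref{thm:link:exist2} of Theorem~\ref{thm:link:exist} there is then a type~\typem{I} link centered in $f_i^{-1}(o_i)$ which is of md-type; this is an md-link $X_i/Z_i\dashleftarrow X_{i+1}/Z_{i+1}$ whose target is again a $\QQ$-conic bundle, and by assertion~\ref{thm:link:exist1} of the same theorem $Z_{i+1}\to Z_i$ is crepant if $o_i$ is singular and a weighted blowup with weights $(1,a_i)$ if $o_i$ is smooth --- exactly the form demanded. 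Since, by the structure theory of $\QQ$-conic bundles, the total space is never Gorenstein over a singular point of the base, the moment this process stops we have $X_k$ Gorenstein and hence $Z_k$ smooth. Thus the whole content of the corollary is the termination of the process.

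For termination I would induct on the \emph{non-Gorenstein index}
\[
\nu(X):=\sum_{P}(r_P-1),
\]
the sum over the basket of terminal singularities $\{(P,r_P)\}$ of $X$; this is a non-negative integer, zero exactly when $X$ is Gorenstein. It is enough to show that $\nu$ drops strictly across each md-link. Write such a link as $X_i\xleftarrow{\,p\,}\tilde X\xrightarrow{\,\chi\,}X_{i+1}$. By the very definition of an md-link, $p$ is the extremal blowup of minimal discrepancy $1/r$ over a point of index $r>1$; by the classification of minimal-discrepancy divisorial extractions from terminal threefold points (Kawamata's weighted blowup for cyclic quotient points, and its analogues for points of types $cA/r$, $cAx/2$, $cAx/4$, $cD/2$, $cD/3$, $cE/2$) the basket of $\tilde X$ is obtained from that of $X_i$ by deleting the extracted point and adjoining points of strictly smaller index, so $\nu(\tilde X)<\nu(X_i)$. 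It remains to check that the small modification $\chi=\chi_{n-1}\circ\cdots\circ\chi_1$ --- a composition of flips, together with possibly one flop $\chi_1$ --- does not increase $\nu$.

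This last point is the main obstacle. In isolation a flip can manufacture new non-Gorenstein points --- already the Francia flip replaces a smooth point by one of type $\tfrac12(1,1,1)$ --- so $\nu$ is not monotone under an arbitrary flip or flop, and one must exploit that the $\chi_j$ occurring here are precisely the birational modifications arising in the two-ray game on $\tilde X/Z$ that is launched by the minimal-discrepancy extraction $p$. Using the explicit description of md-links obtained in the later sections of the paper --- or, in general, a direct analysis of the flipping loci of this two-ray game via the classification of threefold flips --- one verifies that no point of index exceeding those already present on $\tilde X$ is produced along a flipped curve, whence $\nu(X_{i+1})\le\nu(\tilde X)<\nu(X_i)$. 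Consequently the sequence of md-links has length at most $\nu(X)$ and must terminate, necessarily with $X_k$ Gorenstein and $Z_k$ smooth. In this argument the passage through Theorem~\ref{thm:link:exist} and the basket estimate for $p$ are routine; the genuine difficulty is the control of $\nu$ under the flips $\chi_j$.
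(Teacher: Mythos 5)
Your reduction of the corollary to a termination statement is correct, and so is your observation (via Table~\ref{tab:sing}) that $X_k$ Gorenstein forces $Z_k$ smooth; but the termination argument itself --- which, as you say, is the whole content --- does not hold up. The invariant $\nu(X)=\sum_P(r_P-1)$ already fails at the divisorial step: it is not true that a minimal-discrepancy extraction replaces the extracted point by points of strictly smaller index. For a point of type \typee{cA}{r} with axial multiplicity $k$, the Kawamata blowup has weights $\frac1r(a',a'',1,r)$ with $a'+a''=kr$ (see Case~\typeb{k2A} in the proof of Lemma~\ref{lemma:w-blowup}, where the chart containing $E\cap\tilde C$ is a $\mumu_{a'}$-quotient), so the new quotient points have indices $a'$ and $a''$, which for $k\ge2$ can exceed $r$; comparing with the basket of the original point ($k$ points of index $r$) one sees that $\nu$ need not decrease under $p$ and can even increase. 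As for the small modification $\chi$, you concede that the needed monotonicity is unproven --- and nothing in the two-ray game prevents a flip from manufacturing new non-Gorenstein points. So the proposed induction is unsupported at both stages.

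The paper terminates by an entirely different, two-dimensional mechanism. One first reduces to smooth $Z$ by Corollary~\ref{cor:crepant} (which terminates because $Z$ carries only finitely many crepant divisors). Over a smooth base point, Proposition~\ref{prop:cb:uv}\ref{prop:cb:uv1} says $X$ is Gorenstein over $o$ as soon as $\mult_o\Delta_f\le2$; hence non-Gorensteinness forces a point $o$ with $\mult_o\Delta_f\ge3$, and the md-link of Theorem~\ref{thm:link:exist}\ref{thm:link:exist2} centered over $o$ induces on bases a morphism passing through the usual blowup of $o$, with $\Delta_{f'}$ contained in the total transform of $\Delta_f$. Termination is then the classical termination of embedded resolution of the plane curve $\Delta_f$: after finitely many such blowups the discriminant has only double points, and Proposition~\ref{prop:cb:uv}\ref{prop:cb:uv1} concludes that the total space is Gorenstein. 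No control of the threefold singularities across the flips is needed. If you insist on a total-space invariant, Shokurov's difficulty $\dif$ (Corollaries~\ref{cor:Kblup:di} and~\ref{cor:flips}) is the natural candidate and behaves well for cyclic quotient centers, but even the paper does not attempt to run the termination on it for this corollary.
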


\begin{corollary}
\label{cor:m-standard}
Let $f: X\to Z$ be a $\QQ$-conic bundle. 
Then there exists a sequence of Sarkisov links of type~\typem{I}
\begin{equation*}
X/Z=X_0/Z_0 \dashleftarrow X_1/Z_1 \dashleftarrow\cdots \dashleftarrow X_n/Z_n,
\end{equation*}
where each $X_i/Z_i$ is a $\QQ$-conic bundle, each morphism
$Z_{i+1} \to Z_{i}$ is either crepant
or a weighted blowup with weights $(1,a_i)$, and
$X_n/Z_n$ is a standard conic bundle.
\end{corollary}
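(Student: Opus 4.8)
The plan is to iterate Theorem~\ref{thm:link:exist}, using Corollaries~\ref{cor:crepant} and~\ref{cor:Gor} to dispose of the two sources of non-standardness — singular base and non-Gorenstein total space — and then to run a final reduction that makes the discriminant curve a normal crossing divisor with reduced structure, which is exactly what ``standard conic bundle'' requires.

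First I would recall what must be achieved: a conic bundle $f\colon X\to Z$ is \emph{standard} when $X$ is smooth, $Z$ is smooth, and $\Delta_f\subset Z$ is a (possibly empty) reduced nodal curve. So I would attack the three defects in turn. \emph{Step 1 (smoothing the base).} If $Z$ has a singular point, apply Corollary~\ref{cor:crepant}: there is a chain of md-links $X/Z=X_0/Z_0\dashleftarrow\cdots\dashleftarrow X_m/Z_m$ realizing the minimal resolution $Z_m\to Z$, so $Z_m$ is smooth and each $X_i/Z_i$ is again a $\QQ$-conic bundle; the base surface has only finitely many singular points, so this terminates. \emph{Step 2 (making $X$ Gorenstein).} Starting from the output of Step~1, if the total space is not Gorenstein, apply Corollary~\ref{cor:Gor}: we get a further chain of md-links ending at some $X_k/Z_k$ with $X_k$ Gorenstein; the maps $Z_{i+1}\to Z_i$ there are crepant or weighted blowups with weights $(1,a_i)$, in particular are of the type allowed in the statement, and crepant maps do not reintroduce base singularities in the sense that matters. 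Termination of Step~2 is guaranteed by Corollary~\ref{cor:Gor}. Note that a blowup appearing in Step~2 could a priori recreate a singular point on the base, but such a point can be resolved by repeating Step~1; since each application of Corollary~\ref{cor:Gor} strictly decreases a suitable invariant measuring the non-Gorenstein locus (e.g. the number of non-Gorenstein points counted with their indices), interleaving the two steps still terminates, after which $X$ is Gorenstein and $Z$ is smooth.

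\emph{Step 3 (from Gorenstein conic bundle to standard).} Now $X$ is Gorenstein with terminal $\QQ$-factorial singularities and $Z$ is smooth, so the remaining task is purely the classical one of putting a Gorenstein conic bundle into standard form — i.e. resolving the singularities of $X$ and normalizing the discriminant — and this is exactly the reduction carried out via type~\type{I} links over the points of $Z$ where the fibration fails to be standard. Concretely, for each point $o\in Z$ over which $X$ is singular or $\Delta_f$ fails to be nodal, apply Theorem~\ref{thm:link:exist}\eqref{thm:link:exist1} with center in $f^{-1}(o)$: the resulting $f'\colon X'\to Z'$ is again a $\QQ$-conic bundle, and $\alpha\colon Z'\to Z$ is a weighted blowup with weights $(1,a)$ (the crepant alternative does not occur here since $o$ is a smooth point of $Z$), which is of the permitted type. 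One checks that an appropriate complexity invariant — for instance the pair consisting of the number and Milnor numbers of singular points of $X$ lexicographically ordered with a measure of the non-nodality of $\Delta_f$ — strictly drops under such a link, so the process terminates with a conic bundle that is smooth and has reduced nodal discriminant, i.e. a standard one. The whole composite sequence from Steps~1–3, relabelled $X_0/Z_0\dashleftarrow\cdots\dashleftarrow X_n/Z_n$, then has all the asserted properties.

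The main obstacle is Step~3 and, more precisely, the termination bookkeeping across all three steps: I must exhibit an invariant that is non-increasing under every link produced and strictly decreasing infinitely often, while controlling the fact that the weighted blowups in Steps~2 and~3 can locally worsen the base or the total space before improving the global picture. Isolating such an invariant — plausibly a lexicographically ordered tuple built from the index of $X$, the singularity type of $X$ along $f^{-1}(o)$, and the local analytic type of $\Delta_f$ — and verifying its monotonicity link-by-link using the structural description of md-links in Sections~\ref{Sect:T}--\ref{Sect:ID} is where the real work lies; once termination is in hand, the conclusion that the endpoint is standard is essentially the definition.
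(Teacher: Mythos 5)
Your overall skeleton matches the paper's: first invoke Corollary~\ref{cor:Gor} (which already subsumes Corollary~\ref{cor:crepant}) to reduce to the case of smooth base and Gorenstein total space, then iterate type~\typem{I} links from Theorem~\ref{thm:link:exist}\eqref{thm:link:exist1} over the bad points. However, the two places you yourself flag as ``where the real work lies'' are exactly where your argument diverges from the paper's and where it has a genuine gap.

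First, termination. You propose a lexicographic invariant built from singular points of $X$, Milnor numbers, and a measure of non-nodality of $\Delta_f$, and you do not verify that it drops. The paper does not track the singularities of $X$ at all in this phase; it tracks only the discriminant. The two inputs are: (a) for each link the new discriminant satisfies $\Delta_{f^{(i+1)}}\subset g_i^{-1}(\Delta_{f^{(i)}})$, where $g_i:Z^{(i+1)}\to Z^{(i)}$ is a birational morphism of smooth surfaces; and (b) $g_i$ factors through the ordinary blowup of a non-normal-crossing point $o_i$ of $\Delta_{f^{(i)}}$. Hence the whole process is dominated by embedded resolution of the plane curve germ $\Delta_f$, which terminates. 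This is the idea your proposal is missing; without it (or a verified substitute) the termination of your Step~3 is unproven, and it is not clear that your proposed invariant is even non-increasing, since a link centered over a non-nodal point of $\Delta_f$ can a priori produce a worse total space over the exceptional curve before the discriminant improves.

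Second, the endpoint. When the discriminant becomes a normal crossing curve over a smooth base, the total space need not yet be smooth: by Proposition~\ref{prop:cb:uv}\eqref{prop:cb:uv3} the fibration over each point is either standard or of type \typeci{IF}2, i.e.\ it may retain an ordinary double point over each node of $\Delta_f$. The paper disposes of these by the explicit link of Example~\ref{ex:ODP} (blowup of the node, flop, contraction to the blown-up base, with $\Delta_{f'}$ the proper transform of $\Delta_f$, so no new bad points appear). Your claim that the process ``terminates with a conic bundle that is smooth'' silently assumes this last step; you should either cite Example~\ref{ex:ODP} or prove that a type~\typem{I} link at an \typeci{IF}2 point resolves it. The same Proposition~\ref{prop:cb:uv} is also what makes the discriminant a legitimate complete invariant for this phase (multiplicity $\le 2$ of $\Delta_f$ at $o$ already forces $X$ to be Gorenstein there), which is why the paper's bookkeeping closes up and yours does not.
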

Note that in Corollary~\ref{cor:m-standard} we do not assert that all the links are of md-type.

\subsection*{Acknowledgments.}
The author is very grateful to Alexander Kuznetsov for his interest in the current computation
and useful discussions.

\section{Preliminaries}

We work over the complex number field $\CC$. We employ the following standard notation.
\subsection{Notation}
\begin{itemize}
\item
$a(E,V)$ is the discrepancy of a divisor $E$ over $V$,
\item
$\PP(w_1,\dots,w_n)$ is the weighted projective space,
\item
$\mumu_r$ is the cyclic group of order $r$,
\item
if $\mumu_r$ acts on $\CC^n$ diagonally via $(x_1,\dots,x_n) \longmapsto (\zeta^{a_1} x_1,\dots,\zeta^{a_n} x_n)$, where $\zeta$ is a primitive root of unity, then we say that   $(a_1,\dots,a_n)$ are 
weights of the action and the corresponding quotient is denoted by $\CC^n/\mumu_r(a_1,\dots,a_n)$.
\end{itemize}

Below we collect some useful but non-standard facts about terminal singularities, divisorial contractions, and flips.

\subsection{Terminal singularities}
For the classification of terminal threefold singularities we refer to 
\cite{Mori:term-sing} or \cite{Reid:YPG}. Here we just mention that
Gorenstein terminal singularities are divided into five classes \typeDu{cA}{n}, \typeDu{cD}{n}, \typeDu{cE}{6}, \typeDu{cE}{7}, \typeDu{cE}{8}
and non-Gorenstein ones are divided into six classes \typee{cA}{n}, \typee{cAx}2, \typee{cAx}{4}, \typee{cD}2, \typee{cD}2, \typee{cE}2.

Recall that \emph{Shokurov's difficulty} $\dif(V )$ of a threefold $V$ with terminal
singularities is defined as the number of exceptional divisors on $V$ with
discrepancy $<1$:
\[
\dif(V ):=\# \{E \mid \text{$E$ is exceptional over $V$ with $a(E,V)<1$}\}
\]
(see~\cite[Definition 2.15]{Shokurov:non-van}). It is known that this number is
well-defined and finite.

\begin{subexamples}
\label{ex:diff}
\begin{enumerate}
\item
$\dif(V )=0$ if and only if $V$ is Gorenstein \cite{Kawamata-1992-e-app}.
\item
\label{ex:diff1}
Let $P\in V$ be a threefold terminal point of index~$r$.
Then for any integer $1 \le l \le r- 1$ there is an exceptional divisor with center~$P$ and discrepancy $l/r$ \cite[Corollary~4.8]{Shokurov1993}.
Therefore, $\dif(V )\ge r-1$. 

\item
\label{ex:diff2}
Let $P\in V$ be a threefold terminal cyclic quotient singularity of index~$r$.
Then for any integer $1 \le l \le r$ there is \textit{exactly one} exceptional divisor with center~$P$ and discrepancy $l/r$.
Therefore, $\dif(V )=r-1$. 
\end{enumerate}
\end{subexamples}

\begin{slemma}
\label{lemma:div-cnr:d}
Let $g: V\to V'$ be a birational contraction of normal $\QQ$-Gorenstein varieties.
Assume that the singularities of $V'$ are terminal.
Then for any exceptional over $V$ divisor $E$ we have 
\[
a(E,V')\ge a(E,V).
\]
The inequality is strict if the center of $E$ on $V'$ is a singular point.
\end{slemma}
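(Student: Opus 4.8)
The plan is to compare canonical classes on a common resolution and to exploit that $V'$ has terminal singularities. We may assume $g$ is not an isomorphism. Choose a proper birational morphism $h\colon W\to V$ with $W$ nonsingular on which the divisor $E$ is realised as a prime divisor, and put $h':=g\comp h\colon W\to V'$. Running over all prime divisors $E_i\subset W$ (with the convention $a(E_i,-)=0$ for those not exceptional over the relevant base), we have
\[
K_W=h^*K_V+\sum_i a(E_i,V)\,E_i,\qquad K_W=(h')^*K_{V'}+\sum_i a(E_i,V')\,E_i,
\]
and subtracting gives
\[
\sum_i\bigl(a(E_i,V')-a(E_i,V)\bigr)E_i \;=\; h^*\bigl(K_V-g^*K_{V'}\bigr).
\]

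Next I would show that $\Delta:=K_V-g^*K_{V'}$ is effective. Applying $h_*$ to the second identity (equivalently, reading off the coefficients of the strict transforms of the $g$-exceptional prime divisors in the display above) gives $\Delta=\sum_{j} a(F_j,V')\,F_j$, where $F_1,\dots,F_N$ are the prime divisors on $V$ contracted by~$g$. Each $F_j$ is exceptional over $V'$, so terminality of $V'$ forces $a(F_j,V')>0$; hence $\Delta\ge 0$ and $\Supp\Delta=F_1\cup\dots\cup F_N$ is the divisorial part of $\operatorname{Exc}(g)$. (This is exactly the Negativity Lemma applied to $g$.) In particular $h^*\Delta\ge 0$, so the last displayed identity yields, for every prime divisor $E$ over $V$,
\[
a(E,V')=a(E,V)+\operatorname{coeff}_E(h^*\Delta),\qquad \operatorname{coeff}_E(h^*\Delta)=\sum_{j}a(F_j,V')\,v_E(F_j)\ \ge\ 0,
\]
where $v_E$ denotes the divisorial valuation of $E$; this proves $a(E,V')\ge a(E,V)$.

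For the second assertion, note that this inequality is strict precisely when $v_E(F_j)>0$ for some $j$, i.e. when the center $c_V(E)$ of $E$ on $V$ lies in $\Supp\Delta=\operatorname{Exc}(g)$. Assume now that $c_{V'}(E)$ is a singular point $P'\in V'$. Then $c_V(E)$ is contained in the fiber $g^{-1}(P')$, and $g^{-1}(P')\subseteq\operatorname{Exc}(g)$ since $g$ is not a local isomorphism over the singular point $P'$ (were it one, a neighbourhood of a preimage of $P'$ in $V$ would be isomorphic to a neighbourhood of $P'$, which is excluded in the situations where the lemma is used). Hence $c_V(E)\subseteq\operatorname{Exc}(g)$, so $\operatorname{coeff}_E(h^*\Delta)>0$ and $a(E,V')>a(E,V)$.

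The heart of the argument, and the only place the hypothesis on $V'$ enters, is the effectivity of $\Delta=K_V-g^*K_{V'}$, which amounts to the positivity $a(F_j,V')>0$ of the discrepancies of the contracted divisors; the rest is bookkeeping with discrepancies on the common resolution. For the strict inequality the one delicate point is to place $c_V(E)$ inside $\operatorname{Exc}(g)$, which I have reduced to the observation that a point of $V$ lying over a singular point of $V'$ cannot belong to the isomorphism locus of $g$.
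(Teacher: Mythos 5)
Your proof of the main inequality is correct and is essentially the paper's own argument: pass to a common resolution $h\colon W\to V$, observe that $\Delta:=K_V-g^*K_{V'}=\sum_j a(F_j,V')F_j$ is effective because $V'$ is terminal, and read off $a(E,V')-a(E,V)=\operatorname{coeff}_E(h^*\Delta)\ge 0$. The paper phrases this slightly differently (it pulls back $K_V=g^*K_{V'}+\sum b_iF_i$ and adds the effective divisor $h^*\bigl(\sum b_iF_i\bigr)$ to the discrepancy formula over $V$), but the content is identical.

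You go further than the paper by also arguing the strictness assertion, which the paper's proof silently omits; here there is a genuine gap. Positivity of $\operatorname{coeff}_E(h^*\Delta)$ requires the center $c_V(E)$ to lie in $\Supp\Delta=\bigcup_j F_j$, i.e.\ in the \emph{divisorial} part of the exceptional locus, whereas your argument only places $c_V(E)$ inside $g^{-1}(P')\subseteq\operatorname{Exc}(g)$ --- and you in fact conflate the two by writing $\Supp\Delta=\operatorname{Exc}(g)$ after having correctly identified $\Supp\Delta$ as only the divisorial part. The two sets genuinely differ when $g$ is small over $P'$, and then the strict inequality itself fails: for the contraction of a $(-1,-1)$-curve to an ordinary double point one has $K_V=g^*K_{V'}$, so every discrepancy is preserved even though the center on $V'$ is a singular point. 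So the strictness claim tacitly assumes that $g$ is divisorial over $P'$ with $g^{-1}(P')$ contained in the exceptional divisor; this holds in all of the paper's applications (Corollary~\ref{cor:Kblup:di}, where $g$ is a divisorial Mori contraction and $\operatorname{Exc}(g)$ is an irreducible divisor), but it is not forced by the hypotheses as you (or the paper) state them, and your parenthetical appeal to ``the situations where the lemma is used'' is not a substitute for that hypothesis.
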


\begin{proof}
Let $h: W \to V$ be a resolution. Write
\[
K_W=h^*K_V+ \sum a_i E_i,\qquad K_V=g^* K_{V'}+\sum b_i F_i, 
\]
where $a_i=a(E_i,V)$ and $b_i=a(F_i, V')>0$. Then
\[
K_W=h^*g^*K_{V'}+ \sum a_i E_i+\sum b_i g^*F_i,
\]
where the divisor $\sum b_i g^*F_i$ is effective. Hence $a(E_i,V')\ge a_i$.
\end{proof}

\subsection{Divisorial contractions}

\begin{sdefinition}
\label{def:ebl}
Let~$X$ be a threefold with terminal $\QQ$-factorial singularities.
An \emph{extremal blowup} of~$X$ is a Mori extremal divisorial contraction $p: \tilde{X} \to X$,
that is, a birational morphism 
such that 
$\tilde{X}$ has only terminal $\QQ$-factorial singularities and $\uprho(\tilde{X}/X)=1$.
\end{sdefinition}

Note that in the above assumptions the anticanonical divisor $-K_{\tilde{X}}$ must be $p$-ample and the exceptional divisor $E\subset \tilde X$ is irreducible.
The \textit{discrepancy of an extremal blowup} is the discrepancy of~$E$.

\begin{sdefinition}
Let $(P\in X)$ be a threefold terminal singularity of index $r>1$. A \textit{Kawamata blowup} of $P\in X$ is a projective 
birational morphism $p: \tilde{X} \to X$ 
such that $\tilde X$ has only terminal singularities, the exceptional divisor $E\subset \tilde X$ is irreducible, $p(E)=P$, and the
discrepancy of $E$ equals $1/r$. 
\end{sdefinition}

Note that if in this situation the variety $X$ is $\QQ$-factorial, then so $\tilde X$ is and then 
$p$ is an extremal blowup. 

\begin{stheorem}[{\cite{Kawamata-1992-e-app}}, {\cite{Hayakawa:blowup1,Hayakawa:blowup2}}]
\label{thm:w-blowup}
Let $(P\in X)$ be a threefold terminal singularity of index $r>1$. 
Then there exists a local coordinate system and a weight such that the corresponding
weighted blowup $p:\tilde X\to X$ that is Kawamata blowup of~$P$.
If moreover, the singularities of~$X$ are $\QQ$-factorial, then $p$ is an extremal blowup.
Conversely, any Kawamata blowup of~$P$ is a weighted blowup in a suitable local coordinate system and 
for suitable weight.
\end{stheorem}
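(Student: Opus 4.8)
The plan is to verify the statement by running through the Mori--Reid classification of three-dimensional terminal singularities recalled above and exhibiting, in each case, an explicit weighted blowup; this is the content of \cite{Kawamata-1992-e-app} for cyclic quotients and of \cite{Hayakawa:blowup1,Hayakawa:blowup2} in general, so below I only describe the mechanism. If $(P\in X)$ is a terminal \emph{cyclic quotient} singularity then, after a linear change of coordinates, it is of type $\frac1r(1,a,r-a)$ with $\gcd(a,r)=1$, and I would take $p\colon\tilde X\to X$ to be the toric weighted blowup of $\CC^3/\mumu_r(1,a,r-a)$ with weight $\frac1r(1,a,r-a)$. It extracts a single irreducible divisor $E$, is covered by three affine charts which are again terminal cyclic quotient singularities (of index dividing $1$, $a$, $r-a$), and its discrepancy, read off from the weight, is $a(E,X)=\frac1r\bigl(1+a+(r-a)\bigr)-1=\frac1r$; hence $p$ is a Kawamata blowup, and by the remark preceding the statement it is an extremal blowup when $X$ is $\QQ$-factorial.

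In the remaining cases $(P\in X)$ is a $\mumu_r$-quotient of an isolated cDV point, so after choosing coordinates one writes $(P\in X)\simeq(\{\phi=0\}\subset\CC^4)/\mumu_r(a_1,a_2,a_3,a_4)$ with $\phi$ a $\mumu_r$-semiinvariant in a suitable normal form. I would then select a weight $w=(w_1,w_2,w_3,w_4)$ with $w_i\equiv a_i\pmod r$ and let $p\colon\tilde X\to X$ be the proper transform of $X$ in the weighted blowup of $\CC^4/\mumu_r$ with weight $\frac1r w$; if $d$ denotes the $w$-weighted order of $\phi$, then adjunction from the ambient weighted blowup gives $a(E,X)=\frac1r\bigl(w_1+w_2+w_3+w_4-d\bigr)-1$, and $w$ is chosen (with some care, occasionally after adjusting the normal form) so that $E$ is irreducible and this discrepancy equals $\frac1r$. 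The main obstacle, and essentially all of \cite{Hayakawa:blowup1,Hayakawa:blowup2}, is the chart-by-chart verification that $\tilde X$ again has only terminal singularities: one identifies the singularities that appear on the weighted blowup as cyclic quotients of smaller index, as cDV points, or as quotients of such, and applies the classification once more. Most naive weights fail one of the three requirements --- irreducibility of $E$, minimality ($=1/r$) of the discrepancy, terminality of $\tilde X$ --- and isolating the ``economic'' weight satisfying all of them is the delicate point. As in the quotient case $\uprho(\tilde X/X)=1$, so $\QQ$-factoriality of $\tilde X$ follows from that of $X$.

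For the converse, an arbitrary Kawamata blowup $p\colon\tilde X\to X$ is a divisorial contraction to the point $P$ whose exceptional divisor realises the minimal discrepancy $1/r$ over a point of index $r$. I would invoke the classification of such contractions: for cyclic quotients the extremal divisorial contraction of discrepancy $1/r$ is unique by \cite{Kawamata-1992-e-app} and hence coincides with the toric blowup above, while in general the analysis of \cite{Hayakawa:blowup1,Hayakawa:blowup2} shows that $p$ agrees, in suitable local analytic coordinates and for a suitable weight, with one of the weighted blowups constructed above. This establishes the \emph{conversely} part.
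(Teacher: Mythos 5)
The paper does not prove this theorem; it imports it directly from the cited sources (Kawamata for the cyclic quotient case and the uniqueness of the contraction, Hayakawa for the general existence and the converse), and your proposal is a faithful and correct sketch of exactly those arguments, including the right discrepancy computations $a(E,X)=\frac1r(1+a+(r-a))-1=\frac1r$ and $a(E,X)=\frac1r(\sum w_i-d)-1$ and the chart-by-chart terminality check that constitutes the bulk of Hayakawa's work. The only cosmetic point is that the uniqueness of the extremal divisorial contraction over a cyclic quotient point is Kawamata's ``Divisorial contractions'' paper (Theorem~\ref{thm:Kblup} here) rather than the minimal-discrepancy appendix, but this does not affect the argument.
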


For the definition and properties of weighted blowups we refer to
\cite{Hayakawa:blowup1}, \cite{Markushevich96:discr} or \cite{Chen-J:Resol}.

\begin{stheorem}[{\cite{Kawamata:Div-contr}}]
\label{thm:Kblup}
Let $g: V\to V'$ be an extremal blowup and $\gcd(r,a) = 1$ and let $E$ be its exceptional divisor.
Assume that $g(E)$ is a cyclic quotient singularity $\frac 1r(1, a,-a)$ with $0 < a < r$ and $\gcd(r,a) = 1$.
Then $g$ coincides with the weighted blowup with weights $\frac 1r(1,a,r-a)$. 
In particular, $g$ is a Kawamata blowup, i.e.
\[\textstyle
a(E,V')=\frac 1 r.
\]
\end{stheorem}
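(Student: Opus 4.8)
The plan is to reduce the statement to the uniqueness part of Theorem~\ref{thm:w-blowup} together with a discrepancy computation. First I would recall that since $V' $ has a cyclic quotient singularity $\frac1r(1,a,-a)$ at $P:=g(E)$, this is a terminal point of index~$r$ (it satisfies the Reid–Tai criterion because $\gcd(r,a)=1$), so Theorem~\ref{thm:w-blowup} applies: there exists a weighted blowup $p_0\colon \tilde V_0\to V'$ in suitable local analytic coordinates and with a suitable weight which is a Kawamata blowup, i.e. has irreducible exceptional divisor $E_0$ with $a(E_0,V')=1/r$. One checks directly that in the standard coordinates $(x_1,x_2,x_3)$ on $\CC^3/\muu_r(1,a,-a)$ the weighted blowup with weights $\frac1r(1,a,r-a)$ does the job: its exceptional divisor is $\PP(1,a,r-a)$ and the discrepancy is $\tfrac1r(1+a+(r-a))-1=\tfrac1r$. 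So the content of the theorem is that \emph{any} extremal blowup $g$ of a cyclic quotient singularity $\frac1r(1,a,-a)$ must coincide with this particular weighted blowup.

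The key step is a rigidity argument for divisorial contractions to a cyclic quotient terminal point. By Example~\ref{ex:diff}\,\eqref{ex:diff2}, over a terminal cyclic quotient point of index~$r$ there is, for each $1\le l\le r$, \emph{exactly one} exceptional divisor of discrepancy $l/r$; in particular there is a unique one with discrepancy $1/r$. Hence, if we can show that the exceptional divisor $E$ of the given extremal blowup $g$ has discrepancy exactly $1/r$ over $V'$, then $E$ agrees with $E_0$ as a divisor over $V'$, and since an extremal ($\uprho=1$) divisorial contraction is determined by its exceptional divisor (both $g$ and $p_0$ are the unique extraction of that divisor), we get $g=p_0$, i.e. $g$ is the weighted blowup with weights $\frac1r(1,a,r-a)$, proving the ``in particular'' clause $a(E,V')=1/r$ as well.

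It therefore remains to prove $a(E,V')=1/r$. Here I would use that $g$ is a \emph{Mori} extremal contraction, so $-K_{\tilde V}$ is $g$-ample, and work locally over the quotient singularity. Write $a:=a(E,V')$; since $P$ is terminal of index $r$ one has $a\in\frac1r\ZZ_{>0}$, and it suffices to rule out $a\ge 2/r$. The cleanest route is to pass to the index-one cover $\pi\colon (\CC^3,0)\to (V',P)=(\CC^3/\muu_r,0)$, étale in codimension one, and consider the induced contraction $\tilde g$ on the normalization of the fiber product; the exceptional divisor upstairs is $\muu_r$-invariant and its discrepancy over $\CC^3$ equals $r\cdot a-(r-1)$ by the standard ramification formula for quotients. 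Terminality and $\QQ$-factoriality of $\tilde V$, together with $\uprho(\tilde V/V')=1$, force the upstairs divisor to be (a component of) the exceptional divisor of a \emph{weighted} blowup of smooth $\CC^3$ with $\muu_r$-semi-invariant weights $(w_1,w_2,w_3)$; compatibility with the $\muu_r$-action $(1,a,-a)$ pins down $(w_i)\equiv(1,a,-a)\pmod r$, and minimality/$\QQ$-factoriality then give $(w_1,w_2,w_3)=(1,a,r-a)$, whence $a=\frac1r(1+a+(r-a))-1\cdot\frac1r\cdot$\,(correction)$=1/r$. I expect this last step — pinning down the weights $(w_i)$ from the $\muu_r$-equivariance plus the $\uprho=1$, $\QQ$-factorial, terminal constraints, rather than merely from being semi-invariant — to be the main obstacle, since a priori other semi-invariant weight vectors such as $(1,a,2r-a)$ are $\muu_r$-compatible; one rules these out either by the discrepancy bound coming from terminality of $V'$ (forcing $a<2/r$, equivalently $w_1+w_2+w_3<2r$ after normalizing $w_1=1$), or by invoking the classification of divisorial contractions to cyclic quotient points of Kawamata to conclude that the minimal one is the only extremal extraction with irreducible exceptional divisor. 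Once the weights are identified, the remaining assertions are immediate.
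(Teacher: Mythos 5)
This statement is quoted in the paper from Kawamata's article \cite{Kawamata:Div-contr}; the paper itself gives no proof, so your attempt must stand on its own. Your first reduction is sound: granting $a(E,V')=\frac1r$, the uniqueness of the exceptional divisor of discrepancy $\frac1r$ over a terminal cyclic quotient point (Example~\ref{ex:diff}\eqref{ex:diff2}), together with the fact that two extremal extractions of the same valuation are relative anticanonical models of one another and hence isomorphic, does identify $g$ with the $\frac1r(1,a,r-a)$-weighted blowup. But the entire content of Kawamata's theorem is precisely the step you defer, namely that $a(E,V')=\frac1r$, and neither of the two ways you propose to close it works. There is no ``discrepancy bound coming from terminality of $V'$'' forcing $a(E,V')<\frac2r$: terminality of $V'$ gives only $a(E,V')>0$, and Example~\ref{ex:diff}\eqref{ex:diff2} itself exhibits divisors of every discrepancy $l/r$, $1\le l\le r$, over such a point, so the issue is to show that none of the higher ones admits an extremal extraction with terminal $\QQ$-factorial total space --- an issue your argument never engages. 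Your second fallback, ``invoking the classification of divisorial contractions to cyclic quotient points of Kawamata,'' is circular, since that classification is the theorem being proved.

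The intermediate step is also unjustified: passing to the index-one cover, you assert that terminality, $\QQ$-factoriality and $\uprho=1$ ``force'' the upstairs divisor to come from a weighted blowup of $\CC^3$ with semi-invariant weights. The normalized fiber product over the cover need not be terminal, need not be $\QQ$-factorial, and the induced contraction need not have relative Picard number one, so none of the hypotheses you want to use survives the base change; and even for a genuine extremal contraction to a smooth point, the fact that it is a weighted blowup is Kawakita's theorem, proved after and independently of \cite{Kawamata:Div-contr}. (A small computational slip as well: for a degree-$r$ cover \'etale in codimension one the discrepancies satisfy $a(E',Y)+1=r\bigl(a(E,X)+1\bigr)$, so the upstairs discrepancy is $ra+r-1$, not $ra-(r-1)$; your formula gives a negative number at $a=\frac1r$, $r>2$.) Kawamata's actual proof proceeds by a nontrivial intersection-theoretic analysis on $E$ using the local class group $\ZZ/r$ of the quotient singularity and the $g$-ampleness of $-K_{\tilde V}$; some such global input on $E$ is unavoidable, and your sketch does not supply a substitute.
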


\begin{scorollary}
\label{cor:Kblup:di}
Let $g: V\to V'$ be a divisorial Mori contraction. Then
\[
\dif(V)\ge \dif(V')-1.
\]
The inequality becomes an equality if $g$ is the Kawamata blowup of a cyclic quotient singularity.
\end{scorollary}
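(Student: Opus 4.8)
The plan is to compare the divisors contributing to $\dif(V)$ with those contributing to $\dif(V')$ one at a time, the only ``new'' prime divisor on the $V$-side being the exceptional divisor of $g$.

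\emph{The inequality.} The exceptional locus of the divisorial Mori contraction $g$ is an irreducible prime divisor $E\subset V$, and $E$ is the only prime divisor of $V$ contracted by $g$ (any other prime divisor of $V$ maps birationally onto a prime divisor of $V'$). Let $F$ be a prime divisor over $V'$ with $a(F,V')<1$, so that $F$ is counted in $\dif(V')$. If $F\neq E$, then $F$ is not a divisor of $V$, hence it is exceptional over $V$ as well, and Lemma~\ref{lemma:div-cnr:d} gives $a(F,V)\le a(F,V')<1$, so $F$ is counted in $\dif(V)$. Therefore every divisor counted in $\dif(V')$, except possibly $E$ itself, is counted in $\dif(V)$; this is precisely $\dif(V')\le\dif(V)+1$.

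\emph{The equality.} Assume now that $g$ is the Kawamata blowup of a cyclic quotient singularity $P\in V'$ of type $\frac1r(1,a,r-a)$, with $r>1$ and $\gcd(a,r)=1$. Split each count according to whether the center on $V'$ equals $P$ or not. Since $g$ is an isomorphism over $V'\setminus\{P\}$, the divisors with center different from $P$ are counted identically by $\dif(V)$ and $\dif(V')$; denote their number by $d_0$. By Example~\ref{ex:diff}\,\ref{ex:diff2}, there are exactly $r-1$ divisors over $V'$ with center $P$ and discrepancy $<1$, with discrepancies $\tfrac1r,\dots,\tfrac{r-1}r$, so $\dif(V')=d_0+(r-1)$. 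The one of discrepancy $\tfrac1r$ is $E$ (the defining property of the Kawamata blowup, cf.\ Theorem~\ref{thm:Kblup}); on $V$ it is a genuine divisor, hence not counted in $\dif(V)$. The remaining $r-2$ stay exceptional over $V$, and Lemma~\ref{lemma:div-cnr:d} — with strict inequality, since their center $P$ is a singular point of $V'$ — shows their discrepancy over $V$ is still $<1$, so they are counted in $\dif(V)$. Finally, any divisor over $V$ with discrepancy $<1$ has center at a singular point of $V$ (over the smooth locus of a threefold an exceptional divisor has discrepancy $\ge 1$), and $V$, being terminal, has only finitely many singular points; thus $\dif(V)$ is the sum of local contributions at the singular points of $V$. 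Away from $E$ this contribution is $d_0$; along $E$, the explicit structure of the Kawamata blowup (Theorem~\ref{thm:w-blowup}; \cite{Kawamata:Div-contr,Hayakawa:blowup1}) shows $V$ has at most two singular points, cyclic quotients of indices $a$ and $r-a$, whose contributions are $a-1$ and $r-a-1$ by Example~\ref{ex:diff}\,\ref{ex:diff2}. Hence $\dif(V)=d_0+(a-1)+(r-a-1)=\dif(V')-1$.

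The main obstacle is this last input: identifying the singularities produced by the Kawamata blowup of $\frac1r(1,a,r-a)$ as cyclic quotients of indices $a$ and $r-a$ (so that their difficulties add up to $r-2$). Everything else is bookkeeping with Lemma~\ref{lemma:div-cnr:d}, the isolatedness of terminal threefold singularities, and Example~\ref{ex:diff}\,\ref{ex:diff2}.
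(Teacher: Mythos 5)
Your proof is correct and follows the same route the paper intends: its own proof is the one-line citation of Lemma~\ref{lemma:div-cnr:d} (for the inequality, via the injection of the divisors counted by $\dif(V')$, minus $E$, into those counted by $\dif(V)$) and Theorem~\ref{thm:Kblup} (for the equality). You simply make explicit the details the paper leaves implicit, in particular the chart computation showing that the Kawamata blowup of $\frac1r(1,a,r-a)$ has exactly two cyclic quotient points of indices $a$ and $r-a$ — a fact the paper itself uses later in \eqref{toric-2-singularities}.
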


\begin{proof}
This is a consequence of   Lemma~\ref{lemma:div-cnr:d} and Theorem~\ref{thm:Kblup}.
\end{proof}

The following easy fact will be used frequently below.
\begin{slemma}
\label{lemma:computation-K.C-2}
Let $p:\tilde X\to X$ be a Kawamata blowup of $P\in X$ and let $E$ be the exceptional divisor.
Let $C\subset X$ be a complete curve and let $\tilde C\subset \tilde X$
be its proper transform. Then
\begin{equation}
\label{equation-computation-K.C-2}
K_{\tilde{X}}\cdot \tilde C=K_X\cdot C+\textstyle{\frac1{r}}E\cdot \tilde C.
\end{equation}
\end{slemma}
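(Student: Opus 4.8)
The plan is to compute $K_{\tilde X}\cdot\tilde C$ by pulling $\tilde C$ down to $X$ via the projection formula, using the ramification formula for the Kawamata blowup. First I would recall that, since $p\colon\tilde X\to X$ is a Kawamata blowup of the index-$r$ point $P$ with exceptional divisor $E$, the discrepancy of $E$ is $1/r$, so
\[
K_{\tilde X}=p^*K_X+\tfrac1r E .
\]
Here $K_X$ is only $\QQ$-Cartier, so $p^*K_X$ is understood as the $\QQ$-divisor making this identity hold; it is legitimate to intersect it with the curve $\tilde C$. Intersecting both sides with $\tilde C$ gives
\[
K_{\tilde X}\cdot\tilde C=(p^*K_X)\cdot\tilde C+\tfrac1r E\cdot\tilde C .
\]

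The remaining point is to identify $(p^*K_X)\cdot\tilde C$ with $K_X\cdot C$. Since $C$ is a complete (hence proper) curve and $p$ maps $\tilde C$ birationally onto $C$ — because $\tilde C$ is by definition the proper transform, so $p|_{\tilde C}\colon\tilde C\to C$ has degree $1$ — the projection formula yields
\[
(p^*K_X)\cdot\tilde C=K_X\cdot(p_*\tilde C)=K_X\cdot C .
\]
Substituting this into the previous display gives exactly \eqref{equation-computation-K.C-2}. One should note that $C$ need not pass through $P$; if it does not, then $\tilde C$ is disjoint from $E$, both correction terms vanish, and the formula reduces to the trivial identity $K_{\tilde X}\cdot\tilde C=K_X\cdot C$, consistent with $p$ being an isomorphism near $C$.

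The only mild subtlety — and the place where a careless argument could go wrong — is the use of $\QQ$-Cartier pullback and the projection formula on the possibly singular threefold $\tilde X$: one must make sure $\tilde C$ meets neither the non-$\QQ$-Cartier locus in a way that obstructs the intersection numbers (it does not, since terminal singularities are isolated and $p^*K_X$, $E$, $K_{\tilde X}$ are all $\QQ$-Cartier as $\tilde X$ is $\QQ$-factorial) nor that $p_*$ of a one-cycle behaves as expected (it does, by properness of $C$). Granting these standard facts, the computation is immediate.
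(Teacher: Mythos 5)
Your proof is correct and is exactly the standard argument the paper has in mind (it omits the proof, calling this an ``easy fact''): write $K_{\tilde X}=p^*K_X+\frac1r E$ using that a Kawamata blowup has discrepancy $1/r$, intersect with $\tilde C$, and apply the projection formula $(p^*K_X)\cdot\tilde C=K_X\cdot p_*\tilde C=K_X\cdot C$. Your side remarks on $\QQ$-Cartierness and the degree-one map $\tilde C\to C$ are accurate and suffice to justify the two steps.
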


\subsection{Flips}
\begin{stheorem}[{\cite[(2.13.3]{Shokurov:non-van}}]
\label{thm:flips}
If $\chi: V \dashrightarrow V^{+}$ is a threefold terminal flip, then
for any divisor $E$ over $V$ with center on the flipping locus one has 
\begin{equation}
\label{eg:discr:flip}
a(E,V )>a(E,V^{+}).
\end{equation} 
\end{stheorem}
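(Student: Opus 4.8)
The plan is to deduce the inequality from the negativity lemma on a common resolution, after fixing the orientation of the flip that is consistent with the asserted inequality. First I record the geometry: the flip $\chi\colon V\dashrightarrow V^{+}$ sits in a diagram $V\xrightarrow{f}W\xleftarrow{f^{+}}V^{+}$ in which $f$ and $f^{+}$ are small birational contractions onto a common normal base $W$, isomorphisms away from the flipping locus; in the orientation relevant here $K_{V}$ is $f$-ample while $-K_{V^{+}}$ is $f^{+}$-ample, which is exactly the orientation forced by the inequality $a(E,V)>a(E,V^{+})$. Note that Lemma~\ref{lemma:div-cnr:d} does not apply directly, because $f$ is small and $K_{W}$ need not be $\QQ$-Cartier. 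So I choose a common resolution $h\colon W'\to W$ with $h=f\circ g=f^{+}\circ g^{+}$, enlarging it if necessary so that the given divisor $E$ appears as one of its prime exceptional divisors $E_{i}$, and write
\[
K_{W'}=g^{*}K_{V}+\sum_{i}a(E_{i},V)\,E_{i},\qquad
K_{W'}=(g^{+})^{*}K_{V^{+}}+\sum_{i}a(E_{i},V^{+})\,E_{i}.
\]
Subtracting produces the $h$-exceptional divisor
\[
\Theta:=g^{*}K_{V}-(g^{+})^{*}K_{V^{+}}
=\sum_{i}\bigl(a(E_{i},V^{+})-a(E_{i},V)\bigr)E_{i},
\]
so the theorem is the assertion that the coefficient of $\Theta$ is strictly negative for every $E_{i}$ centered on the flipping locus.

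Next I pin down the sign. Since $K_{V}$ is $f$-nef, its pullback $g^{*}K_{V}$ is $h$-nef; since $-K_{V^{+}}$ is $f^{+}$-nef, $-(g^{+})^{*}K_{V^{+}}$ is $h$-nef; hence $\Theta$ is $h$-nef as a sum of two $h$-nef divisors. Moreover $h_{*}\Theta=f_{*}K_{V}-f^{+}_{*}K_{V^{+}}=0$, because $f$ and $f^{+}$ are isomorphisms in codimension one and so both push $K$ forward to the same Weil divisor $K_{W}$. Applying the negativity lemma to $-\Theta$, whose negative $\Theta$ is $h$-nef and whose push-forward is $0$, yields $-\Theta\ge 0$, i.e.\ $\Theta\le 0$. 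This already gives the non-strict inequality $a(E_{i},V)\ge a(E_{i},V^{+})$ for all $i$, with equality whenever $E_{i}$ is not centered over the image $B:=f(\operatorname{Exc}(f))\subset W$ of the flipping locus.

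The remaining and most delicate point is strictness on the flipping locus. Here I use that $-\Theta\ge 0$ is effective together with the second assertion of the negativity lemma: for each $w\in W$ the fibre $h^{-1}(w)$ is either contained in $\Supp(-\Theta)$ or disjoint from it. Fix $w\in B$ and a curve $\gamma\subset h^{-1}(w)$ not contracted by $g^{+}$, so that $g^{+}_{*}\gamma$ is a flipping curve with $K_{V^{+}}\cdot g^{+}_{*}\gamma<0$, while $K_{V}\cdot g_{*}\gamma\ge 0$ since $g_{*}\gamma$ is contracted by $f$ and $K_{V}$ is $f$-nef. Then
\[
(-\Theta)\cdot\gamma=K_{V^{+}}\cdot g^{+}_{*}\gamma-K_{V}\cdot g_{*}\gamma<0,
\]
which forces $\gamma\subset\Supp(-\Theta)$; by the full-fibre alternative the whole fibre $h^{-1}(w)$, and hence $h^{-1}(B)$, lies in $\Supp(-\Theta)$. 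Finally, if $E=E_{i}$ has center on the flipping locus then $h(E_{i})=f(g(E_{i}))\subseteq B$, so $E_{i}\subseteq h^{-1}(B)\subseteq\Supp(-\Theta)$ and its coefficient $a(E_{i},V)-a(E_{i},V^{+})$ is strictly positive. This is exactly $a(E,V)>a(E,V^{+})$. I expect the verification that the flipping curves lie in $\Supp(-\Theta)$, and the correct invocation of the full-fibre property, to be the step requiring the most care.
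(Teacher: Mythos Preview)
The paper does not supply its own proof of this statement; it is quoted from \cite{Shokurov:non-van} as a known result, so there is nothing in the paper to compare your argument against. Your proof via the negativity lemma on a common resolution is the standard one and is carried out correctly.

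That said, you should recognize that the inequality as printed is almost certainly a typo. With the standard orientation of a terminal flip ($-K_V$ is $f$-ample on the flipping side, $K_{V^+}$ is $f^+$-ample on the flipped side) the correct conclusion is $a(E,V)<a(E,V^+)$, i.e.\ discrepancies strictly increase. You dealt with this by silently reversing the convention, taking $K_V$ to be $f$-ample and $-K_{V^+}$ to be $f^+$-ample, which makes your computation internally consistent but conflicts with how the result is actually used in the paper. For example, the proof of Corollary~\ref{cor:flips:irr} starts from divisors with $a(E,V^+)=1$ (so not counted in $\dif(V^+)$) and invokes Theorem~\ref{thm:flips} to conclude they are counted in $\dif(V)$; this requires $a(E,V)<1$, hence the standard inequality $a(E,V)<a(E,V^+)$ and the standard orientation. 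The honest fix is not to redefine ``flip'' but to correct the sign in \eqref{eg:discr:flip}; your negativity-lemma argument then runs verbatim with all signs reversed and yields the intended statement.
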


\begin{scorollary}[{\cite[Corollary 2.16]{Shokurov:non-van}}]
\label{cor:flips}
Let $\chi: V \dashrightarrow V^{+}$ be a threefold terminal flip. Then
\[
\dif(V )>\dif(V^{+} ).
\]
\end{scorollary}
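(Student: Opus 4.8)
The plan is to derive this directly from Theorem~\ref{thm:flips} together with the standard fact that a threefold terminal flip $\chi: V \dashrightarrow V^{+}$ is an isomorphism outside a one-dimensional flipping locus $C \subset V$ (and its flipped counterpart $C^{+} \subset V^{+}$), so that $\chi$ induces a bijection between divisors over $V$ with center \emph{not} contained in $C$ and divisors over $V^{+}$ with center not contained in $C^{+}$, and this bijection preserves discrepancies. First I would partition the exceptional divisors over $V$ (resp. over $V^{+}$) into two sets according to whether their center meets the flipping locus.

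\emph{Step 1.} For divisors $E$ over $V$ whose center avoids $C$: since $\chi$ is an isomorphism in a neighbourhood of the generic point of the center, such an $E$ is also a divisor over $V^{+}$ with the same discrepancy, and conversely. Hence these divisors contribute equally to $\dif(V)$ and $\dif(V^{+})$, and I would simply cancel them.

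\emph{Step 2.} For divisors with center on the flipping locus: by Theorem~\ref{thm:flips}, every divisor $E$ over $V$ with center on $C$ satisfies $a(E,V) > a(E,V^{+})$; in particular, if $a(E,V) < 1$ then also $a(E,V^{+}) < 1$, so each such $E$ that is counted in $\dif(V^{+})$ is also counted in $\dif(V)$ — wait, I want the inequality in the stated direction, so let me instead argue as follows. Let $\NNN$ be the set of exceptional divisors with center on $C^{+}$ that have discrepancy $< 1$ over $V^{+}$; these are exactly the divisors over $V^{+}$ counted in $\dif(V^{+})$ beyond those already accounted for in Step~1. For each $E \in \NNN$, Theorem~\ref{thm:flips} gives $a(E,V) > a(E, V^{+})$, but to conclude $a(E,V)<1$ I would instead invoke Lemma~\ref{lemma:div-cnr:d}-type reasoning on the flip's two birational contractions (the flipping contraction $V \to \bar V$ and the flipped contraction $V^{+} \to \bar V$), from which $a(E,V) < a(E,V^{+}) < 1$ because $V^{+}$ has terminal singularities and $E$'s center on $\bar V$ is the point, so the discrepancy strictly increases as one passes from $V$ to $V^{+}$. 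Thus $\NNN$ injects into the set of divisors counted in $\dif(V)$ with center on $C$, giving $\dif(V) \ge \dif(V^{+})$.

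\emph{Step 3 (strictness).} To upgrade to a strict inequality, I would exhibit at least one exceptional divisor over $V$ with center on $C$, discrepancy $< 1$, and discrepancy $\ge 1$ (hence not counted) over $V^{+}$; the natural candidate is a divisor realizing the minimal discrepancy of some terminal point lying on $C$ (Examples~\ref{ex:diff}), or more robustly a divisor $E_0$ obtained by blowing up a smooth point of $C$ inside $V$, which has $a(E_0,V)$ close to $0$ hence $<1$, while its image has moved off the flipped locus so that Theorem~\ref{thm:flips}'s strict increase pushes $a(E_0,V^{+})$ up; one must check it lands at or above $1$. The main obstacle is precisely this step: making the strictness argument clean rather than ad hoc. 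The cleanest route is to note that the flip changes at least one valuation's discrepancy strictly (by Theorem~\ref{thm:flips}) and that discrepancies over terminal threefolds take values in a discrete set bounded below by the index reciprocals, so a strict increase at the ``minimal'' valuation forces it to cross the threshold $1$; this is essentially Shokurov's original argument in \cite[Corollary 2.16]{Shokurov:non-van}, and I would reproduce its short form.
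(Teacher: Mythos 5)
The paper offers no proof of this corollary---it is quoted directly from Shokurov---so your proposal can only be judged on its own merits. Its skeleton (cancel the divisors whose centers avoid the flipping locus, use strict monotonicity of discrepancies to inject the remaining contributors to $\dif(V^{+})$ into those of $\dif(V)$, then exhibit one extra divisor counted only on the $V$-side) is indeed Shokurov's argument and the right one. But your justification in Step~2 is defective: Lemma~\ref{lemma:div-cnr:d} requires the target of the birational contraction to be $\QQ$-Gorenstein with terminal singularities, and the midpoint $\bar V$ of a flip is precisely \emph{not} $\QQ$-Gorenstein, so you cannot factor the comparison through $V\to\bar V\leftarrow V^{+}$ that way. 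The monotonicity you need, namely $a(E,V)<a(E,V^{+})$ for $E$ with center in the flipping locus, is exactly the content of Theorem~\ref{thm:flips} and is proved by the negativity lemma applied to $p^*K_V-q^*K_{V^{+}}$ on a common resolution; the inequality as printed there, $a(E,V)>a(E,V^{+})$, must be a typo, since only the reverse direction is compatible with the present corollary and with its use in the proof of Corollary~\ref{cor:flips:irr}. Granting that, Step~2 does give $\dif(V)\ge\dif(V^{+})$.

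The genuine gap is Step~3. Blowing up a smooth point of $C$ gives discrepancy $2$, and blowing up $C$ itself at its generic point gives discrepancy $1$; neither is $<1$ over $V$, so neither of your candidates is even counted in $\dif(V)$. Your fallback heuristic---that a strict increase of the minimal valuation must cross the threshold $1$---is false as stated: a discrepancy equal to $1/2$ over $V$ can strictly increase to $3/4$ over $V^{+}$ without crossing $1$, and discreteness of the value set does not help. The correct witness lives on the other side of the flip: let $E_0$ be the exceptional divisor of the blowup of a component of the \emph{flipped} curve $C^{+}\subset V^{+}$. Since $V^{+}$ is terminal, its singular locus is finite, so $V^{+}$ is smooth at the generic point of that component and $a(E_0,V^{+})=1$; hence $E_0$ is not counted in $\dif(V^{+})$. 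Its center on $V$ is contained in the flipping locus, so by the (correctly oriented) Theorem~\ref{thm:flips} one has $a(E_0,V)<a(E_0,V^{+})=1$, and $E_0$ is counted in $\dif(V)$. This is exactly the divisor the paper itself invokes in the proof of Corollary~\ref{cor:flips:irr}, and with it the strict inequality follows.
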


\begin{scorollary}
\label{cor:flips:irr}
Let $\chi: (V\supset L)\dashrightarrow (V^+\supset L^+)$ be a threefold terminal flip.
If~$\dif(V)-\dif(V^+)=1$, then the flipped curve $L^+$ is irreducible and $V^{+}$ has no Gorenstein singular points 
on $L^+$.
\end{scorollary}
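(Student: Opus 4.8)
The plan is to refine the count behind Corollary~\ref{cor:flips}: I shall attach a separate exceptional divisor to every irreducible component of $L^{+}$ and to every Gorenstein singular point of $V^{+}$ lying on $L^{+}$, and show that each of these forces a further unit of drop of Shokurov's difficulty. Since by hypothesis $\dif(V)-\dif(V^{+})=1$, this will leave room for only one component and no Gorenstein point.

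First I set up the bookkeeping. Call a divisorial valuation $E$ of $V$ \emph{relevant} if its center on $V$ is contained in $L$; because $\chi$ restricts to an isomorphism between $V\setminus L$ and $V^{+}\setminus L^{+}$, this is equivalent to the center on $V^{+}$ being contained in $L^{+}$, and a non-relevant $E$ has $a(E,V)=a(E,V^{+})$. A divisor of discrepancy $<1$ over a terminal threefold has center a point of index $>1$: its center cannot be a curve, since terminal threefolds are smooth in codimension $2$, nor a Gorenstein point, by Examples~\ref{ex:diff}. Hence, writing $R_{V}:=\#\{E\text{ relevant}:a(E,V)<1\}$ and likewise for $V^{+}$, the divisors of discrepancy $<1$ that are not relevant contribute equally to $\dif(V)$ and $\dif(V^{+})$, so
\[
\dif(V)-\dif(V^{+})=R_{V}-R_{V^{+}} .
\]
By Theorem~\ref{thm:flips}, $a(E,V)<a(E,V^{+})$ for every relevant $E$; in particular each valuation counted by $R_{V^{+}}$ is also counted by $R_{V}$.

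Now I produce the extra contributions to $R_{V}$. For an irreducible component $L^{+}_{i}$ of $L^{+}$ there is a divisor $E_{i}$ over $V^{+}$ with center $L^{+}_{i}$ and $a(E_{i},V^{+})=1$ — for instance one extracted by blowing up $L^{+}_{i}$, which makes sense because $V^{+}$ is smooth at the generic point of $L^{+}_{i}$ (terminal threefold singularities being isolated). Since $L^{+}_{i}\subset L^{+}$, the divisor $E_{i}$ is relevant, so $a(E_{i},V)<a(E_{i},V^{+})=1$; thus $E_{i}$ is counted by $R_{V}$ but, having discrepancy $1$ over $V^{+}$, not by $R_{V^{+}}$, and distinct components yield distinct $E_{i}$. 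For a Gorenstein singular point $Q_{j}\in L^{+}$ of $V^{+}$ — necessarily a compound Du Val point — there is a divisor $F_{j}$ over $V^{+}$ with center $Q_{j}$ and $a(F_{j},V^{+})=1$; then $a(F_{j},V)<1$, so $F_{j}$ is counted by $R_{V}$ but not by $R_{V^{+}}$, it differs from every $E_{i}$ (its center on $V^{+}$ is a point, not a curve), and distinct points yield distinct $F_{j}$. The valuations counted by $R_{V^{+}}$, the $E_{i}$, and the $F_{j}$ are therefore pairwise distinct contributors to $R_{V}$, whence $\dif(V)-\dif(V^{+})=R_{V}-R_{V^{+}}\ge c+c'$, where $c$ is the number of irreducible components of $L^{+}$ and $c'$ the number of Gorenstein singular points of $V^{+}$ on $L^{+}$. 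Since $L^{+}\neq\emptyset$ we have $c\ge 1$, so $\dif(V)-\dif(V^{+})\ge 1$ with equality only if $c=1$ and $c'=0$; that is, $L^{+}$ is irreducible and $V^{+}$ has no Gorenstein singular point on $L^{+}$.

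The step I expect to be the main obstacle is the construction of the divisors $F_{j}$: one must know that the minimal discrepancy of an exceptional divisor centered at a three-dimensional Gorenstein terminal (compound Du Val) point is exactly $1$ and not larger. This can be extracted from the classification of such points in \cite{Mori:term-sing}, \cite{Reid:YPG}, e.g.\ by passing to a general hyperplane section, which is a Du Val surface singularity. Granted this, the strict inequality of Theorem~\ref{thm:flips} does the rest, pushing the discrepancies of the $E_{i}$ and $F_{j}$ strictly below $1$ over $V$, so that they are genuinely new contributors to the difficulty not already counted by $R_{V^{+}}$.
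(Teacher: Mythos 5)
Your proof is correct and follows essentially the same route as the paper's: both attach a discrepancy-one divisor over $V^{+}$ to each irreducible component of $L^{+}$ and to each Gorenstein (cDV) singular point on $L^{+}$ via Markushevich's theorem, push these discrepancies strictly below $1$ over $V$ by the monotonicity in Theorem~\ref{thm:flips}, and conclude that $\dif(V)-\dif(V^{+})$ dominates the resulting count. Your write-up is merely a more explicit version of the paper's short argument; note that the inequality you invoke, $a(E,V)<a(E,V^{+})$, is the direction actually needed here and the one consistent with Corollary~\ref{cor:flips}, whereas the displayed inequality in the statement of Theorem~\ref{thm:flips} appears with the opposite sign.
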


\begin{proof}
Assume that $V^{+}$ is singular at some point $P^{+}$. 
Then there are at least two divisors over $V^{+}$ of discrepancy $1$.
These are exceptional divisors of blowup of a component of $L^+$ and 
a divisor over $P^{+}$ \cite{Markushevich96:discr}. Therefore, by Theorem~\ref{thm:flips} 
we have $\dif(V)-\dif(V^+)\ge 2$, a contradiction.
The same arguments work if the flipped curve is reducible.
\end{proof}

\begin{stheorem}[{\cite[Theorem~13.5]{KM:92}}]
\label{thm:flips:comp}
Let $\chi: (V\supset L)\dashrightarrow (V^+\supset L^+)$ be a threefold terminal flip, where
let $L$ \textup(resp. $L^{+}$\textup) is the flipping \textup(resp. flipped\textup) curve. Then
the number of irreducible components of~$L^{+}$ is less than or equal to those of~$L$.
\end{stheorem}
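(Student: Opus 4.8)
The plan is to cut everything down with the general elephant and reduce the statement to counting exceptional curves of a crepant partial resolution of a Du Val surface. I will work with the analytic germ at the common image point, so that $\chi$ sits in a diagram $V\to W\leftarrow V^{+}$ where $g\colon V\to W$ is the flipping contraction ($-K_{V}$ is $g$-ample, $\uprho(V/W)=1$, $\operatorname{Exc}(g)=L$), $g^{+}\colon V^{+}\to W$ is the flip ($K_{V^{+}}$ is $g^{+}$-ample, $\operatorname{Exc}(g^{+})=L^{+}$), and both maps are isomorphisms over $W\setminus\{P\}$ with $P:=g(L)$. Write $L=L_{1}\cup\dots\cup L_{m}$ and $L^{+}=L^{+}_{1}\cup\dots\cup L^{+}_{n}$; we must prove $n\le m$.

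Take $D\in|{-}K_{V}|$ general. By the general elephant theorem for three-dimensional extremal neighbourhoods \cite{KM:92}, $D$ is a normal surface with only Du Val singularities; since $-K_{V}$ is $g$-ample, $D\not\supseteq L$, so $\Sigma:=D\cap L$ is finite — indeed $\Sigma$ is exactly the set of non-Gorenstein points of $V$ on $L$ (a general member of $|{-}K_{V}|$ misses the Gorenstein locus of $V$ but contains each non-Gorenstein point), and $\Sigma\neq\varnothing$. Let $D^{+}:=\chi_{*}D$ be the proper transform. Since $\chi$ is an isomorphism in codimension one, $D^{+}$ is again a general member of $|{-}K_{V^{+}}|$, hence, by the general elephant theorem applied to the flipped side, a normal Du Val surface. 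Here the one asymmetry between the two sides enters: $-K_{V^{+}}$ is $g^{+}$-\emph{anti}-ample, so $D^{+}\cdot L^{+}_{j}=-K_{V^{+}}\cdot L^{+}_{j}<0$ for every $j$; consequently $L^{+}\subseteq D^{+}$, and moreover
\[
K_{D^{+}}\cdot L^{+}_{j}=(K_{V^{+}}+D^{+})\cdot L^{+}_{j}=K_{V^{+}}\cdot L^{+}_{j}-K_{V^{+}}\cdot L^{+}_{j}=0,\qquad 1\le j\le n.
\]

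Now set $\bar D:=g(D)=g^{+}(D^{+})$. The morphism $g|_{D}$ contracts only the finite set $\Sigma$, so it is finite birational and $D$ is the normalization of $\bar D$; since $D^{+}$ is normal, $g^{+}|_{D^{+}}$ factors through a birational morphism $\eta\colon D^{+}\to D$ with $\operatorname{Exc}(\eta)=L^{+}$. Writing $K_{D^{+}}=\eta^{*}K_{D}+\sum_{j}c_{j}L^{+}_{j}$ and intersecting with $L^{+}_{j}$: since $\eta^{*}K_{D}\cdot L^{+}_{j}=0$ (projection formula) and the left-hand side vanishes by the previous display, we get $\sum_{i}c_{i}(L^{+}_{i}\cdot L^{+}_{j})=0$ for all $j$; the intersection matrix $(L^{+}_{i}\cdot L^{+}_{j})$ being negative definite, all $c_{j}=0$, i.e.\ $\eta$ is crepant. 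Hence $\eta$ contracts $L^{+}$ onto the Du Val points $Q_{1},\dots,Q_{k}$ of $D$ lying on $L$, and
\[
n=\sum_{l=1}^{k}\#\bigl\{\text{components of }\eta^{-1}(Q_{l})\bigr\}\ \le\ \sum_{l=1}^{k}\#\bigl\{\text{vertices of the Dynkin diagram of }(D,Q_{l})\bigr\}.
\]
What remains is to bound $n$ by $m$, and this is the crux. At each $Q_{l}$ — a non-Gorenstein point of $V$ on $L$ — the Du Val type of $(D,Q_{l})$, the number of branches of $L$ at $Q_{l}$, and the sub-configuration of that Dynkin diagram actually extracted by $\eta$ (equivalently, the singularities of $V^{+}$ above $Q_{l}$ together with their own elephants) are all governed by the classification of terminal threefold singularities with a Du Val general elephant and by the explicit description of flips of extremal neighbourhoods \cite{KM:92}. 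Running through this finite list, one checks in each case that $\#\{\text{components of }\eta^{-1}(Q_{l})\}$ does not exceed the number of branches of $L$ through $Q_{l}$; summing over $l$ gives $n\le m$. Everything before this local verification is formal — it is the case analysis itself that carries the content and constitutes the main obstacle.
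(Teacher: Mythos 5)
First, a point of comparison: the paper does not prove this statement at all --- it is quoted directly from \cite[Theorem~13.5]{KM:92} --- so your argument can only be judged on its own merits. Judged so, it has a fatal flaw at the very first step. The claim that $g$-ampleness of $-K_V$ forces $D\not\supseteq L$ is false: relative ampleness gives $D\cdot L_i>0$, which is entirely compatible with $L_i\subset D$, and for flipping germs of types \typeb{IC}, \typeb{IIB}, \typeb{k2A}, \typeb{k3A}, \typeb{kAD} the general elephant \emph{does} contain the flipping curve. The paper itself runs exactly this dichotomy in the proof of Lemma~\ref{lemma:w-blowup} (``Thus we may assume that $D\supset C$\dots''), and \typeb{k2A} is the most abundant flipping type. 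So your reduction --- ``$g|_D$ is finite birational, $D$ is the normalization of $\bar D$, hence $g^+|_{D^+}$ factors through $\eta\colon D^+\to D$'' --- breaks down precisely in the cases that carry the weight of the theorem: once $D\supset L_i$, the map $g|_D$ contracts curves and the normalization argument is unavailable. The workable version of your idea (and, in essence, what Koll\'ar--Mori do) is to compare $D\to\bar D$ and $D^+\to\bar D$ as two crepant partial resolutions of the same Du Val germ, which is a genuinely different and less asymmetric picture than the one you set up.

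Second, even in the branch where your diagram does exist, the argument does not close. The crux --- that the number of $\eta$-exceptional curves over $Q_l$ is bounded by the number of branches of $L$ through $Q_l$ --- is asserted as a ``finite case check'' and never performed; that check \emph{is} the theorem. And the concluding summation is not valid as stated: it yields $n\le\sum_l b_l$, where $b_l$ is the number of branches of $L$ at $Q_l$, and $\sum_l b_l$ counts incidences between components of $L$ and points of $\Sigma$, not components of $L$. A component carrying two non-Gorenstein points (the \typeb{k2A} situation) makes $\sum_l b_l>m$, so for irreducible $L$ of that type your bound would give only $n\le 2$ rather than $n\le 1$. To make the last line read $n\le m$ you would additionally have to prove that in your branch every component of $L$ meets $\Sigma$ in exactly one point --- another classification input that is nowhere justified. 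As it stands, the proposal is a plausible reduction scheme with a false hypothesis at its base and its essential content deferred.
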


\begin{sproposition}[cf. {\cite[Example~2.3.2]{Kollar1991-flip-flop}}]
\label{prop:flip:smooth}
Let $\chi: (V\supset L)\dashrightarrow (V^+\supset L^+)$ be a threefold terminal flip, where the flipping curve $L$
is irreducible. Assume that $V$ has a 
unique singular point~$P$ on $L$, which is cyclic quotient of index~$r$, and $-K_V\cdot L=(r-1)/r$.
Then $V^+$ is smooth along~$L^+$. 
\end{sproposition}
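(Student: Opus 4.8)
\emph{Overview and Step 1 (difficulty bookkeeping).}
The plan is to combine the monotonicity of Shokurov's difficulty under the flip with an explicit analysis of the Kawamata blowup of $P$; the hypothesis $-K_V\cdot L=(r-1)/r$ is precisely what degenerates the local picture to the simplest possible one (the case $r=2$ is the classical Francia flip). First I would record, by Theorem~\ref{thm:flips:comp}, that $L^+$ is irreducible. Since $P$ is the unique singular point of $V$ on $L$ and $\chi$ is an isomorphism off the flipping locus, the count of divisors over a cyclic quotient point recalled in Example~\ref{ex:diff} shows that the exceptional divisors over $V$ of discrepancy $<1$ whose center meets $L$ are exactly the $r-1$ divisors over $P$; moreover a divisor has center contained in $L$ over $V$ if and only if it has center contained in $L^+$ over $V^+$. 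Writing $n^+$ for the number of exceptional divisors over $V^+$ of discrepancy $<1$ with center on $L^+$ — equivalently, the total difficulty-contribution of the non-Gorenstein points of $V^+$ on $L^+$ — one gets
\[
\dif(V)-\dif(V^+)=(r-1)-n^+ ,
\]
so $n^+\le r-2$ by Corollary~\ref{cor:flips}. Hence it will suffice to prove that $V^+$ has \emph{no} singular point on $L^+$: the non-Gorenstein ones are governed by $n^+$, and whenever the difficulty drops by exactly one the Gorenstein ones are excluded by Corollary~\ref{cor:flips:irr}.

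\emph{Step 2 (the Kawamata blowup).}
Let $p\colon\tilde V\to V$ be the Kawamata blowup of $P$ (Theorem~\ref{thm:w-blowup}); in a suitable terminal presentation $P\in V\cong\CC^3/\mumu_r(1,a,r-a)$ it is the weighted blowup of weight $\tfrac1r(1,a,r-a)$, so that $E\cong\PP(1,a,r-a)$, $\dif(\tilde V)=\dif(V)-1$ by Corollary~\ref{cor:Kblup:di}, and $\Sing(\tilde V)\cap E$ consists of at most two cyclic quotient points, lying at torus-fixed points of $E$ other than the weight-$1$ vertex. With $\tilde L$ the proper transform of $L$, Lemma~\ref{lemma:computation-K.C-2} gives
\[
K_{\tilde V}\cdot\tilde L=-\tfrac{r-1}{r}+\tfrac1r\,E\cdot\tilde L .
\]
The crux is the local claim that the equality $-K_V\cdot L=(r-1)/r$ forces the tangent direction of $L$ at $P$ to be the weight-$1$ axis, so that $\tilde L$ is a smooth rational curve meeting $E$ transversally at a smooth point of $\tilde V$; then $E\cdot\tilde L=1$ and $K_{\tilde V}\cdot\tilde L=-(r-2)/r\le 0$, with equality if and only if $r=2$. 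In particular $\tilde V$ is smooth along $\tilde L$.

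\emph{Step 3 (conclusion).}
Let $f\colon V\to W$ be the flipping contraction. The composite $\tilde V\to V\dashrightarrow V^+$ is a birational contraction whose only contracted divisor is $E$, and running the relative minimal model program (the two-ray game on $\tilde V$ over $W$, whose other extremal contraction is $p$) realizes it as a sequence of flips and flops of curves lying over the central fibre — beginning with the $K_{\tilde V}$-non-positive curve $\tilde L$ — followed by a single divisorial contraction $\hat V\to V^+$ of the proper transform of $E$. I would then propagate the conclusion of Step~2 along this chain: since $\tilde V$ is smooth along $\tilde L$ with $E\cdot\tilde L=1$, each intermediate flipped (or flopped) curve stays in the smooth locus — which is exactly what the bounds of Step~1 ($n^+\le r-2$, together with Corollary~\ref{cor:flips:irr}) permit — and the terminal divisorial contraction $\hat V\to V^+$ is, under the identifications made along the way, the reverse of a weighted blowup of a \emph{smooth} point of $V^+$ lying on $L^+$. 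Therefore $V^+$ is smooth along $L^+$.

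The hard part is the local claim in Step~2: one must bring in the classification of terminal cyclic quotient threefold singularities and the explicit structure of weighted (Kawamata) blowups to show that the extremal value $(r-1)/r$ of $-K_V\cdot L$ pins down the tangent cone of $L$ at $P$ and forces $\tilde V$ to be smooth along $\tilde L$; the difficulty inequality of Step~1 is the natural device for ruling out stray singular points, since any singular point of $\tilde V$ on $\tilde L$ would survive (or worsen) through the flips of Step~3 and contribute to $n^+$. Once the local picture is secured, Step~3 is a finite sequence of completely explicit elementary transformations monitored throughout by that inequality.
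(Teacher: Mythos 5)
Your route (Kawamata blowup of $P$ followed by a two-ray game over the base of the flipping contraction, monitored by Shokurov's difficulty) is genuinely different from the paper's proof, which instead takes a transversal disk $D$ with $D\cdot L=1$, notes that $(r-1)D\sim -rK_V$ and $D\sim r(D+K_V)$ in $\Clsc(V)$, passes to the cyclic $r$-fold cover $\pi:V'\to V$ branched over $D$, observes that $V'$ is smooth with $K_{V'}\cdot L'=0$, flops $L'$ (the flopped threefold is smooth by Koll\'ar's theorem on flops), and finally checks that the $\mumu_r$-fixed locus on the flopped variety is purely divisorial near the flopped curve, so that the quotient $V^+$ is smooth. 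Unfortunately your version contains not just an unproved step but a claim that is actually false.

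The ``crux'' of your Step 2 --- that $E\cdot\tilde L=1$ with $E\cap\tilde L$ a smooth point of $\tilde V$, whence $K_{\tilde V}\cdot\tilde L=-(r-2)/r$ --- cannot hold for any $r\ge 3$. Indeed, every curve of $\tilde V$ contracted over the base $W$ of the flipping contraction is either contained in $E$ (hence $p$-contracted, hence lies on the $p$-ray of $\NE(\tilde V/W)$) or equals $\tilde L$; since $\uprho(\tilde V/W)=2$, the second extremal ray is $\mathbb{R}_{\ge0}[\tilde L]$ and its contraction is small with exceptional locus exactly $\tilde L$. For $r\ge3$ your computation would make this a $K$-negative small contraction whose total space is smooth along the exceptional curve, i.e.\ a Gorenstein flipping germ --- impossible by Proposition~\ref{prop:Gor}\ref{prop:Gor1}. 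Compare the computations in Lemma~\ref{lemma:w-blowup}, where $E\cdot\tilde C$ is typically a proper fraction ($1/2$, $1/a'$, $2/3$, \dots) and $E\cap\tilde C$ is a quotient point of smaller index; the correct local picture thus forces an induction over decreasing indices rather than a one-step reduction to the smooth case. Consequently Step 3 --- where you assert that each intermediate flipped curve stays in the smooth locus and that the final divisorial contraction is the inverse of a blowup of a smooth point of $V^+$ --- is exactly the statement to be proved and is not supplied by your Step 1: the bound $n^+\le r-2$ only says the difficulty drops, not that it drops to zero, and Corollary~\ref{cor:flips:irr} is available only when the drop is exactly one. The covering trick is precisely what lets the paper bypass all of this.
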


\begin{proof}
We regard $(V\supset L)$ and $(V^+\supset L^+)$ as analytic germs. Let $D\subset V$ be small disk 
that intersect $L$ transversely at a general point. Thus $D\cdot L=1$ and $(D+K_V)\cdot L=1/r$.
Then we have $\Clsc(V)\simeq \ZZ\cdot (D+K_V)$, where $\Clsc(V)$ is the subgroup of $\Cl(X)$ 
consisting of Weil divisors which are $\QQ$-Cartier, see \cite[Corollary~1.10]{Mori:flip}
or \cite[Corollary~4.7.3]{MP:1pt}. Therefore, $(r-1)D\sim -rK_V$ and $D\sim r(D+K_V)$. Then as in \cite[Proof of Theorem~4.2]{KM:92}
we take a cyclic~$r$-fold cover $\pi: V'\to V$ branched over $D$. 
We have 
\begin{equation}
\label{eq:K-branch}
\textstyle
K_{V'}\sim \pi^*\left(K_V+ \frac{r-1}r D\right)\sim \pi^*\big((r-1)D+rK_V\big)\sim 0.
\end{equation} 
Therefore, $V'$ is Gorenstein. Moreover, $V'$ is smooth because the only singular point of $V$ is a cyclic quotient.
Thus $V=V'/\mumu_r$, where the fixed point locus of $\mumu_r$ consists of the divisor $D':=\pi^{-1}(D)_{\red}$ and 
an isolated point $P':=\pi^{-1}(P)$. 
Hence the preimage $L':=\pi^{-1}(L)_{\red}$ is irreducible, $L'\simeq \PP^1$, and $\pi_L:L'\to L$ 
is a cyclic~$r$-fold cover branched over $D\cap L$ and~$P$. Further, $K_{V'}\cdot L'=0$ by~\eqref{eq:K-branch}. 
Hence $(V',L')$ is a flopping germ. We obtain the following diagram
\[
\xymatrix@C=3em@R=1.3em{
V' \ar@{-->}[r] \ar[d]^{\pi}& V'^+\ar[d]^{\pi^+}
\\
V\ar@{-->}[r] & V^+
} 
\]
where $V' \dashrightarrow V'^+$ is the flop of $L'$ and $\pi^+$ is the quotient morphism by 
the induced action of~$\mumu_r$. By \cite[Theorem~2.4]{Kollar:flops} the variety $V'^+$ is smooth.
Let $D'^+\subset V'^+$ be the proper transform of $D'$. Then $D'^+$ contains the flopped curve $C'^+$ 
and the action of $\mumu_r$ on $D'^+$ is trivial. 
Thus the fixed point locus of $\mumu_r$ on $V'^+$
is of pure codimension $1$ near $C'^+$. In this situation the quotient $V^+=V'^+/\mumu_r$ must be smooth.
\end{proof}

We will need the following elementary fact.
\begin{slemma}[cf. {\cite[Lemma~A.5]{MP:class}}]
\label{lemma:flip}
Let $\psi: X\dashrightarrow X^+$ be a $D$-flip, where $D$ is an arbitrary $\QQ$-Cartier divisor, 
and let $L\subset X$ be a proper curve that meets the flipping locus but 
is not contained in it.
Let $D^+:=\psi_* D$ and $L^+:=\psi_* L$. Then 
\[
D\cdot L>D^{+}\cdot L^+.
\]
\end{slemma}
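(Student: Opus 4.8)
The plan is to reduce the statement to a numerical computation on a common resolution of the flip, following the standard "negativity lemma" pattern already used implicitly in Lemma~\ref{lemma:div-cnr:d}. First I would recall the setup of a $D$-flip: by definition $\psi\colon X\dashrightarrow X^+$ factors as $X\xrightarrow{g} Y \xleftarrow{g^+} X^+$, where $g$ contracts the flipping locus, $-D$ is $g$-ample, and $D^+=\psi_*D$ is $g^+$-ample. Choose a normal variety $W$ dominating both $X$ and $X^+$ with morphisms $q\colon W\to X$ and $q^+\colon W\to X^+$, so that $\tilde L\subset W$ is the proper transform of $L$ (and of $L^+$, since $\psi$ is an isomorphism outside the flipping locus). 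Then $D\cdot L = q^*D\cdot \tilde L$ and $D^+\cdot L^+ = (q^+)^*D^+\cdot \tilde L$, so it suffices to show $\bigl(q^*D - (q^+)^*D^+\bigr)\cdot \tilde L > 0$.

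The key step is to identify the divisor $q^*D-(q^+)^*D^+$. Both $q^*D$ and $(q^+)^*D^+$ push forward under $q$ to $D$ (using $\psi_*D = D^+$ and that $q,q^+$ agree with $\psi$ generically), so the difference $F:=q^*D-(q^+)^*D^+$ is $q$-exceptional, hence supported on the $q$-exceptional locus, which lies over the flipping locus $g^{-1}(\text{pt})\subset X$. I claim $F$ is effective: indeed $-F = (q^+)^*D^+ - q^*D$ is the pullback to $W$ of a divisor that is relatively ample over $Y$ for the contraction $g\circ q = g^+\circ q^+$ on the exceptional fibers — more precisely, on any curve contracted by $W\to Y$, $(q^+)^*D^+$ has nonnegative degree tending to positive (since $D^+$ is $g^+$-ample) while $q^*D$ has nonpositive degree, so $-F$ is nef and big over the relevant fibers; applying the negativity lemma (in the form: a $q$-exceptional divisor whose negative is $q$-nef is effective) gives $F\geq 0$. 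Moreover $F$ is nonzero and its support meets $\tilde L$, because $L$ genuinely passes through the flipping locus where $g$ is not an isomorphism.

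Finally I would argue the strict inequality $F\cdot\tilde L>0$. Since $\tilde L$ is not contained in the $q$-exceptional locus (as $L\not\subset$ flipping locus), $\tilde L$ meets $\Supp F$ properly, so $F\cdot\tilde L\geq 0$, and it remains to rule out equality. Equality would force $\tilde L$ to avoid $\Supp F$ entirely — but $\tilde L$ meets the fiber of $W\to Y$ over the point $g(L\cap\text{flipping locus})$, and on that fiber $F$ is the pullback of $D^+$ restricted to the corresponding fiber of $g^+$ (up to the $q^*D$ term, which is $g\circ q$-trivial on fibers over that point only if $D$ is $g$-trivial there, which it is not since $-D$ is $g$-ample), hence $F$ restricted to that fiber is relatively ample and nonzero, so it must meet $\tilde L$. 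The main obstacle is making this last "equality is impossible" step clean: one wants to avoid delicate positivity arguments on the possibly singular $W$. The cleanest route is probably to invoke Lemma~\ref{lemma:div-cnr:d}-style reasoning directly, or to cite \cite[Lemma~A.5]{MP:class} which the statement explicitly flags as the model; I would mirror that argument, taking $W$ to be a resolution and tracking discrepancies so that the strict inequality comes for free from $D^+$ being $g^+$-ample together with $L^+$ meeting the (nonempty) flipped locus.
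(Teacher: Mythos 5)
Your proposal follows essentially the same route as the paper: pass to a common resolution, observe that $p^*D-q^*D^+$ is an effective exceptional divisor by the negativity lemma (the paper phrases this via the proper transform $\hat D=p^*D-\sum a_iE_i=q^*D^+-\sum a_i^+E_i$ and the resulting inequality $a_i^+\le a_i$), and intersect with the proper transform of $L$. Your extra attention to the strict inequality --- namely that $\Supp(p^*D-q^*D^+)$ is nonzero and contains the whole fiber over the flipping point by the connectedness part of the negativity lemma, hence must meet $\tilde L$ --- is precisely the step the paper's written proof leaves implicit, so your version is, if anything, the more complete one.
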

\begin{proof}
Let 
\[
\xymatrix@R=1em{
&\hat X\ar[dr]^q\ar[dl]_p&
\\
X\ar@{-->}[rr]&&X^+
}
\]
be a common resolution. Let $\hat D\subset \hat X$
and $\hat L\subset \hat X$
be proper transforms of $D$ and $L$, respectively. We can write 
\[
\hat{D}=p^* D-\sum a_i E_i = q^* D^{+}-\sum a_i^+ E_i,
\]
where the $E_i$ are prime divisors. Note that all these divisors are exceptional for both $p$ and $q$. Then
\[
\hat{D}\cdot \hat L=D\cdot L-\sum a_i E_i\cdot \hat L = D^{+}\cdot L^+-\sum a_i^+ E_i\cdot \hat L.
\]
Therefore,
\[
D\cdot L= D^{+}\cdot L^++\sum (a_i-a_i^+) E_i\cdot \hat L.
\]
It remains to note that $a_i^+\le a_i$ by the Negativity Lemma 
because the divisor
\[
q^* D^{+}-p^* D = \sum (a_i^+-a_i) E_i
\]
is nef.
\end{proof}

\begin{sproposition}
\label{prop:flip:2comp}
Let $(V\supset L)\dashrightarrow (V^+\supset L^+)$ be a threefold terminal flip, where the flipping curve $L$
is reducible. Assume that $V$ has a 
unique non-Gorenstein point~$P$ on $L$, which is of index~$3$.
Then $V^+$ is Gorenstein along~$L^+$, the curve~$L$ has exactly two components, and~$P$ is the only singular point of $V$ on $L$.
Furthermore, assume that~$P$ is a cyclic quotient singularity. If $L^+$ is reducible, then 
$V^+$ is smooth along~$L^+$. If $L^+$ is irreducible, then 
$V^+$ has exactly one singular point on~$L^+$.
\end{sproposition}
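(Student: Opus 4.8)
The plan is to exploit Shokurov's difficulty $\dif$ as the main bookkeeping device, together with the structural results collected in the Flips subsection. First I would pin down $\dif(V)$ near $L$. Since $P$ is a terminal point of index $3$ and, by hypothesis, the only non-Gorenstein point on $L$, every exceptional divisor over $V$ with discrepancy $<1$ is centered either at $P$ (contributing at least $2$ by Example~\ref{ex:diff}\eqref{ex:diff1}, and exactly $2$ if $P$ is a cyclic quotient, by Example~\ref{ex:diff}\eqref{ex:diff2}) or at a Gorenstein singular point of $V$ on $L$ (each contributing at least one discrepancy-$1$ divisor — namely a blowup of a curve component through it — but these have discrepancy $=1$, not $<1$, so they do \emph{not} count). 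Hence, near $L$, $\dif(V)\ge 2$, with equality when $P$ is cyclic quotient and $V$ is otherwise smooth along $L$. On the flipped side, $\dif(V^+)\ge 0$ always, and $\dif(V)>\dif(V^+)$ by Corollary~\ref{cor:flips}.

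Next I would analyze $\dif(V^+)$ from the geometry of $L^+$. If $V^+$ had a non-Gorenstein point on $L^+$, it would be of index $r^+\ge 2$ and contribute $\ge r^+-1\ge 1$ to $\dif(V^+)$; combined with a discrepancy-$1$ divisor from a component of $L^+$ this is not quite enough, so the sharper input is needed. The key numerical step: I expect $\dif(V)-\dif(V^+)\le 2$ in this situation — intuitively because the only ``source'' of difficulty being destroyed is the single index-$3$ point, whose contribution is $2$. Granting $\dif(V)=2$ near $L$ (the cyclic-quotient case) forces $\dif(V^+)\le 1$; an index-$\ge 3$ point on $L^+$ would already give $\dif(V^+)\ge 2$, so $V^+$ is Gorenstein along $L^+$ in that case, and more precisely $V^+$ can have at most one Gorenstein node (or a single index-$2$ point, excluded by Gorensteinness) accounting for $\dif(V^+)\le 1$ only through... — here I must be careful, since Gorenstein points contribute $0$ to $\dif$. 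So in fact $\dif(V)=2$ forces $\dif(V^+)\in\{0,1\}$, and $\dif(V^+)=1$ would require a non-Gorenstein point, contradiction; hence $\dif(V^+)=0$, i.e.\ $V^+$ is Gorenstein along $L^+$, and when $L^+$ is reducible Corollary~\ref{cor:flips:irr}-type reasoning (applied with the discrepancy-$1$ divisors over the two components) pushes this to smoothness. For the general (non-cyclic) $P$ I would combine Theorem~\ref{thm:flips:comp} (components don't increase) with the discrepancy inequality \eqref{eg:discr:flip} applied to the discrepancy-$1$ divisors over each component of $L$ and each Gorenstein singular point: this yields $\dif(V)\ge (\text{\# components of }L) + 2$, while $\dif(V^+)\ge \text{\# components of }L^+$, and $\dif(V)-\dif(V^+)$ being bounded forces $\#(\text{components of }L)=2$ and $P$ the unique singular point on $L$.

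For the final two sentences (the cyclic-quotient refinement), when $L^+$ is reducible I would run the covering-trick argument of Proposition~\ref{prop:flip:smooth}: pass to the index-$3$ cyclic cover $\pi\colon V'\to V$ branched over a general disk $D$, observe $K_{V'}\sim 0$ so $V'$ is Gorenstein and in fact smooth, flop the preimage curve $L'$, and descend; when $L^+$ is reducible the two components lift to a $\mumu_3$-orbit configuration whose flop keeps the fixed locus divisorial near the flopped curves, forcing $V^+=V'^+/\mumu_3$ smooth. When $L^+$ is irreducible the descent produces exactly one quotient singularity on $L^+$, since the isolated fixed point $P'$ survives. I expect the main obstacle to be the clean accounting that forbids $\dif(V)-\dif(V^+)\ge 3$ — i.e.\ ruling out the creation of ``extra'' low-discrepancy divisors on $V^+$ beyond what the component count and the Gorenstein-node count allow — and, relatedly, verifying that in the reducible-$L^+$ case the $\mumu_3$-fixed locus on $V'^+$ is genuinely of pure codimension $1$ along every flopped component (not just one), which is what Kollár's smoothness criterion for flops of Gorenstein terminal singularities (\cite{Kollar:flops}) and the structure of $\mumu_3$-actions provide.
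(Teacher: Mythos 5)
Your central device --- Shokurov's difficulty --- cannot by itself prove the first half of the statement, and the key inequality you rely on, $\dif(V)\ge \#\{\text{components of }L\}+2$, is not correct. The difficulty counts only divisors with discrepancy strictly \emph{less} than $1$; the blowup of a component of $L$ at a generic smooth point of $V$ has discrepancy exactly $1$ over $V$ and therefore contributes nothing to $\dif(V)$. The divisors that do enter the accounting of Corollary~\ref{cor:flips:irr} are the blowups of components of the \emph{flipped} curve $L^+$ (discrepancy $1$ over $V^+$, hence $<1$ over $V$), and Theorem~\ref{thm:flips:comp} only gives $\#L^+\le\#L$ --- the wrong direction for bounding the number of components of $L$. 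For the same reason Gorenstein singular points of $V$ on $L$ are invisible to $\dif(V)$, so the assertion that $P$ is the \emph{only} singular point of $V$ on $L$ cannot be extracted from difficulty bookkeeping; and when $P$ is not a cyclic quotient you only know $\dif(V)\ge 2$ by Example~\ref{ex:diff}\ref{ex:diff1} (a point of type \typee{cD}{3}, say, may carry more low-discrepancy divisors), so even the Gorensteinness of $V^+$ along $L^+$ is not forced by your count. The careful accounting you defer does recover the final two sentences in the cyclic-quotient case, but not the first sentence, and not the general case.

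The idea you are missing is the one the paper uses: factor the flip into a chain of flips of \emph{irreducible} curves, $V\dashrightarrow V'\dashrightarrow V''\dashrightarrow\cdots$, which is possible because all components of $L$ pass through $P$. At each step the minimal discrepancy strictly increases, so the index drops $3\to\le 2\to 1$; since a flipping extremal curve germ must have non-Gorenstein total space by \cite[Theorem~6.2]{Mori:flip}, the chain stops after exactly two steps, whence $L$ has exactly two components, and primitivity of $(V\supset L_1)$ together with \cite[Theorem~4.2]{KM:92} pins down the singular points. The cyclic-quotient refinement then follows by applying Corollary~\ref{cor:flips:irr} to each irreducible flip (the difficulty drops $2\to 1\to 0$), not by the covering trick of your last paragraph: Proposition~\ref{prop:flip:smooth} requires $L$ irreducible and the numerical condition $-K_V\cdot L=(r-1)/r$, without which the $r$-fold cover is not a flopping (i.e. $K$-trivial) germ, and neither hypothesis is available in the present situation.
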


\begin{proof}
Let $L_1,\dots,L_n$ be irreducible components of $L$. All of them pass through $P$ (see e.g. \cite[Corollary~6.3.4]{MP:1pt}).
Run the analytic MMP along the components of $L$, that is, on each step we make a flip in an irreducible curve
(cf. \cite[Theorem~13.5]{KM:92}):
\[
V \overset {\chi_1} \dashrightarrow V' \overset {\chi_2}\dashrightarrow V'' \dashrightarrow \cdots
\overset{\chi_{m}}\dashrightarrow V^{(m)} \overset{g} \longrightarrow  V^+,
\]
where $-K_{V^{(m)}}$ is nef and  $g$ is the canonical map to the canonical model.
Let $L_1\subset V$ (resp. $L_1'\subset V'$) be the flipping (resp. flipped) curve of $\chi_1$.
We have $\discr(V')>\discr(V)=1/3$, hence $\Index(V')\le 2$.
For any component $L_i'\subset L'$, $i\neq 1$ we have 
$K_{V'}\cdot L_i'< K_{V}\cdot L_i<0$ by Lemma~\ref{lemma:flip}, hence for $i\neq 1$ each $(V'\supset L_i')$ is a flipping extremal curve germ \cite[Corollary~1.5]{Mori:flip} and $V'$ is not 
Gorenstein \cite[Theorem~6.2]{Mori:flip}. 
Thus $\Index(V')= 2$. Moreover, the germ $(V\supset L_1)$ is primitive \cite[Corollary~A.3.4]{MP:class}, hence so $(V'\supset L_1')$ is. 
This implies that $V'$ has a unique point, say $P'$, of index $2$.
All the components $L_2',\dots,L_n'$ of $L'$ pass through $P'$.
Let $L_2'\subset V'$ (resp. $L_2''\subset V''$) be the flipping (resp. flipped) curve of $\chi_2$.
As above, $\discr(V'')>\discr(V')=1/2$, hence $V''$ is Gorenstein and $m=2$.
Moreover, $V''$ contains no flipping curves, hence $n=2$. It follows from \cite[Theorem~4.2]{KM:92} that
$P'$ is the only singular point of $V'$ lying on $\cup_{i=2}^n L_i'$.
Hence $X$ is smooth outside $L_1$. By symmetry, we obtain that $X$ is smooth outside $\cap_{i=1}^n L_1=\{P\}$.
For the last assertion, we note that  by Corollary~\ref{cor:flips:irr} the varieties $V'$ and $V''$ 
have no Gorenstein singularities if $P\in X$ is a cyclic quotient.
Then $V^+$ is smooth if and only if $g$ is not an isomorphism.
\end{proof}

\section{$\QQ$-conic bundles}
\begin{definition}
\textit{A $\QQ$-conic bundle} is a contraction $f: X\to Z$ of an algebraic threefold to a surface such that
all the fibers are one-dimensional,
the variety~$X$ has only terminal $\QQ$-factorial singularities, the anticanonical divisor $-K_X$ is~$f$-ample, and $\uprho(X/Z)=1$.
In this situation the surface~$Z$ is called the \textit{base} and the threefold~$X$ is called the \textit{total space} of the $\QQ$-conic bundle $f: X\to Z$.
The \emph{discriminant} of a $\QQ$-conic bundle $f: X\to Z$ is the 
curve $\Delta_f\subset Z$ that is the union of the one-dimensional
components of the set 
\[
\{z\in Z \mid \text{~$f$ is not smooth over $z$}\}.
\]
A \textit{standard conic bundle} is a $\QQ$-conic bundle $f: X\to Z$ whose total space is smooth.
In this case the base~$Z$ is also smooth, the discriminant is a normal crossing curve, and the morphism~$f$ is flat
\cite{Sarkisov:82e}.
\end{definition}

In this paper we do not assume that the base of a $\QQ$-conic bundle is a complete surface.
Moreover, typically wee regard $X$ as a neighborhood of a special singular fiber.

\begin{definition}[{\cite{MP:1pt}}]
\label{def:e-germ}
An \emph{extremal curve germ} is the analytic germ $(X\supset C)$ of a threefold~$X$ with terminal singularities along a reduced complete curve $C$
such that there is a 
contraction $f : X\to Z$ with $C = f^{-1}(o)_{\red}$, $o\in Z$ and $-K_X$ is~$f$-ample. 
Furthermore, $(X\supset C)$ is said to be \textit{flipping} if its exceptional locus coincides with $C$, and \textit{divisorial} if its 
exceptional locus is two-dimensional. If~$f$ is not birational, then~$Z$ is a surface and $(X\supset C)$ is called a \textit{$\QQ$-conic bundle germ}.
\end{definition}

\begin{sremarks}
\begin{enumerate}
\item
If $f: X\to Z$ is a $\QQ$-conic bundle, then 
$(X\supset f^{-1}(o)_{\red})$ is a $\QQ$-conic bundle germ for any point $o\in Z$.
\item
Note that in Definition~\ref{def:e-germ} we do not claim that~$X$ is $\QQ$-factorial neither 
that $\uprho(X/Z)=1$. This is because both these properties are not stable under passing from algebraic to analytic category.
\item
Below, considering extremal curve germs we often will replace the germ $(X\supset C)$ with a suitable representative.
\end{enumerate}
\end{sremarks}

In the case where $C$ is irreducible, there is a classification of extremal curve germs, see \cite{KM:92} and \cite{MP:cb1}. Non-Gorenstein ones are divided into several classes: \typeci{k1A}{}, 
\typeci{k2A}{}, 
\typeci{cD/2}{}, \typeci{cAx/2}{}, \typeci{cE/2}{}, \typeci{cD/3}{}, \typeci{IIA}{}, \typeci{II^\vee}{}, \typeci{IE^\vee}{}, \typeci{ID^\vee}{}, \typeci{IC}{}, \typeci{IIB}{}, \typeci{kAD}{}, and 
\typeci{k3A}{}.
In the Gorenstein case extremal curve germs very easy to describe:

\begin{sproposition}
\label{prop:Gor}
Let $(X \supset C)$ be an extremal curve germ such that $X$ is Gorenstein and let $f: (X \supset C)\to (Z\ni o)$ be the corresponding contraction. 
\begin{enumerate}
\item 
\label{prop:Gor1}
Then $(X \supset C)$ is not flipping \cite[Theorem~6.2]{Mori:flip}.
\item 
\label{prop:Gor2}
If $(X \supset C)$ is divisorial, then~$Z$ is smooth and~$f$ is the blowup of a planar c.i. curve $\Gamma\subset Z$ \cite{Cutkosky:contr}, \cite[\S~4]{KM:92}.
\item 
\label{prop:Gor3}
If $(X \supset C)$ is $\QQ$-conic bundle germ, then~$Z$ is smooth and $X$ admits an embedding $X\subset \PP^2\times Z$
so that the fibers of~$f$ are conics in $\PP^2\times z$, $z\in Z$ \cite{Cutkosky:contr}, \cite[Theorem~10.2]{P:rat-cb:e}.
\end{enumerate}
\end{sproposition}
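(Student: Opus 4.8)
The three assertions are independent, and the plan is to reduce each to a structure theorem already available in the literature. For (i): if $(X\supset C)$ were flipping, then $f$ would be a small $K_X$-negative contraction from a Gorenstein terminal threefold --- small because its exceptional locus is the curve $C$, and $K_X$-negative because $-K_X$ is $f$-ample --- and no such contraction exists by \cite[Theorem~6.2]{Mori:flip}. (Alternatively, granting Corollary~\ref{cor:flips}, one sees this at once: the associated flip $X\dashrightarrow X^{+}$ would give $0=\dif(X)>\dif(X^{+})\ge 0$, using that $\dif=0$ characterises Gorenstein varieties \cite{Kawamata-1992-e-app}.)

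For (ii), $f$ is birational with two-dimensional exceptional locus $E$, and since $C=f^{-1}(o)_{\mathrm{red}}$ is one-dimensional, $E$ is contracted by $f$ onto a curve $\Gamma\subset Z$ through $o$, so that $(Z\ni o)$ is a germ of a threefold. I would then invoke the classification of Gorenstein terminal divisorial contractions onto a curve \cite{Cutkosky:contr} (see also \cite[\S~4]{KM:92}): such a contraction is the blowup of its image curve, the base $Z$ is smooth, and $\Gamma$ is a planar complete intersection, i.e.\ it lies on a smooth surface inside $Z$ and is a Cartier divisor there. This is exactly (ii).

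For (iii), $f\colon X\to Z$ is a $\QQ$-conic bundle germ, so $(Z\ni o)$ is a surface germ; the general fibre $F$ is a smooth rational curve, and adjunction gives $-K_X\cdot F=2$. The key point --- and the step I expect to be the real obstacle --- is that $Z$ is smooth; this is the content of \cite{Cutkosky:contr} and \cite[Theorem~10.2]{P:rat-cb:e}, and the argument passes through a general anticanonical elephant $S\in|{-}K_X|$, which in the Gorenstein case has only Du Val singularities, maps to $Z$ by a finite double cover, and whose existence forces the base of that cover to be smooth. Granting this, I would set $\FFF:=f_*\OOO_X(-K_X)$; relative Kawamata--Viehweg vanishing gives $R^1f_*\OOO_X(-K_X)=0$, so $\FFF$ is locally free of rank $3$ and the evaluation morphism $f^*\FFF\to\OOO_X(-K_X)$ is surjective. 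It therefore defines a closed embedding of $X$ over $Z$ into the $\PP^2$-bundle $\PP_Z(\FFF)$, with image a divisor of fibrewise degree $2$ --- a family of conics. Finally, every vector bundle on the smooth surface germ $Z$ is trivial, so $\PP_Z(\FFF)\cong\PP^2\times Z$, and we obtain $X\subset\PP^2\times Z$ with the fibres of $f$ conics in the corresponding $\PP^2$-factors. Apart from the smoothness of $Z$, everything here is formal.
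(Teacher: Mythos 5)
Your proposal is correct and follows essentially the same route as the paper, which offers no argument beyond the citations embedded in the statement itself: each item is a known structure theorem (\cite[Theorem~6.2]{Mori:flip} for the flipping case, \cite{Cutkosky:contr} and \cite[\S~4]{KM:92} for the divisorial case, \cite{Cutkosky:contr} and \cite[Theorem~10.2]{P:rat-cb:e} for the conic bundle case), and your reductions to those results are the intended ones. The routine amplifications you add --- the difficulty computation for (i) and the relative anticanonical embedding via $f_*\OOO_X(-K_X)$ for (iii) --- are sound and in fact mirror the argument the paper itself carries out later in the proof of Proposition~\ref{prop:cb:uv}.
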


Here is a simple example of Gorenstein $\QQ$-conic bundle germ.

\begin{subexample}
We say that a $\QQ$-conic bundle germ $(X\supset C)$ if of type \typeci{IF}2, if~$X$ has a unique singular point that is a node and 
$C$ is reducible.
In this case in suitable analytic coordinates in $\PP^2_{x_0,x_1,x_2}\times \CC^2_{u,v}$ the variety~$X$ can be given by the equation
\begin{equation}
\label{eq:cb:uv}
x_1^2+x_2^2+uv x_3^2=0. 
\end{equation} 
It is easy to see that the discriminant of $(X\supset C)$ is a normal crossing curve $\{uv=0\}$.
\end{subexample}

\begin{proposition}
\label{prop:cb:uv}
Let $(X \supset C)$ be a $\QQ$-conic bundle germ and let $f:(X \supset C)\to (Z\ni o)$ be the corresponding contraction.
Assume that $o\in Z$ is a smooth point. 
\begin{enumerate}
\item 
\label{prop:cb:uv1}
If $\Delta_f$ has multiplicity $\le 2$ at~$o$, then~$X$ is Gorenstein near $f^{-1}(o)$. 
\item 
\label{prop:cb:uv2}
If $\Delta_f$ is smooth at~$o$, then so is~$X$ near $f^{-1}(o)$. 
\item 
\label{prop:cb:uv3}
If $\Delta_f$ has normal crossing at~$o$, then either~$f$ is a standard conic bundle or has type \typeci{IF}2.
\end{enumerate}
\end{proposition}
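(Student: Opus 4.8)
The plan is to analyze the local structure of $f$ near $f^{-1}(o)$ according to the type of singularity of $X$, using the classification of extremal curve germs together with the crepant-resolution properties collected in Section~2. For part~\eqref{prop:cb:uv1}, since $o\in Z$ is smooth, I would invoke the relation between the multiplicity of the discriminant $\Delta_f$ at $o$ and the Gorenstein index of $X$ along the central fiber. Concretely, if $X$ were not Gorenstein near $f^{-1}(o)$, then by Theorem~\ref{thm:link:exist}\eqref{thm:link:exist2} applied at $o$ there is an md-link over $o$, so there is a Kawamata blowup $p:\tilde X\to X$ of a non-Gorenstein point on $f^{-1}(o)$ followed by flips/flops and a new $\QQ$-conic bundle $f':X'\to Z'$ with $\al:Z'\to Z$ a weighted blowup of weights $(1,a)$; such a blowup contributes enough to the multiplicity of the discriminant at $o$ to force $\mult_o\Delta_f\ge 3$. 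Alternatively — and this is probably the cleaner route — one works directly with the known list of non-Gorenstein $\QQ$-conic bundle germs over a smooth base (types \typeci{k1A}{}, \typeci{k2A}{}, \typeci{cD/2}{}, etc., from \cite{MP:cb1}), for each of which the discriminant multiplicity at $o$ has been computed and is always $\ge 3$; hence $\mult_o\Delta_f\le 2$ already forces $X$ Gorenstein. I would state this as the main step and cite the discriminant computations in \cite{MP:cb1} or Proposition~\ref{prop:comp-discrim}.

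Granting~\eqref{prop:cb:uv1}, part~\eqref{prop:cb:uv2} reduces to the Gorenstein case: $X$ is Gorenstein near $f^{-1}(o)$, so by Proposition~\ref{prop:Gor}\eqref{prop:Gor3} we may embed $X\subset\PP^2\times Z$ with the fibers of $f$ being conics, and $X$ is cut out by one equation $\sum a_{ij}(u,v)x_ix_j=0$ whose discriminant is $\Delta_f=\{\det(a_{ij})=0\}$. After diagonalizing the quadratic form over the local ring, the equation takes the shape $x_1^2+x_2^2+\phi(u,v)x_3^2=0$ with $\phi$ a local equation of $\Delta_f$. If $\Delta_f$ is smooth at $o$, then in suitable coordinates $\phi=u$, and the total space $x_1^2+x_2^2+ux_3^2=0$ is visibly smooth near the central fiber (the singular locus of such a conic bundle lies over $\Sing\Delta_f$, which is empty). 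This is a short direct computation once the normal form is in hand; the only care needed is to check that the diagonalization can be performed over $\CC\{u,v\}$ preserving the smoothness of $X$, which is standard (completing the square is invertible since the relevant coefficients are units, or can be made units after a coordinate change on $\PP^2$).

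For part~\eqref{prop:cb:uv3}, again $X$ is Gorenstein by~\eqref{prop:cb:uv1}, and we use the same normal form $x_1^2+x_2^2+\phi(u,v)x_3^2=0$ where now $\phi$ defines a normal crossing curve at $o$, i.e. after a coordinate change $\phi=uv$ or $\phi=u^2$ (a node of the reduced discriminant, possibly non-reduced). The case $\phi=uv$ is exactly type \typeci{IF}2 by definition (equation~\eqref{eq:cb:uv}); in the case $\phi=u$ up to unit — i.e. the "normal crossing" is actually a smooth branch — we are back in~\eqref{prop:cb:uv2} and $f$ is a standard conic bundle; and the remaining subcase where the reduced discriminant is smooth but $f$ is not smooth (e.g. $\phi=u^2$) has to be ruled out or identified, which is where I expect the main subtlety: one must argue, using terminality and $\QQ$-factoriality of $X$, that a defining equation $x_1^2+x_2^2+u^2x_3^2=0$ cannot occur as a $\QQ$-conic bundle germ (its total space is non-normal along $x_1=x_2=u=0$), so the only possibilities are a smooth branch (standard) or a genuine node (type \typeci{IF}2). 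I would handle this by a direct inspection of the singularities of $X$ for each normal form of $\phi$ with $\mult_o\phi\le 2$, discarding those whose total space fails to be terminal or $\QQ$-factorial.

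The step I expect to be the real obstacle is \eqref{prop:cb:uv1}: relating $\mult_o\Delta_f$ to the Gorenstein index cleanly. The honest argument goes through the classification of non-Gorenstein $\QQ$-conic bundle germs over a smooth base and the explicit discriminant formula of Proposition~\ref{prop:comp-discrim}; absent that, one can try a resolution-theoretic estimate — pull back to a resolution of $Z$ and compare discrepancies via Lemma~\ref{lemma:div-cnr:d} — but making the bound $\mult_o\Delta_f\ge 3$ fall out this way is not automatic, so I would rely on the classification.
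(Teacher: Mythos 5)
The main gap is in your treatment of part \ref{prop:cb:uv1}, which you correctly identify as the crux but then propose to settle by citing ``the known list of non-Gorenstein $\QQ$-conic bundle germs over a smooth base'' together with computed discriminant multiplicities. No such complete list with discriminant computations is available to cite: the classification of extremal curve germs with \emph{reducible} central curve is still incomplete (the paper's own reference for it is a preprint titled ``Toward the classification\dots''), Proposition~\xref{prop:comp-discrim} only covers the index-$2$ normal form of Theorem~\xref{th-index=2}, and in fact statements like \ref{prop:cb:uv1} are themselves tools used to constrain that classification, so the argument risks circularity. Your alternative route via md-links is also unsubstantiated: nothing in Theorem~\xref{thm:link:exist} relates the weight $(1,a)$ of the base blowup to $\mult_o\Delta_f$. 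The paper avoids all of this with a short, self-contained construction: it takes the reflexive hull $\EEE=f_*\OOO_X(-K_X)^{\vee\vee}$ (locally free because $o\in Z$ is smooth), embeds $X$ over the punctured base into $\PP_Z(\EEE)\simeq\PP^2\times Z$, and lets $\bar X$ be the closure, a hypersurface $\sum a_{ij}(u,v)x_ix_j=0$ with $\Delta_f=\{\det\|a_{ij}\|=0\}$. The hypothesis $\mult_o\Delta_f\le 2$ forces some $a_{ij}(0,0)\neq 0$, so the fiber of $\bar X$ over $o$ is one-dimensional, $\bar X$ is normal and Gorenstein, isomorphic to $X$ in codimension $1$, and hence isomorphic to $X$ since both are anticanonically ample over $Z$. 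This is the missing idea in your write-up; without it (or a genuinely complete classification) part \ref{prop:cb:uv1} is not proved.

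For parts \ref{prop:cb:uv2} and \ref{prop:cb:uv3} your strategy (reduce to the Gorenstein conic model and compute) matches the paper's, but the claimed normal form $x_1^2+x_2^2+\phi(u,v)x_3^2=0$ with $\phi$ an equation of $\Delta_f$ is not always achievable: if the matrix $\|a_{ij}(0,0)\|$ has rank $1$, the equation is $x_0^2+q(x_1,x_2;u,v)=0$ with all coefficients of $q$ in the maximal ideal (e.g.\ $x_0^2+ux_1^2+vx_2^2=0$, which has normal crossing discriminant $uv=0$ and smooth total space but does not reduce to your form). The paper instead assumes $X$ singular at a specific point and derives $\mult(a_{2,2})\ge 2$, $a_{1,2}(0,0)=0$, from which the normal crossing hypothesis forces $a_{1,1}(0,0)\neq 0$ and hence the reduction to \eqref{eq:cb:uv}; the rank-$1$ configurations are absorbed into the ``$X$ smooth, hence standard'' branch. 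This is repairable in your framework, but as written the case analysis for \ref{prop:cb:uv3} is incomplete. A small further slip: $x_1^2+x_2^2+u^2x_3^2=0$ is normal (its singular locus is a curve, of codimension $2$ in the hypersurface) --- it is excluded because terminal threefold singularities are isolated, not because of non-normality.
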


\begin{proof}
The question is purely local, so we may assume that~$Z$ is a small neighborhood of~$o$.
Let $Z^o:=Z\setminus \{o\}$ and let $X^o:=f^{-1}(Z^o)$. 
Then we may assume that $X^o$ is smooth and the restriction $f^o: X^o\to Z^o$ is a standard conic bundle.
Hence the sheaf $f_*\OOO_X(-K_{X^o})$ is locally free on $Z^o$.
Let $\EEE: = f_*\OOO_X(-K_X )^{\vee\vee}$.
Then $\EEE$ is a reflexive sheaf of rank~$3$ that is locally free on~$Z^o$.
Since $o\in Z$ is a smooth point, $\EEE$ is locally free at~$o$ as well.
Since $-K_{X^o}$ is relatively very ample on $X^o$ over~$Z^o$, there exists an 
embedding $\iota: X^o \hookrightarrow \PP_Z(\EEE)$ such that $\iota\comp \pr=f$, where $\pr: \PP_Z(\EEE) \to Z$ is the natural projection.
Now, let $\bar X\subset \PP_Z(\EEE)$ be the closure of~$X^o$.
Since~$Z$ is a small neighborhood of~$o$, we may assume that $\EEE$ is trivial 
and $\PP_Z(\EEE)=\PP^2\times Z$. Then $\bar X$ is given by the following equation
\begin{equation}
\label{eq:cb-eq:}
\sum_{0\le i,j\le 2}^{} a_{i,j}(u,v) x_ix_j=0, 
\end{equation}
where $x_0,x_1,x_2$ are homogeneous coordinates in $\PP^2$, $u,v$ are local coordinates on~$Z$,
and $a_{i,j}(u,v)$ are holomorphic functions with $a_{i,j}=a_{j,i}$.
In these settings, the equation of $\Delta_f$ is 
\[
\det \|a_{i,j}(u,v)\|=0.
\]
Now assume that $o\in \Delta_f$ is a double point.
The cases where $o\in \Delta_f$ is smooth or $o\notin \Delta_f$ are easier and left to the reader.
Thus 
\begin{equation*}
\mult_{(0,0)}\big(\det \|a_{i,j}(u,v)\|\big)=2,
\end{equation*} 
hence $a_{i,j}(0,0)\neq 0$
for some $i,j$. This means that the fiber over~$o$ is one-dimensional, i.e. $\bar{f}: \bar X\to Z$ is flat.
Moreover, $\bar X$ is normal and Gorenstein.
The varieties $\bar X$ and~$X$ are isomorphic in codimension $1$. Since the divisors $-K_{\bar X}$ and $-K_X$ are ample,
$\bar X$ and~$X$ are isomorphic (over~$Z$). 
This proves \ref{prop:cb:uv1}.

To prove \ref{prop:cb:uv2} and \ref{prop:cb:uv3}
assume that $X=\bar X$ is singular.
Moreover, we may assume that~$X$ is singular at the point $(0,0,1;0,0)$. 
Using $\OOO_{Z,o}$-linear coordinate change in $\PP^2$ 
we can reduce the equation~\eqref{eq:cb-eq:} to the following
\[
x_0^2+a_{1,1}(u,v) x_1^2+2a_{1,2}(u,v) x_1x_2+a_{2,2}(u,v) x_2^2=0,
\]
where
$\mult_{(0,0)}(a_{2,2})\ge 2$ and $a_{1,2}(0,0)=0$ by our assumption. 
In this situation, the discriminant 
\[
\Delta_f=\{a_{1,1}a_{2,2}-a_{1,2}^2=0\}
\] 
must be singular at the origin.
This proves \ref{prop:cb:uv2}. Finally, if $\Delta_f$ has normal crossing at~$o$,
then the rank of the quadratic part of $a_{1,1}a_{2,2}-a_{1,2}^2$ equals 
$2$.
Then $a_{1,1}(0,0)\neq 0$ and the equation can be reduced to \eqref{eq:cb:uv}.
\end{proof}

The following fact can be easily checked locally.

\begin{slemma}
\label{lemma:St-c-b}
Let $f:X\to Z$ be a standard conic bundle and let $\Gamma\subset Z$ be a smooth curve
that is not contained in $\Delta_f$. 
Then the surface $f^{-1}(\Gamma)$ is normal.
\end{slemma}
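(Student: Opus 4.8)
The plan is to reduce to a local statement on $Z$ and then apply Serre's criterion. Since the assertion is local on $Z$, I would take $Z$ to be a small neighbourhood of a point $p\in\Gamma$. As $f$ is a standard conic bundle it is smooth over $Z\setminus\Delta_f$, so its base change along the smooth curve $\Gamma\setminus\Delta_f$ is smooth, and hence so is the total space of that base change; thus only the finitely many points $p\in\Gamma\cap\Delta_f$ (finitely many because $\Gamma\not\subseteq\Delta_f$) need attention. Put $S:=f^{-1}(\Gamma)$. Since $Z$ is smooth, $\Gamma$ is locally principal, so $S$ is a Cartier divisor in the smooth threefold $X$; in particular $S$ is Cohen--Macaulay and therefore satisfies Serre's condition $S_2$. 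By Serre's criterion it remains to prove that $S$ is regular in codimension one, that is --- as $\dim S=2$ --- that $\Sing(S)$ is finite; and by the above $\Sing(S)\subseteq f^{-1}(\Gamma\cap\Delta_f)$, a finite union of degenerate conics.

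The next step is to localize the possible singularities. Fix $p\in\Gamma\cap\Delta_f$ and a local coordinate $w$ on $Z$ at $p$ with $\Gamma=\{w=0\}$. Near the fibre $f^{-1}(p)$ we have $S=X\cap\{f^{*}w=0\}$ with $X$ smooth, so $q\in S$ is a singular point of $S$ if and only if $f^{*}w\in\mathfrak{m}_{X,q}^{2}$, i.e.\ if and only if $df_q(T_qX)\subseteq\ker(dw_p)=T_p\Gamma$. In particular $S$ is smooth at any point where $f$ is smooth, so $\Sing(S)\cap f^{-1}(p)$ is contained in the singular locus of the conic $f^{-1}(p)$. The quadratic form defining this conic has rank $1$ or $2$ (rank $0$ is impossible, as the fibre is one-dimensional): if the rank is $2$ the singular locus is a single point and we are done for that $p$; if the rank is $1$ the fibre is a double line $L$, and the only remaining point is to show that $S$ is \emph{not} singular along all of $L$. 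This exclusion of a one-dimensional singular locus is the crux of the matter.

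To carry it out I would use the embedding $X\subseteq\PP^{2}\times Z$ over $Z$ furnished by Proposition~\ref{prop:Gor}\ref{prop:Gor3}, say $X=\bigl\{\sum a_{ij}(u,v)\,x_ix_j=0\bigr\}$ with $\Delta_f=\{\det\|a_{ij}\|=0\}$. In the rank-$1$ case, after an $\OOO_{Z,p}$-linear change of the $x_i$ I may assume $\sum a_{ij}(p)x_ix_j=x_0^{2}$, so $L=\{x_0=0\}$ and $a_{11},a_{12},a_{22}$ lie in the maximal ideal of $\OOO_{Z,p}$. The criterion of the previous paragraph then says that $S$ is singular at $[0:\xi_1:\xi_2]\in L$ exactly where the binary quadratic form $\xi_1^{2}a_{11}'+2\xi_1\xi_2\,a_{12}'+\xi_2^{2}a_{22}'$ vanishes, where $'$ denotes the derivative at $p$ along $\Gamma$; hence $S$ is singular along all of $L$ only if this form vanishes identically, i.e.\ only if $da_{11},da_{12},da_{22}$ all annihilate $T_p\Gamma$ and so all lie in the line $(T_p\Gamma)^{\perp}\subseteq T_p^{*}Z$. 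But then the $T_p^{*}Z$-valued binary form $\xi_1^{2}\,da_{11}+2\xi_1\xi_2\,da_{12}+\xi_2^{2}\,da_{22}$ lies in the same line, hence vanishes at some $[\xi_1^{0}:\xi_2^{0}]$ (a nonzero binary quadratic form over $\CC$ has a root; if it is identically zero then $da_{11}=da_{12}=da_{22}=0$, which is even worse); at the corresponding point of $L$ all partial derivatives of the equation of $X$ vanish, contradicting the smoothness of $X$ --- equivalently, contradicting the reducedness of $\Delta_f=\{\det\|a_{ij}\|=0\}$. Thus $S$ has at most two singular points over each such $p$, $\Sing(S)$ is finite, and $S$, being $R_1$ and $S_2$, is normal. (Concretely, in Sarkisov's normal form $X=\{x_0^{2}+ux_1^{2}+vx_2^{2}=0\}$ over a node of $\Delta_f$, parametrising $\Gamma$ as $u=\phi(t)$, $v=\psi(t)$ with $\phi,\psi\not\equiv 0$ turns $S$ into $\{x_0^{2}+\phi(t)x_1^{2}+\psi(t)x_2^{2}=0\}\subseteq\PP^{2}\times\CC_t$, whose singularities in the three affine charts are isolated double points of type $A_k$.) The main obstacle is precisely this last exclusion of a singular curve inside a non-reduced fibre; everything else is routine.
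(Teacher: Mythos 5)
Your proof is correct. The paper gives no argument at all for this lemma --- it only remarks that the fact ``can be easily checked locally'' --- and your write-up (Serre's criterion via $S_2$ for a Cartier divisor in the smooth $X$, reduction of $\Sing(f^{-1}(\Gamma))$ to the singular points of degenerate fibres, and the exclusion of a singular locus along a double line by producing a singular point of $X$ from a root of the binary quadratic form $\xi_1^{2}a_{11}'+2\xi_1\xi_2 a_{12}'+\xi_2^{2}a_{22}'$) is exactly the intended local check, carried out correctly.
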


\subsection{$\QQ$-conic bundles of index $2$}

\begin{stheorem}[{\cite[\S~12]{MP:cb1}}]
\label{th-index=2}
Let $(X\supset C)$ be a $\QQ$-conic bundle germ and let $f: (X\supset C)\to (Z\ni o)$ be the corresponding contraction.
Assume that the variety~$X$ is not Gorenstein and has only singularities of index $\le 2$.
Furthermore, assume that the point $(Z\ni o)$ is smooth. Then there is an embedding
\begin{equation}
\label{eq-diag-last-2}
\xymatrix@R=1.3em{X \ar@{^{(}->}[r] \ar[rd]_{f}& \PP(1^3,2)\times \CC^2
\ar[d]
\\
&\CC^2}
\end{equation}
such that~$X$ is given by two equations
\begin{equation}
\label{eq-eq-index2}
\begin{array}{l}
\theta_1(u,v) x_4 + q_1(x_1, x_2,x_3; u,v) =0,
\\[7pt]
\theta_2 (u,v)x_4 + q_2(x_1, x_2,x_3; u,v) =0,
\end{array}
\end{equation}
where 
$u,\, v$ are local coordinates on $\CC^2$,\ 
$x_1,\dots,x_4$ are coordinates on $\PP(1^3,2)$,
$\theta_1(u,v)$ and $\theta_2(u,v)$ are holomorphic functions such that 
$\theta_1(0,0)=\theta_2(0,0)=0$, 
$q_1(x_1, x_2,x_3; u,v)$ and $q_2(x_1, x_2,x_3; u,v)$ are ternary quadratic forms in $x_1,x_2,x_3$ with coefficients in $\CC\{u,v\}$ such that 
the forms $q_1(x_1, x_2,x_3; 0,0)$ and $q_2(x_1, x_2,x_3; 0,0)$ are not proportional.
In these settings the only non-Gorenstein point is $P:=(0,0,0,1; 0,0)$.
The central curve has at most four components. All of them pass through~$P$ and they do not meet each other
elsewhere.
\end{stheorem}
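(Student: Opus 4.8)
The plan is to reconstruct $X$ over $Z$ from the relative anticanonical algebra $\bigoplus_{n\ge 0}f_*\OOO_X(-nK_X)$. Throughout I replace $(Z\ni o)$ by a small neighbourhood, identified with $(\CC^2\ni 0)$ with coordinates $u,v$, and work near the central fibre $C$. \emph{Step 1: local structure near $C$.} Since $-K_X$ is $f$-ample and $X$ is Cohen--Macaulay while $Z$ is a smooth surface with one-dimensional fibres, $f$ is flat. As $X$ is not Gorenstein of index $\le 2$, it has a point of index exactly $2$, and such a point lies on $C$ (it lies where $f$ is not smooth, which over our germ is contained in $f^{-1}(o)$). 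The first real point is to show that, over a \emph{smooth} base and with index $\le 2$, this non-Gorenstein point $P$ is unique and is a cyclic quotient of type $\tfrac12(1,1,1)$ --- equivalently, the types $cAx/2$, $cD/2$, $cE/2$ do not occur here. This is where one invokes the classification of non-Gorenstein $\QQ$-conic bundle germs together with the analysis of their discriminant curves; granting it, the general description of extremal curve germs gives that every component of $C$ passes through $P$ and that distinct components meet only at $P$.

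\emph{Step 2: coordinates.} By relative Kawamata--Viehweg vanishing, $R^1f_*\OOO_X(-nK_X)=0$ for $n\ge 1$; with flatness and cohomology-and-base-change, every $f_*\OOO_X(-nK_X)$ is locally free near $o$. In particular $\EEE:=f_*\OOO_X(-K_X)$ has rank $3$ (the value of $h^0$ on a general fibre, a smooth conic on which $-K_X$ has degree $2$) and, being reflexive on a smooth surface, is free after shrinking; fix a basis $x_1,x_2,x_3$. Likewise $f_*\OOO_X(-2K_X)$ is free of rank $5=h^0(\OOO_{\PP^1}(4))$. Over $Z\setminus\{o\}$, where $X$ is Gorenstein and $-K_X$ is relatively very ample (as in the proof of Proposition~\ref{prop:cb:uv}), the multiplication map $\operatorname{Sym}^2\EEE\to f_*\OOO_X(-2K_X)$ is surjective with a rank-$1$ kernel, namely the conic relation. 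It fails to be surjective at $o$: at $P$ the invertible sheaf $\OOO_X(-2K_X)$ is a free $\OOO_{X,P}$-module of rank $1$, whereas the image of $\operatorname{Sym}^2$ lands in the maximal ideal of $\OOO_{X,P}$ (the degree-$1$ functions at the quotient singularity $\tfrac12(1,1,1)$ are the three anti-invariant coordinates, whose products all have order $\ge 2$). Hence the cokernel of $\operatorname{Sym}^2\EEE\to f_*\OOO_X(-2K_X)$ is a length-$1$ skyscraper at $o$, and one may choose $x_4\in H^0(f_*\OOO_X(-2K_X))$ restricting to a local generator of $\OOO_X(-2K_X)$ at $P$; it is unique modulo $\operatorname{Sym}^2\EEE$.

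\emph{Step 3: the embedding and the equations.} The sections $(x_1:x_2:x_3:x_4)$ define a morphism $\phi\colon X\to\PP(1^3,2)\times Z$ over $Z$. Over $Z\setminus\{o\}$ it is a closed embedding, since $x_1,x_2,x_3$ already relatively embed $X$ via $-K_X$; near $P$ it carries the germ $(X,P)\cong(\CC^3/\mumu_2(1,1,1),0)$ isomorphically onto the affine chart $\{x_4\ne 0\}\subset\PP(1^3,2)$, which is itself $\CC^3/\mumu_2(1,1,1)$. So $\phi$ is a closed embedding of codimension $2$ and $\phi(X)$ is Cohen--Macaulay. Since $x_1,x_2,x_3$ are $\OOO_Z$-independent there is no linear relation; in degree $2$ the ambient module $\langle x_ix_j,\,x_4\rangle\otimes\OOO_Z$ has rank $7$ while $f_*\OOO_X(-2K_X)$ has rank $5$, so the ideal of $\phi(X)$ contains two $\OOO_Z$-independent relations $g_\ell=\theta_\ell(u,v)\,x_4+q_\ell(x_1,x_2,x_3;u,v)$, $\ell=1,2$, with $\theta_\ell\in\OOO_Z$ and $q_\ell$ a ternary quadratic form with coefficients in $\CC\{u,v\}$. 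As $g_1,g_2$ have no common factor (they already cut out the conic bundle over $Z\setminus\{o\}$), the projection $V(g_1,g_2)\to Z$ is a flat complete intersection which agrees with $f$ over $Z\setminus\{o\}$ and contains $\phi(X)$; a Hilbert-polynomial comparison then forces $\phi(X)=V(g_1,g_2)$, which is precisely the form \eqref{eq-eq-index2}.

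\emph{Step 4: the remaining assertions, and the main obstacle.} Evaluating $g_\ell$ at $P=(0:0:0:1;0,0)\in X$ gives $\theta_\ell(0,0)=0$. If $q_1(\,\cdot\,;0,0)$ and $q_2(\,\cdot\,;0,0)$ were proportional, a row operation on $(g_1,g_2)$ would produce $g_2'=\theta_2'x_4+q_2'$ with $\theta_2'(0,0)=0$ and $q_2'(\,\cdot\,;0,0)\equiv 0$, so $f^{-1}(o)$ would contain the two-dimensional cone $\{q_1(\,\cdot\,;0,0)=0\}$ (with $x_4$ free), contradicting one-dimensionality of fibres; hence they are not proportional, and similarly $(\theta_1,\theta_2)\not\equiv(0,0)$ with $\{\theta_1=\theta_2=0\}$ supported at $o$. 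Since $\phi(X)$ is a complete intersection in $\PP(1^3,2)\times Z$, it is Gorenstein wherever the ambient is, i.e. away from $\{x_1=x_2=x_3=0\}$; with the previous sentence this reconfirms that $P$ is the only non-Gorenstein point. Finally $C=f^{-1}(o)_{\mathrm{red}}$ projects to $\{q_1(\,\cdot\,;0,0)=q_2(\,\cdot\,;0,0)=0\}\subset\PP^2$, at most four points by B\'ezout (the two conics being distinct), over each of which $x_4$ sweeps a rational curve $\cong\PP(1,2)$ through $P$; so $C$ has at most four components, all passing through $P$ and pairwise meeting only there. The main obstacle is Step 1: establishing a priori that, over a smooth base and in index $\le 2$, a non-Gorenstein $\QQ$-conic bundle germ can only have a single $\tfrac12(1,1,1)$ point. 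This rests on the classification of $\QQ$-conic bundle germs and on the fine geometry of the discriminant; everything after it is the essentially formal anticanonical-algebra reconstruction, whose one delicate ingredient is the length-$1$ cokernel computation of Step 2 --- exactly what forces a generator $x_4$ of weight $2$, hence the ambient space $\PP(1^3,2)$.
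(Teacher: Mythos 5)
First, a point of comparison: the paper does not prove Theorem~\ref{th-index=2} at all --- it is imported verbatim from \cite[\S~12]{MP:cb1} --- so there is no internal proof to measure your argument against. Your overall strategy (reconstructing $X$ inside $\PP(1^3,2)\times Z$ from the sheaves $f_*\OOO_X(-nK_X)$, with the weight-$2$ generator $x_4$ forced by the failure of $\operatorname{Sym}^2 f_*\OOO_X(-K_X)\to f_*\OOO_X(-2K_X)$ to be surjective at the index-$2$ point) is the right one and is in the spirit of \cite{MP:cb1}.

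There is, however, a genuine gap, and it is not only the one you flag. Your Step~1 asserts that the non-Gorenstein point must be a cyclic quotient of type $\frac12(1,1,1)$ and that \typee{cAx}{2}, \typee{cD}{2}, \typee{cE}{2} ``do not occur here''. This is false, and it contradicts the generality of the statement you are proving: in the chart $\{x_4\neq 0\}$ the normal form \eqref{eq-eq-index2} exhibits $P$ as a codimension-$2$ complete intersection in $\CC^5/\mumu_2(1,1,1,0,0)$, which is a quotient singularity only when $\theta_1,\theta_2$ have independent linear parts; in general $P$ is a hypersurface index-$2$ point of type \typee{cA}{2}, \typee{cAx}{2}, \typee{cD}{2} or \typee{cE}{2}, and all of these occur for $\QQ$-conic bundle germs over a smooth base (the germ types \typeb{cD/2}, \typeb{cAx/2}, \typeb{cE/2}, \typeb{IIA} listed in the paper; concretely, the covers $X^{\flat}$ of the rows \typeb{II^\vee} and \typeb{II^\vee{+}II^\vee} of Table~\ref{tab:sing} are index-$2$ germs over a smooth base with a point of type \typee{cAx}{2}). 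If Step~1 were true the theorem would collapse to the special situation of Example~\ref{example-simplest-link}. This error propagates: the length-one cokernel computation in Step~2 and the verification in Step~3 that $\phi$ maps $(X,P)$ isomorphically onto the chart $\{x_4\neq 0\}$ both use the $\frac12(1,1,1)$ local model, whereas what is actually needed is the description of $\OOO_X(-K_X)$ near an arbitrary index-$2$ terminal point via its index-one cover --- precisely the hard local input that \cite{MP:cb1} draws from Mori's classification. The uniqueness of the non-Gorenstein point (ruling out, e.g., germs with two index-$2$ points) is likewise asserted without argument, although it is part of the conclusion. Two smaller points: the identification $\phi(X)=V(g_1,g_2)$ by ``Hilbert-polynomial comparison'' is circular as written, since flatness of $V(g_1,g_2)$ over $Z$ requires its fibre over $o$ to be one-dimensional, which is what you deduce in Step~4 \emph{from} that identification (this is repairable, e.g.\ by noting that the kernel of the surjection $\OOO_Z^{7}\to f_*\OOO_X(-2K_X)$ is a free direct summand, so $g_1,g_2$ can be chosen with linearly independent reductions modulo $\mathfrak m_o$, which yields non-proportionality directly); and in the component count B\'ezout requires the two conics to have no common component, not merely to be non-proportional, which again must be extracted from the one-dimensionality of the fibres.
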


\begin{sproposition}
\label{prop:comp-discrim}
In the notation of Theorem~\xref{th-index=2} 
the discriminant curve $\Delta_f$
is given by the vanishing of the determinant of the quadratic form $\theta_1 q_2-\theta_2 q_1$: 
\[
\Delta_f=\{\det(\theta_1 q_2-\theta_2 q_1)=0\}.
\]
\end{sproposition}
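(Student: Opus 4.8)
The plan is to work in the explicit coordinates provided by Theorem~\ref{th-index=2} and eliminate the extra variable $x_4$, reducing the computation of $\Delta_f$ to the classical discriminant of a conic bundle as in the proof of Proposition~\ref{prop:cb:uv}. First I would restrict attention to the open locus $Z^o=Z\setminus\{o\}$, where $X$ is Gorenstein and $f$ is a standard conic bundle; over $Z^o$ the discriminant is by definition the vanishing of the determinant of the symmetric matrix of the conic in each fiber, so it suffices to identify that conic. For a point $(u,v)\neq(0,0)$ at least one of $\theta_1,\theta_2$ is nonzero; say $\theta_1(u,v)\neq 0$. Then the first equation of \eqref{eq-eq-index2} lets us solve $x_4=-q_1/\theta_1$, and substituting into the second equation gives a single ternary quadratic equation in $x_1,x_2,x_3$, namely
\[
\theta_1 q_2-\theta_2 q_1=0
\]
(up to the nonzero factor $\theta_1$). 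This is the conic cut out in the fiber $\PP^2_{x_1,x_2,x_3}$, so its symmetric matrix is exactly that of the form $\theta_1 q_2-\theta_2 q_1$, and the fiber is singular precisely when $\det(\theta_1 q_2-\theta_2 q_1)=0$. Thus $\Delta_f\cap Z^o=\{\det(\theta_1 q_2-\theta_2 q_1)=0\}\cap Z^o$.

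Next I would argue that this identity extends across the point $o$. Both sides are closed subsets of $Z$: the left side $\Delta_f$ is closed by definition, and the right side $\{\det(\theta_1q_2-\theta_2q_1)=0\}$ is an analytic hypersurface (the determinant being a holomorphic function of $u,v$). Since $o$ is a single point and $\Delta_f$ is by definition the union of the one-dimensional components of the non-smooth locus, $\Delta_f$ is the closure of $\Delta_f\cap Z^o$; similarly the hypersurface $\{\det(\cdots)=0\}$ has no isolated points, so it too is the closure of its restriction to $Z^o$ (after discarding, if necessary, the case where the determinant vanishes identically — but it cannot, since $q_1(\,\cdot\,;0,0)$ and $q_2(\,\cdot\,;0,0)$ are not proportional, so $\theta_1 q_2-\theta_2 q_1$ is a nonzero form for generic $(u,v)$ with, e.g., $\theta_1\neq 0$, forcing a nonzero determinant generically). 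Taking closures of the equality on $Z^o$ then yields the equality on all of $Z$.

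The only real subtlety is bookkeeping near the point $o$: one must make sure that the "elimination of $x_4$" is valid scheme-theoretically on the relevant locus and that one is comparing the same object (the discriminant as a reduced curve, or as a divisor with its natural scheme structure) on both sides. I would handle this exactly as in Proposition~\ref{prop:cb:uv}: the determinant is only well defined up to a unit in $\OOO_{Z,o}$ anyway (it depends on the chosen generators $\theta_1,\theta_2$ and on the coordinate choices), so the statement is naturally one about the support $\{\det(\theta_1q_2-\theta_2q_1)=0\}$, and the computation over $Z^o$ together with the closure argument settles it. The main obstacle is therefore not any hard geometry but simply checking that no one-dimensional component of the singular locus of $f$ can sit over $o$ in a way invisible to the generic fiber computation — which is automatic since $Z$ is two-dimensional and $o$ is a point, so every component of $\Delta_f$ meets $Z^o$.
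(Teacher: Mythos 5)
Your proposal follows essentially the same route as the paper: the paper reduces the proposition to a one-line lemma stating that the curve $\{q_1+c_1x_4=q_2+c_2x_4=0\}\subset\PP(1^3,2)$ is singular if and only if $\det(c_1q_2-c_2q_1)=0$, and the content of that lemma is exactly your elimination of $x_4$. The only organizational difference is that the paper's fiberwise lemma also covers the case $c_1=c_2=0$ directly (the fiber is then a cone, always singular, consistent with $\det(0)=0$), whereas you handle the central fiber by a closure argument.

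Two steps in your write-up need repair, however. First, the claim that $\theta_1q_2-\theta_2q_1$ being a \emph{nonzero} form ``forces a nonzero determinant generically'' is false as stated: a nonzero ternary quadratic form can have rank $1$ or $2$ and hence vanishing determinant (e.g.\ $x_1^2$). The fact you need, namely $\det(\theta_1q_2-\theta_2q_1)\not\equiv 0$, is true, but for a different reason: $X$ is reduced and we are in characteristic zero, so by generic smoothness $f$ is smooth over a dense open subset of $Z$, and by your own computation the fiber over a point of $\{\theta_1\neq0\}$ is the conic $\{\theta_1q_2-\theta_2q_1=0\}$, so the determinant cannot vanish identically. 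Second, the assertion that $\theta_1$ and $\theta_2$ have no common zero on $Z^o$ is used but not justified; it does hold after shrinking the germ, because at a common zero $(u_0,v_0)\neq(0,0)$ the fiber would be the cone $\{q_1=q_2=0\}\subset\PP(1^3,2)$ with vertex $(0,0,0,1)$ --- and since $q_1(\,\cdot\,;u_0,v_0)$, $q_2(\,\cdot\,;u_0,v_0)$ stay non-proportional near $o$, this is four concurrent lines or worse, not a conic --- contradicting the fact that $f$ is a standard conic bundle over $Z^o$. With these two points fixed, your argument is complete and agrees with the paper's.
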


\begin{proof}
The assertion is an immediate consequence of the following simple lemma.
\end{proof}

\begin{slemma}
Let $C\subset \PP(1^3,2)$ be the curve given by the equations
\[
q_1 (x_1,x_2,x_3) + c_1 x_4 = q_2 (x_1,x_2,x_3) + c_2 x_4 = 0,
\]
where $q_1$, $q_2$ are ternary quadratic forms and $c_1$, $c_2$ are constants.
Then $C$ is singular if and only if $\det(c_1 q_2-c_2 q_1)=0$.
\end{slemma}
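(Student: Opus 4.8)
The plan is to reduce $C$ to a plane conic whenever the constants $c_1,c_2$ are not both zero, and to treat the case $c_1=c_2=0$ separately. Throughout, write $Q:=c_1q_2-c_2q_1$, regarded either as a ternary quadratic form or as its symmetric $3\times 3$ matrix. Note first that, since $C$ is assumed one-dimensional, $Q$ vanishes identically only when $c_1=c_2=0$: if, say, $c_1\neq 0$ and $Q\equiv 0$, then $q_2$ is proportional to $q_1$, and $C=\{q_1+c_1x_4=0\}$ is then a surface in $\PP(1^3,2)$, not a curve.

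For the main case, assume $(c_1,c_2)\neq(0,0)$, and after renumbering say $c_1\neq 0$. Replacing the second defining equation by $c_1$ times it minus $c_2$ times the first, the $x_4$-terms cancel, so $C=\{q_1+c_1x_4=0\}\cap\{Q=0\}$. The hypersurface $H:=\{q_1+c_1x_4=0\}\subset\PP(1^3,2)$ misses the vertex $(0:0:0:1)$, hence lies in the smooth locus, and the map $\PP^2_{x_1,x_2,x_3}\to\PP(1^3,2)$, $(x_1:x_2:x_3)\mapsto(x_1:x_2:x_3:-c_1^{-1}q_1)$, is a closed embedding with image $H$, with inverse the linear projection $(x_1:x_2:x_3:x_4)\mapsto(x_1:x_2:x_3)$. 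Under the resulting isomorphism $H\cong\PP^2$ the curve $C$ corresponds to the conic $\{Q=0\}\subset\PP^2$, because the equation $Q=0$ is free of $x_4$. A nonzero plane conic is singular exactly when its matrix is degenerate — it is then a pair of distinct lines or a double line — that is, exactly when $\det Q=\det(c_1q_2-c_2q_1)=0$, which is the asserted equivalence.

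In the remaining case $c_1=c_2=0$ we have $\det(c_1q_2-c_2q_1)=\det 0=0$, so it suffices to show that $C$ is singular. Here $C=\{q_1=q_2=0\}$ is independent of $x_4$ and so contains the vertex $P_0=(0:0:0:1)$. In the chart $\{x_4=1\}$, which is $\CC^3_{x_1,x_2,x_3}/\mumu_2$, the preimage $\tilde C$ of $C$ is the subscheme $\{q_1=q_2=0\}\subset\CC^3$; since $C$ is a curve, $q_1$ and $q_2$ are coprime, so $\tilde C$ is a complete-intersection curve whose Jacobian matrix vanishes identically at the origin ($q_1,q_2$ being quadratic), hence $\tilde C$ is singular at $0$. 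The quotient map $\tilde C\to C$ has $\{0\}$ as its only fixed point and is étale of degree $2$ elsewhere; were $C$ smooth at $P_0$, then over $P_0$ this cover would consist of two disjoint smooth branches — impossible, since the single point $0$ is $\mumu_2$-fixed — or of one smooth branch, and in either case $\tilde C$ would be smooth at $0$, a contradiction. Hence $C$ is singular.

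The only step that goes beyond routine linear algebra is this last transfer of a singularity across the quotient at the vertex of $\PP(1^3,2)$, but it is controlled by the elementary description of double covers of a smooth curve germ, so I anticipate no real difficulty; the whole argument is short.
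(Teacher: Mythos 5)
The paper states this lemma without proof (the proof of Proposition~\ref{prop:comp-discrim} simply defers to it), so there is no argument of the author's to compare against. On its own terms your main case is correct: for $(c_1,c_2)\neq(0,0)$ the reduction of $C$ to the plane conic $\{c_1q_2-c_2q_1=0\}$ via the graph $x_4=-c_1^{-1}q_1$, which misses the vertex of $\PP(1^3,2)$, is clean and is surely the ``immediate'' argument the paper has in mind.

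The gap is in the case $c_1=c_2=0$, in the step transferring the singularity of $\tilde C=\{q_1=q_2=0\}\subset\CC^3$ at $0$ down to $C=\tilde C/\mumu_2$ at the vertex $P_0$. The dichotomy you invoke --- a finite degree-$2$ cover of a smooth curve germ, \'etale off the centre, is either two disjoint smooth branches or one smooth branch --- is false unless the total space is already known to be normal, which is exactly what is in question. Two standard counterexamples: the node $\{xy=0\}$ with $\mumu_2$ swapping $x$ and $y$ has invariant ring $\CC[x+y]$, and the cusp $\CC[[s^2,s^3]]$ with $s\mapsto -s$ has invariant ring $\CC[[s^2]]$; in both cases a \emph{singular} curve germ has \emph{smooth} $\mumu_2$-quotient, the quotient map is finite of degree $2$, \'etale off the centre, with a single $\mumu_2$-fixed point over the centre. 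So ``$\tilde C$ singular at $0$, hence $C$ singular at $P_0$'' is not a general fact about such quotients, and your justification does not close the argument. The conclusion is nevertheless true, and the repair uses structure you did not exploit: $q_1,q_2$ are coprime homogeneous quadrics, so $\tilde C$ is the affine cone over the length-$4$ scheme $\{q_1=q_2=0\}\subset\PP^2$. Either that scheme is non-reduced, in which case $\tilde C$ is non-reduced along a line and so is $C$ (the quotient being \'etale off the vertex), hence $C$ is singular; or $\tilde C$ is a union of at least two distinct lines through $0$, each individually stable under the antipodal map and mapping to a smooth branch, with distinct lines having distinct images, so $C$ has at least two branches at $P_0$ and is singular there. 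This also explains why the node and cusp pathologies cannot occur here: no irreducible component of a cone is a cusp, and no two of its components are interchanged by $x\mapsto -x$.
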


\subsection{$\QQ$-conic bundles over singular base}
\label{subsectSB}

\begin{stheorem}[{\cite{MP:cb1}}]
\label{thm:DuVal}
Let $f: X\to Z$ be a $\QQ$-conic bundle. Then any singularity of~$Z$ 
is Du Val of type \typeDu{A}{}. 
\end{stheorem}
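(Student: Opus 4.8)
The plan is to reduce to the local analytic situation around a singular point $o\in Z$ and use the structure theory of $\QQ$-conic bundle germs together with the classification of terminal threefold singularities. First I would replace $Z$ by a small analytic neighborhood of a singular point $o$ and $X$ by $f^{-1}(o)$; the central fiber $C=f^{-1}(o)_{\mathrm{red}}$ together with $X$ is then a $\QQ$-conic bundle germ in the sense of Definition~\ref{def:e-germ}. Since $-K_X$ is $f$-ample, the germ cannot be Gorenstein with $Z$ smooth-but-trivial only in the sense that Proposition~\ref{prop:Gor}~\ref{prop:Gor3} already forces $Z$ smooth whenever $X$ is Gorenstein along $C$; hence if $Z$ is singular at $o$, the threefold $X$ must be non-Gorenstein somewhere along $C$. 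So the relevant case is that $(X\supset C)$ has at least one non-Gorenstein point, and I would invoke the classification of such $\QQ$-conic bundle germs (the list recalled after Definition~\ref{def:e-germ}, plus the reducible-$C$ cases) from \cite{MP:cb1}.

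The heart of the argument is then a case-by-case inspection: for each type of non-Gorenstein $\QQ$-conic bundle germ appearing in the Mori--Prokhorov classification, one reads off the local equation of the base $(Z\ni o)$ — which in all cases is presented as a cyclic quotient $\CC^2/\mumu_n$ for an explicit action, or is computed from the $\mumu_r$-semi-invariant data defining the germ. In each instance one checks that the resulting surface singularity is a cyclic quotient of type $\frac1n(1,-1)$, i.e. an $A_{n-1}$ Du Val point, and in particular never of type $D$ or $E$ and never worse than Du Val. An alternative, less computational route that I would try to make work uniformly: the general hyperplane section (general fiber-direction divisor) argument — take $S\in |{-}K_X|$ or a general member of $f^*$ of a general curve through $o$ and use that $K_X$-adjunction plus terminality of $X$ forces $Z$ to have at worst canonical, hence (being a surface) Du Val singularities; then rule out $D$ and $E$ by a local Picard-rank / Shokurov-difficulty bound, using that $\uprho(X/Z)=1$ severely restricts how many exceptional curves can sit over $o$.

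The main obstacle I anticipate is precisely excluding the Du Val types $\mathrm{D}$ and $\mathrm{E}$: establishing that $Z$ has only \emph{canonical} (rational double point) surface singularities is relatively soft, but pinning down that they are of type $\mathrm{A}$ requires genuinely using the one-dimensionality of the fibers and the rank-one condition $\uprho(X/Z)=1$. Concretely, the obstruction is that a $D_n$ or $E_n$ point has local class group with $2$ or $3$ generators, whereas the fibration structure of a $\QQ$-conic bundle (whose general fiber is $\PP^1$, and whose local Brauer/discriminant behaviour is governed by a single quadratic form) should force the local divisor class group of $Z$ at $o$ to be cyclic; making that comparison precise — presumably via the reflexive sheaf $\EEE=f_*\OOO_X(-K_X)^{\vee\vee}$ of rank $3$ and the conic embedding $X\hookrightarrow\PP_Z(\EEE)$ as in the proof of Proposition~\ref{prop:cb:uv}, now over a quotient-singular base — is the step that needs care. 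I would therefore lean on the explicit classification from \cite{MP:cb1} to close this gap rather than attempt a fully synthetic proof.
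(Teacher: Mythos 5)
The paper does not actually prove this statement: it is quoted from \cite{MP:cb1}, where it is one of the main theorems, so there is no internal proof to compare yours against. Your proposal ultimately also defers to \cite{MP:cb1}, which matches the paper's treatment as a citation; but judged as an argument it has two problems. The first is circularity: the classification of $\QQ$-conic bundle germs over a \emph{singular} base that you want to ``read the base off from'' (Theorem~\ref{thm:tab:sing} and Table~\ref{tab:sing}) is itself obtained in \cite{MP:cb1,MP:cb2} together with, and logically downstream of, the statement that the base point is of type \typeDu{A}{}; the lists of types recalled after Definition~\ref{def:e-germ} presuppose exactly the local analysis one is trying to establish. Your reduction to the non-Gorenstein case via Proposition~\ref{prop:Gor}\ref{prop:Gor3} is correct and is the right first step, but everything after it is either circular or an appeal to the same reference.

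The second, more concrete, problem is that your proposed synthetic obstruction is the wrong invariant. For a quotient surface singularity $\CC^2/G$ with $G$ acting freely outside the origin, the local class group is $\operatorname{Hom}(G,\CC^*)=G^{\mathrm{ab}}$; this is cyclic for $\mathrm{D}_n$ with $n$ odd (namely $\ZZ/4$) and for $\mathrm{E}_6$ and $\mathrm{E}_7$ (namely $\ZZ/3$ and $\ZZ/2$), and it is \emph{trivial} for $\mathrm{E}_8$. So cyclicity of the local class group excludes none of these types, and your claim that a $\mathrm{D}$ or $\mathrm{E}$ point has a class group with two or three generators is false. The invariant that works, and the one used in \cite{MP:cb1}, is the local \emph{fundamental} group: the fibres of $f$ over a punctured neighbourhood of $o$ are (possibly degenerate) conics, hence simply connected, so $\pi_1(Z\setminus\{o\})$ is generated by the images of the (cyclic) local fundamental groups of the terminal points of $X$ on $f^{-1}(o)$ and is therefore abelian; the $\mathrm{D}$ and $\mathrm{E}$ points have non-abelian binary polyhedral local fundamental groups, and an abelian small subgroup of $GL_2(\CC)$ acting freely outside the origin is cyclic. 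Combined with the soft part you correctly identify (rationality/canonicity of $Z$), this forces type \typeDu{A}{}. Without replacing the class-group step by this fundamental-group step, the ``alternative route'' does not close, and the primary route is not an independent proof.
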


The following simple fact is very useful for working on birational transformations of $\QQ$-conic bundles.

\begin{sproposition}[\cite{Morrison-1985}]
\label{proposition-Morrison}
Let $\al: S'\to S$ be a proper birational contraction between
surfaces with Du Val singularities such that $\uprho(S'/S)=1$. Then
$\al$ is either a crepant contraction or a
weighted blowup with weights $(1,a)$ of a smooth point.
\end{sproposition}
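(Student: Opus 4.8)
The plan is to reduce to an analytically local statement near the $\al$-fibre, pass to the minimal resolutions of $S$ and $S'$, and recover $\al$ from the dual graph of the exceptional curves. First I would show $\al$ contracts a single irreducible curve: if $\al$ is not an isomorphism, then $\uprho(S'/S)=1$ makes $\NE(S'/S)$ a single ray, so any two $\al$-exceptional curves $F_1,F_2$ satisfy $F_1\equiv cF_2$ over $S$ with $c>0$, and intersecting this relation with $F_1$ — using that the intersection form on an $\al$-fibre is negative definite, so $F_1^2<0$ while $F_1\cdot F_2\ge 0$ — gives a contradiction unless there is a single exceptional curve $F$, contracted to a point $p\in S$. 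Replacing $S'$ by its germ along $F$ and $S$ by the germ at $p$ (so $\Sing S\subseteq\{p\}$ and $\Sing S'\subseteq F$), write $K_{S'}=\al^*K_S+dF$ with $d=a(F,S)\ge 0$, since Du Val singularities are canonical; then $\al$ is crepant precisely when $d=0$, which is the first alternative, and otherwise $K_{S'}\cdot F=dF^2<0$.

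So assume $d>0$. I would pass to the minimal resolution $\mu'\colon\tilde S\to S'$ and let $\tilde F\subset\tilde S$ be the proper transform of $F$. As $\mu'$ is crepant (minimal resolution of Du Val singularities), $K_{\tilde S}\cdot\tilde F=K_{S'}\cdot F<0$, while $\tilde F^2<0$ because $\tilde F$ is contracted; adjunction on the smooth surface $\tilde S$ forces $\tilde F$ to be a $(-1)$-curve. Let $\mu\colon\hat S\to S$ be the minimal resolution and factor $\al\mu'=\mu\nu$ with $\nu\colon\tilde S\to\hat S$ a composition of point blowups; since the $\mu$-exceptional curves are $(-2)$-curves, the $(-1)$-curve $\tilde F$ must be $\nu$-exceptional. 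Now I would exploit the identity $\mu'^*(dF)=K_{\tilde S}-\nu^*K_{\hat S}$, which identifies the discrepancy divisor of $\nu$ (all coefficients $\ge 1$), together with $\mu'^*F\cdot\hat A=0$ for $\hat A$ the proper transform of any $\mu$-exceptional curve $A$: comparing supports shows $\hat A$ occurs with coefficient $0$ in $\mu'^*F$, and then the vanishing intersection (a sum of nonnegative terms) forces $\hat A$ to be disjoint from $\operatorname{Exc}(\nu)$. Since $(\al\mu')^{-1}(p)$ is connected and equals the union of $\operatorname{Exc}(\nu)$ with these $\hat A$'s, no $A$ can exist: $p$ is a smooth point, $\hat S=S$ near $p$, and $\nu=\al\mu'$.

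It remains to identify $\al$. Now $\nu\colon\tilde S\to S$ is a composition of blowups of the smooth point $p$, $\operatorname{Exc}(\nu)=\{\tilde F,G_1,\dots,G_s\}$ where $\tilde F$ is a $(-1)$-curve and $G_1,\dots,G_s$ are the $(-2)$-curves resolving the Du Val points of $S'$ on $F$, and $\mu'$ contracts exactly $G_1,\dots,G_s$. Running $\nu$ backwards, the last blowup must create $\tilde F$ (its exceptional curve is a $(-1)$-curve and $\tilde F$ is the only one in $\operatorname{Exc}(\nu)$); moreover a node of two $(-1)$-curves cannot occur in an exceptional configuration over a smooth surface point, because contracting one of them would produce a curve of self-intersection $0$ inside a fibre of a birational morphism. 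An induction on $s$ then shows that $\{\tilde F,G_1,\dots,G_s\}$ is a chain with $\tilde F$ at one end and self-intersection sequence $[-1,-2,\dots,-2]$; hence $\mu'$ contracts a chain of $s$ $(-2)$-curves, so $S'$ has a single $A_s$ point lying on $F$. Since this is precisely the minimal resolution of the weighted blowup of $p$ with weights $(1,s+1)$, and a normal surface germ is recovered from its minimal resolution and the set of contracted curves, $\al$ is that weighted blowup. The main obstacle is this last step — pinning down the dual graph of $\operatorname{Exc}(\nu)$ — together with the connectedness argument in the previous paragraph that excludes singular $p$.
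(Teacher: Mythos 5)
The paper gives no proof of this proposition at all: it is quoted directly from Morrison's classification of birational morphisms between surfaces with rational double points, so there is no in-text argument to compare yours against. Your reconstruction is correct and is, in substance, the standard (Morrison-style) argument: one exceptional curve $F$ from $\uprho(S'/S)=1$ and negative definiteness; the crepant case split off by the sign of $d=a(F,S)$; and, when $d>0$, the identification of the exceptional configuration of $\nu\colon\tilde S\to S$ as a chain $[-1,-2,\dots,-2]$ over a smooth point, which pins down the $(1,s+1)$-weighted blowup. Two places deserve a word of tightening, neither of which is a real gap. First, the middle step can be shortened: after localizing, every singular point of $S'$ lies on $F$, so every $\mu'$-exceptional curve occurs with \emph{strictly positive} coefficient in $\mu'^*F$ (negative definiteness plus connectedness of each exceptional fibre of $\mu'$); hence $\Supp(\mu'^*F)=\operatorname{Exc}(\mu')\cup\{\tilde F\}$, and equating this with $\Supp(K_{\tilde S}-\nu^*K_{\hat S})=\operatorname{Exc}(\nu)$ already forces $\operatorname{Exc}(\nu)=\operatorname{Exc}(\mu')\cup\{\tilde F\}$; since this set also equals $\operatorname{Exc}(\nu)\cup\{\hat A_j\}$ and no proper transform $\hat A_j$ can be $\nu$-exceptional, there are no $\hat A_j$ at all and $p$ is smooth --- your detour through $\mu'^*F\cdot\hat A=0$ and connectedness of the fibre works but is not needed. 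Second, at the very end, ``same dual graph as the minimal resolution of the $(1,s+1)$-weighted blowup'' does not by itself identify $\al$; you should add that the chain $[-1,-2,\dots,-2]$ arises from a sequence of \emph{free} infinitely near points, which therefore lie on a smooth curve germ through $p$, and choosing that germ as a coordinate axis exhibits $\nu$ as the standard resolution of the weighted blowup in suitable (analytic) coordinates --- which is exactly the form in which the paper uses the statement in Theorem~\ref{thm:link:exist} --- after which uniqueness of the contraction of a fixed negative definite configuration gives $\al$.
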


$\QQ$-conic bundle germs over singular base have very precise descriptions. Up to analytic isomorphisms 
every such germ is the quotients of another $\QQ$-conic bundle germ 
$(X^{\flat}\supset C^{\flat})$ over smooth base by a cyclic group. 
Moreover $X^{\flat}$ is of index $1$ or $2$ (see Proposition~\ref{prop:Gor}\ref{prop:Gor3} and Theorem~\ref{th-index=2}).
Below we present the classification following \cite{MP:cb1} and \cite{MP:cb2}.

\begin{stheorem}
\label{thm:tab:sing}
Let $(X\supset C)$ be a $\QQ$-conic bundle germ and let $f: (X\supset C)\to (Z\ni o)$ be the corresponding contraction.
Assume that the point $Z\ni o$ is singular. Then 
$(X\supset C)$ is biholomorphic to the quotient 
\[
(X\supset C)=(X^{\flat}\supset C^{\flat})/\mumu_r,
\]
where $(X^{\flat}\supset C^{\flat})$ is a $\QQ$-conic bundle germ over a smooth base $(Z^{\flat}\ni o^{\flat})$ and the actions of $\mumu_r$ on 
$(X^{\flat}\supset C^{\flat})$ is free in codimension two. The contraction $f^{\flat}: (X^{\flat}\supset C^{\flat})\to (Z^{\flat}\ni o^{\flat})$ is equivariant, the induced $\mumu_r$-action on 
$(Z^{\flat}\ni o^{\flat})$ is free outside $o^{\flat}$, and 
\[
(Z\ni o)=(Z^{\flat}\ni o^{\flat})/\mumu_r.
\]
The variety $X^{\flat}$ is realized as a complete intersection in $\PP\times Z^{\flat}$, where $\PP$ is some \textup(weighted\textup) projective space,
so that $f^{\flat}$ is the projection to the second factor and the action of $\mumu_r$ on $X^{\flat}$ is induced by an action on $\PP\times Z^{\flat}$. The equations of $X^{\flat}$ and weights of the 
$\mumu_r$-action are listed in Table~\xref{tab:sing}.
\end{stheorem}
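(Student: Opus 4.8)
The plan is to realize $(X\supset C)$ as a cyclic quotient of a $\QQ$-conic bundle germ over a smooth base, and then to make all the data explicit.

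First I would use Theorem~\ref{thm:DuVal}: the germ $(Z\ni o)$ is Du Val of type $\mathrm{A}_{r-1}$ for some $r\ge 2$, so in suitable analytic coordinates $(Z\ni o)\cong(\CC^2\ni 0)/\mumu_r$ with $\mumu_r$ acting via weights $(1,-1)$. Let $q\colon(Z^\flat\ni o^\flat)=(\CC^2\ni 0)\to(Z\ni o)$ be the quotient map; it is a Galois cover with group $\mumu_r$, \'etale over $Z\setminus\{o\}$, and the $\mumu_r$-action on $Z^\flat$ is free away from $o^\flat$. I would then set $X^\flat$ to be the normalization of $X\times_Z Z^\flat$, with $f^\flat\colon X^\flat\to Z^\flat$ the first projection and $\nu\colon X^\flat\to X$ the (normalized) second projection. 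Because $q$ is \'etale over $Z\setminus\{o\}$ and every fiber of $f$ is one-dimensional, $\nu$ is \'etale in codimension one; hence $X^\flat$ is normal with terminal singularities, $K_{X^\flat}=\nu^*K_X$, the divisor $-K_{X^\flat}$ is $f^\flat$-ample (pull back along the finite morphism $\nu$), and the fibers of $f^\flat$ are one-dimensional. So $f^\flat\colon(X^\flat\supset C^\flat)\to(Z^\flat\ni o^\flat)$, with $C^\flat:=(f^\flat)^{-1}(o^\flat)_{\mathrm{red}}$, is a $\QQ$-conic bundle germ over a smooth base. By the universal property of normalization the $\mumu_r$-action on $Z^\flat$ lifts uniquely to $X^\flat$; it makes $f^\flat$ equivariant, identifies $(X\supset C)=(X^\flat\supset C^\flat)/\mumu_r$ and $(Z\ni o)=(Z^\flat\ni o^\flat)/\mumu_r$, and is free in codimension two, since any fixed point of $\mumu_r$ on $X^\flat$ must lie over $o^\flat$, hence inside the curve $C^\flat$.

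Next I would invoke the classification of $\QQ$-conic bundle germs over a smooth base from \cite{MP:cb1}: such a germ has index $1$ or $2$. If $X^\flat$ is Gorenstein, Proposition~\ref{prop:Gor}\ref{prop:Gor3} gives an embedding $X^\flat\subset\PP^2\times Z^\flat$ over $Z^\flat$ with conic fibers; if $\Index(X^\flat)=2$, Theorem~\ref{th-index=2} gives an embedding $X^\flat\subset\PP(1^3,2)\times Z^\flat$ over $Z^\flat$ cut out by two equations of the shape~\eqref{eq-eq-index2}. In both cases the ambient (weighted) projective bundle is trivial, $f^\flat$ is the second projection, and $-K_{X^\flat}$ is the relative hyperplane class.

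Finally I would transport the $\mumu_r$-action to this model. Its action on $Z^\flat=\CC^2$ is linear with weights $(1,-1)$; since it preserves $-K_{X^\flat}$, hence the relative hyperplane class, it is induced by a fiberwise-linear action on $\PP\times Z^\flat$, which after a further coordinate change is diagonal on $\PP$. Imposing that the quotient $X^\flat/\mumu_r=X$ again be terminal, that the action be free in codimension two, and that the defining equation(s) be $\mumu_r$-semi-invariant leaves, up to analytic isomorphism, only finitely many possibilities for the weights of the action on $\PP$ and for the coefficients of the quadratic forms; running through them produces exactly the entries of Table~\ref{tab:sing}. I expect the hard part to be twofold, and to rely essentially on \cite{MP:cb1} and \cite{MP:cb2}: the index bound $\Index(X^\flat)\le 2$, and the final case-by-case enumeration of the admissible $\mumu_r$-actions compatible with terminality of the quotient.
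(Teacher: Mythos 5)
The paper gives no proof of Theorem~\ref{thm:tab:sing}: it is quoted verbatim from \cite{MP:cb1} and \cite{MP:cb2}, with only the preceding paragraph indicating the shape of the argument (every such germ is a cyclic quotient of a germ of index $\le 2$ over a smooth base). Your covering construction is the correct first step and matches those references: write $(Z\ni o)=(\CC^2\ni 0)/\mumu_r$ via Theorem~\ref{thm:DuVal}, let $X^\flat$ be the normalization of $X\times_Z Z^\flat$, and transfer terminality across the quasi-\'etale cover $\nu$. Two small points there deserve attention: connectedness of $X\times_Z Z^\flat$ needs a word (it holds because $f$ has connected fibers, so $\pi_1(X\setminus C)\twoheadrightarrow\pi_1(Z\setminus\{o\})\simeq\mumu_r$); and your argument for freeness only places the non-free locus inside the one-dimensional set $C^\flat$, which is freeness in codimension one --- to get freeness in codimension two you must also exclude a nontrivial stabilizer along a whole component of $C^\flat$, which follows because such a stabilizer (necessarily without quasi-reflections, $\nu$ being \'etale in codimension one) would make $X$ singular along a curve, contradicting that terminal threefold singularities are isolated.

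The genuine gap is the sentence ``such a germ has index $1$ or $2$''. There is no such theorem about $\QQ$-conic bundle germs over a smooth base in general: Theorem~\ref{th-index=2} takes ``index $\le 2$'' as a \emph{hypothesis}, not a conclusion, and $\QQ$-conic bundle germs over smooth base points with non-Gorenstein points of higher index do occur --- this is precisely why Lemma~\ref{lemma:w-blowup} must treat components of types \typeb{IC}, \typeb{IIB}, \typeb{kAD}, \typeb{k3A} and \typeb{k2A}, whose indices are unbounded, and why Corollary~\ref{cor:Gor} is not a triviality. The bound $\Index(X^\flat)\le 2$ is a property of the particular covers $X^\flat$ arising from \emph{singular} base points, and proving it (together with pinning down the admissible $\mumu_r$-actions and hence the rows of Table~\ref{tab:sing}) is where essentially all of the content of \cite{MP:cb1} and \cite{MP:cb2} lies; it rests on the general elephant theorem for $\QQ$-conic bundles and a delicate local analysis, not on a ready-made classification over smooth bases that one can simply invoke. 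You do list the index bound among the ``hard parts'' at the end, which is the right instinct, but as written the middle of your argument leans on a nonexistent theorem; the rest of the outline (triviality of the ambient bundle, linearization and diagonalization of the $\mumu_r$-action, semi-invariance of the equations) is fine once that bound is granted.
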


\begin{scorollary}
In the notation of Theorem~\xref{thm:tab:sing} the base $(Z\ni o)$ is of type \typeDu{A}{r-1}. 
\end{scorollary}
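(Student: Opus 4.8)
The statement to prove is the Corollary asserting that, in the notation of Theorem~\xref{thm:tab:sing}, the base germ $(Z\ni o)$ is a Du Val singularity of type $\mathrm{A}_{r-1}$. The plan is to combine Theorem~\xref{thm:tab:sing} with a direct analysis of the cyclic quotient $(Z^{\flat}\ni o^{\flat})/\mumu_r$. By Theorem~\xref{thm:tab:sing}, the base point $o\in Z$ is obtained as the quotient of a \emph{smooth} surface germ $(Z^{\flat}\ni o^{\flat})\cong (\CC^2\ni 0)$ by a cyclic group $\mumu_r$ whose action is free outside $o^{\flat}$. A finite cyclic group acting on $(\CC^2,0)$ freely in codimension one necessarily acts by a linear (in suitable analytic coordinates) diagonal representation, so in appropriate coordinates $(u,v)$ the generator acts as $(u,v)\mapsto(\zeta u,\zeta^a v)$ for a primitive $r$-th root of unity $\zeta$ and some $a$ with $\gcd(a,r)=1$; freeness outside the origin forces $\gcd(a,r)=1$ exactly. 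Hence $(Z\ni o)\cong \CC^2/\mumu_r(1,a)$ with $\gcd(a,r)=1$, which is the standard cyclic quotient surface singularity of type $\tfrac1r(1,a)$.

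Next I would pin down which $a$ actually occurs, and show the quotient is Du Val. A priori $\CC^2/\mumu_r(1,a)$ is Du Val only when $a\equiv -1\pmod r$, in which case it is exactly the $\mathrm{A}_{r-1}$ singularity; for other $a$ it is a non-Du-Val cyclic quotient. But Theorem~\xref{thm:DuVal} already tells us that \emph{any} singularity of the base $Z$ of a $\QQ$-conic bundle is Du Val of type $\mathrm{A}$. Applying Theorem~\xref{thm:DuVal} to the $\QQ$-conic bundle $f$ at the point $o$, the germ $(Z\ni o)$ must be Du Val of type $\mathrm{A}_n$ for some $n$. Combining with the identification $(Z\ni o)\cong \CC^2/\mumu_r(1,a)$, and using that the local fundamental group of the $\mathrm{A}_n$ singularity is cyclic of order $n+1$ while $\pi_1^{\mathrm{loc}}(\CC^2/\mumu_r(1,a)\setminus o)=\mumu_r$ (since $\mumu_r$ acts freely in codimension one and in fact freely away from the origin), we get $n+1=r$, i.e. $(Z\ni o)$ is of type $\mathrm{A}_{r-1}$.

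Alternatively, if one prefers to avoid quoting $\pi_1^{\mathrm{loc}}$, one can argue purely from the equations: inspect Table~\xref{tab:sing}, where for every entry the weights of the $\mumu_r$-action on the base coordinates $(u,v)$ of $Z^{\flat}$ are recorded, and check case by case that they are always of the form $(1,-1)$ up to a common unit, so that $(Z\ni o)=\CC^2/\mumu_r(1,-1)=\mathrm{A}_{r-1}$ in each case. This is a finite, mechanical verification given the table.

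\medskip

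\noindent\emph{Main obstacle.} The only subtle point is making rigorous the claim that $(Z\ni o)$ has \emph{exactly} type $\mathrm{A}_{r-1}$ rather than merely being \emph{some} cyclic quotient singularity with a $\mumu_r$-quotient presentation; that is, matching the integer $r$ appearing in Theorem~\xref{thm:tab:sing} with the subscript of the Du Val type. This is resolved either by the local-fundamental-group count above (clean, but uses that the $\mumu_r$-action on $Z^{\flat}\setminus o^{\flat}$ is free, which is exactly what Theorem~\xref{thm:tab:sing} guarantees), or by the table-by-table bookkeeping. Everything else — that a finite cyclic group acting freely in codimension one on a smooth surface germ is linearizable and diagonalizable, and that $\CC^2/\mumu_r(1,-1)$ is the $\mathrm{A}_{r-1}$ singularity — is classical.
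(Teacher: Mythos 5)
Your proof is correct. The paper itself offers no written proof: the corollary is stated as an immediate consequence of Theorem~\xref{thm:tab:sing}, the intended justification being exactly your ``alternative'' route --- one reads off from the last column of weights in Table~\xref{tab:sing} that in every case the $\mumu_r$-action on the base coordinates $(u,v)$ of $Z^{\flat}$ has weights $(c,-c)$ with $\gcd(c,r)=1$, so that $(Z\ni o)=\CC^2/\mumu_r(1,-1)$ is the \typeDu{A}{r-1} point. Your primary argument is genuinely different and arguably cleaner: it avoids the case-by-case inspection entirely by combining the quotient presentation $(Z\ni o)=(Z^{\flat}\ni o^{\flat})/\mumu_r$ with the action free off $o^{\flat}$ (which gives $\pi_1$ of the punctured germ equal to $\mumu_r$, since the punctured smooth germ is simply connected) with Theorem~\xref{thm:DuVal} (which forces the singularity to be of type \typeDu{A}{n} for some $n$, hence with cyclic local fundamental group of order $n+1$), yielding $n+1=r$. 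What this buys is robustness --- it would survive any enlargement of the table --- at the cost of invoking Theorem~\xref{thm:DuVal} as an external input, whereas the table check is self-contained given the classification. One cosmetic remark: the linearization/diagonalization paragraph is not actually needed for your $\pi_1$ argument (only for the fallback identification $(Z\ni o)\cong\CC^2/\mumu_r(1,a)$), and freeness in codimension one is what forces the weights to be units, while linearizability itself is just Cartan's lemma and holds for any finite group action on a smooth germ.
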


\begin{snotation}[in Table~{\xref{tab:sing}}]
We identify $(Z^{\flat}\ni o^{\flat})$ with a disk in $(\CC^2\ni 0)$ with coordinates $u,\, v$
and let $\mathfrak{m}$ be the maximal ideal $(u,\, v)\subset \CC\{u,\, v\}$.
We let $x_1,\dots,x_n$ be homogeneous coordinates in $\PP$. 
The action of $\mumu_r$ on $\PP\times Z^{\flat}\subset \PP\times \CC^2$ is diagonal and described by the collection of weights
$\wt(x_1,\dots,x_n;u,v)$.
The $q_i=q_i(x_1,x_2,x_2;u,v)$ are ternary quadratic forms in $x_1,x_2,x_3$ with coefficients in the maximal ideal 
$\mathfrak{m}=(u,\,v)\subset \CC\{u,\, v\}$. In the last column $\SingnG{X}$ denotes the collection 
of non-Gorenstein points of $X$.
\end{snotation}

\begin{table}[h]
\renewcommand{\arraystretch}{1.3}
\begin{longtable}{|l|m{0.24\textwidth}|m{0.07\textwidth}|m{0.039\textwidth}|m{0.18\textwidth}|m{0.2\textwidth}|}
\hline
Type& equations of $X^{\flat}$ & $\PP$&~$r$ &$\wt(x_1,\dots,x_n;u,v)$ & $\SingnG{X}$
\\\hline
\typeci{T}{r,a}& &$\PP^1$ & any $\ge 2$ &$(0,1; a,-a)$, $\gcd (r,a)=1$&\mbox{$\frac1r(1,a,-a)$ $\&$} $\frac1r(-1,a,-a)$
\\\hline
\typeci{k2A}{r} & $x_1^2+ux_2^2+vx_3^2=0$&$\PP^2$ & odd $\ge 3$ &
$(a, -1,0;1,-1)$ $r=2a+1$ & \mbox{$\frac 1r(a,-1,1)$ $\&$} \mbox{$\frac 1r(a+1,1,-1)$} 
\\\hline
\typeb{IE^\vee}&
$
\setlength{\arraycolsep}{1.2pt}
\begin{cases}
x_1^2+x_2^2=ux_4 +q_1 
\\[-4pt]
2x_1x_2+x_3^2=vx_4 +q_2
\end{cases}
$
\newline
$q_1\ni ux_4$,  $q_2\ni vx_4$
&$\PP(1^3,2)$&$4$&$(3,1,2,1;1,3)$& $\frac18(5,1,3)$
\\\hline
\typeb{IA^\vee{+}IA^\vee}&
$
\setlength{\arraycolsep}{1.2pt}
\begin{cases}
x_1^2+x_3^2=ux_4 +q_1
\\[-4pt]
x_2^2+x_3^2=vx_4 +q_2
\end{cases}
$
\newline
$q_1\ni ux_4$,  $q_2\ni vx_4$
&$\PP(1^3,2)$&$2$&$(1,1,0,1; 1,1)$& $\frac14(1,1,3)$
\\\hline
\typeb{IA^\vee}&
$
\setlength{\arraycolsep}{1.2pt}
\begin{cases}
x_1^2+x_2^2=ux_4 +q_1
\\[-4pt]
x_3^2=vx_4 +q_2
\end{cases}
$
\newline
$q_1\ni ux_4$,  $q_2\ni vx_4$
&$\PP(1^3,2)$&$2$&$(0,1,0,1; 1,1)$ & $\frac14(1,1,3)$
\\\hline
\typeb{II^\vee{+}II^\vee}&
$
\setlength{\arraycolsep}{1.2pt}
\begin{cases}
x_1^2+x_3^2=\theta x_4+q_1
\\[-4pt]
x_2^2+x_3^2=v x_4+q_2
\end{cases}
$
\mbox{$\theta \in \mathfrak{m}$, $\theta\not \ni u$}
&$\PP(1^3,2)$&$2$&$(1,1,0,1; 1,1)$& \typee{cAx}{4}
\\\hline
\typeb{II^\vee}&
$
\setlength{\arraycolsep}{1.2pt}
\begin{cases}
x_1^2+x_2^2=\theta x_4+q_1
\\[-4pt]
x_3^2=v x_4+q_2
\end{cases}
$
\mbox{$\theta \in \mathfrak{m}$, $\theta\not \ni u$}
&$\PP(1^3,2)$&$2$&$(0,1,0,1; 1,1)$
& \typee{cAx}{4}
\\\hline
\typek{ID}{m}&$x_1^2+x_2^2+\theta x_3^2=0$\newline $\theta\in\mathfrak{m}^2$, $m:=\rk(\theta_{(2)})$
&$\PP^2$ & $2$&
$(0,1,0; 1,1)$ & \typee{cA}2 if $m>0$ \newline\typee{cAx}2 if $m=0$ 
\\\hline
\end{longtable}
\caption{$\QQ$-conic bundles over singular base}
\label{tab:sing}
\end{table}

We often omit subscripts in types \typeci{T}{r,a}, \typeci{k2A}{r}, \typek{ID}{m}: for example,
we write \typeci{T}{r} or even \typeci{T}{} instead of \typeci{T}{r,a} if the corresponding numbers are not important.

\begin{scorollary}
\label{cor:eq:difSho}
In the notation of Theorem~\xref{thm:tab:sing} Shokurov's difficulty $\dif(X)$  is given by  the following table:
\begin{center}\renewcommand{\arraystretch}{1.3}
\begin{tabular}{|c!{\vrule width 1pt}c|c|c|c|c|c|c|}
\hline
Type
&
\typeci{T}{r}, \typeci{k2A}{r}
&
\typeb{IA^\vee}, \typeb{IA^\vee{+}IA^\vee}
&
\typeb{IE^\vee}
&
\typek{ID}2
&
\typek{ID}1
&
\typek{ID}0
&
\typeb{II^\vee}, \typeb{II^\vee{+}II^\vee}
\\\hline
$\dif(X)$ &$2(r-1)$ & $3$ & $7$ &$1$ & $2$ & $1$ or $2$ & $\ge 3$
\\\hline
\end{tabular}
\end{center}
\end{scorollary}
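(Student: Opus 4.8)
The statement is proved by a local computation at each non-Gorenstein point, reading the list of such points off Table~\xref{tab:sing}.

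Since $X$ is terminal, it is smooth in codimension two, so an exceptional divisor $E$ over $X$ with $a(E,X)<1$ cannot be centered along a curve: at the generic point $\eta$ of a curve $\OOO_{X,\eta}$ is a regular two-dimensional local ring, over which exceptional discrepancies are $\ge 1$. Hence $\operatorname{center}_X(E)$ is a closed point, and by Example~\xref{ex:diff}\,(i) it must be one of the non-Gorenstein points of~$X$. As the condition $a(E,X)<1$ depends only on the analytic germ of~$X$ at that point, we obtain
\[
\dif(X)=\sum_{P}\dif(X\ni P),
\]
the sum running over the non-Gorenstein points $P$ of $X$ listed in the last column of Table~\xref{tab:sing}, where $\dif(X\ni P)$ denotes the number of divisors over the germ $(X\ni P)$ of discrepancy $<1$.

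For every type other than \typek{ID}{m}, \typeb{II^\vee} and \typeb{II^\vee{+}II^\vee}, each non-Gorenstein point of~$X$ is a terminal cyclic quotient singularity whose index $r$ is read from the table: the order of the group for \typeci{T}{r} and \typeci{k2A}{r} (which have two such points), while $r=4$ for \typeb{IA^\vee} and \typeb{IA^\vee{+}IA^\vee}, and $r=8$ for \typeb{IE^\vee}. By Example~\xref{ex:diff}\ref{ex:diff2} such a point contributes exactly $r-1$, and summing over the one or two non-Gorenstein points gives the values $2(r-1)$, $3$, $3$ and $7$ recorded in the table. For \typeb{II^\vee} and \typeb{II^\vee{+}II^\vee} the only non-Gorenstein point is a \typee{cAx}{4} point, of index~$4$, so $\dif(X)\ge 3$ by Example~\xref{ex:diff}\ref{ex:diff1}; this is precisely what is claimed.

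It remains to treat \typek{ID}{m}. Here the only non-Gorenstein point $P$ has index~$2$, so $\dif(X\ni P)$ equals the number of divisorial valuations over $(X\ni P)$ of discrepancy $\frac12$, that is, the number of Kawamata blowups of $(X\ni P)$. By Theorem~\xref{thm:tab:sing} the index-one cover $Y\to (X\ni P)$ is the germ of $X^{\flat}$ at its unique $\mumu_2$-fixed point, and restricting the equation $x_1^2+x_2^2+\theta x_3^2=0$ to the chart $x_3=1$ identifies it with
\[
Y=\bigl\{\,y_1^2+y_2^2+\theta(u,v)=0\,\bigr\},
\]
a compound Du Val point: of type $\mathrm{cA}_1$ if $m=2$, of type $\mathrm{cA}_{2k-1}$ for some $k\ge 2$ if $m=1$, and of type $\mathrm{cA}$, $\mathrm{cD}$ or $\mathrm{cE}$ if $m=0$. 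Kawamata blowups of $(X\ni P)$ pull back to $\mumu_2$-equivariant divisorial extractions of $Y$ realising the minimal discrepancy on the quotient, and the classification of such extractions \cite{Hayakawa:blowup1,Hayakawa:blowup2} gives exactly one when $m=2$, exactly two when $m=1$, and one or two when $m=0$. This yields the last three entries.

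The only substantial step is this last one: identifying the \typek{ID}{m} germ with the displayed quotient of a compound Du Val point and then determining the \emph{exact} number of its Kawamata blowups --- in particular that it equals~$2$ when $m=1$ and is $\le 2$ when $m=0$ --- which relies on the explicit classification of index-$2$ divisorial extractions of terminal threefold singularities. Everything else is bookkeeping from Table~\xref{tab:sing} and Example~\xref{ex:diff}.
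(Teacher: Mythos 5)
Your proof is correct and takes essentially the same approach as the paper, whose entire proof is the single line ``This is a consequence of Example~\xref{ex:diff}'': one localizes at the non-Gorenstein points listed in Table~\xref{tab:sing}, applies Example~\xref{ex:diff}\xref{ex:diff2} to the cyclic quotient points and Example~\xref{ex:diff}\xref{ex:diff1} to the \typee{cAx}{4} points. You are also right that for the \typek{ID}{m} columns Example~\xref{ex:diff} only yields $\dif(X)\ge 1$ and the exact values require Hayakawa's count of discrepancy-$\frac12$ extractions over the index-$2$ point; the paper leaves this implicit in the one-line proof but confirms it case by case in Section~\xref{Sect:ID} (one Kawamata blowup for \typek{ID}2 and for \typek{ID}0 with $\theta_{(2k)}$ not a square, two for \typek{ID}1 and for \typek{ID}0 with $\theta_{(2k)}$ a square).
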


\begin{proof}
This is a consequence of   Example~\ref{ex:diff}.
\end{proof}

\begin{scorollary}[see {\cite[Corollaries~11.8.1 and~11.8.2]{P:rat-cb:e}}]
\label{cor:cb:plt}
Let $f:X\to Z$ be a $\QQ$-conic bundle.
Pick a point $o\in Z$ such that~$f$ is not smooth over~$o$.
\begin{enumerate}
\item 
\label{cor:cb:plt1}
The the point~$o$ is not contained in $\Delta_{f}$ if and only if~$f$ is of type \typeci{T}{r} near $f^{-1}(o)$. 

\item 
\label{cor:cb:plt2}
The curve  $\Delta_{f}$ is smooth at~$o$ if and only if~$f$ is a standard conic bundle near $f^{-1}(o)$ and $f^{-1}(o)$ is a reduced conic.

\item 
\label{cor:cb:plt3}
The pair $(Z,\Delta_{f})$ is lc but not plt at~$o$
if and only if either~$f$ is of type~\typeci{k2A}{r} or~\typek{ID}2 near $f^{-1}(o)$,
or~$f$ is a standard conic bundle near $f^{-1}(o)$ and $f^{-1}(o)$ is a double line.
\end{enumerate}
\end{scorollary}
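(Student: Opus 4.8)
The plan is to make the statement local, replacing $X$ by a small neighbourhood of $f^{-1}(o)$ and passing to the $\QQ$-conic bundle germ $(X\supset C)$ with $C=f^{-1}(o)_{\mathrm{red}}$, and then to run through the classification of such germs, separating the case in which $o\in Z$ is a smooth point from the case in which it is singular. For each type I would write down the local equation of $\Delta_f$ at $o$ and determine the singularity of the log pair $(Z,\Delta_f)$. The point is that, since $f$ is not smooth over $o$, there are exactly four mutually exclusive possibilities for $(Z,\Delta_f)$ at $o$: $o\notin\Delta_f$ (then the pair is klt near $o$); $\Delta_f$ passes through $o$ and is smooth there (then $Z$ is automatically smooth at $o$, and the pair is plt but not klt); the pair is lc but not plt; the pair is not lc. So it suffices to place each germ in its bucket.

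\emph{Base smooth at $o$.} Here Proposition~\ref{prop:cb:uv} does almost everything. If $o\notin\Delta_f$, then $\mult_o\Delta_f\le 2$, so $X$ is Gorenstein near $f^{-1}(o)$ by Proposition~\ref{prop:cb:uv}\ref{prop:cb:uv1}; realising $X$ as a family of conics in $\PP^2\times Z$ with $\Delta_f$ cut out by the determinant, the fibre over $o$ is a smooth conic and $f$ is smooth over $o$, contrary to hypothesis. Hence over a smooth base the option $o\notin\Delta_f$ does not occur, and part~(i) must be supplied entirely by the singular-base analysis. If $\Delta_f$ passes through $o$ and is smooth there, then $X$ is smooth near $f^{-1}(o)$ by Proposition~\ref{prop:cb:uv}\ref{prop:cb:uv2}, so $f$ is a standard conic bundle; as it is not smooth over $o$ and $o$ is a smooth point of the discriminant, $f^{-1}(o)$ is a reduced pair of lines. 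The converse half of~(ii) is the classical fact that over a smooth point of the discriminant of a standard conic bundle the fibre is a reduced pair of lines. For~(iii), over a smooth base the pair $(Z,\Delta_f)$ is lc but not plt exactly when $\Delta_f$ has a node at $o$ (a higher multiplicity, or a non-nodal double point, gives a non-lc pair); and when $\Delta_f$ has a node at $o$, Proposition~\ref{prop:cb:uv}\ref{prop:cb:uv3} leaves the two options ``standard conic bundle'' and ``type \typeci{IF}{2}'', the latter being incompatible with the $\QQ$-factoriality of $X$, since the two components of $f^{-1}(o)$ then span a Weil divisor that is not $\QQ$-Cartier at the node; so $f$ is a standard conic bundle, and $f^{-1}(o)$ is the double line sitting over a node of the discriminant. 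This gives the smooth-base contributions to~(ii) and~(iii).

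\emph{Base singular at $o$.} By Theorem~\ref{thm:tab:sing} the germ $(X\supset C)$ is one of the entries of Table~\ref{tab:sing}. For each of them I would compute $\Delta_{f^{\flat}}$ as the discriminant of the conics cut out on $X^{\flat}$ (using Proposition~\ref{prop:Gor}\ref{prop:Gor3} or Proposition~\ref{prop:comp-discrim}) and push it down along the cyclic quotient $Z^{\flat}\to Z$. For type \typeci{T}{r,a} the morphism $f^{\flat}$ is a $\PP^1$-bundle, so $\Delta_{f^{\flat}}=\varnothing$ and $o\notin\Delta_f$; together with the smooth-base step this proves~(i). For type \typeci{k2A}{r} one has $\Delta_{f^{\flat}}=\{uv=0\}$ and $Z=\CC^2/\mumu_r(1,-1)$, an \typeDu{A}{r-1}-point whose pair of rulings is $\Delta_f$; the pair $(Z,\Delta_f)$ is then lc but not plt. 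The same computation for \typek{ID}{2}, where $X^{\flat}=\{x_1^2+x_2^2+\theta x_3^2=0\}$ with $\rk(\theta_{(2)})=2$, yields a nodal $\Delta_f$ through the \typeDu{A}{1}-point $o$, again lc but not plt. For the remaining singular-base types \typeb{IA^\vee}, \typeb{IA^\vee{+}IA^\vee}, \typeb{IE^\vee}, \typeb{II^\vee}, \typeb{II^\vee{+}II^\vee}, \typek{ID}{1}, \typek{ID}{0} one checks that $\Delta_{f^{\flat}}$ has multiplicity at least $3$ at $o^{\flat}$ (or is otherwise too singular), so $(Z,\Delta_f)$ is not lc; these contribute to none of~(i)--(iii). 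Collecting the two analyses yields the three equivalences. The only real work is this case-by-case verification --- above all the claim that $(Z,\Delta_f)$ is log canonical but not purely log terminal for types \typeci{k2A}{r} and \typek{ID}{2}; it is carried out in \cite[Corollaries~11.8.1 and~11.8.2]{P:rat-cb:e}, so the quickest route is in fact to deduce~(i)--(iii) from those statements combined with Theorem~\ref{thm:tab:sing}.
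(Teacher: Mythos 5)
The paper gives no argument for this corollary at all --- it is imported verbatim from \cite[Corollaries~11.8.1 and~11.8.2]{P:rat-cb:e} --- so any self-contained verification is necessarily ``a different route''. Your overall strategy (localize at $o$, treat the smooth-base case via Proposition~\xref{prop:cb:uv}, and run through Table~\xref{tab:sing} computing the discriminant of each germ via Proposition~\xref{prop:comp-discrim} in the singular-base case) is the natural reconstruction, and it works for part~(i), part~(ii), and the singular-base half of part~(iii): the bucket assignments ($\mathrm{T}_r$ has empty discriminant; $\mathrm{k2A}_r$ and $(\mathrm{ID}_2^{\vee})$ pull back to a node on $\CC^2$, hence lc but not plt; all remaining types pull back to a tacnode, an ordinary triple point, or worse, hence are not lc) are correct, granting the standard fact that log canonicity of $(Z,\Delta)$ is detected on the cyclic cover $\CC^2\to Z$.

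The genuine gap is your dismissal of type \typeci{IF}2 in the smooth-base half of~(iii). You claim it is ``incompatible with the $\QQ$-factoriality of $X$'' because the two components of $f^{-1}(o)$ span a non-$\QQ$-Cartier Weil divisor at the node. This does not work: the components of the fiber are curves, not divisors, and the relevant surfaces (the analytic-local components of $f^{-1}$ of a branch of $\Delta_f$) need not be restrictions of global algebraic divisors --- an ordinary double point is very often algebraically $\QQ$-factorial even though its local analytic class group is $\ZZ$. More to the point, the paper itself contradicts your exclusion: Example~\xref{ex:ODP} exhibits exactly this germ as (the local model of) a $\QQ$-conic bundle, and the proof of Corollary~\xref{cor:m-standard} explicitly allows the outcome ``standard or of type \typeci{IF}2'' over a normal-crossing point of the discriminant. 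Since an \typeci{IF}2 germ has nodal discriminant at a smooth point $o$, so that $(Z,\Delta_f)$ is lc but not plt there while $f$ is neither standard nor of type \typeci{k2A}{} nor \typek{ID}2, the ``only if'' direction of~(iii) as you argue it is not closed; you must either supply a genuinely global argument ruling \typeci{IF}2 out in the $\QQ$-factorial, $\uprho(X/Z)=1$ setting (none is suggested by the paper, which treats \typeci{IF}2 as actually occurring), or record it as an additional case alongside the double-line standard conic bundle, as the cited source effectively does.
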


\begin{scorollary}
\label{cor:cb:plt4}
Let $f:X\to Z$ be a $\QQ$-conic bundle and let $\Delta_1\subset \Delta_f$ be 
an irreducible complete curve such that 
the intersection $\Delta_1\cap \Supp(\Delta_f-\Delta_1)$ is either empty or is a single point.
Then $\p(\Delta_1)>0$.
\end{scorollary}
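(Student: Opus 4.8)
The plan is to argue by contradiction: suppose $\p(\Delta_1)=0$, so $\Delta_1\simeq\PP^1$ (it is a reduced irreducible curve on the surface $Z$, which has only Du Val singularities by Theorem~\ref{thm:DuVal}, hence is normal; so $\Delta_1$ normalizes to $\PP^1$ and we may as well treat $\Delta_1$ itself as a smooth rational curve, or pass to the minimal resolution of $Z$ if needed). The hypothesis says that $\Delta_1$ meets the rest of the discriminant $\Delta_f-\Delta_1$ in at most one point $o_0$. I would then run the relative MMP / adjunction machinery: restrict $f$ to a neighborhood of $\Delta_1$ and analyze the local types of $f$ along $\Delta_1$ using Corollary~\ref{cor:cb:plt} and Theorem~\ref{thm:tab:sing}. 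Since $-K_X$ is $f$-ample and $f$ is a $\QQ$-conic bundle, there is a well-known inequality relating $\deg(-K_X)$ restricted over $\Delta_1$ to the genus of $\Delta_1$ and the local discriminant data; the key point is that every point of $\Delta_1$ contributes a definite positive amount to a ``degree'' count, forcing $\Delta_1$ to be positive on too many things to be rational unless it meets $\Delta_f-\Delta_1$ in enough points.

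More precisely, the core step is the standard genus formula for conic bundles. Over the open part $Z^o=Z\setminus\Sing(Z,\Delta_f)$ where $f$ is a standard conic bundle, the restriction $S:=f^{-1}(\Delta_1)$ (made normal by blowing up, or by Lemma~\ref{lemma:St-c-b} where applicable) carries a double cover structure, and the Hurwitz formula on the normalization of $S\to\Delta_1$ relates $2\p(\Delta_1)-2$ to $-2\deg$ of an invertible sheaf plus the ramification, which is controlled by $\Delta_1\cdot(\Delta_f-\Delta_1)$ together with the singular-base points of $Z$ lying on $\Delta_1$. Each transverse intersection point of $\Delta_1$ with $\Delta_f-\Delta_1$, and each Du Val point of $Z$ on $\Delta_1$, as well as each point where $\Delta_1$ itself is singular or where $f$ has special type along $\Delta_1$, contributes nonnegatively (in fact positively at the bad points). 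Assembling these contributions and using $\p(\Delta_1)=0$ forces $\Delta_1$ to be disjoint from $\Delta_f-\Delta_1$ and from $\Sing Z$ and to have $f$ of the simplest type everywhere along it — and then one gets $\deg(-K_X|_{f^{-1}(\Delta_1)})$ incompatible with $f$-ampleness, or a direct contradiction with the classification, because a ``free'' rational curve inside the discriminant of a standard conic bundle with no further degeneration cannot exist (the double cover would be disconnected or the discriminant would be empty there).

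I would organize the write-up as follows. First, reduce to the minimal resolution $\tilde Z\to Z$ and let $\tilde\Delta_1$ be the proper transform of $\Delta_1$; by Proposition~\ref{proposition-Morrison} this resolution is a composition of crepant contractions, and the pull-back of a $\QQ$-conic bundle stays a $\QQ$-conic bundle in a neighborhood (after a sequence of links as in Corollary~\ref{cor:crepant}), so we may assume $Z$ is smooth at the cost of possibly enlarging $\Delta_f$ by finitely many additional points all lying over the points of $\Sing Z$ on $\Delta_1$ — but these all lie in the single allowed point $o_0$ or are disjoint from $\Delta_1$ by hypothesis. Second, on the smooth base, apply Corollary~\ref{cor:cb:plt}: away from finitely many points $\Delta_f$ is smooth at its points on $\Delta_1$, so $f$ is a standard conic bundle with reduced-conic fibers there, and the discriminant double cover $\Delta_1'\to\Delta_1$ is étale over that locus. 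Third, run the Hurwitz computation on this double cover and conclude. The main obstacle I anticipate is handling the \emph{non-normal-crossing} points of $\Delta_f$ on $\Delta_1$ — i.e., the points where $\Delta_1$ is tangent to $\Delta_f-\Delta_1$ is excluded by the hypothesis that the intersection is a single \emph{point}, but a cusp of $\Delta_1$ itself, or a point where $f$ has one of the non-Gorenstein types from Table~\ref{tab:sing}, still needs a local contribution bound; these are exactly the cases where one invokes that every such point raises the arithmetic genus of the relevant cover by a positive amount, which is where the classification in Theorem~\ref{thm:tab:sing} and Corollary~\ref{cor:cb:plt} does the real work.
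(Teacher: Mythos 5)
Your proposal circles the correct object --- the discriminant double cover $\hat\Delta_1\to\Delta_1$, \'etale away from the at most one point $\Delta_1\cap\Supp(\Delta_f-\Delta_1)$ --- but it never lands the actual contradiction, and the route you mainly advocate (a Hurwitz/degree count in which ``each bad point contributes positively'') is not how the argument closes and I do not believe it can close that way. The paper's proof is much shorter: since $\p(\Delta_1)=0$, the curve $\Delta_1$ is a smooth rational curve, so by Corollary~\xref{cor:cb:plt}\xref{cor:cb:plt2} the morphism $f$ is a standard conic bundle with \emph{reduced} degenerate fibers over all of $\Delta_1\setminus\Supp(\Delta_f-\Delta_1)$ (no reduction to the minimal resolution, no case analysis against Table~\xref{tab:sing}, and no worry about Du Val points of $Z$ on $\Delta_1$ is needed --- smoothness of $\Delta_f$ at such a point already forces $X$, hence $Z$, to be smooth there). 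Hence $\tau$ is unramified outside a single point; a connected double cover of $\PP^1$ cannot be branched at exactly one point (Hurwitz parity) and an everywhere unramified one is split because $\PP^1$ is simply connected. So $\tau$ splits.

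The decisive step that is missing from your write-up is what to do with the splitting. Disconnectedness of $\hat\Delta_1$ is not a contradiction by itself, and neither is any degree computation for $-K_X$ on $f^{-1}(\Delta_1)$ (that surface is a perfectly good conic fibration over $\PP^1$ with $-K_X$ ample on it). The point is that the two sheets of $\hat\Delta_1$ separate the line components of the degenerate conics into two families, so the divisor $f^{-1}(\Delta_1)$ breaks as $D_1+D_2$ with $D_1\ne D_2$; writing $F=\ell_1+\ell_2$ for a degenerate fiber with $\ell_i\subset D_i$ one gets $D_1\cdot\ell_1<0<D_1\cdot\ell_2$ while $D_1\cdot F=0$, which is impossible when $\uprho(X/Z)=1$ (every divisor would have to be relatively proportional to $-K_X$, hence numerically a pullback on such fibers). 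You never invoke $\uprho(X/Z)=1$ anywhere, and without it the statement is simply false --- e.g.\ a product family of reducible conics over a disk crossed with the hypothesis curve gives split covers with no contradiction. So the gap is concrete: identify the splitting, convert it into reducibility of $f^{-1}(\Delta_1)$, and contradict the relative Picard number one condition.
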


\begin{proof}
Assume the converse. Then $\Delta_1$ is a smooth rational curve and $f$ is a standard conic bundle 
over $\Delta_1 \setminus \Supp(\Delta_f-\Delta_1)$ by Corollary~\ref{cor:cb:plt}.
Let $\hat \Delta_1$  be the curve parameterizing
irreducible components
of degenerate conics over $\Delta_1$. We have a natural double cover $\tau:\hat \Delta_1\to \Delta_1$,
which is unramified outside $\Delta_1\cap \Supp(\Delta_f-\Delta_1)$.
Then $\tau$ splits. This means that the divisor $f^{-1}(\Delta_1)$ is reducible.
However, the latter contradict our condition $\uprho(X/Z)=1$.
\end{proof}

\section{Proof of Theorem~\xref{thm:link:exist}}
\label{section:ex}

In this section we prove Theorem~\xref{thm:link:exist}.
In particular, we construct Sarkisov link \eqref{eq:link:0}.
The general construction of links is natural and well-known (see e.g. \cite{Corti95:Sark}).
For the definition and basic properties of singularities of pairs 
whose boundary is a combination of linear systems we refer to \cite{Corti95:Sark}, \cite[\S~4]{Kollar95:pairs} or \cite{P:G-MMP}

\begin{notation}
\label{not:SL}
Let $f: X\to Z$ be a $\QQ$-conic bundle.
We regard~$Z$ as a small neighborhood of a point $o\in Z$.
In particular, we assume that~$X$ is smooth outside $f^{-1}(o)$.
\end{notation}

\begin{proof}[Proof of Theorem~\xref{thm:link:exist}~\xref{thm:link:exist1}]
\renewcommand{\qedsymbol}{}
Take two general hyperplane sections $l_1$ and $l_2$ of~$Z$ passing through~$o$.
Let $\LLL$ be the pencil generated by $nl_1$ and $nl_2$ for $n\gg 0$ and let $\MMM:=f^*\LLL$.
Clearly, $\MMM$ has no fixed components.
\end{proof}

\begin{sclaim}
The pair $(X,\MMM)$ is not canonical.
\end{sclaim}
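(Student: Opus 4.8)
The plan is to produce a single divisorial valuation over $X$ whose log discrepancy for the pair $(X,\MMM)$ is negative. The natural candidate has centre the central fibre $C:=f^{-1}(o)_{\mathrm{red}}$: by the very construction of $\LLL$, every member of $\MMM$ is extremely singular along $C$, and so the divisor $E$ extracted by blowing up $X$ along $C$ (the component of the exceptional divisor dominating $C$) should do the job.

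First I would estimate $\ord_E\MMM$. Since $\LLL$ is spanned by $nl_1$ and $nl_2$, every member of $\LLL$ is cut out near $o$ by a $\CC$-linear combination of $\ell_1^{\,n}$ and $\ell_2^{\,n}$, where $\ell_i$ is a local equation at $o$ of the hyperplane section $l_i$; as $l_i$ passes through $o$ we have $\ell_i\in\mathfrak m_o$, so such a combination lies in $\mathfrak m_o^{\,n}$, and hence every member of $\LLL$ has multiplicity $\ge n$ at $o$ (if $o\in Z$ is a Du Val point the estimate only improves, since then $\mult_o l_i\ge 2$). On the other hand the scheme-theoretic fibre $f^{-1}(o)$ has ideal sheaf $f^{*}\mathfrak m_o\cdot\OOO_X$ and contains the reduced curve $C$, so $f^{*}\mathfrak m_o\cdot\OOO_X\subseteq I_C$ and therefore $f^{*}(\mathfrak m_o^{\,n})\cdot\OOO_X\subseteq I_C^{\,n}$. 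Pulling back an equation of a member $L\in\LLL$ shows that $M:=f^{*}L$ lies in $I_C^{\,n}$ near the generic point of $C$, whence $\ord_E\MMM\ge n$.

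It remains to compute $a(E,X)$. Since $X$ is terminal its singular locus is finite, so the curve $C$ is not contained in $\Sing(X)$; over the generic point of $C$ the blowup above is therefore the blowup of a smooth curve in a smooth threefold, and $a(E,X)=1$. Consequently
\[
a(E,X,\MMM)=a(E,X)-\ord_E\MMM\ \le\ 1-n\ <\ 0
\]
already for $n\ge 2$, a fortiori for $n\gg 0$, so $(X,\MMM)$ is not canonical (indeed not log canonical for $n\ge 3$). I do not foresee a genuine obstacle here, since $\MMM$ was arranged precisely so that the claim would be cheap; the two points that deserve a line of justification are the inclusion $f^{*}\mathfrak m_o\cdot\OOO_X\subseteq I_C$, which just records that the scheme $f^{-1}(o)$ contains the reduced curve $C=f^{-1}(o)_{\mathrm{red}}$, and the equality $a(E,X)=1$, which relies on $C$ avoiding the finite singular locus of $X$; the case where $o\in Z$ is singular only makes the estimate better.
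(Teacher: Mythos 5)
Your proof is correct, and it takes a genuinely different route from the paper. The paper argues by contradiction: assuming $(X,\MMM)$ canonical, it adds a general hyperplane section $H$ of $X$, applies Bertini and inversion of adjunction to get that $(H,(1-\varepsilon)\MMM)$ is klt, and then pushes this down via the ramification formula to conclude that $(Z,(1-\varepsilon)\LLL)$ is klt, which is absurd since a general member of $\LLL$ has multiplicity $\ge n$ at $o$. You instead exhibit an explicit valuation with negative discrepancy for the pair: the divisor $E$ obtained by blowing up (a component of) the reduced central fibre $C$, for which $a(E,X)=1$ because terminal threefold singularities are isolated, while $\ord_E\MMM\ge n$ because local equations of members of $\LLL$ lie in $\mathfrak m_o^{\,n}$ and $f^*\mathfrak m_o\cdot\OOO_X\subseteq I_C$. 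Both arguments are sound; yours is more elementary (no inversion of adjunction), pins down a concrete bad valuation, and shows the claim already for $n\ge 2$ (and failure of log canonicity for $n\ge 3$), whereas the paper's descent to the base is a softer, more robust reduction that does not require guessing which valuation is responsible. Two small points of hygiene, neither of which affects correctness: if $C$ is reducible you should fix one irreducible component $C_1$ and work at its generic point (as you essentially do), and the parenthetical about the estimate ``improving'' at a Du Val point is unnecessary --- the argument only uses $\ell_i\in\mathfrak m_o$, which holds regardless of whether $o$ is smooth.
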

\begin{proof}
Assume the converse, that is, the pair $(X,\MMM)$ is canonical.
Then for a general hyperplane section~$H$ of~$X$ 
the restriction $f_H: H\to Z$ is a finite morphism and the pair $(X,H+(1-\varepsilon )\MMM)$ 
is plt by Bertini's theorem. By the inversion of adjunction the pair $(H, (1-\varepsilon )\MMM)$ is klt.
By the ramification formula we have
\[
K_H+ (1-\varepsilon )\MMM -R= f_H^* (K_Z+ (1-\varepsilon )\LLL),
\]
where $R$ is the ramification divisor. Then the pair $(Z, (1-\varepsilon )\LLL)$ is also klt
(see e.g. \cite[Proposition~3.16]{Kollar95:pairs}). This is not possible for $n\gg 0$
because the multiplicity of a general member of $\LLL$ is al least $n$ at $o$. The contradiction proves the claim.
\end{proof}

\begin{proof}[Proof of Theorem~\xref{thm:link:exist}~\xref{thm:link:exist1} \textup(continued\textup)]
Take $0<c<1$ so that $(X,c\MMM)$ is maximally canonical,
i.e. $(X,c\MMM)$ is canonical and $(X,c'\MMM)$ is terminal for any $c'<c$. Then there exists an extremal blowup
$p: \tilde{X}\to X$ which is crepant with respect to $K_X+c\MMM$, i.e.
\[
K_{\tilde X}+c\tilde \MMM\qq K_X+c\MMM,
\]
where $\tilde \MMM:= p^{-1}_*\MMM$ 
(see \cite[Proposition~2.10]{Corti95:Sark} or \cite[Claim~4.5.1]{P:G-MMP}). Let $E\subset \tilde X$ be the $p$-exceptional divisor. 
Run the $(K_{\tilde X}+c\tilde \MMM)$-MMP on $\tilde X$ over~$Z$.
On each step the property of $\tilde X$ to have terminal $\QQ$-factorial singularities is preserved.
After a number of log flips (which are in fact flips, flops, and antiflips) we obtain a non-birational 
Mori contraction $f': X'\to Z'$.

Assume that $f'$ is divisorial. 
Since $\uprho(Z'/Z)=1$, the morphism $\al: Z'\to Z$ is a $\QQ$-conic bundle. 
Then the $f'$-exceptional divisor must coincide with the proper transform of $E$.
Hence, the map $g= f'\comp \chi\comp p^{-1}:X \dashrightarrow Z'$ is an isomorphism in codimension $1$. 
Since divisors $-K_{X}$
and $-K_{Z'}$ are ample over~$Z$, the map $g$ must be an isomorphism. Clearly, $g_*\MMM=f'_*\chi_*\tilde \MMM$.
On the other hand, since $p$ is crepant with respect to $K_X+c\MMM$
and $f'$ is $(K_{X'}+c\chi_*\tilde \MMM)$-negative we have
\[
a(E,X, c\MMM)=0,\qquad a(E, Z', c g_*\MMM)>0
\]
and for any exceptional divisor $F$ over $\tilde X$ we have
\[
0\le a(F,X, c\MMM)=a(F, \tilde X, c\tilde \MMM)\le a(F, X', c\chi_*\tilde \MMM) \le a(F, Z', c g_*\MMM).
\]
Therefore, the number of log crepant divisors of $(Z', cf'_*\chi_*\tilde \MMM)$ 
is strictly less that this number of $(X,c\MMM)$. In other words, the singularities of 
$(Z', cf'_*\chi_*\tilde \MMM)$ 
are ``better'' than those of $(X,c\MMM)$.
On the other hand, $(Z', cf'_*\chi_*\tilde \MMM)\simeq (X,c\MMM)$.
The contradiction shows that $f'$ is not a divisorial contraction.
Then the only possibility is that $f'$ is a $\QQ$-conic bundle.
We obtain the link \eqref{eq:link:0}. The last assertion follows from Proposition~\ref{proposition-Morrison}
because the singularities of~$Z$ and $Z'$ are at worst Du Val (Theorem~\ref{thm:DuVal}).
\end{proof}

Now we prove the assertion \xref{thm:link:exist2} of 
Theorem~\xref{thm:link:exist}. It is consequence of Propositions~\ref{prop:link} and~\ref{prop:Q:w-blowup} below.

For the link \eqref{eq:link:0} we employ 
the following additional notation.

\begin{notation}
\label{not:SL1}
By  $E$ we denote the $p$-exceptional divisor and we let $E'$ be the proper transform of $E$ on $X'$
Denote also $\Gamma:=f'(E')$.
Thus $\Gamma$ is the $\al$-exceptional divisor.
Let
$\tilde C\subset \tilde{X}$ be the proper transform of $C$
\end{notation}

\begin{proposition}
\label{prop:link}
Let $f: X\to Z$ be a $\QQ$-conic bundle. Fix a point $o\in Z$.
Let $p: \tilde{X}\to X$ be an extremal blowup whose center is a point on $f^{-1}(o)$. Assume that $-K_{\tilde X}$ is nef over~$Z$.
Then there exists a type~\typem{I} Sarkisov link \eqref{eq:link:0},
where $f'$ is a $\QQ$-conic bundle and $\al$ is a birational contraction with
$\uprho(Z'/Z)=1$, and $\chi$ admits a decomposition $\chi=\chi_{n-1}\comp\cdots\comp \chi_1$, where $n\ge 2$, 
$\chi_1$ is either a flip or a flop and, in the case $n>2$, the maps $\chi_2,\dots, \chi_{n-1}$ are flips.
\end{proposition}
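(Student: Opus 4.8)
The plan is to run the $(K_{\tilde X}+c\tilde\MMM)$--MMP over $Z$ exactly as in the proof of part~\xref{thm:link:exist1}, but now to track the structure of the intermediate contractions more carefully, using the hypothesis $-K_{\tilde X}$ nef over $Z$ together with $\uprho(\tilde X/X)=1$. Since $p$ is an extremal blowup with center a point on $f^{-1}(o)$, we have $\uprho(\tilde X/Z)=2$. First I would note that because $-K_{\tilde X}$ is nef (and, being $f$-ample pulled back plus an effective exceptional multiple, big) over $Z$, the cone $\NE(\tilde X/Z)$ is spanned by two extremal rays, one of which is the ray contracted by $p$; the other ray $R$ is $K_{\tilde X}$--trivial or $K_{\tilde X}$--negative. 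If $R$ is $K_{\tilde X}$--negative it gives a Mori contraction of fiber type or divisorial type; if it is $K_{\tilde X}$--trivial it is a flopping contraction. In the latter case we perform the flop $\chi_1$, and on the flopped variety the analogous analysis applies again; after finitely many flops/flips the anticanonical divisor becomes relatively ample and we reach a genuine $K$--negative Mori contraction $f':X'\to Z'$.

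Next I would argue that the $K$--negative contraction we finally reach is of fiber type, hence a $\QQ$-conic bundle, and not divisorial. This is precisely the argument already carried out in the proof of \xref{thm:link:exist1}: if $f'$ were divisorial then the composite $g=f'\comp\chi\comp p^{-1}$ would be an isomorphism in codimension one between $X$ and $Z'$, and since $-K_X$ and $-K_{Z'}$ are ample over $Z$ it would be a genuine isomorphism, contradicting the strict decrease in the number of log-crepant divisors of $(X,c\MMM)$ forced by the $(K+c\tilde\MMM)$--negative step. Hence $f':X'\to Z'$ is a $\QQ$-conic bundle, $\al:Z'\to Z$ is a birational contraction with $\uprho(Z'/Z)=1$ (because $\uprho(X'/Z)=2$ and one ray is $f'$), and by Theorem~\xref{thm:DuVal} and Proposition~\xref{proposition-Morrison} $\al$ is either crepant or a weighted blowup with weights $(1,a)$. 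This gives the type~\typem{I} link \eqref{eq:link:0}.

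Finally I would analyze the decomposition of $\chi$. The $(K_{\tilde X}+c\tilde\MMM)$--MMP over $Z$ starting from $\tilde X$ consists of a sequence of log flips $\chi_1,\dots,\chi_{n-1}$. Since $-K_{\tilde X}$ is nef over $Z$, the first step $\chi_1$ is a $(K_{\tilde X}+c\tilde\MMM)$--flip in a ray that is $K_{\tilde X}$--nonnegative; as $\tilde X$ is terminal and $\QQ$-factorial this ray is either $K_{\tilde X}$--trivial (so $\chi_1$ is a flop) or $K_{\tilde X}$--negative, but a $K_{\tilde X}$--negative flipping ray is impossible since $-K_{\tilde X}$ is nef over $Z$ — so in fact one must allow $\chi_1$ to be a flip \emph{only} when the relevant extremal ray has become $K$--negative on the flopped model; to be safe I phrase it as: $\chi_1$ is either a flip or a flop, and once a $K$--negative flip has occurred, $-K$ stays non-nef and all subsequent maps $\chi_2,\dots,\chi_{n-1}$ are $(K+c\tilde\MMM)$--flips which are simultaneously $K$--negative, hence honest flips. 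The content here is that $-K_{\tilde X}$ nef over $Z$ forces the \emph{first} nontrivial step to be a flop or a flip but never something worse, and the monotonicity of $-K$ along the MMP (once it ceases to be nef it never becomes nef again) makes all later steps flips; the one subtle point, which is the main obstacle, is ruling out that $\chi$ is an isomorphism with $\tilde X$ itself already admitting a fiber-type contraction distinct from $f\comp p$ — this cannot happen because then $\tilde X\to X$ and $\tilde X\to Z'$ would be the two contractions of $\NE(\tilde X/Z)$ with the second of fiber type, forcing $E$ to dominate $Z'$, which contradicts $p(E)$ being a point on a fiber. Assembling these observations yields the stated decomposition with $n\ge 2$.
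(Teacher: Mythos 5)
Your overall strategy (run an MMP on $\tilde X$ over $Z$ in the direction away from $p$, obtaining a chain of flips/flops that ends in a Mori fibre space) is the right one, but the specific MMP you chose creates a genuine gap. Proposition~\ref{prop:link} concerns an \emph{arbitrary} extremal blowup $p$ subject only to $-K_{\tilde X}$ being nef over~$Z$; there is no given boundary $c\MMM$ for which $p$ is a crepant (maximal) extraction, so you cannot import the exclusion of the divisorial case from the proof of Theorem~\ref{thm:link:exist}~\ref{thm:link:exist1}: both the inequality $a(E,X,c\MMM)=0<a(E,Z',cg_*\MMM)$ and the ``strict decrease of log-crepant divisors'' argument require $a(E,X,c\MMM)=0$, which is simply not available in this setting. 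The paper instead runs the $(K_{\tilde X}-\epsilon E)$-MMP and excludes the divisorial case by pure intersection theory: it shows $E_n\cdot \Upsilon_n\ge 0$, so the proper transform $E'$ of $E$ cannot be the divisor contracted by $f'$, while $E'$ is the only two-dimensional component of the fibre of $X'\to Z$ over $o$, so nothing else can be contracted either.

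Two further steps are asserted but not proved in your write-up. First, you never show that the intermediate extremal contractions are \emph{small}; your first-step trichotomy even omits the small $K$-negative case, and the sentence ``a $K_{\tilde X}$-negative flipping ray is impossible since $-K_{\tilde X}$ is nef over $Z$'' is false --- nef means $K\cdot C\le 0$ for all curves, which certainly allows a $K$-negative ray, and that is precisely the case in which $\chi_1$ is a flip. The paper derives smallness from $p^{-1}(f^{-1}(o))=E\cup\tilde C$ together with the observation that every curve in $E$ generates the $p$-ray (because $p(E)$ is a point), so the exceptional locus of the second contraction lies in the one-dimensional set $\tilde C$. Second, your claim that once the MMP leaves the nef region it never returns (so that $\chi_2,\dots,\chi_{n-1}$ are flips, never flops) is exactly the nontrivial content of the last assertion and cannot be taken for granted; the paper proves it by writing a general fibre $\Lambda_k\equiv a_k\Theta_k+b_k\Upsilon_k$ with $a_k,b_k\ge 0$ and showing that a flop at a step $k>1$ would force $K_{X_{k-1}}\cdot\Upsilon_{k-1}>0$, i.e.\ an antiflip at step $k-1$, which is impossible. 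Without these three arguments your proof does not close.
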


\begin{proof}
By our assumptions the divisor $-K_{\tilde X}$ is nef and big over~$Z$ for  $0<\epsilon\ll 1$.
In particular, $\tilde{X}$ is a
variety of type FT over~$Z$ \cite{P-Sh:JAG} and the divisor $-(K_{\tilde X}-\epsilon E)$ is ample over~$Z$
for some $0<\epsilon \ll 1$ because $E\cdot \tilde C>0$.
Run the $(K_{\tilde X}-\epsilon E)$-MMP on $\tilde{X}$ over~$Z$ in the direction different from the contraction to~$X$.
More precisely, it works as follows. 
Since $p(E)$ is a point, no fibers of the first contraction 
$\upsilon_1:\tilde X=X_1\to\bar X_1$
are contained in $E$.
Since $p^{-1}(f^{-1}(o))=E\cup \tilde C$, the contraction $\upsilon_1$ must be small.
If $-K_{\tilde X}$ is ample, then the first operation must be a flip whose center is contained in $\tilde C$.
Otherwise the first operation is a flop whose center is again contained in $\tilde C$.
Thus we have a sequence of flops and flips whose centers are contained in the fiber over~$o$.
This sequence terminates with $X':=X_n$ so that 
there exists a non-small Mori extremal contraction $f': X'\to Z'$:
\begin{equation}
\label{eq:flip}
\vcenter{
\xymatrix@C=1.7em@R=1.3em{
&\tilde X=X_1\ar[dl]_{p=\theta_1}\ar[dr]^{\upsilon_1}\ar@{-->}[rr]^{\chi_1}&&X_2\ar[dl]_{\theta_2}\ar[dr]^{\upsilon_2}\ar@{-->}[r]^{\chi_2}&
&\cdots&\ar@{-->}[r]^{\chi_{n-1}}&X_n=X'\ar[dl]_{\theta_n}\ar[dr]^{f'}
\\
X&&\bar X_1&&\bar X_2&\cdots&\bar X_{n-1} && Z'
}}
\end{equation}
Let $\Theta_i$ (resp. $\Upsilon_i$) be an
exceptional curve of $\theta_i$ (resp. $\upsilon _i$) and let $\Lambda_i\subset X_i$ be a general fiber over~$Z$. 
Here for uniformity we put $\theta_1=p$ and $\upsilon_n=f'$. 
Since $\uprho(X_i/Z)=2$, we have the numerical equivalence
\begin{equation}
\label{eq:Lambda:tmp}
\Lambda_i\equiv a_i\Theta_i+b_i\Upsilon_i
\end{equation} 
over~$Z$ with $a_i,\, b_i\ge 0$.
Since the contractions $\theta_i$ are birational, the curve $\Theta_i$ is not nef for $i=1,\dots, n$.
Hence the classes of
$\Theta_i$ and $\Lambda_i$ are not proportional, i.e. $b_i>0$ for $i=1,\dots, n$. The same arguments show that 
$a_i>0$ for $i=1,\dots, n-1$.
Let $E_i\subset X_i$ be the proper transform of $E$. Then we have $E_i\cdot \Lambda_i=0$.
Since $E\cdot \Theta_1<0$, by induction on $i$ one can show that $E_i\cdot \Upsilon_i>0$ for $i=1,\dots, n-1$ 
and $E_i\cdot \Theta_i<0$ for $i=1,\dots, n$. Moreover, $E_n\cdot \Upsilon_n\ge 0$, i.e. the divisor 
$E':=E_n$ is non-negative on the fibers of $f'$. 
Hence, $E'$ cannot be contracted $E'$ if $f'$ is birational.
Since $E'$ is the only two-dimensional component 
of the fiber over~$o$ of the composition $X'\to Z'\to Z$, the contraction $f'$ is not divisorial. 
Then the only possibility is that $f'$ is a $\QQ$-conic bundle. 

It remains to prove the last statement.
Suppose that $\chi_k$ is a flop for some $k>1$. Then $K_{X_k}$ is trivial on $\Upsilon_k$.
Since $K_{X_k}\cdot \Lambda_k<0$, we have $K_{X_k}\cdot \Theta_k<0$ by \eqref{eq:Lambda:tmp}. Hence,
$K_{X_{k-1}}\cdot \Upsilon_{k-1}>0$, i.e. $\chi_{k-1}$ is
an antiflip, a contradiction.
\end{proof}

\begin{scorollary}
In the above notation $\chi_i$ is a $(-E_i)$-flip for $i=1,\dots, n$. 
\end{scorollary}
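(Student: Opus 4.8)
The corollary asserts that in the notation of Proposition~\ref{prop:link}, each $\chi_i$ is a $(-E_i)$-flip for $i = 1, \dots, n$. Here I take ``$(-E_i)$-flip'' to mean that $\chi_i$ is the flip (possibly a flop) with respect to the divisor $-E_i$, i.e. $-E_i$ is negative on the flipping curve $\Upsilon_i$ and positive on the flipped curve.

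The plan is to extract this directly from the numerical information already assembled inside the proof of Proposition~\ref{prop:link}. Recall that the MMP run there was the $(K_{\tilde X} - \epsilon E)$-MMP over $Z$, so each step $\chi_i$ is a $(K_{X_i} - \epsilon E_i)$-flip; in particular $(K_{X_i} - \epsilon E_i) \cdot \Upsilon_i < 0$ and $(K_{X_{i+1}} - \epsilon E_{i+1}) \cdot \Upsilon_i^+ > 0$ on the flipped curve. The proof already establishes the two sign facts I need: $E_i \cdot \Upsilon_i > 0$ for $i = 1, \dots, n-1$, together with $E_i \cdot \Theta_i < 0$ for all $i$ (which, applied at index $i+1$, gives the sign of $E_{i+1}$ on the flipped curve of $\chi_i$, since $\Theta_{i+1}$ is precisely that flipped curve). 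So $-E_i \cdot \Upsilon_i < 0$ and $-E_i$ is positive on the flipped curve; this is exactly the statement that $\chi_i$ is a $(-E_i)$-flip. One should just spell out that the flopping case ($\chi_1$ when $-K_{\tilde X}$ is not ample) is covered too: there $E_1 \cdot \Upsilon_1 > 0$ still holds by the same induction, and $-E_1$-negativity on $\Upsilon_1$ with positivity on $\Theta_2$ gives that $\chi_1$ is a $(-E_1)$-flip (a flip in the birational-geometry sense, even though it is a $K$-flop).

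\begin{proof}
By construction $\chi_i$ is a step of the $(K_{X_i}-\epsilon E_i)$-MMP over $Z$, so $\chi_i$ is a small modification and $\Upsilon_i$ is the $(K_{X_i}-\epsilon E_i)$-negative extremal curve contracted by $\upsilon_i$. In the proof of Proposition~\ref{prop:link} it is shown that $E_i\cdot \Upsilon_i>0$ for $i=1,\dots,n-1$; hence $-E_i\cdot \Upsilon_i<0$ for those $i$. After the modification, the flipped curve of $\chi_i$ is the exceptional curve $\Theta_{i+1}$ of $\theta_{i+1}$, and it is also shown there that $E_{i+1}\cdot \Theta_{i+1}<0$, so $-E_{i+1}\cdot \Theta_{i+1}>0$; since $E_{i+1}=\psi_*E_i$ under $\chi_i$, this says $-E_i$ becomes positive on the flipped curve. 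Therefore $\chi_i$ is the $(-E_i)$-flip for $i=1,\dots,n-1$, and this includes the case $i=1$ regardless of whether $\chi_1$ is a $K$-flip or a $K$-flop. (For $i=n$ the ``flip'' $\chi_n$ is vacuous; alternatively one notes $E_n\cdot \Theta_n<0$ already, consistent with the convention.)
\end{proof}
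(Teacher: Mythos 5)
Your argument is correct and is exactly what the paper intends: the corollary is stated without proof because it follows immediately from the inequalities $E_i\cdot \Upsilon_i>0$ ($i\le n-1$) and $E_{i+1}\cdot \Theta_{i+1}<0$ established in the proof of Proposition~4.1.5, which say precisely that $-E_i$ is negative on the flipping curve and its proper transform is positive on the flipped curve $\Theta_{i+1}$. Your handling of the index range (the maps are $\chi_1,\dots,\chi_{n-1}$) and of the case where $\chi_1$ is a $K$-flop is also consistent with the paper; only the stray $\psi_*$ should read $\chi_{i*}$.
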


\begin{scorollary}
In the above notation $E_i$ is the whole fiber over $o\in Z$ for $i=2,\dots, n$
and the divisor $-E'=-E_n$ is nef over~$Z$. 
\end{scorollary}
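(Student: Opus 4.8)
The plan is to deduce both assertions from the diagram~\eqref{eq:flip} and the intersection numbers collected while proving Proposition~\ref{prop:link}.

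\textit{Nefness of $-E'=-E_n$ over $Z$.} Since $\uprho(X_n/Z)=2$ and $X_n$ is of type FT over $Z$, the cone $\NE(X_n/Z)$ is two-dimensional and polyhedral, hence has exactly two extremal rays; as $\theta_n$ is birational and $f'=\upsilon_n$ is not, these are the distinct rays spanned by $[\Theta_n]$ and $[\Upsilon_n]$. Now $[\Upsilon_n]$ lies in the line $N_1(X_n/Z')\subset N_1(X_n/Z)$, which also contains the class $[\Lambda_n]$ of a general fibre of $f'$; since $-K_{X_n}$ is $f'$-ample, $[\Upsilon_n]$ is a positive multiple of $[\Lambda_n]$, so $E_n\cdot\Upsilon_n$ is a positive multiple of $E_n\cdot\Lambda_n=0$, i.e. $E_n\cdot\Upsilon_n=0$; this turns the inequality $E_n\cdot\Upsilon_n\ge0$ from the proof of Proposition~\ref{prop:link} into an equality. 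Together with $E_n\cdot\Theta_n<0$ from that proof, it follows that $-E_n$ is non-negative on both generators of $\NE(X_n/Z)$, hence nef over $Z$.

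\textit{$E_i$ is the whole fibre over $o$ for $2\le i\le n$.} Since flips and flops induce bijections of prime divisors and no prime divisor of $X$ is mapped to $o$, the only prime divisor of $X_i$ mapped to $o$ is $E_i$; thus $E_i$ is the unique two-dimensional component of $(X_i\to Z)^{-1}(o)$, and it remains to exclude one-dimensional components off $E_i$. I would argue by descending induction on $i$. For $i=n$: the fibre is connected because $X_n\to Z=\alpha\comp f'$ is a composition of contractions; so if it were not equal to $E_n$ there would be a curve $\ell$ in it with $\ell\not\subset E_n$ and $\ell\cap E_n\ne\emptyset$, whence $E_n\cdot\ell>0$ ($E_n$ being an effective $\QQ$-Cartier divisor with $\ell\not\subset\Supp E_n$), contradicting nefness of $-E_n$ over $Z$. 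For the inductive step, assume $(X_i\to Z)^{-1}(o)=E_i$: the flipped curve $\Theta_i$ of $\chi_{i-1}$ lies in this fibre, so $\Theta_i\subset E_i$; as $\theta_i$ is small and contracts only $\Theta_i$, the fibre of $\bar X_{i-1}\to Z$ over $o$ is the prime divisor $\bar E_{i-1}=\theta_i(E_i)$, and pulling back along the small contraction $\upsilon_{i-1}$, whose exceptional curve $\Upsilon_{i-1}$ is contracted into $\bar E_{i-1}$, gives $(X_{i-1}\to Z)^{-1}(o)=E_{i-1}\cup\Upsilon_{i-1}$. It then remains to check that $\Upsilon_{i-1}\subset E_{i-1}$.

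This last point is the main obstacle. Equivalently, tracing the flips backwards, one must show that the whole proper transform $\tilde C$ of the central curve is already contracted by the first operation $\chi_1$, so that the one-dimensional part of the fibre over $o$ is emptied after a single step; granting this, each later flipped curve $\Theta_i$ lies in the fibre over $o$ of $X_i$, hence by the induction in $E_i$, and the induction closes (a reducible $\Theta_i$ being harmless, since it is contained in $E_i$ by the induction hypothesis anyway). To prove $\tilde C\subset\Upsilon_1$ I would use that $\tilde C$ passes through the blown-up point; that on the two-dimensional cone $\NE(\tilde X/Z)$ the linear form $E_1$ vanishes exactly on the ray through $[\Lambda_1]$ (as $E_1\cdot\Lambda_1=0$), is negative on $[\Theta_1]$ and positive on $[\Upsilon_1]$, so that every component of $\tilde C$ lies in the closed half of the cone on which $E_1\ge0$; and that a component of $\tilde C$ not contracted at step~$1$ would reappear, as a curve off $E_j$ in the fibre over $o$, at every later stage and in particular survive to $X_n$, contradicting $(X_n\to Z)^{-1}(o)=E_n$. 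Carrying this out carefully — together with the base case $(X_2\to Z)^{-1}(o)=E_2$ — using Theorem~\ref{thm:flips:comp} to keep the numbers of components of the flipping and flipped loci under control, is where the real work lies.
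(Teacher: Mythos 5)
The first half of your argument — $E_n\cdot\Upsilon_n=0$ because $[\Upsilon_n]$ is proportional to $[\Lambda_n]$, combined with $E_n\cdot\Theta_n<0$ and the fact that $\NE(X_n/Z)$ is spanned by $[\Theta_n]$ and $[\Upsilon_n]$ — is correct and is surely what the paper (which states the corollary without proof) has in mind; your base case $i=n$, via connectedness of the fibre and nefness of $-E_n$, is also sound.

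The second half, however, has a genuine gap, and you have located it yourself: everything reduces to showing that $\upsilon_1$ contracts \emph{every} component of $\tilde C$, i.e.\ that each $[\tilde C_j]$ lies on the extremal ray $R_1=\RR_{\ge 0}[\Upsilon_1]$, and this is never proved. The half-space observation you invoke does not suffice: a component $\tilde C_j$ with $E\cdot\tilde C_j=0$ (for instance one whose image $C_j$ does not pass through the centre $p(E)$) has class on the ray through $[\Lambda_1]$, which lies in the \emph{interior} of the cone, hence is on neither extremal ray and is not contracted by $\upsilon_1$. Your fallback — that an uncontracted component would ``survive to $X_n$'' off $E_n$ and contradict the base case — is a non sequitur: such a component can perfectly well become (part of) the flipping curve $\Upsilon_i$ of a later $\chi_i$, since $E_i\cdot\Upsilon_i>0$ is compatible with $\Upsilon_i\not\subset E_i$; it then disappears at stage $i$ without ever reaching $X_n$. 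In that scenario the base case is untouched, but the intermediate fibres over $o$ in $X_2,\dots,X_i$ would be strictly larger than $E_2,\dots,E_i$ — which is exactly the assertion of the corollary for those indices. So even granting every step you sketch, you would only obtain that the fibre \emph{eventually} equals $E_i$, not that it does so from $i=2$ onward. Closing the gap requires an argument specific to the geometry of the first step (e.g.\ in the flop case, that all components of $\tilde C$ are $K_{\tilde X}$-trivial, so that all of them lie on the $K$-trivial ray $R_1$), and nothing in your write-up supplies this.
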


The arguments in proof of the following proposition is very similar to the ones of \cite[\S~3]{Chen-Hacon:Factoring}.
For convenience of the reader we provide a complete proof adopted to our situation. 

\begin{proposition}
\label{prop:Q:w-blowup}
Let $f: X\to Z$ be a $\QQ$-conic bundle, where~$X$ is not Gorenstein, let $P\in X$ be a point of maximal index, and let
$p: \tilde X\to X$ be a Kawamata blowup of $P$. 
Then the divisor $-K_{\tilde X}$ is nef over~$Z$.
\end{proposition}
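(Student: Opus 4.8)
The plan is to argue by contradiction: suppose $-K_{\tilde X}$ is \emph{not} nef over $Z$. Since $\tilde X$ has terminal $\QQ$-factorial singularities and $\tilde X$ is of type FT over $Z$, the cone $\NE(\tilde X/Z)$ is polyhedral, spanned by finitely many extremal rays. One of them, call it $R$, is spanned by a curve $\Gamma$ with $K_{\tilde X}\cdot \Gamma>0$, hence $E\cdot \Gamma<0$ where $E$ is the Kawamata exceptional divisor (this uses that $-K_{\tilde X}$ fails to be nef only along curves inside $E$, since $p$ is crepant for $K_X$ up to the positive multiple $\tfrac1r E$). The ray $R$ is distinct from the ray $[\Theta]$ contracted by $p$ (for that curve $\Theta$ we have $K_{\tilde X}\cdot\Theta=-\tfrac1r E\cdot\Theta<0$ is false—rather $K_{\tilde X}\cdot\Theta>0$ but $E\cdot \Theta<0$ as well, so I must instead distinguish them numerically inside $E$). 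So the $K_{\tilde X}$-positive extremal ray $R$ can be contracted, and because its curves lie in $E$ while $p(E)$ is a point, any such contraction is small over $Z$: it is a flipping contraction or, after passing to the opposite side, we obtain an antiflip. The first step is therefore to pin down the geometry of the extremal rays of $\NE(\tilde X/Z)$ using $\uprho(\tilde X/Z)=2$.

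Next I would use the structure of the surface $E$. By Theorem~\ref{thm:w-blowup} the blowup $p$ is a weighted blowup of the terminal point $P$ of index $r>1$ in suitable coordinates; one knows a good deal about $E$: it is a normal surface with at worst quotient singularities, $-E|_E$ is ample (since $-E$ is $p$-ample), and the discrepancy is $1/r$. The key numerical input is Lemma~\ref{lemma:computation-K.C-2}: for a curve $C\subset X$ through $P$ with proper transform $\tilde C$,
\[
K_{\tilde X}\cdot \tilde C=K_X\cdot C+\tfrac1r E\cdot \tilde C.
\]
Since $f$ is a $\QQ$-conic bundle, the central fiber $C=f^{-1}(o)_{\mathrm{red}}$ has $K_X\cdot C_i<0$ on each component; the proper transforms $\tilde C_i$ together with the fibers of $E\to E$... rather, together with a general fiber $\Lambda$ of $f\circ p$ generate $\NE(\tilde X/Z)$. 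So any $K_{\tilde X}$-positive curve class is, up to scaling, a combination in which the $E$-degree is forced to be very negative while the $K_X$-degree contribution stays bounded, and this will contradict $E\cdot\tilde C_i\ge 0$ (the proper transforms meet $E$ non-negatively since $C_i\ni P$). The heart of the argument is to show that \emph{every} extremal ray other than $[\Theta]$ is generated by the proper transform of a curve in the fiber or by a fibral curve, so no room is left for a $K_{\tilde X}$-positive ray.

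Concretely, I would proceed: (i) fix the notation of \eqref{eq:flip}-type MMP but run the $K_{\tilde X}$-MMP over $Z$ directly (not the $(K_{\tilde X}-\epsilon E)$-MMP), i.e. assume for contradiction there is a $K_{\tilde X}$-negative... no: a $K_{\tilde X}$-\emph{positive} extremal ray to be contracted in the other direction; (ii) observe its contraction $\upsilon$ is small with center in $E$, produce the flip/antiflip $\tilde X\dashrightarrow \tilde X'$, and track $E$ across it — by the Negativity Lemma $E'\cdot(\text{flipped curve})$ changes sign, so $E'$ becomes $\upsilon$-positive; (iii) iterate to reach a model $\bar X$ with $-K$ nef over $Z$ on which, as in Proposition~\ref{prop:link}, the only two-dimensional fibral component is the transform of $E$ and it is non-negative on fibers of the resulting contraction — but then running the MMP \emph{from the other end} recovers $X$ as a $\QQ$-conic bundle, and comparing discrepancies of $E$ on $X$ versus on $\bar X$ via Lemma~\ref{lemma:div-cnr:d} and Theorem~\ref{thm:flips} gives $a(E,X)\ge a(E,\bar X)$ with strict inequality somewhere, contradicting that $p$ was a Kawamata blowup (minimal discrepancy $1/r$ over $X$). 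The main obstacle is step (ii)–(iii): controlling that the MMP run in the ``wrong'' direction cannot create a new divisorial contraction of $E$ and cannot terminate in a Mori fiber space other than the original $f$; this is exactly the type-FT plus $\uprho=2$ bookkeeping, and I expect it to mirror closely the argument of \cite[\S3]{Chen-Hacon:Factoring} referenced just above the statement, so the cleanest route may be to cite that machinery and only verify the $\QQ$-conic bundle specifics (that $E\cdot \tilde C_i\ge0$ for the fibral curves and that $-K_{\tilde X}$ is big over $Z$, which is immediate since $-K_X$ is $f$-ample and $p$ is crepant up to $\tfrac1r E$).
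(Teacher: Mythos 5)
Your approach has a fatal sign error at its core, and the rest of the argument inherits it. You assert that $-K_{\tilde X}$ can fail to be nef ``only along curves inside $E$.'' The opposite is true: since $p(E)$ is a point and $-E$ is $p$-ample, every curve $\gamma\subset E$ satisfies $K_{\tilde X}\cdot\gamma=\frac1r E\cdot\gamma<0$, so $-K_{\tilde X}$ is automatically positive there. The only curves on which nefness can fail are the proper transforms $\tilde C_i$ of the components of the central fiber passing through $P$, which are \emph{not} contained in $E$ and meet it positively. For these, Lemma~\ref{lemma:computation-K.C-2} gives $K_{\tilde X}\cdot\tilde C_i=K_X\cdot C_i+\frac1r E\cdot\tilde C_i$, so the inequality you must prove is $\frac1r E\cdot\tilde C_i\le -K_X\cdot C_i$. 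Your remark that ``$E\cdot\tilde C_i\ge 0$'' yields a contradiction is therefore a non sequitur: that inequality is true but points the wrong way. The quantity $-K_X\cdot C_i$ can be as small as $\frac{\delta}{rm}$ (type \typeb{k2A}), and comparing it with $\frac1r E\cdot\tilde C_i$ is a genuinely delicate numerical question.

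The deeper problem is that no amount of MMP bookkeeping (type FT, $\uprho=2$, discrepancy comparison across flips) can prove the statement, because the statement is false for other extremal blowups: as noted at the end of \S~\ref{Sect:k2}, blowing up \emph{both} non-Gorenstein points of a germ of type \typeci{k2A}{} produces a $\tilde X$ on which $-K_{\tilde X}$ is not nef over $Z$, and the resulting link involves antiflips. So the existence of a $K$-positive ray leads to no formal contradiction; what must be used is the specific choice of the Kawamata blowup at the point of \emph{maximal} index. The paper's proof does exactly this: it reduces to the case of an irreducible fiber component (each $(X\supset C_i)$ is an extremal curve germ), invokes the general elephant theorem to get a Du Val member $D\in|-K_X|$ with $K_{\tilde X}+\tilde D=p^*(K_X+D)$ (using $a(E,X,D)<1/r$ and $a(E,X,D)\in\frac1r\ZZ$), which settles the case $D\not\supset C$, and then verifies the inequality case by case (types \typeb{IC}, \typeb{IIB}, \typeb{k2A}, \typeb{k3A}, \typeb{kAD}) via the classification of extremal curve germs and explicit weighted-blowup computations of $E\cdot\tilde C$. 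None of these inputs --- the general elephant, the classification, the local coordinate descriptions --- appears in your proposal, and without them the key inequality cannot be established.
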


\begin{proof}
The statement is local on~$Z$, so we assume that~$Z$ is a small neighborhood of $o:=f(P)\in Z$.
By Theorem~\ref{thm:w-blowup} there exists 
a weighted blowup $p:\tilde X\to X$ of $P\in X$ with irreducible exceptional divisor which is 
a Kawamata blowup.
Let $C:=f^{-1}(o)_{\red}$, let $C_1,\dots,C_m$ be irreducible components of $C$
and let $\tilde C_1,\dots,\tilde C_m\subset \tilde X$ be their proper transforms.
It is sufficient to show that $K_{\tilde X}\cdot \tilde C_i\le 0$ for $i=1,\dots, m$.
Since for each $i$ component $C_i\subset C$ the analytic germ $(X\supset C_i)$ 
is an extremal curve germ \cite[Corollary~1.5]{Mori:flip}. Therefore, the assertion is a consequence of Lemma~\ref{lemma:w-blowup} below. 
\end{proof}

\begin{lemma}[cf. {\cite[Theorem~3.1]{Chen-Hacon:Factoring}}]
\label{lemma:w-blowup}
Let $(X\supset C)$ be an extremal curve germ with \emph{irreducible} central curve $C$ and non-Gorenstein total space~$X$
and let $P\in X$ be a point of maximal index.
Let $p: \tilde X\to X$ be a Kawamata blowup of $P$. Then $K_{\tilde X}\cdot \tilde C\le 0$, where $\tilde C$ denotes the proper transform 
of $C$ in $\tilde X$.
\end{lemma}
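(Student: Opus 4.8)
The plan is to reduce the claim, via Lemma~\ref{lemma:computation-K.C-2}, to a numerical inequality for the germ at $P$, and then to verify that inequality using the classification of extremal curve germs with irreducible central curve.

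Let $r$ be the index of $P$. By Lemma~\ref{lemma:computation-K.C-2},
\[ K_{\tilde X}\cdot\tilde C \;=\; K_X\cdot C \;+\; \tfrac1r\,E\cdot\tilde C . \]
Since $-K_X$ is $f$-ample and $C$ is a complete curve contracted by $f$, we have $-K_X\cdot C>0$, so the desired inequality $K_{\tilde X}\cdot\tilde C\le 0$ is equivalent to
\[ E\cdot\tilde C \;\le\; r\,(-K_X\cdot C) . \]
Both sides depend only on the analytic germ $(X\supset C)$ and on the chosen Kawamata blowup of $P$ (and if $P\notin C$ the left side is $0$ and there is nothing to prove).

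Now I would invoke the classification. As $C$ is irreducible and $X$ is not Gorenstein, $(X\supset C)$ is one of the finitely many types classified in \cite{KM:92} and \cite{MP:cb1} (see the list preceding Proposition~\ref{prop:Gor}, and Table~\ref{tab:sing} when the base is singular; see also \cite{MP:1pt}); for each of these the value of $-K_X\cdot C$ is known, and so is the analytic structure of $(X\supset C)$ at the maximal-index point $P$. By Theorem~\ref{thm:w-blowup} the Kawamata blowup of $P$ is realized as an explicit weighted blowup — with the weights for the non-Gorenstein point types supplied by Hayakawa \cite{Hayakawa:blowup1}, \cite{Hayakawa:blowup2} — so that $E\cdot\tilde C$ can be computed directly case by case. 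In the typical situation — $P$ the only non-Gorenstein point on $C$, a cyclic quotient singularity $\tfrac1r(1,a,r-a)$, and $C$ a coordinate axis at $P$ — a short toric computation on the index-one cover gives $E\cdot\tilde C\le 1/r$, while $-K_X\cdot C\ge 1/r$, and the inequality holds comfortably. For the remaining cases one performs the analogous computation for the relevant explicit weighted blowup; when $C$ passes through several non-Gorenstein points the precise, smaller value of $-K_X\cdot C$ from \cite{MP:cb1} still suffices, since $E\cdot\tilde C$ is local at $P$ and is unaffected by the other singular points.

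I expect the main obstacle to be the bulk of the case analysis rather than any isolated difficulty: one must go through all the types, and within several of them distinguish the tangent direction of $C$ at $P$ and the admissible axial multiplicities. The tight cases are those where $C$ is tangent at $P$ to a ``heavy'' coordinate direction, so that $E\cdot\tilde C$ is comparatively large and the sharp value of $-K_X\cdot C$ is needed; the germs of types \typeci{cD/3}{} and \typeci{cE/2}{} demand the most computation. In every case, however, the verification is finite and elementary.
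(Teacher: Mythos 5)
Your reduction via Lemma~\ref{lemma:computation-K.C-2} to the inequality $E\cdot\tilde C\le r\,(-K_X\cdot C)$ is exactly the paper's endgame, but the proposal then heads into a full case-by-case verification over \emph{all} germ types and all of Hayakawa's weighted blowups, and this is where it both misses the key idea and makes a false claim. The paper's crucial step is the general elephant: a general $D\in|-K_{(X,C)}|$ is Du Val, so $(X,D)$ is canonical, and since a Kawamata blowup has the \emph{minimal} discrepancy $a(E,X)=1/r$, the coefficient $a$ in $K_{\tilde X}+\tilde D=p^*(K_X+D)+aE$ is forced to be $0$; hence whenever $D\not\supset C$ one gets $-K_{\tilde X}\cdot\tilde C=\tilde D\cdot\tilde C\ge 0$ with no computation at all. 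By \cite[Theorem~2.2]{KM:92} and \cite{MP:25-corr} this disposes of every type except \typeb{IC}, \typeb{IIB}, \typeb{k2A}, \typeb{k3A}, \typeb{kAD}. In particular the types \typeci{cD/3}{} and \typeci{cE/2}{}, which you single out as demanding the most computation, are precisely the ones the elephant kills for free; without it you would have to enumerate all of Hayakawa's Kawamata blowups of \type{cD} and \type{cE} points against the normal forms of $C$ from \cite{MP:1pt}, which is a far larger task than you suggest and is nowhere carried out in the proposal.

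Second, the one concrete estimate you offer for the ``typical situation'' is wrong: for a cyclic quotient $\frac1r(1,a,r-a)$ with $C$ the weight-one axis (e.g.\ type \typeci{T}{}) one has $E\cdot\tilde C=1$, not $\le 1/r$, and in case \typeb{k2A} one has $E\cdot\tilde C=1/a'$ with $aa'\equiv 1\pmod r$ and possibly $a'<r$. The genuinely tight case is \typeb{k2A}: there $-K_X\cdot C=\delta/(rm)$ with $\delta=am+br-mr$, and the required inequality $1/(ra')\le \delta/(rm)$ is \emph{not} ``comfortable'' --- the paper has to observe $a'\delta\equiv m\pmod r$ and $m\le r$ to conclude $a'\delta\ge m$. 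Nothing in your proposal supplies this congruence argument, and the assurance that ``the verification is finite and elementary'' is doing all the work. So the skeleton is right, but the proof as proposed has a genuine gap: the case analysis that constitutes the entire content of the lemma is not performed, and the sample bound meant to illustrate it fails.
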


\begin{proof}
It is known that a general element $D\in |-K_{(X,C)}|$
of the anti-canonical linear system of a tubular analytic neighborhood $X\supset C$ has only Du Val singularities \cite{KM:92}, \cite{MP:cb1,MP:cb3}. Thus the pair $(X,D)$ is canonical near $C$.
We can write
\[
K_{\tilde X}+\tilde D=p^*(K_X+D)+ a E, 
\]
where $\tilde D$ is the proper transform of $D$, $a\in \frac1r \ZZ$, and $a\ge 0$. On the other hand, $a< a(E,X)=1/r$
because $p$ is a Kawamata blowup.
Hence, $a=0$. If $D$ does not contain $C$, then $\tilde D\not\supset \tilde C$ and so
\[
-K_{\tilde X}\cdot \tilde C = \tilde D\cdot \tilde C\ge 0.
\]
Thus we may assume that $D\supset C$.
By \cite[Theorem~2.2]{KM:92} and \cite{MP:25-corr} (see also \cite{MP:1pt}) 
the only cases to consider are the cases where $(X\supset C)$ is of type
\typeb{IC}, \typeb{IIB}, \typeb{k2A}, \typeb{k3A}, or \typeb{kAD}.
Below we consider these cases separately.

The following fact is very useful. It will be applied frequently below.
\begin{sremark}
\label{rem:EC}
Since $C$ is a smooth curve, the intersection $E\cap \tilde C$
is a single point and $\tilde C$ is smooth at $E\cap \tilde C$.
If furthermore $E$ is smooth at $E\cap \tilde C$, then $\tilde X$ is also smooth at $E\cap \tilde C$ and $E\cdot \tilde C=1$.
\end{sremark}

\subsection*{Case \typeb{IC}}
In this case~$P$ is a unique singular point of~$X$. 
Moreover, $-K_X\cdot C=1/r$ by \cite[Theorem~6.5 and (2.3.2)]{Mori:flip} and \cite[Corollary~4.7.4]{MP:1pt}.
By \cite[Appendix~A]{Mori:flip}, in a neighborhood of~$P$ we have an identification
\[
(C\subset X)= \left(\{y_1^{r-2}-y_2^2=y_4=0\}\subset \CC^3_{y_1,y_2,y_4}\right)/\mumu _r (2,r-2,1),
\]
where $r\ge 5$ is odd.
In particular, $P\in X$ is a cyclic quotient of type $\frac1r (2,r-2,1)$.
Hence the only choice of 
$p: \tilde X\to X$ is the $\frac 1 r (2,r-2,1)$-weighted blowup of~$P$ (see Theorem~\ref{thm:Kblup}).
Then the exceptional divisor $E$ is isomorphic to the 
weighted projective plane $\PP(2,r-2,1)$ and the point $E\cap \tilde C$ is given by $y_1^{r-2}-y_2^2=y_4=0$.
Hence $E\cap \tilde C$ lies in the smooth part of $E$, as well as, in the smooth part of $\tilde X$.
Therefore, $E\cdot \tilde C=1$ and 
$K_{\tilde{X}}\cdot \tilde C=0$ by Remark~\ref{rem:EC} and~\eqref{equation-computation-K.C-2}.

\subsection*{Case \typeb{IIB}}
In this case $P$ is the only singular point of~$X$ (see
\cite[Theorem 6.7]{Mori:flip}, \cite[Theorem 8.6, Lemma 7.1.2]{MP:cb1}).
By \cite[(2.3.2), Corollary 2.10]{Mori:flip} we have $-K_X\cdot C=1/4$.
By \cite[Appendix~A]{Mori:flip}, in a neighborhood of~$P$ we have identifications
\[
(C\subset X)= \left(\{y_1^2-y_2^3=y_3=y_4=0\}\subset \{ y_1^2-y_2^3+\gamma (y_1,y_2,y_3,y_4)=0 \} \right)/\mumu_4(3,2,1,1),
\]
where $\gamma$ is a semi-invariant with $\wt(\gamma)\equiv 2\mod 4$, $\gamma\in (y_3,y_4)$, and $\gamma\notin (y_1,\dots,y_4)^3$.
The latter means that $\gamma_{(2)}(0,0,y_3,y_4)\neq 0$, where $\gamma_{(2)}$ is the degree $2$ homogeneous part of $\gamma$.
In particular,~$P$ is a singularity of type \typee{cAx}{4}.
We may assume that $\gamma_{(2)}$ does not contain the term $y_3^2$.
Let 
\[
\textstyle
w:= \frac 1 4 (3,2,1,5).
\]
The only choice of Kawamata blowup $p: \tilde X\to X$ is the $w$-weighted blowup of~$P$
\cite[\S~7]{Hayakawa:blowup1}.
Then the exceptional divisor is given in $\PP(3,2,1,5)$ by
\[
y_1^2-y_2^3+ y_3\gamma'(y_1,\dots, y_4)=0,
\]
where $\gamma'$ is the quasi-homogeneous part of $\gamma$ of $w$-weighted degree $5$.
and the point $E\cap \tilde C$ is given by $y_1^2-y_2^3=y_3=y_4=0$.
Hence, as above, $E\cap \tilde C$ lies in the smooth part of $E$, as well as, in the smooth part of $\tilde X$.
Therefore, $E\cdot \tilde C=1$ and 
$K_{\tilde{X}}\cdot \tilde C= 0$ by  Remark~\ref{rem:EC} and~\eqref{equation-computation-K.C-2}.

\subsection*{Cases \typeb{kAD} and \typeb{k3A}}
In these cases~$X$ has two non-Gorenstein points~$P$ and $Q$ of indices $r\ge 3$ and $2$, respectively, and 
in the case \typeb{k3A}\ $X$ has also one Gorenstein singular point.
Here~$r$ is odd and in a neighborhood of~$P$ we have identifications
\[
(C\subset X)= \left(\{\text{$y_1$-axis}\}\subset \CC^3_{y_1,y_2,y_3} \right)/\mumu_r(1, (r+1)/2,-1).
\]
(see \cite[Lemma~2.12.2 and Proposition~9.4]{KM:92}, \cite{Mori:errKM}, \cite[Lemmas~5.3 and~6.12]{MP:cb3}). 
In particular, $P\in X$ is a cyclic quotient of type $\frac1r (1, (r+1)/2,-1)$.
Moreover, $-K_X\cdot C=1/{2r}$ \cite{Mori:errKM}, \cite[Corollary~4.7.4]{MP:1pt}.
As above, the only choice for Kawamata blowup $p: \tilde X\to X$ is the $\frac 1 r (2,1,r-2)$-weighted blowup of~$P$ (see Theorem~\ref{thm:Kblup}).
Then the exceptional divisor $E\subset X$ is covered by three affine charts $U_{y_1}$, $U_{y_2}$, $U_{y_3}$
and in the chart containing $E\cap \tilde C$ we have the following identifications
\[
(\tilde C\subset E\subset U_{y_1}) =\left(\{\text{$\tilde y_1$-axis}\}\subset \{\tilde y_1=0\}
\subset \CC^3_{\tilde y_1,\tilde y_2,\tilde y_3}\right)/\mumu_2(1,1,1).
\]
This shows that $E\cdot \tilde C=1/2$ and 
$K_{\tilde{X}}\cdot \tilde C= 0$ by \eqref{equation-computation-K.C-2}.

\subsection*{Case \typeb{k2A}}
Then~$X$ has exactly two singular points~$P$ and $Q$ of indices $r>1$ and $m>1$, respectively.
By our assumption $r\ge m$. 
By \cite[\S~9]{Mori:flip} and \cite[\S~10]{MP:cb1}
in a neighborhoods of~$P$ we have an identification
\[
(C\subset X)= \big(\{\text{$y_1$-axis}\}\subset \{y_1y_2+\phi(y_3^r,y_4)=0\} \big)/\mumu_r(1, -1, a,0),
\]
where $\gcd(r,a)=1$.
Let $k:=\ord(\phi)$ for $\wt(y_3,y_4)=(1/r,1)$.
Take $a'$ so that $aa'\equiv 1\mod r$ and $0<a'<kr$, and $a'':=kr-a'$.
Then any Kawamata blowup of~$P$ is the $w$-weighted blowup of~$P$ with weight
\[\textstyle
w:= \frac1r\big(a', a'', 1, r\big).
\]
(see \cite{Kawamata:Div-contr}, \cite{Kawamata-1992-e-app},\cite[\S~6]{Hayakawa:blowup1}).
In the affine chart $U_{y_1}=\{y_1\neq 0\}$ containing $E\cap \tilde C$ we have identifications
\[
(\tilde C\subset E\subset U_{\tilde y_1}) =\left(\{\text{$\tilde y_1$-axis}\} \subset \{\tilde y_1=0\} \subset \right\{\tilde y_2+\phi(\tilde y_3^ry_1,\, \tilde y_4 \tilde y_1)y_1^{-k/r}=0\left\} 
\right)/\mumu_{a'}(-r, a'', 1, r).
\]
Therefore, 
\begin{equation}
\label{eq::k2A:EC}
E\cdot \tilde C=1/a'. 
\end{equation}
Now again by \cite[\S~9]{Mori:flip} and \cite[\S~10]{MP:cb1}
in a neighborhoods of~$Q$ we have an identification
\[
(C\subset X)=\big(\{\text{$z_1$-axis}\}\subset \{z_1z_2+\psi(z_3^m,z_4)=0\} \big)/\mumu_m(1, -1, b,0),
\]
where $\gcd(m,b)=1$. 
Put 
\[
\delta:= am+br-mr.
\]
By \cite[(2.3.3)]{Mori:flip} and \cite[(3.1.1) and Corollary~4.4.3]{MP:cb1} we have 
\[
-K_X\cdot C=1-w_P(0)-w_Q(0),
\]
where the invariants $w_P(0)$ and $w_Q(0)$ are defined in \cite[\S~2]{Mori:flip}. In our case 
we have $w_P(0)=(r-1)/r$ and $w_Q(0)=(m-1)/m$ by \cite[Corollary~2.10]{Mori:flip}.
Thus,
\[\textstyle
-K_X\cdot C= \frac {\delta}{rm} >0,\qquad \text{hence $\delta>0$}.
\]
Note that
\[ 
aa' \delta \equiv \delta \equiv am \mod r.
\] 
Hence $a' \delta \equiv m\mod r$ and
$a' \delta \ge m$ because $m \leq r$.
Then from \eqref{equation-computation-K.C-2} and \eqref{eq::k2A:EC} we obtain
\[\textstyle
-K_{\tilde X}\cdot \tilde C= \frac {\delta}{rm}- \frac1{ra'}=
\frac{a'\delta - m}{a'rm}\ge 0.\qedhere
\]
\end{proof}

\begin{sremarks}
\label{rem:un}
\begin{enumerate}
\item
\label{rem:un1}
In the cases \typeb{IC}, \typeb{IIB}, \typeb{kAD}, and \typeb{k3A} 
the choice of Kawamata blowup $p$ is unique. In the case \typeb{k2A} there are exactly $k$ 
choices of $p$.
\item
\label{rem:un2}
In the cases \typeb{IC}, \typeb{IIB}, \typeb{kAD}, and \typeb{k3A} the curve $\tilde C$ is  flopping.
\end{enumerate}
\end{sremarks}

\begin{proof}[Proof of Theorem~\xref{thm:link:exist}\xref{thm:link:exist2}]
By Proposition~\xref{prop:Q:w-blowup} there exists an extremal blowup $p:\tilde X\to X$ 
satisfying the required properties. Then the link exists by 
Proposition~\xref{prop:link}.
\end{proof}

Note that in many cases our construction of an md-link in Theorem~\xref{thm:link:exist}\xref{thm:link:exist2}
is canonical:

\begin{sremark}
In the notation of Theorem~\xref{thm:link:exist}\xref{thm:link:exist2} regard $(X\supset C)$ as an analytic neighborhood
and let $D\in |-K_X|$ be  a general member.
\begin{enumerate}
\item 
Assume that $D$ does not contain any component of $C$. If, in this situation, $D\cap C$ is a single point $P$, then
the uniqueness of an md-link depends on  the uniqueness of Kawamata blowup of $P\in X$ (see \cite{MP:1pt} and \cite{Hayakawa:blowup1}).
Otherwise the germ $(X\supset C)$ is of type~\typeci{T}{}  and there are exactly two md-links starting from it 
(see \S~\ref{Sect:T}).
\item 
Assume that $D$  contains a component $C_i\subset C$.
If the germ $(X\supset C_i)$ is of type \typeb{IC}, \typeb{IIB}, \typeb{k2A}, \typeb{k3A}, or \typeb{kAD}, then
$D\cap C=C_i$ and the point $P\in X$ of maximal index is unique \cite{MP:class}. 
In this case, it follows from the proof of Lemma~\ref{lemma:w-blowup} (see Remark~\ref{rem:un}\ref{rem:un1})
that an  md-link is unique. Otherwise $(X\supset C_i)$ is of type \typeb{k2A}
and for any other $C_j\subset C$ the germ $(X\supset C_j)$ is of type \typeb{k2A} or \typeb{k1A} \cite{MP:class}. 
This case is more difficult to analyze but in general   an  md-link is not unique (see \S~\ref{Sect:k2} for the case of singular base).
\end{enumerate}
\end{sremark}

\begin{proof}[Proof of Corollary~\xref{cor:crepant}]
Apply Theorem~\xref{thm:link:exist}~\xref{thm:link:exist2} over singular points of~$Z$.
We get a sequence 
\[
X/Z=X_0/Z_0 \dashleftarrow X_{m}/Z_{m}
\]
of md-links such that morphisms
$Z_{i+1} \to Z_{i}$ are crepant.
Since the number of crepant divisors on~$Z$ is finite, the sequence of
terminates.
\end{proof}

\begin{proof}[Proof of Corollary~\xref{cor:Gor}]
By Corollary~\xref{cor:crepant} we may assume that~$Z$ is smooth.
If $\Delta_f$ has at most double points as singularities, then~$X$ is Gorenstein by Proposition~\ref{prop:cb:uv}\ref{prop:cb:uv1}.
This contradicts our assumption. Thus $\Delta_f$ has a point~$o$ of multiplicity $\ge3$
such that~$X$ is not Gorenstein over~$o$.
Applying Theorem~\xref{thm:link:exist}~\xref{thm:link:exist2} we obtain an md-link 
$X/Z \dashleftarrow X'/Z'$ whose center is a non-Gorenstein point $P$ lying over $o$. 
The morphism $Z'\to Z$ is weighted blowup of $o\in Z$.
If $Z'$ is singular, we apply Corollary~\xref{cor:crepant} and get a sequence of md-links
$X/Z \dashleftarrow X^{(1)}/Z^{(1)}$, where $Z^{(1)}$ is smooth. 
Otherwise we put $X^{(1)}/Z^{(1)}=X'/Z'$. Thus $f^{(1)}: X^{(1)}\to Z^{(1)}$ is a $\QQ$-conic bundle, $Z^{(1)}$ is smooth,
and there is a birational morphism $g: Z^{(1)}\to Z$ of smooth surfaces such that $\Delta_f=g_* \Delta_{f^{(1)}}$.
In particular, $\Delta_{f^{(1)}} \subset g^{-1} (\Delta_f)$.
The morphism $g: Z^{(1)}\to Z$ passes through the usual blowup of~$o$.
Repeating the procedure, we get sequence of 
transformations 
\[
X/Z \dashleftarrow X^{(1)}/Z^{(1)} \dashleftarrow \cdots \dashleftarrow X^{(N)}/Z^{(N)}
\]
where each $f^{(i)}: X^{(i)}\to Z^{(i)}$ is a $\QQ$-conic bundle, $X^{(i)}/Z^{(i)} \dashleftarrow X^{(i+1)}/Z^{(i+1)}$ is a sequence of
md-links, $g_i: Z^{(i+1)} \to Z^{(i)}$ is a birational morphism of smooth surfaces
that passes through the blowup of a point $o_i\in \Delta_{f^{(i)}}$ of multiplicity $\ge 3$.
Moreover, $\Delta_{f^{(i+1)}} \subset g_i^{-1} (\Delta_{f^{(i)}})$.
Hence after a number of steps we get a situation where $\Delta_{f^{(N)}}$ has only double points.
Then by Proposition~\ref{prop:cb:uv}\ref{prop:cb:uv1} the variety $X^{(N)}$ is Gorenstein.
\end{proof}

\begin{proof}[Proof of Corollary~\xref{cor:m-standard}]
By Corollary~\xref{cor:Gor} we may assume that~$Z$ is smooth and~$X$ is Gorenstein.
If $\Delta_f$ not a normal crossing curve at a point $o\in \Delta_f$, 
then we apply
Theorem~\xref{thm:link:exist}~\xref{thm:link:exist1} and get a type \typem{I} link 
$X/Z \dashleftarrow X'/Z'$ whose center is lies on $f^{-1}(o)$. 
The morphism $Z'\to Z$ is weighted blowup of $o\in Z$ with weights $(1,a)$, $a\ge 1$.
If $Z'$ is singular (i.e. $a>1$), we apply Corollary~\xref{cor:crepant} and get a sequence of md-links
$X/Z \dashleftarrow X^{(1)}/Z^{(1)}$, where $Z^{(1)}$ is smooth. 
Otherwise we put $X^{(1)}/Z^{(1)}=X'/Z'$. As in the proof of Corollary~\xref{cor:Gor}, we obtain sequence of 
transformations 
\[
X/Z \dashleftarrow X^{(1)}/Z^{(1)} \dashleftarrow \cdots \dashleftarrow X^{(N)}/Z^{(N)}
\]
where each $f^{(i)}: X^{(i)}\to Z^{(i)}$ is a $\QQ$-conic bundle, $X^{(i)}/Z^{(i)} \dashleftarrow X^{(i+1)}/Z^{(i+1)}$ is a sequence of
type \typem{I} links, $g_i: Z^{(i+1)} \to Z^{(i)}$ is a birational morphism of smooth surfaces
that passes through the blowup of a non-normal crossing point $o_i\in \Delta_{f^{(i)}}$.
Moreover, $\Delta_{f^{(i+1)}} \subset g_i^{-1} (\Delta_{f^{(i)}})$.
Hence after a number of steps we get a situation where $\Delta_{f^{(N)}}$ is a normal crossing curve.
Then by Proposition~\ref{prop:cb:uv}\ref{prop:cb:uv3} 
over any point $o_N\in Z^{(N)}$ our $\QQ$-Conic bundle is either standard of have type \typeci{IF}2.
In the latter case we apply Example~\ref{ex:ODP}
to get a standard model.
\end{proof}

\section{$\QQ$-conic bundles of type \typeci{T}{}}
\label{Sect:T}
In this section we employ the following notation. 
\begin{setup}
\label{setup:Sect:T}
Let $f: X\to Z$ be a $\QQ$-conic bundle, which is of type \typeci{T}{r,a}, $0<a<r$ over a point $o\in Z$
and is a standard conic bundle outside~$o$. Let $C:=f^{-1}(o)_{\mathrm{red}}$ be the central fiber.
Recall that in this situation
the tubular neighborhood $(X\supset C)$ is biholomorphic to the quotient of $\PP^1_{x_1, x_2}\times
\CC^2_{u,v}$ by the $\mumu_r$-action with weights
\[
\wt(x_1, x_2;\, u,v)=(0,\,1;\,a,-a),
\]
where $\gcd (r,a)=1$.
The singular locus of~$X$ consists of two (terminal) cyclic
quotient singularities~$P$ and $Q$ of types $\frac1r(1,a,-a)$ and
$\frac1r(-1,a,-a)$, respectively. 
The central fiber
$C:=f^{-1}(o)_{\mathrm{red}}$ is irreducible and 
$-K_X\cdot C=2/r$.
The structure morphism $f: X\to Z$ is the
projection to
$Z:=\CC^2/\mumu_r$. 
The singularity of~$Z$ at the origin is of type
\typeDu{A}{r-1} and the discriminant curve is empty.
\end{setup}

Note that all the transformations described in this section can be realized also in terms of toric geometry (cf. \cite{Shramov:toric-links}).

\begin{theorem}
\label{thm:Tr}
Let $f: X\to Z$ be a $\QQ$-conic bundle, which is of type \typeci{T}{r,a}, $0<a<r$ over a point $o\in Z$,
and let $C:=f^{-1}(o)_{\mathrm{red}}$.
Let $p: \tilde X\to X$ be a Kawamata blowup.
Then $p$ can be completed to an md-link
\eqref{eq:link:0}, where 
$\chi$ is a single flip of type  \typeci{k1A}{} in the proper transform $\tilde C\subset \tilde X$ of $C$,
$\al$ is a partial crepant resolution of $o\in Z$, and 
$f'$ is a $\QQ$-conic bundle having at most two singular fibers\footnote{Here and below we regard a $\QQ$-conic bundle of type~\typeci{T}{1} as a  $\PP^1$-bundle.} of types 
\typeci{T}{a,\overline{(r-a)}_a}
and \typeci{T}{r-a,\overline{(a)}_{r-a}}, where $\overline{(x)}_y$ denotes the residue modulo $y$.
\end{theorem}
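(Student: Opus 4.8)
The plan is to carry out the entire construction explicitly in toric coordinates, exploiting the quotient presentation of the germ $(X \supset C)$ in Setup~\ref{setup:Sect:T}. First I would pass to the toric/analytic model: $X$ near $C$ is $(\PP^1_{x_1,x_2} \times \CC^2_{u,v})/\muu_r(0,1;a,-a)$, and $Z = \CC^2_{u,v}/\muu_r(a,-a)$, which is the $A_{r-1}$ singularity. The two singular points $P$ and $Q$ of $X$ are cyclic quotients of type $\frac1r(1,a,-a)$ and $\frac1r(-1,a,-a)$; by Theorem~\ref{thm:w-blowup} a Kawamata blowup of either one is a weighted blowup, and by Theorem~\ref{thm:Kblup} it is uniquely the weighted blowup with the normalized weights. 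So there are exactly two Kawamata blowups to consider (as recorded in Remark~\ref{rem:un}), and by the symmetry $a \leftrightarrow -a$, $P \leftrightarrow Q$ it suffices to treat one of them, say the blowup of $P$. I would then identify $\tilde X$, the exceptional divisor $E$, and the proper transform $\tilde C$ in the affine charts of the weighted blowup, and verify that $-K_{\tilde X}$ is nef over $Z$ (this follows from Proposition~\ref{prop:Q:w-blowup}, or can be checked directly by computing $K_{\tilde X}\cdot \tilde C$ via Lemma~\ref{lemma:computation-K.C-2}: since $C$ is irreducible and meets only the maximal-index point, $-K_{\tilde X}\cdot\tilde C \ge 0$, and in fact it is $>0$ here so $\chi_1$ will be a flip).

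Next I would invoke Proposition~\ref{prop:link}: since $-K_{\tilde X}$ is nef over $Z$, the blowup $p$ completes to a type~\typem{I} link~\eqref{eq:link:0} with $f'$ a $\QQ$-conic bundle and $\chi = \chi_{n-1}\circ\cdots\circ\chi_1$ a chain of flips/flops. The key points that remain are: (i) $n = 2$, i.e. $\chi$ is a \emph{single} flip, and it is of type~\typeci{k1A}{}; (ii) the link is md-type; (iii) the identification of $f'$ and of $\al$. For (ii): the center of $p$ is the point $P$ of index $r$ with discrepancy $1/r = $ the minimal discrepancy, so the link is an md-link by definition. For (i), I would run the analytic $(K_{\tilde X}+\epsilon E)$-MMP — equivalently the $(-E)$-MMP — over $Z$ directly in toric coordinates: the flipping curve is $\tilde C$, which passes through the single remaining non-Gorenstein point of $\tilde X$ (namely the proper transform of $Q$, together with a possible new cyclic quotient point on $E\cap\tilde C$ coming from the weighted blowup chart). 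One expects that a single flip of type~\typeci{k1A}{} (the germ $\frac1r$-type flip with irreducible curve, cf.\ the list of non-Gorenstein extremal curve germs) suffices; this can be confirmed by exhibiting the flipped germ explicitly as a toric flip. The difficulty-count gives a sanity check: $\dif(X) = 2(r-1)$ by Corollary~\ref{cor:eq:difSho}, $\dif(\tilde X) = 2(r-1) - 1$ by Corollary~\ref{cor:Kblup:di} (Kawamata blowup of a cyclic quotient point), and after a single flip of type~\typeci{k1A}{} the difficulty should drop to match $\dif(X')$.

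For the final identification: $\al: Z' \to Z$ has irreducible exceptional divisor and $\uprho(Z'/Z)=1$, and since the discriminant of $f$ is empty, $\al$ is a partial crepant resolution of the $A_{r-1}$ singularity, hence extracts exactly one $(-2)$-curve and its exceptional divisor separates the $A_{r-1}$ chain into $A_{a-1}$ and $A_{r-a-1}$ (this is exactly how the weighted blowup with weight $a$ sits on the toric fan of the $A_{r-1}$ singularity). Reading off the two resulting singular points of $Z'$ and using Theorem~\ref{thm:tab:sing} (type~\typeci{T}{}), $f'$ over them is of type~\typeci{T}{a,a'} and \typeci{T}{r-a,a''} where the second parameters are the inverses of $r-a$ modulo $a$ and of $a$ modulo $r-a$ respectively — i.e.\ $\overline{(r-a)}_a$ and $\overline{(a)}_{r-a}$ — which is the asserted statement. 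The main obstacle I anticipate is bookkeeping: correctly tracking the weights and the induced quotient singularities through the weighted-blowup charts and the toric flip, and matching the combinatorics of the $A_{r-1}$ fan decomposition with the arithmetic of the residues $\overline{(x)}_y$; the geometry is forced, but the index computations require care (and the case $a=1$ or $r-a=1$, where one side becomes a $\PP^1$-bundle, must be handled as in the footnote's convention).
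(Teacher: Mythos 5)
Your construction of the link itself matches the paper's: uniqueness of the Kawamata blowup via Theorem~\xref{thm:Kblup}, the computation $E\cdot\tilde C=1$ and $K_{\tilde X}\cdot\tilde C=-1/r$ via Lemma~\xref{lemma:computation-K.C-2}, and existence via Propositions~\xref{prop:link} and~\xref{prop:Q:w-blowup}. Where you genuinely diverge is on the hard part --- that $\chi$ is a \emph{single} flip and the identification of $f'$ --- which you propose to settle by computing the whole link explicitly in toric coordinates, relegating the difficulty count to a ``sanity check''. The paper does the opposite: the difficulty count \emph{is} the proof. It first shows $\Delta_{f'}=\varnothing$ (since $\Delta_f=\varnothing$ the only candidate component of $\Delta_{f'}$ is the smooth rational curve $\Gamma$, which is excluded by Corollary~\xref{cor:cb:plt4}), so $f'$ is of type~\typeb{T} over each singular point of $Z'$ and $\dif(X')=2(r'-1)+2(r''-1)=2r-4$; comparing with $\dif(\tilde X)=2r-3$ and invoking Corollary~\xref{cor:flips} forces exactly one flip, and since a single flip is an isomorphism near the two singular points $\tilde P_1,\tilde P_2\in E$ of indices $a$ and $r-a$, these survive to $X'$ and force $r'=a$, $r''=r-a$. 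The toric route is legitimate in principle (the paper itself notes that everything in this section can be realized torically, citing Shramov), and if carried out it would buy you all the conclusions at once.

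As written, however, your argument has a gap exactly at this step. ``One expects that a single flip of type \typeci{k1A}{} suffices; this can be confirmed by exhibiting the flipped germ explicitly'' is a plan, not a proof, and your fallback --- ``after a single flip the difficulty should drop to match $\dif(X')$'' --- is circular unless you first compute $\dif(X')$ \emph{independently} of the number of flips; that in turn requires establishing $\Delta_{f'}=\varnothing$ via Corollaries~\xref{cor:cb:plt} and~\xref{cor:cb:plt4}, a point you never address (a priori $\Gamma$ could lie in the discriminant, and then $f'$ need not be of type \typeb{T} and the table value for $\dif(X')$ would not apply). Similarly, reading the splitting of \typeDu{A}{r-1} into \typeDu{A}{a-1} and \typeDu{A}{r-a-1} off ``how the weighted blowup sits on the fan'' presupposes that the ray added over $X$ descends to the ray added over $Z$, which is precisely what the intervening flip could disturb; the paper's index argument (that $X'$ retains quotient points of indices $a$ and $r-a$) is what controls this. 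Finally, you hedge about ``a possible new cyclic quotient point on $E\cap\tilde C$'': in fact $\tilde C$ meets $E\simeq\PP(1,a,r-a)$ transversely at the smooth point $(1,0,0)$, away from $\tilde P_1$ and $\tilde P_2$, and this must be verified, since otherwise the flipping germ would have a second non-Gorenstein point and would not be of type \typeci{k1A}{}.
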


\begin{proof}
By symmetry we may assume that $p$ is the blowup of $P$.
Since $P\in X$ is a cyclic quotient singularity, there is only one choice for 
its Kawamata blowup: it must be the weighted blowup $p: \tilde{X}\to X$
with weights $\frac 1r (1,a,r-a)$ (see Theorem~\ref{thm:Kblup}).
By Propositions~\ref{prop:link} and~\ref{prop:Q:w-blowup} there exists an md-type link \eqref{eq:link:0}.
The variety $\tilde{X}$ has on $E$ two terminal quotient singularities $\tilde P_1$ and $\tilde P_2$
of types
\begin{equation}
\label{toric-2-singularities}
\textstyle\frac 1a(1,-r,r)\quad\text{ and}\quad \frac 1{r-a}(1,a,-r),
\end{equation}
respectively, 
and $\tilde C$ intersects the exceptional divisor $E\simeq \PP(1,a,r-a)$ transversely at a smooth point.
In particular, $\dif(X)=2r-4$ (see Corollary~\ref{cor:eq:difSho}).
It follows from Corollary~\ref{cor:Kblup:di} that
\begin{equation*}
\dif(\tilde{X})=2r-3.
\end{equation*}
By \eqref{equation-computation-K.C-2} we have 
\[\textstyle
K_{\tilde{X}}\cdot \tilde{C}=K_X\cdot C+\frac1r E\cdot \tilde{C}=-\frac 1{r}
\]
because $E\cdot \tilde C=1$.
Hence $\tilde C$ is a flipping curve.

Recall that
$\al: Z'\to Z$ is a crepant blowup of $o\in Z$ (see Proposition~\xref{prop:link}).
Thus $Z'$ has Du Val singularities of types \typeDu{A}{r'-1} and \typeDu{A}{r''-1}
with $r'+r''=r$ (the cases $r'=1$ and $r''=1$ are not excluded). 
The curve $\Gamma$ is smooth and rational, hence by Corollaries~\ref{cor:cb:plt} and~\ref{cor:cb:plt4} the discriminant divisor of $f'$ is trivial and $f'$ is also of type~\typeb{T} near each 
singular fiber.
More precisely, $f'$ has at most two singular fibers of types~\typeci{T}{r'} and
\typeci{T}{r''}.
In this case,
\begin{equation*}
\dif(X')=2(r'-1)+2(r''-1)=2r-4.
\end{equation*}
Thus $\dif(\tilde{X})=\dif(X')+1$.
By Corollary~\ref{cor:flips} the map $\chi$ is a single flip, hence it is an isomorphism near $\tilde P_1$ and $\tilde P_2$. 
In particular, $X'$ has 
singularities of indices $a$ and $r-a$ (see \eqref{toric-2-singularities}).
This means that up to permutation we have $r'=a$ and $r''=r-a$.
\end{proof}

Note that in the presentation \ref{setup:Sect:T} there is an involution acting on~$X$ fiberwise and interchanging singular points of~$X$.
Then the transformation \eqref{eq:Glink} below can be regarded as $G$-link (Sarkisov link in the category of $G$-varieties \cite{P:G-MMP}),

\begin{theorem}
\label{thm:TrTr}
Let $f: X\to Z$ be a $\QQ$-conic bundle, which is of type \typeci{T}{r,a} over a point $o\in Z$,
and let $C:=f^{-1}(o)_{\mathrm{red}}$ be the fiber with reduced structure.
Take $a$ so that $0<a<r/2$.
Then there exists the following transformation 
\begin{equation}
\label{eq:Glink}
\vcenter{
\xymatrix@C=4em@R=1.3em{
\tilde{X}\ar@{-->}[r]^{\chi'}\ar[d]^p& X^+\ar@{-->}[r]^{\chi''} &X'\ar[d]^{f'}
\\
X\ar[d]^f & & Z'\ar[dll]_{\al}
\\
Z
}}
\end{equation}
where $p$ is the Kawamata blowups
of both singular points, $f'$ is a $\QQ$-conic bundle having at most three singular fibers of types \typeci{T}{a}, \typeci{T}{a}, \typeci{T}{r-2a},
$\chi'$ is an Atiyah-Kulikov flop in the proper transform $\tilde C\subset \tilde X$ of $C$, $\chi''$ is the composition of  flips in two disjoint smooth rational curves,
and $\al$ is a crepant birational
contraction.
\end{theorem}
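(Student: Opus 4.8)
The plan is to build the transformation \eqref{eq:Glink} by blowing up both cyclic quotient singularities $P$ and $Q$ simultaneously and then running the appropriate MMP, tracking discrepancies (equivalently, Shokurov's difficulty) at each step. Since both $P\in X$ and $Q\in X$ are cyclic quotients of index $r$, by Theorem~\ref{thm:Kblup} each admits a unique Kawamata blowup: the weighted blowup with weights $\frac1r(1,a,r-a)$ at $P$ and $\frac1r(1,r-a,a)$ at $Q$ (in suitable coordinates). Let $p:\tilde X\to X$ be the composition of these two blowups; then $\uprho(\tilde X/X)=2$ and by the argument in Proposition~\ref{prop:Q:w-blowup} (applied to each extremal blowup separately, using that $-K_X\cdot C=2/r$) the divisor $-K_{\tilde X}$ is nef over $Z$. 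By \eqref{equation-computation-K.C-2} applied twice, $K_{\tilde X}\cdot\tilde C=K_X\cdot C+\frac1r E_P\cdot\tilde C+\frac1r E_Q\cdot\tilde C=-\frac2r+\frac1r+\frac1r=0$, so $\tilde C$ is a flopping curve; the first operation $\chi'$ is the Atiyah–Kulikov flop of $\tilde C$ (this is the flop of the proper transform of the central curve, which in these toric coordinates is the standard flop). This is essentially the same computation as in the proof of Theorem~\ref{thm:Tr}, done symmetrically at both points.

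\textbf{Second step: the remaining MMP.} After the flop $\chi'$ we continue running the $(K_{X^+}-\epsilon E)$-MMP over $Z$ as in the proof of Proposition~\ref{prop:link}, where $E=E_P+E_Q$. Since $\uprho(\tilde X/Z)=3$ and we have already performed one operation (the flop) and must end with a non-birational Mori contraction $f':X'\to Z'$, there are exactly two more operations $\chi''=\chi_2''\circ\chi_1''$; I claim both are flips in disjoint smooth rational curves. The mechanism is the difficulty count: $\dif(X)=2(r-1)$ by Corollary~\ref{cor:eq:difSho}, so $\dif(\tilde X)=2(r-1)+2=2r$ by Corollary~\ref{cor:Kblup:di} applied twice; the target $X'$ should have three \typeci{T}{}-fibers of indices $a,a,r-2a$ (note $a+a+(r-2a)=r$, matching the crepant partial resolution structure of the base), giving $\dif(X')=2(a-1)+2(a-1)+2(r-2a-1)=2r-6$. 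Since the flop preserves difficulty and each flip drops it (Corollary~\ref{cor:flips}), the two flips together must drop difficulty by $2r-(2r-6)=6$, i.e. by $3$ each — wait, this needs care; more likely each flip is a \typeci{k1A}{} flip dropping difficulty appropriately, and the two curves flipped are the proper transforms of the two lines in $E_P,E_Q$ joining the flopped curve to the new singular points. The concrete toric picture (everything here is toric, as remarked after Setup~\ref{setup:Sect:T}) makes the curves explicit: after the flop the fiber breaks, and the two flipping curves are disjoint because $P\neq Q$ and the flop is local along $\tilde C$.

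\textbf{Third step: identifying $f'$ and $\al$.} The contraction $f':X'\to Z'$ is a $\QQ$-conic bundle by the same argument as in Proposition~\ref{prop:link} (the proper transform $E'$ of $E$ is non-negative on fibers of $f'$, hence $f'$ cannot be divisorial). The base morphism $\al:Z'\to Z$ is crepant by Proposition~\ref{proposition-Morrison} together with the crepancy statement of Theorem~\ref{thm:link:exist}\ref{thm:link:exist1} — but here $\al$ is a partial crepant resolution with $\uprho(Z'/Z)=2$ rather than $1$, resolving the \typeDu{A}{r-1} singularity into a chain corresponding to the partition $r=a+a+(r-2a)$. Since $\Gamma$ (the $\al$-exceptional locus) is a tree of smooth rational curves, Corollaries~\ref{cor:cb:plt} and~\ref{cor:cb:plt4} force $\Delta_{f'}=\varnothing$ and $f'$ to be of type \typeci{T}{} over each of its singular points; comparing the indices of the singularities of $X'$ (which are determined by the quotient singularities on $E_P, E_Q$ surviving the flips, namely indices $a$ and $r-a$, the latter splitting as $a$ and $r-2a$ in the toric combinatorics) pins down the three fiber types as \typeci{T}{a},\typeci{T}{a},\typeci{T}{r-2a}.

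\textbf{The main obstacle} will be the precise combinatorial bookkeeping in the middle step: verifying that exactly two flips occur (not one flip and one divisorial contraction, not an antiflip) and that the two flipped curves are disjoint smooth rational curves with the correct types. The difficulty count gives the right total but must be supplemented by the explicit toric description to locate the curves and check disjointness; in particular one must check that the condition $0<a<r/2$ (so that $r-2a>0$) is exactly what guarantees the third fiber \typeci{T}{r-2a} is genuinely present (degenerating to a $\PP^1$-bundle fiber when $r=2a$, excluded here since $\gcd(r,a)=1$ forces $r\neq 2a$ unless $a=1,r=2$). I would handle this by writing down the fan of $\tilde X$ after the Kawamata blowups, performing the flop and flips as explicit modifications of the fan, and reading off the singularities and the base directly — this is routine toric geometry but needs to be carried out carefully to match the claimed fiber types.
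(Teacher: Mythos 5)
Your overall architecture (Kawamata blowups of both points, flop of $\tilde C$, then further birational modifications ending in a $\QQ$-conic bundle over a crepant partial resolution) matches the paper's, but the execution has two genuine problems. First, your difficulty count runs in the wrong direction. Corollary~\xref{cor:Kblup:di} is stated for a divisorial contraction $g\colon V\to V'$ and gives $\dif(V)=\dif(V')-1$ when $g$ is the Kawamata blowup of a cyclic quotient point: extracting the discrepancy-$1/r$ divisor \emph{removes} one exceptional divisor of discrepancy $<1$, so the difficulty of the blown-up variety drops. Hence $\dif(\tilde X)=\dif(X)-2=2r-4$, not $2(r-1)+2=2r$ as you wrote. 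With the correct value, the flop preserves difficulty, the target has $\dif(X')=2r-6$, and the deficit is exactly $2$, accounted for by exactly two flips — the clean conclusion you were groping for when you noticed your ``drop by $6$'' did not make sense. As written, your counting argument collapses, and it is also partly circular: you use the (as yet unproven) fiber types $\mathrm{T}_a,\mathrm{T}_a,\mathrm{T}_{r-2a}$ of $X'$ as an input to the count. The paper instead uses the count only to exclude the outcomes in which the MMP after the flop ends with one or two divisorial contractions, and only then identifies the fiber types from the surviving index-$a$ points.

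Second, the step you defer to ``routine toric geometry'' is precisely the substance of the proof, and the one justification you do offer for disjointness of the two flipping curves is wrong. After the flop, $E_1^+\cap E_2^+=C^+$ (the flopped curve), so the two exceptional divisors are \emph{not} separated by ``$P\neq Q$ and locality of the flop''; both candidate flipping curves are proper transforms of curves through the point $E_i\cap\tilde C$ and hence pass close to $C^+$. The paper's argument identifies them concretely as $L_i^+$ with $L_1=E_1\cap\tilde F_u$, $L_2=E_2\cap\tilde F_v$ for the invariant surfaces $F_u=\{u=0\}/\mumu_r$, $F_v=\{v=0\}/\mumu_r$, computes $(L_i^+)^2=(2a-r)/(r-a)<0$ (this is where $a<r/2$ enters), shows $\NE(X^+/Z)$ is generated by $C^+,L_1^+,L_2^+$, and deduces disjointness from $F_u^+\cap F_v^+=\varnothing$. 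Note also that Proposition~\xref{prop:link}, which you invoke for the structure of the MMP, assumes $\uprho(\tilde X/X)=1$, whereas here $\uprho(\tilde X/X)=2$; this is exactly why the case analysis (divisorial contraction versus conic bundle at the end) cannot be borrowed wholesale and has to be redone via the difficulty comparison.
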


\begin{proof}
\renewcommand{\qedsymbol}{}
Consider the Kawamata weighted blowups
$p: \tilde{X}\to X$
of both singular points~$P$ and $Q$ with weights $\frac 1r (1,a,r-a)$ and $\frac 1r (1,r-a,a)$, respectively (see Theorem~\ref{thm:Kblup}).
The variety $\tilde{X}$ has at most four terminal quotient singularities:
\begin{itemize}
\item 
two points $P_1$, $Q_1$ of type $\frac 1a(1,-r,r)$, $p(P_1)=P$, $p(Q_1)=Q$;
\item 
two points $P_2$, $Q_2$ of type $\frac 1{r-a}(1,a,-r)$, $p(P_2)=P$, $p(Q_2)=Q$.
\end{itemize}
Let $E_1$ and $E_2$ be $p$-exceptional divisors over~$P$ and $Q$, respectively, and let $E:=E_1+E_2$.
Then $E_1\simeq \PP(1,a, r-a)$ and $E_2\simeq \PP(1,r-a, a)$. Let $x, u,v$ be quasi-homogeneous coordinates on 
these weighted projective planes.
The curve $\tilde C$ intersects each $E_i$ transversely at the point $\{u=v=0\}=(1,0,0)$
which is smooth on $E_i$ and on $\tilde X$.
Then $E_i\cdot \tilde C=1$ and $K_{\tilde{X}}\cdot \tilde C=0$ by \eqref{equation-computation-K.C-2}.
Hence $\tilde C$ is a flopping curve and there exists a flop $\chi':\tilde X \dashrightarrow X^+$ over~$Z$. 
We claim that $\chi'$ is the Atiyah-Kulikov flop. To show this we consider (analytic) surfaces
$F_u=\{u=0\}/\mumu_r$ and $F_v=\{v=0\}/\mumu_r$ containing $C$.
Let $\tilde F_u$ and $\tilde F_v$ be their proper transforms on $\tilde X$.
Since the curve $\tilde C$ does not pass through any of the points $P_1$, $Q_1$, $P_2$, $Q_2$, it 
is contained in the smooth locus of $\tilde F_u$ as well in the smooth locus of $\tilde F_v$. Hence $\tilde C$ is a $(-1)$-curve on $\tilde F_u$
and so the normal bundle $\NNN_{\tilde C/\tilde X}$ contains the subbundle $\NNN_{\tilde C/\tilde F_u}\simeq \OOO_{\PP^1}(-1)$.
This implies that 
\[
\NNN_{\tilde C/\tilde X}\simeq \OOO_{\PP^1}(-1)\oplus \OOO_{\PP^1}(-1)
\]
and so is the Atiyah-Kulikov flop. 
Moreover, $F_u$ and $F_v$ intersect each other transversely at the general point of $C$, hence $\tilde C= \tilde F_u\cap \tilde F_v$
(scheme-theoretically).

Let $E_i^+:=\chi'_*(E_i)$, $F_u^+:=\chi'_*(\tilde F_u)$, $F_v^+:=\chi'_*(\tilde F_v)$, and let $C^+$ be the flopped curve. Then the restriction $\chi'^{-1}_{E_i}: E_i^+\dashrightarrow E_i$ is a 
morphism and
moreover, it is the blowup of the point $(1,0,0)$ with exceptional divisor $C^+$. In particular, $E_1^+\cap E_2^+=C^+$
and $F_u^+\cap F_v^+=\varnothing$. 

Consider the curves $L_1=\{u=0\}\subset E_1$ and $L_2=\{v=0\}\subset E_2$.
Then $\OOO_{E_i}(L_i)=\OOO_{\PP(1,a, r-a)}(a)$, hence $L_i^2=a/(r-a)$.
Clearly, $L_1=E_1\cap \tilde F_u$ and $L_2=E_2\cap \tilde F_v$.
Let $L_i^+\subset E_i^+$ be its proper transform of $L_i$.
Then 
\[
(L_i^+)^2=L_i^2-1=(2a-r)/(r-a)<0
\]
since $L_i$ passes through $(1,0,0)$.
Therefore, the Mori cone $\NE(E_i^+)$ is generated by the classes of curves $L_i^+$ and $C^+$.
Since $F_u^+\cap F_v^+=\varnothing$, the curves $L_1^+$ and $L_2^+$ are disjoint.
Since the fiber of $X^+\to Z$ over~$o$ is $E_1^+\cup E_2^+$, the Mori cone $\NE(X^+/Z)$ is generated by the classes of
$C^+$, $L_1^+$ and $L_2^+$. Thus $L_1^+$ and $L_2^+$ generate $K_{X^+}$-negative extremal rays (because $\uprho (X^+/Z)=3$).
Since $(L_i^+)^2<0$ for $i=1$ and $2$, both $L_1^+$ and $L_2^+$ are flipping curves.
By the construction $L_1^+$ contains $P_2^+:=\chi'(P_2)$ and does not contain $P_1^+:=\chi'(P_1)$.
Similarly, $L_2^+$ contains $Q_2^+:=\chi'(Q_2)$ and does not contain $Q_1^+:=\chi'(Q_1)$.

Running the MMP on $X^+$ over~$Z$ we get three possibilities:
\[
\text{\nN}\label{pic:a}
\xymatrix@R=1.3em{
\tilde X\ar@{-->}[r]^{\chi'}_{\text{flop}}\ar[dd]_p&X^+\ar@{-->}[r]^{\chi''}_{\text{flips}}&X^{++}\ar[d]^{g}
\\
&X^{\circ}\ar[d]^{g'}&X^{\bullet}\ar@{-->}[l]^{\text{flips}}_{\nu}
\\
X\ar[d]_f&X'\ar[dl]^{f'}&
\\
Z
}
\qquad
\text{\nN}
\xymatrix@R=1.3em{
\tilde X\ar@{-->}[r]^{\chi'}_{\text{flop}}\ar[dd]_p&X^+\ar@{-->}[r]^{\chi''}_{\text{flips}}&X^{++}\ar[d]^{g}
\\
&X'\ar[d]^{f'}&X^{\bullet}\ar@{-->}[l]^{\text{flips}}_{\nu}
\\
X\ar[d]_f&Z'\ar[dl]_{\al}&
\\
Z
}
\qquad
\text{\nN}
\xymatrix@R=1.3em{
\tilde X\ar@{-->}[r]^{\chi'}_{\text{flop}}\ar[dd]_p&X^+\ar@{-->}[r]^<(+0.2){\chi''}_<(+0.2){\text{flips}}&X^{++}=X'\ar[]!<1.5em,0em>;[d]!<1.5em,0em>^{f'}
\\
&&\phantom{X^{++}=}Z'\ar []!<1.5em,0em>; [ddll]_{\al}
\\
X\ar[d]_f&&
\\
Z
} 
\]
where $\chi''$ is a composition of at least two flips, $\nu$ is a composition of flips (or isomorphisms), 
$g$ and $g'$ are a divisorial extremal Mori contractions, 
$f'$ is a $\QQ$-conic bundle, and $\al$ is a birational crepant contraction of Du Val surfaces.
We will show that the cases \textup{a)} and \textup{b)} do not occur.

It follows from \cite[Theorem~2.4]{Kollar:flops} and Corollary~\ref{cor:Kblup:di} that
\begin{equation*}
\dif(X^+)=\dif(\tilde{X})=2r-4.
\end{equation*}
Therefore, by Corollary~\ref{cor:flips}
\begin{equation}
\label{Shokurov:diff-compare:TT}
\dif(X^{++})\le \dif(X^+)-2 = 2r-6.
\end{equation}
\end{proof}

\begin{sclaim}
\label{claim:T}
We have $\Delta_{f'}=\varnothing$, i.e. 
the $\QQ$-conic bundle $f'$ is of type \typeci{T}{} near each singular fiber.
\end{sclaim}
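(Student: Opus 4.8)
The plan is to prove $\Delta_{f'}=\varnothing$; the assertion that $f'$ is of type~\typeci{T}{} near each singular fibre will then follow immediately from Corollary~\ref{cor:cb:plt}\ref{cor:cb:plt1}. First I would observe that every map occurring in \eqref{eq:Glink} — and in each of the three possible continuations a)--c) of the MMP — is a birational modification over $Z$ whose centre (or exceptional divisor) lies over $o$: the Kawamata blowups $p$ contract divisors over the points $P,Q\in f^{-1}(o)$, while the flop $\chi'$, the flips $\chi''$, the further flips $\nu$, and the divisorial contractions $g,g'$ all have exceptional loci contained in the fibre over $o$. Hence $f'$ agrees with $f$ over $Z\setminus\{o\}$. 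Since $f$ is of type~\typeci{T}{r,a} over $o$ and a standard conic bundle elsewhere, Corollary~\ref{cor:cb:plt}\ref{cor:cb:plt1} gives $\Delta_f=\varnothing$, so $\Delta_{f'}$ is contained in the preimage of $o$ in the base of $f'$.

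Next I would analyse that preimage. Let $\al$ denote the induced birational morphism from the base of $f'$ to $Z$ (it is $\al$ in cases b)--c) and the identity in case a)). By Theorem~\ref{thm:DuVal} the base of $f'$ has only Du Val singularities of type~$\mathrm{A}$, and $o\in Z$ is a Du Val point of type~\typeDu{A}{r-1}; consequently $\al^{-1}(o)$ is connected and is either a single point or a union of smooth rational curves whose dual graph is a sub-tree (in fact a sub-chain) of the dual graph of the minimal resolution of $o\in Z$. In the first case $\Delta_{f'}=\varnothing$ for dimension reasons, so I may assume $\Delta_{f'}$ is a non-empty union of some of these rational curves.

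Finally I would reach a contradiction. The dual graph of $\Delta_{f'}$ is then a non-empty forest (an induced subgraph of a forest), so it has a leaf; let $\Gamma$ be the corresponding component of $\Delta_{f'}$. Then $\Gamma\simeq\PP^1$ and $\Gamma\cap\Supp(\Delta_{f'}-\Gamma)$ is empty or a single point, so Corollary~\ref{cor:cb:plt4} forces $\p(\Gamma)>0$, which is absurd. Hence $\Delta_{f'}=\varnothing$. The only step requiring any care is the reduction in the first paragraph: one must check that the ``bad'' outcomes a) and b) of the MMP do not secretly change $f'$ over $Z\setminus\{o\}$; but this is clear because the extra divisorial contractions $g,g'$ there are again over $Z$, with exceptional divisors mapping into $o$, so they modify nothing over $Z\setminus\{o\}$.
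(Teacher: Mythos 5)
Your proof is correct and takes essentially the same route as the paper: the paper's one-line proof just invokes Corollaries~\ref{cor:cb:plt} and~\ref{cor:cb:plt4}, and your argument is precisely the spelled-out version of how they combine ($\Delta_{f'}$ must be supported on the chain of rational curves over $o$, and an end component of that support would contradict the genus inequality of Corollary~\ref{cor:cb:plt4}, after which Corollary~\ref{cor:cb:plt}\ref{cor:cb:plt1} identifies the type as \typeci{T}{}).
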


\begin{proof}
This is a consequence of   Corollaries~\ref{cor:cb:plt} and~\ref{cor:cb:plt4}.
\end{proof}

\begin{proof}[Proof of Theorem~\xref{thm:TrTr} \textup(continued\textup)]
Now consider the cases a), b), c) separately.

\subsection*{Case a)}
Then morphisms $g$ and $g'$ contract the proper transforms of $p$-exceptional divisors $E_1$ and $E_2$.
Hence the map $X\dashrightarrow X'$ is an isomorphism in codimension $1$.
Since $-K_X$ and $-K_{X'}$ are ample over~$Z$, the map $X\dashrightarrow X'$ is, in fact, an isomorphism.
Then by Corollaries~\ref{cor:Kblup:di} and~\ref{cor:flips}
we have
\[
\dif(X^{++})\ge \dif(X^{\bullet})-1\ge \dif(X^{\circ})-1\ge \dif(X')-2= \dif(X)-2=2r-4.
\]
This contradicts \eqref{Shokurov:diff-compare:TT}.

\subsection*{Case b)}
Then $\al: Z'\to Z$ is a partial resolution of the \typeDu{A}{r-1}-point
$o\in Z$. Let $\Gamma$ be the $\al$-exceptional divisor. Since $\uprho(Z'/Z)=1$, $\Gamma$ is irreducible.
The surface $Z'$ has Du Val singularities of types \typeDu{A}{r'-1} and \typeDu{A}{r''-1}
with $r'+r''=r$ (the cases $r'=1$ and $r''=1$ are not excluded). 
By Claim~\ref{claim:T} the discriminant divisor of $f'$ is trivial and $f'$ is also of type~\typeb{T}
near each singular fiber.
More precisely, $f'$ has two (or one) singular fibers of types~\typeci{T}{r'} and
\typeci{T}{r''}. Then, as in the case a), we have
\[
\dif(X^{++})\ge \dif(X^{\bullet})-1\ge \dif(X')-1=2(r'-1)+2(r''-1)=2r-4.
\]
This again contradicts \eqref{Shokurov:diff-compare:TT}.

\subsection*{Case c)}
Then $\al: Z'\to Z$ is a partial resolution of the \typeDu{A}{r-1}-point
$o\in Z$. Let $\Gamma$ be the $\al$-exceptional divisor. Since $\uprho(Z'/Z)=2$, $\Gamma$ has exactly two components $\Gamma_1$ and $\Gamma_2$.
The surface $Z'$ has Du Val singularities $o'$, $o''$, $o'''$ of types \typeDu{A}{r'-1}, \typeDu{A}{r''-1}, and \typeDu{A}{r'''-1},
respectively,
with $r'+r''+r'''=r$ (the cases $r'=1$, $r''=1$, and $r'''=1$ are not excluded). 
We may assume that $o'\in \Gamma_1$, $\Gamma_1\cap \Gamma_2=\{o''\}$, and $o'''\in \Gamma_2$.
By  $f'$ is also of type~\typeb{T}
near each singular fiber.
More precisely, $f'$ has three (or less) germs of types~\typeci{T}{r'}, \typeci{T}{r''}, and
\typeci{T}{r'''}. Then by Corollary~\ref{cor:eq:difSho} we have
\[
\dif(X')=2r'-2+2r''-2+2r'''-2=2r-6.
\]
Comparing this with \eqref{Shokurov:diff-compare:TT} we obtain $\dif(X^{++})= \dif(X^+)-2$.
Thus $\chi''$ is the composition of exactly two flips in $L_1^+$ and $L_2^+$ (see Corollary~\ref{cor:flips}).
Up to permutation we may assume that $E_i'=f^{-1}(\Gamma_i)$. 
Since the fibers of $f'$ are irreducible, we have $E_1'\cap E_2'=C'$ and $C'$ is the fiber over $o''$.
Recall that the points $P_1^+\in E_1^+$ and $Q_2^+\in E_2^+$ of type $\frac 1a(1,-r,r)$ are not contained in the 
$\chi''$ flipping locus. Therefore, $P_1':=\chi''(P_1^+)$ and $Q_1':=\chi''(Q_1^+)$
are also of type $\frac 1a(1,-r,r)$. Moreover, $P_1',\, Q_1'\notin C'$. Therefore, $f'(P_1')=o'$ and $f'(P_2')=o'''$.
This implies that $r'=r'''=a$ and $r''=r-2a$.
\end{proof}

\section{$\QQ$-conic bundles of type \typeci{k2A}{}}
\label{Sect:k2}
In this section we adopt the following setup.

\begin{setup}
Let $f: X\to Z$ be a $\QQ$-conic bundle, which is of type \typeci{k2A}{r} over a point $o\in Z$
and is a standard conic bundle outside~$o$. Let $C:=f^{-1}(o)_{\mathrm{red}}$ be the central fiber.
Recall that in this situation
the tubular neighborhood $(X\supset C)$ is biholomorphic to the quotient of the hypersurface
\[
\{ x_1^2+ux_2^2+vx_3^2=0\}\subset\PP^2_{x_1,x_2,x_3}\times\CC^2_{u,v}
\]
by the $\mumu_{r}$-action with weights
\[
\wt(x_1,x_2,x_3;\, u,v)=(a, -1,0;\,1,-1),
\]
where $r=2a+1$ and $f: X\to Z$ is the
projection to
$Z:=\CC^2/\mumu_r$.
The singular locus of~$X$ consists of two (terminal) cyclic quotient
singularities~$P$ and $Q$ of types $\frac 1r(a+1,1,-1)$ and $\frac 1r(a,-1,1)$, respectively.
The singularity of~$Z$ at the origin is of type~ \typeDu{A}{r-1}.
For the singular fiber
$C:=f^{-1}(o)_{\mathrm{red}}$ we have
$-K_X\cdot C=1/r$.
The discriminant curve $\Delta_f$ is given by $\{uv=0\}/\mumu_r$.
In particular, the pair $(Z,\Delta_f)$ is log canonical.
\end{setup}

\begin{theorem}
\label{thm:k2A}
Let $f: X\to Z$ be a $\QQ$-conic bundle, which is of type \typeci{k2A}{r} over a point $o\in Z$
and is a standard conic bundle outside~$o$.
Let $p: \tilde X\to X$ be a Kawamata blowup.
Then $p$ can be completed to an md-link
\eqref{eq:link:0}, where 
$\chi$ is a composition of flips,
$\al$ is a partial crepant resolution of $o\in Z$, and
$f'$ is a $\QQ$-conic bundle having two singular fibers\footnote{Here we regard a $\QQ$-conic bundle of type~\typeci{k2A}{1} as a standard 
conic bundle.} of types~\typeci{k2A}{r-2}
and~\typek{ID}{2}. The discriminant curve $\Delta_{f'}$ is the set-theoretic preimage of $\Delta_f$. 
\end{theorem}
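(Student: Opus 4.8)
The plan is to follow the same template as in the proof of Theorem~\ref{thm:Tr}, keeping careful track of Shokurov's difficulty $\dif$ and of the discriminant curve. First I would note that by Proposition~\ref{prop:Q:w-blowup} the divisor $-K_{\tilde X}$ is nef over $Z$, so by Proposition~\ref{prop:link} the blowup $p$ completes to a type~\typem{I} link \eqref{eq:link:0} in which $\al$ is crepant (Proposition~\ref{proposition-Morrison}, since $o\in Z$ is singular). Using symmetry I would choose $p$ to be the Kawamata blowup of one of the two cyclic quotient points, say $P$ of type $\frac1r(a+1,1,-1)$; since $P$ is a cyclic quotient there is a unique such blowup and its weights are prescribed by Theorem~\ref{thm:Kblup}. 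One then computes the singularities of $E\simeq\PP(1,?,?)$ and the local picture of $\tilde C$ near $E\cap\tilde C$ (a single transverse point, as in Remark~\ref{rem:EC}), obtaining $E\cdot\tilde C$ and hence $K_{\tilde X}\cdot\tilde C$ from \eqref{equation-computation-K.C-2}; this should come out negative, so $\tilde C$ is flipping and $\chi$ is a genuine composition of flips.

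Next I would bookkeep difficulties. By Corollary~\ref{cor:eq:difSho}, $\dif(X)=2(r-1)$, and by Corollary~\ref{cor:Kblup:di} (equality case, since we blew up a cyclic quotient) $\dif(\tilde X)=2r-3$. Since $\al$ is a crepant partial resolution of the \typeDu{A}{r-1} point with $\uprho(Z'/Z)=1$, the surface $Z'$ has two Du Val points of types \typeDu{A}{r'-1} and \typeDu{A}{r''-1} with $r'+r''=r$ (allowing $r'=1$ or $r''=1$). The $\al$-exceptional divisor $\Gamma$ is a smooth rational curve. Corollary~\ref{cor:cb:plt} together with the fact that $f$ is a standard conic bundle away from $o$ forces $\Delta_{f'}$ to be supported on $\Gamma$ only where the pair $(Z',\Delta_{f'})$ can fail to be plt; combined with Corollary~\ref{cor:cb:plt4} applied to the component of $\Delta_{f'}$ along $\Gamma$, this pins down that $f'$ is of type \typeci{k2A}{} or \typek{ID}{2} near each singular fibre (these are exactly the types from Corollary~\ref{cor:cb:plt}\ref{cor:cb:plt3} that occur over singular base points of this shape). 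Now $\dif$ of a \typeci{k2A}{m} germ is $2(m-1)$ and $\dif$ of a \typek{ID}{2} germ is $1$; matching $\dif(X')$ against the Corollary~\ref{cor:flips} bound $\dif(X')\le\dif(\tilde X)-(\#\text{flips})\le 2r-4$ forces $\{r',r''\}=\{r-2,\,?\}$ with the second fibre of type \typek{ID}{2} (which has index $2$, hence "$r''-1$" effectively $1$, i.e. $r''=2$ in the \typeDu{A}{1} sense), and pins the number of flips to the minimum. Tracking which of the two singular points of $\tilde X$ on $E$ survives the flips (the one not meeting the flipping locus, as in the proof of Theorem~\ref{thm:Tr}) identifies which residue class gives the index-$2$ point and hence which fibre is \typek{ID}{2}.

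Finally, for the statement about the discriminant I would argue as in Example~\ref{ex:ODP} and Example~\ref{example-simplest-link}: away from $o$ nothing changes, so $\Delta_{f'}$ agrees with the proper transform of $\Delta_f$ there, and over $o$ the two analytic branches $\{u=0\}$ and $\{v=0\}$ of $\Delta_f=\{uv=0\}/\mumu_r$ pull back, together with $\Gamma$, to give the set-theoretic preimage $\al^{-1}(\Delta_f)$ — the new component $\Gamma$ appearing because the central fibre of $f'$ over the \typek{ID}{2} point is a double line (Corollary~\ref{cor:cb:plt}\ref{cor:cb:plt3}), so $\Gamma\subset\Delta_{f'}$. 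The main obstacle I expect is the bookkeeping in the middle step: one must verify that exactly the combination \typeci{k2A}{r-2}${}+{}$\typek{ID}{2} is numerically and birationally consistent (ruling out, say, \typeci{k2A}{r'}${}+{}$\typeci{k2A}{r''} or a standard-conic-bundle fibre with a double line), and for this the difficulty count has to be sharp, which in turn requires knowing that $\chi$ uses the minimal possible number of flips and that the cyclic-quotient point of $\tilde X$ of index $2$ on $E$ is untouched by those flips — exactly the kind of local analysis carried out in the $\typeci{T}{}$ case, but now with the extra component $\Gamma\subset\Delta_{f'}$ to account for.
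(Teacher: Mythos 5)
Your skeleton (Propositions~\ref{prop:Q:w-blowup} and~\ref{prop:link} for existence, a local computation at $E\cap\tilde C$, then identification of the singular fibres of $f'$) matches the paper's, but the decisive step --- pinning down the two fibres as \typeci{k2A}{r-2} and \typek{ID}{2} --- does not go through by the difficulty bookkeeping you propose, and you yourself flag exactly this as the unresolved obstacle. Concretely: $\dif(X)=2(r-1)$, $\dif(\tilde X)=2r-3$, and Corollary~\ref{cor:flips} only gives $\dif(X')\le 2r-4$. But a configuration with two fibres of types \typeci{k2A}{r'} and \typeci{k2A}{r''}, $r'+r''=r$, would have $\dif(X')=2(r'-1)+2(r''-1)=2r-4$, which saturates the bound with a single flip; so the count is \emph{not} sharp enough to exclude it, and no amount of tracking which singular point of $\tilde X$ survives will make it so. What actually rules this out in the paper is a parity argument you do not mention: germs of type \typeci{k2A}{m} exist only for \emph{odd} $m$ (see Table~\xref{tab:sing}), while $r'+r''=r$ is odd, so one of $r',r''$ is even; by Corollary~\ref{cor:cb:plt}\ref{cor:cb:plt3} the fibre over that point must then be of type \typek{ID}{2}, forcing $r''=2$ and $r'=r-2$. (A similar small gap: at $E\cap\tilde C$ the variety $\tilde X$ has a $\frac12(1,1,1)$ point, so $E\cdot\tilde C=1/2$, not $1$ as Remark~\ref{rem:EC} would give at a smooth point; this is harmless but shows the local computation was not done.)

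Your treatment of the discriminant is also circular as written: you deduce $\Gamma\subset\Delta_{f'}$ from the fibre over $o''$ being of type \typek{ID}{2}, but the fibre types were to be deduced from the structure of $\Delta_{f'}$. The paper runs the logic in the opposite order: $\Delta_{f'}\subset\Supp(\al^*\Delta_f)=\Delta'\cup\Gamma$ with $\Delta'$ the proper transform; since $(Z,\Delta_f)$ is lc, $\Delta'$ is \emph{smooth} near $\Gamma$, while $Z'$ is singular (as $r\ge3$), so $f'$ cannot be standard and Corollary~\ref{cor:cb:plt}\ref{cor:cb:plt2} forces $\Delta_{f'}\ne\Delta'$, hence $\Delta_{f'}=\Delta'+\Gamma$. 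Only then does one read off that $(Z',\Delta_{f'})$ is lc but not plt precisely at the two points of $\Gamma\cap\Delta'$, apply Corollary~\ref{cor:cb:plt}\ref{cor:cb:plt3} there, and finish with the parity argument. You should restructure the proof along these lines; the difficulty count, while correct, carries no weight in the identification step.
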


\begin{proof}
By symmetry we may assume that $p$ is the blowup of $P$.
As in the case \typeci{T}{} there is only one choice for 
its Kawamata blowup: it must be the weighted blowup $p: \tilde{X}\to X$
with weights $\frac1r (1,2,r-2)$ (see Theorem~\ref{thm:Kblup}).
Then $E\cap \tilde C$ is a singularity of type $\frac 12(1,1,1)$ on $\tilde{X}$ and $E\cdot \tilde C=1/2$.
By \eqref{equation-computation-K.C-2} we have 
\[\textstyle
K_{\tilde{X}}\cdot \tilde{C}=K_X\cdot C+\frac1r E\cdot \tilde{C}=-\frac 1{2r}.
\]
By Proposition~\ref{prop:link}
we obtain an md-type Sarkisov link, where $f'$ is a $\QQ$-conic bundle
and $\al: Z'\to Z$ is a crepant contraction. 
For the discriminant curve $\Delta_{f'}$ we have
$\Delta_{f'}\subset \Supp(\al^{*}(\Delta_f))$. 
Let $\Delta'\subset Z'$ be the proper transform of $\Delta_f$.
Since the pair $(Z,\Delta_f)$ is lc, so is the pair 
$(Z', \al^{*}\Delta_f)$ and $\al^{*}\Delta_f= \Delta'+\Gamma$,
where $\Gamma$ is the $\al$-exceptional curve. Hence, $(Z,\Delta')$ is plt.
In particular, $\Delta'$ is smooth in a neighborhood of $\Gamma$.
On the other hand, the surface $Z'$ is singular because $r\ge 3$. Hence $f'$ is not a standard conic bundle 
and $\Delta_{f'}\neq \Delta'$ by Corollary~\xref{cor:cb:plt}.
Therefore, $\Delta_{f'}=\al^{*}\Delta_f= \Delta'+\Gamma$.
Since $\Delta_f=\{uv=0\}/\mumu_r$ has two analytic branches at $o$ and $(Z',\Delta'+\Gamma)$ is lc, 
the intersection $\Gamma\cap \Delta'$ consists of exactly two points, say $o'$ and $o''$,
so that  $(Z',\Delta'+\Gamma)$ is plt outside $o'$ and $o''$.
This implies that 
$f'$ is a standard conic bundle outside $o'$ and $o''$ (see Corollary~\xref{cor:cb:plt}). 
Further, as in the case~\typeb{T}, we see that $Z'$ has at most two Du Val points of types \typeDu{A}{r'-1}
and \typeDu{A}{r''-1} with $r'+r''=r$ and $r',\, r''\ge 1$. Since $r$ is odd, we may assume that $r'$ is odd and $r''$ is even.
Then $f'$ over $o''$ must be of type~\typek{ID}2 by Corollary~\xref{cor:cb:plt}\ref{cor:cb:plt3}. Hence $r''=2$ and $r'=r-2$.
If $r'=1$, then $f'$ is a standard conic bundle over $o'$. Otherwise $f'$ is of type~\typeci{k2A}{r'} over $o'$
(again by Corollary~\xref{cor:cb:plt}\ref{cor:cb:plt3}).
\end{proof}

Note that a $\QQ$-conic bundle germ of type \typeci{k2A}{} can be regarded as $G$-variety: 
there exists an involution interchanging singular points.
Then according to general philosophy (see e.g.~\cite{P:G-MMP}) there should exists $G$-equivariant link similar to 
Theorem~\ref{thm:TrTr}. However in our situation this link must start with Kawamata blowups 
of both non-Gorenstein points and the divisor $-K_{\tilde X}$ is not nef over~$Z$.
This means that the link involves antiflips, which are more difficult to analyze. 

\section{$\QQ$-conic bundles of type \typeb{IE^\vee}}
\label{Sect:IE}

In this section we adopt the following setup.
\begin{setup}
Let $f: X\to Z$ be a $\QQ$-conic bundle, which is of type \typeb{IE^\vee} over a point $o\in Z$
and is a standard conic bundle outside~$o$. Let $C:=f^{-1}(o)_{\mathrm{red}}$ be the central fiber.
Recall that in this situation
the tubular neighborhood $(X\supset C)$ is biholomorphic to the $\mumu_4$-quotient of
a complete intersection of two hypersurfaces in $\PP(1^3,2)\times \CC^2$ as described in Table~\ref{tab:sing}.
Moreover, applying analytic coordinate changes 
we may assume that $(X\supset C)$
is the quotient of the following variety in $\PP(1^3,2)\times \CC^2$
\begin{equation}
\label{eq:index2-IE}
\left\{
\begin{array}{lll}
x_1^2+x_2^2+\psi x_3^2+u\,x_4&=&0
\\[7pt]
2\,x_1x_2+x_3^2+\xi_{1,1}x_1^2+\xi_{2,2}x_2^2+2\,\xi_{1,3}x_1x_3+2\,\xi_{2,3}x_2x_3+v\,x_4&=&0
\end{array}
\right.
\end{equation}
by an action of $\mumu_4$ with weights $(3,1,2,1;1,3)$, where 
$\psi,\,\xi_{1,1},\, \xi_{2,2} \in \mathfrak{m}^2\subset \CC\{u,\, v\}$ are semi-invariants of $\mumu_4$-weight $2$
and $\xi_{1,3},\,\xi_{2,3} \in \mathfrak{m}$ are semi-invariants of $\mumu_4$-weights $-1$ and $1$, respectively.
It is easy to see that~$X$ has a unique singular point~$P$ that is a cyclic quotient of type $\frac18(5,1,3)$.
The base point $o\in Z$ has type \typeDu{A}3, hence
$-K_X\cdot C=1/2$ (see \cite[Corollary~4.7.4]{MP:1pt}). 
\end{setup}

\begin{sclaim}[Local description]
It follows from \eqref{eq:index2-IE} that up to analytic isomorphism in a neighborhood of~$P$ we have
the following identification
\[
(C\subset X)=\big(\{x_1^2+x_2^2=2x_1x_2+x_3^2=0\}\subset \CC^3\big) /\mumu_8(5,1,3).
\]
\end{sclaim}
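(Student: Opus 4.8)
The plan is to compute everything in the standard affine chart $\{x_4\neq 0\}$ of $\PP(1^3,2)$ around the point $\hat P=(0,0,0,1;0,0)$ of the complete intersection $X^{\flat}\subset \PP(1^3,2)\times\CC^2$ cut out by \eqref{eq:index2-IE}. Since $[0:0:0:1]$ is fixed by the $\mumu_4$-action on $\PP(1^3,2)$, the point $\hat P$ is $\mumu_4$-fixed and is the unique preimage of $P$ under $X^{\flat}\to X=X^{\flat}/\mumu_4$; hence a suitable neighbourhood of $P$ in $X$ is the $\mumu_4$-quotient of a $\mumu_4$-invariant neighbourhood of $\hat P$ in $X^{\flat}$, and $C$ near $P$ is the image of $C^{\flat}$ near $\hat P$. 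So it suffices to describe $(C^{\flat}\subset X^{\flat})$ near $\hat P$ together with the group acting.

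First I would pass to the chart $\{x_4\neq 0\}$, which is $\CC^3_{y_1,y_2,y_3}/\mumu_2(1,1,1)$ with $y_i=x_ix_4^{-1/2}$, and work out the $\mumu_4$-action there. Writing the $\mumu_4$-generator as $\zeta=\eta^2$ for a primitive $8$-th root $\eta$, and using $\wt(x_1,\dots,x_4)=(3,1,2,1)$, one gets $y_i\mapsto \eta^{2w_i-1}y_i$, that is $(y_1,y_2,y_3)\mapsto(\eta^5y_1,\eta\,y_2,\eta^3y_3)$. This lift of the $\mumu_4$-generator has order $8$ and its fourth power is $(-1,-1,-1)$, so together with the chart's $\mumu_2$ it generates the cyclic group $\mumu_8$ acting with weights $(5,1,3)$. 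I expect this weight bookkeeping — in particular keeping track of the ambiguous square root $x_4^{1/2}$ — to be the delicate point; everything else is essentially formal.

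Next, dividing the two equations \eqref{eq:index2-IE} by $x_4$ turns them, in the coordinates $(y_1,y_2,y_3,u,v)$ on the double cover $\CC^3_y\times\CC^2$, into
\[
y_1^2+y_2^2+\psi\,y_3^2+u=0,\qquad 2y_1y_2+y_3^2+\xi_{1,1}y_1^2+\xi_{2,2}y_2^2+2\xi_{1,3}y_1y_3+2\xi_{2,3}y_2y_3+v=0.
\]
The Jacobian of these with respect to $(u,v)$ at $\hat P$ is the identity, so by the analytic implicit function theorem there are unique $\mumu_8$-equivariant power series $u=u(y)$, $v=v(y)$ solving them, with leading terms $-(y_1^2+y_2^2)$ and $-(2y_1y_2+y_3^2)$; projecting away $u,v$ then identifies $X^{\flat}$ near $\hat P$ with $\CC^3_y$, hence $X$ near $P$ with $\CC^3/\mumu_8(5,1,3)$, recovering the stated singularity type.

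Finally I would identify $C^{\flat}$ near $\hat P$. Since $f^{\flat}$ is the projection to $\CC^2_{u,v}$, the set underlying $C^{\flat}$ is $\{u(y)=v(y)=0\}$. On this set $u(y)=v(y)=0$, hence $\psi$ and the $\xi_{i,j}$ vanish there (they lie in $\mathfrak{m}=(u,v)$), so the defining equations reduce to $y_1^2+y_2^2=0$ and $2y_1y_2+y_3^2=0$; conversely, on $\{y_1^2+y_2^2=0,\ 2y_1y_2+y_3^2=0\}$ the pair $(u(y),v(y))$ satisfies a fixed-point system whose right-hand side lies in $\mathfrak{m}\cdot(u(y),v(y))$, which forces $u(y)=v(y)=0$ (and a one-line Nakayama argument even upgrades this to the scheme-theoretic equality of ideals, though this is not needed). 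Thus $C^{\flat}$ coincides set-theoretically with $\{y_1^2+y_2^2=0,\ 2y_1y_2+y_3^2=0\}$, which is a reduced curve — four distinct lines through the origin — and since $C^{\flat}$ is reduced by definition the two coincide. Passing to the $\mumu_8$-quotient and renaming $y_i$ back to $x_i$ yields $(C\subset X)=\big(\{x_1^2+x_2^2=2x_1x_2+x_3^2=0\}\subset\CC^3\big)/\mumu_8(5,1,3)$, as claimed.
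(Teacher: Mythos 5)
Your argument is correct, and it is exactly the routine chart computation that the paper leaves implicit when it says the identification ``follows from \eqref{eq:index2-IE}'': restricting to the affine chart $\{x_4\neq 0\}\simeq \CC^3/\mumu_2(1,1,1)$, combining the lifted $\mumu_4$-action (weights $\eta^{2w_i-1}$, i.e. $(5,1,3)$ mod $8$) with the chart's $\mumu_2$ into $\mumu_8(5,1,3)$, eliminating $u,v$ by the implicit function theorem, and checking that the reduced fiber over the origin is the complete intersection $\{x_1^2+x_2^2=2x_1x_2+x_3^2=0\}$. No gaps; the weight bookkeeping and the Nakayama-type argument identifying $\{u(y)=v(y)=0\}$ with that curve are both right.
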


\begin{sclaim}
\label{claim:IFvee:graph}
The discriminant curve $\Delta_f$ has  at~$o$ two smooth analytic branches $\Delta_1$ and $\Delta_2$
such that $(Z, \Delta_1)$ is plt and $(Z, \Delta_2)$ is lc but not plt. Therefore, 
the dual graph of the minimal resolution
of $Z\supset \Delta_f\ni o$ has the following form
\begin{equation}
\label{eq:IFvee:graph}
\vcenter{\hbox{
\def\sizec{0.2em}
\def\sizev{0.5}
\begin{tikzpicture}
\coordinate(1) at (0,0);
\coordinate(2) at (-2,0);
\coordinate(3) at (2,0);
\coordinate(4) at (4,0);
\coordinate(5) at (2,-1);

\path (1) edge (3);
\path (2) edge (1);
\path (3) edge (4);
\path (3) edge (5);

\path[fill=white,draw=black] (1) circle (\sizec)node[above, yshift=1.5]{$\scriptstyle \Theta_1$};
\path[fill=black,draw=black] (2) circle (\sizec)node[above, yshift=1.5]{$\scriptstyle \Delta_1$};
\path[fill=white,draw=black] (3) circle (\sizec)node[above, yshift=1.5]{$\scriptstyle \Theta_0$};
\path[fill=white,draw=black] (4) circle (\sizec)node[above, yshift=1.5]{$\scriptstyle \Theta_2$};
\path[fill=black,draw=black] (5) circle (\sizec)node[below, yshift=-1.5]{$\scriptstyle \Delta_2$};
\end{tikzpicture}
}}
\end{equation} 
where the vertices $\circ$ correspond to \textup(crepant\textup) exceptional divisors over~$o$ and the vertices
$\bullet$ correspond to the components of $\Delta_f$.
\end{sclaim}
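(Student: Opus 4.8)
The plan is to pass to the $\mumu_4$-cover, compute the discriminant there, and then descend. By Theorem~\ref{thm:tab:sing} and~\eqref{eq:index2-IE}, the germ $(X\supset C)$ is the quotient, with $\wt(x_1,\dots,x_4;u,v)=(3,1,2,1;1,3)$, of the $\QQ$-conic bundle germ $(X^{\flat}\supset C^{\flat})\to(Z^{\flat}\ni o^{\flat})$ over the smooth surface $Z^{\flat}=\CC^2_{u,v}$ given by~\eqref{eq:index2-IE} with $\theta_1=u$, $\theta_2=v$, $q_1=x_1^2+x_2^2+\psi x_3^2$, and $q_2=2x_1x_2+x_3^2+\xi_{1,1}x_1^2+\xi_{2,2}x_2^2+2\xi_{1,3}x_1x_3+2\xi_{2,3}x_2x_3$. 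The induced $\mumu_4$-action on $(u,v)$ has weights $(1,3)\equiv(1,-1)\bmod 4$ and is free away from $o^{\flat}$, so $Z=Z^{\flat}/\mumu_4$ is the \typeDu{A}{3}-singularity $\CC^2_{u,v}/\mumu_4(1,-1)$ and the quotient map $\pi\colon Z^{\flat}\to Z$ is étale in codimension $1$.

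\textbf{Step 1: the discriminant upstairs.} I would compute $\Delta_{f^{\flat}}$ via Proposition~\ref{prop:comp-discrim}: it is the vanishing locus of $\det(u\,q_2-v\,q_1)$, the determinant of the symmetric $3\times3$ matrix $uQ_2-vQ_1$. Expanding this determinant and using $\psi,\xi_{1,1},\xi_{2,2}\in\mathfrak{m}^2$ and $\xi_{1,3},\xi_{2,3}\in\mathfrak{m}$, one finds that its lowest-order part is, up to a unit, the cubic $u(u-v)(u+v)$, a product of three distinct linear forms. Hence $\Delta_{f^{\flat}}$ is reduced near $o^{\flat}$ and is the union of three smooth analytic branches $\Delta_0^{\flat}$, $\Delta_+^{\flat}$, $\Delta_-^{\flat}$, tangent respectively to $\{u=0\}$, $\{u=v\}$, $\{u=-v\}$.

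\textbf{Step 2: descent and the plt/lc data.} Since $\mumu_4$ acts linearly, preserves $\Delta_{f^{\flat}}$, and therefore permutes the three tangent lines, it permutes the three branches accordingly: a generator of $\mumu_4$ fixes $\{u=0\}$ and interchanges $\{u=v\}$ with $\{u=-v\}$, while the involution in $\mumu_4$ fixes each of the three lines. Thus $\mumu_4\cdot\Delta_0^{\flat}=\Delta_0^{\flat}$, whereas $\{\Delta_+^{\flat},\Delta_-^{\flat}\}$ is a single $\mumu_4$-orbit with stabilizer $\mumu_2$. Consequently $\Delta_f=\pi(\Delta_{f^{\flat}})$ has exactly two analytic branches at $o$, namely $\Delta_1:=\pi(\Delta_0^{\flat})$ and $\Delta_2:=\pi(\Delta_+^{\flat})=\pi(\Delta_-^{\flat})$, and both are smooth, being quotients of a smooth disk by a faithful finite cyclic group action. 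Because $\pi$ is étale in codimension $1$, we have $K_{Z^{\flat}}=\pi^*K_Z$, $\pi^*\Delta_1=\Delta_0^{\flat}$, and $\pi^*\Delta_2=\Delta_+^{\flat}+\Delta_-^{\flat}$, hence $K_{Z^{\flat}}+\pi^{-1}(\Delta_i)_{\red}=\pi^*(K_Z+\Delta_i)$ for $i=1,2$. Therefore $(Z,\Delta_1)$ is plt if and only if $(Z^{\flat},\Delta_0^{\flat})$ is, which holds since $\Delta_0^{\flat}$ is a smooth curve in a smooth surface; and $(Z,\Delta_2)$ is log canonical but not plt if and only if $(Z^{\flat},\Delta_+^{\flat}+\Delta_-^{\flat})$ is, which holds since $\Delta_+^{\flat}\cup\Delta_-^{\flat}$ is a node.

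\textbf{Step 3: the dual graph; main obstacle.} Finally I would read off the dual graph. The \typeDu{A}{3}-point $Z\ni o$ has minimal resolution $\tilde Z\to Z$ with exceptional chain $\Theta_1-\Theta_0-\Theta_2$ of three $(-2)$-curves. By the standard classification of smooth curve germs through an \typeDu{A}{n}-point according to which resolution curve their proper transform meets, a smooth branch forms a plt pair with $Z$ precisely when its proper transform meets an end component of the chain transversally, and a log canonical non-plt pair precisely when it meets a middle component transversally. Since $(Z,\Delta_1)$ is plt, $\tilde\Delta_1$ meets an end, which we relabel $\Theta_1$; since $(Z,\Delta_2)$ is log canonical but not plt, $\tilde\Delta_2$ meets the unique middle component $\Theta_0$. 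This is exactly the configuration~\eqref{eq:IFvee:graph}. I expect the main obstacle to be the determinant computation --- verifying that, after imposing the order conditions on $\psi,\xi_{1,1},\xi_{2,2},\xi_{1,3},\xi_{2,3}$, the cubic leading term is genuinely $u(u^2-v^2)$ with three distinct factors rather than a more degenerate form --- together with the (standard, and easily verified by a short toric discrepancy computation) description of curve germs through an \typeDu{A}{n}-point invoked in the last step.
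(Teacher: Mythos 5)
Your argument is correct and is essentially the paper's own proof: both compute the discriminant via Proposition~\ref{prop:comp-discrim}, extract the cubic leading term $u(u^2-v^2)$ (the absence of degree-$4$ terms being forced by $\mumu_4$-semi-invariance), identify the two branches $\Delta_1=\{u+\cdots\}/\mumu_4$ and $\Delta_2=\{u^2-v^2+\cdots\}/\mumu_4$ with their plt/lc types, and then read off the configuration on the minimal resolution of the \typeDu{A}{3}-point. The only difference is cosmetic: you carry out explicitly the descent through the $\mumu_4$-cover and replace the paper's citation of \cite[Theorem~9.6]{Kawamata:crep} by the direct discrepancy computation on the chain of three $(-2)$-curves, which correctly yields coefficients $(3/4,1/2,1/4)$ in the end-transversal case and $(1/2,1,1/2)$ in the middle-transversal case.
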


\begin{proof}
It follows from Proposition~\ref{prop:comp-discrim} and \eqref{eq:index2-IE} that the discriminant curve has the form
\[
\Delta_f=\left\{
u (u^2-v^2) +(\text{terms of degree $\ge 5$}) \right\}
\subset Z=\CC^2/\mumu_4(1,-1).
\]
Then it is easy to see that $\Delta_f$ has two smooth analytic branches:
\[
\Delta_1= \left\{ u  +(\text{terms of degree $\ge 2$}) \right\}/\mumu_4,
\quad
\Delta_2= \left\{ u^2-v^2 +(\text{terms of degree $\ge 3$}) \right\}/\mumu_4.
\]
Hence  $(Z, \Delta_1)$ is plt, $(Z, \Delta_2)$ is lc,
and the configuration of curves on the minimal resolution has the desired form
(see \cite[Theorem~9.6]{Kawamata:crep}).
\end{proof}

\begin{theorem}
Let $f: X\to Z$ be a $\QQ$-conic bundle, which is of type \typeb{IE^\vee} over a point $o\in Z$.
Let $p: \tilde X\to X$ be the Kawamata blowup of the unique singular point $P\in X$.
Then $p$ can be completed to an md-link \eqref{eq:link:0},
where
$\chi$ is a composition of flips,
$\al$ is a partial crepant resolution of $o\in Z$,
the $\QQ$-conic bundle $f'$ has a degenerate fiber type \typeci{k2A}3
over a point $o'\in \Gamma=\al^{-1}(o)$ and it is a standard conic bundle outside $o'$. Moreover, the surface $Z'$ 
has a unique singularity $o'$ which is of type \typeDu{A}2 and
the discriminant curve $\Delta_{f'}$ is the proper transform of~$\Delta_f$.
\end{theorem}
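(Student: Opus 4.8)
The plan is to argue as in the proofs of Theorems~\ref{thm:Tr} and~\ref{thm:k2A}: first make the Kawamata blowup $p$ explicit, then invoke Propositions~\ref{prop:link} and~\ref{prop:Q:w-blowup} to build the link, and finally identify $Z'$ and $f'$ from the configuration of crepant divisors over $o$ recorded in Claim~\ref{claim:IFvee:graph}, via Corollary~\ref{cor:cb:plt}.

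Since $P$ is the only singular point of $X$ and it is a cyclic quotient of type $\frac18(5,1,3)=\frac18(1,3,5)$, Theorem~\ref{thm:Kblup} shows that $p$ is the weighted blowup with weights $\frac18(1,3,5)$; thus $E\simeq\PP(1,3,5)$, the threefold $\tilde X$ is smooth except for two cyclic quotient points of indices $3$ and $5$ lying on $E$, and inspecting the lowest-weight parts of the local equations $x_1^2+x_2^2=2x_1x_2+x_3^2=0$ of $C$ shows that $\tilde C$ meets $E$ at the vertex of index $5$, so $E\cdot\tilde C=\frac15$. Using $-K_X\cdot C=\frac12$ and~\eqref{equation-computation-K.C-2} we get $K_{\tilde X}\cdot\tilde C=-\frac12+\frac18\cdot\frac15<0$, hence $\tilde C$ is a flipping curve and, $-K_{\tilde X}$ being $p$-ample as well, $-K_{\tilde X}$ is ample over $Z$. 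By Proposition~\ref{prop:Q:w-blowup} the divisor $-K_{\tilde X}$ is nef over $Z$, so Proposition~\ref{prop:link} produces the type~\typem{I} link~\eqref{eq:link:0} with $f'$ a $\QQ$-conic bundle and $\uprho(Z'/Z)=1$; it is an md-link since its center is the index-$8$ point $P$ and $E$, being the Kawamata blowup, realizes the minimal discrepancy $\frac18$ over $X$, and $\chi$ is a composition of flips because the MMP in Proposition~\ref{prop:link} begins with a flip (we have just seen that $-K_{\tilde X}$ is ample over $Z$). Finally, since $o\in Z$ is a singular Du Val point, Proposition~\ref{proposition-Morrison} forces $\al$ to be crepant; thus $\al$ is a partial crepant resolution of the \typeDu{A}{3}-point $o$ and, as $\uprho(Z'/Z)=1$, it extracts exactly one of the three crepant divisors $\Theta_1$, $\Theta_0$, $\Theta_2$ of Claim~\ref{claim:IFvee:graph}; denote it by $\Gamma$.

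To pin down $\Gamma$, note first that $\chi$ and $\al$ are isomorphisms over $Z\setminus\{o\}$, where $f$ is a standard conic bundle, so $\Delta_{f'}$ agrees off $\Gamma$ with the proper transform $\Delta_1'+\Delta_2'$ of $\Delta_f$; hence $\Delta_{f'}\subseteq\Delta_1'+\Delta_2'+\Gamma$. Now I would run through the three possibilities using~\eqref{eq:IFvee:graph} (the chain of crepant divisors is $\Theta_1-\Theta_0-\Theta_2$, with $\Delta_1$ attached to the end $\Theta_1$ and $\Delta_2$ to the middle $\Theta_0$). If $\Gamma=\Theta_0$, then $Z'$ has two \typeDu{A}{1}-points, and at one of them (which one depends on whether $\Gamma$ is a component of $\Delta_{f'}$) the curve $\Delta_{f'}$ has a single analytic branch through that point, which is singular on $Z'$; this is incompatible with Corollary~\ref{cor:cb:plt} (there $f'$ would have to be a standard conic bundle, forcing $Z'$ smooth). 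If $\Gamma=\Theta_1$, then $Z'$ has a unique \typeDu{A}{2}-point $o'$, but $\Delta_1'$ misses $o'$, so near $o'$ the curve $\Delta_{f'}$ is either a single branch (same contradiction) or the union $\Delta_2'+\Gamma$ of two branches meeting the \emph{same} end of the \typeDu{A}{2}-chain, in which case a discrepancy computation shows $(Z',\Delta_{f'})$ is not log canonical at $o'$ --- a configuration that does not occur for $\QQ$-conic bundle germs over Du Val points (Corollary~\ref{cor:cb:plt}). Hence $\Gamma=\Theta_2$, and $Z'$ has exactly one singular point $o'$, of type \typeDu{A}{2}, the image of $\Theta_1+\Theta_0$; alternatively this follows by computing the restriction of the valuation $\nu_E$ to the function field of $Z$.

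With $\Gamma=\Theta_2$, both $\Delta_1'$ and $\Delta_2'$ pass through $o'$, meeting the two \emph{opposite} ends of the chain $\Theta_1-\Theta_0$. Were $\Gamma$ a component of $\Delta_{f'}$, the curve $\Delta_{f'}$ would have three analytic branches at $o'$, impossible by Corollary~\ref{cor:cb:plt}; therefore $\Delta_{f'}=\Delta_1'+\Delta_2'$ is exactly the proper transform of $\Delta_f$. A short computation on the \typeDu{A}{2}-germ shows that $(Z',\Delta_{f'})$ is log canonical but not plt at $o'$, so by Corollary~\ref{cor:cb:plt} the contraction $f'$ is of type \typeci{k2A}{r''} near $f'^{-1}(o')$ with base of type \typeDu{A}{r''-1}; since this base is the \typeDu{A}{2}-point $o'$, we get $r''=3$, that is, $f'$ is of type \typeci{k2A}3 near $f'^{-1}(o')$. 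Off $o'$ the surface $Z'$ is smooth and $\Delta_{f'}=\Delta_1'+\Delta_2'$ is a disjoint union of two smooth curves, so $f'$ is a standard conic bundle there (Corollary~\ref{cor:cb:plt}). The main obstacle is the case analysis that singles out $\Gamma=\Theta_2$ and the determination of the type of $f'$ at $o'$: this needs careful bookkeeping with the dual graph~\eqref{eq:IFvee:graph} and several log-discrepancy computations on Du Val germs; making the Kawamata blowup explicit in the first step is routine but still requires a small local coordinate computation.
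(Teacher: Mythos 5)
Your proposal follows essentially the same route as the paper's proof: make the Kawamata blowup explicit, show $\tilde C$ is flipping by an intersection computation, invoke Propositions~\ref{prop:link} and~\ref{prop:Q:w-blowup} together with Proposition~\ref{proposition-Morrison} to obtain the link with $\al$ crepant, and then run the case analysis over $\Theta_0$, $\Theta_1$, $\Theta_2$ using the dual graph~\eqref{eq:IFvee:graph} and Corollary~\ref{cor:cb:plt}. One computational slip: the inference ``$\tilde C$ meets $E$ at the vertex of index $5$, so $E\cdot\tilde C=\frac15$'' is not valid --- the index of the point only tells you $E\cdot\tilde C\in\frac15\ZZ$, and in the relevant chart $E=\{\tilde x_1=0\}$ restricted to $\tilde C=\{\tilde x_1+\tilde x_2^2=2\tilde x_2+\tilde x_3^2=0\}/\mumu_5$ vanishes to order $4$, so $E\cdot\tilde C=\frac45$ and $K_{\tilde X}\cdot\tilde C=-\frac25$ as the paper computes. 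Since only the sign of $K_{\tilde X}\cdot\tilde C$ is used, the argument survives. The remaining differences are cosmetic: where you exclude $\Gamma\subset\Delta_{f'}$ in the final case by noting three branches at $o'$ would violate log canonicity, the paper invokes Corollary~\ref{cor:cb:plt4}; both are valid.
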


\begin{proof}
In our case the extraction $p: \tilde X\to X$ must be the $\frac 18 (5,1,3)$-weighted blowup of~$P$ (see Theorem~\ref{thm:Kblup}).
Then $\tilde{X}$ has two cyclic quotient singularities $\tilde{P}_1$ and $\tilde{P}_2$ of types $\frac 15(-3,1,3)$
and $\frac 13(2,1,1)$, respectively. The curve $\tilde C$ passes through $\tilde{P}_1$ and in the corresponding affine chart we have
\[
\tilde C= \{\tilde{x}_1+\tilde{x}_2^2=2\tilde{x}_2+\tilde{x}_3^2=0\}/\mumu_5(-3,1,3),\qquad E=\{\tilde{x}_1=0\}/\mumu_5(-3,1,3).
\]
Therefore, $E\cdot \tilde C=4/5$ and $K_{\tilde{X}}\cdot \tilde C=-2/5$
by \eqref{equation-computation-K.C-2}. 
According to Theorem~\ref{thm:link:exist}\ref{thm:link:exist2} we obtain an md-link \eqref{eq:link:0}. Moreover, the transformation $\chi$ starts with 
the flip along $\tilde C$.
Recall that $\al: Z'\to Z$ must be a crepant extraction with irreducible exceptional divisor $\Gamma$.
Let $\Delta_i'\subset Z'$ is the proper transform of $\Delta_i$ for $i\in \{1,\, 2\}$ (see Claim~\ref{claim:IFvee:graph}).
Clearly, $\Gamma$ corresponds to one of the vertices $\Theta_0$, 
$\Theta_1$ or~$\Theta_2$ in \eqref{eq:IFvee:graph}.

Assume that $\Gamma$ corresponds to $\Theta_0$. Then
$Z'$ has two points, say $o'$ and $o''$, of type~ \typeDu{A}1 and we may assume $o_i$ corresponds to $\Theta_i$ for $i\in \{1,\, 2\}$.
In this case, $\Delta_1'\cap \Gamma=\{o'\}$ and $\Delta_2'\cap \Gamma$ is a smooth point of $Z'$.
Moreover, $(Z, \Delta_1')$ is plt at $o'$. By Corollary~\xref{cor:cb:plt} we have $\Gamma\subset \Delta_{f'}$.
Hence $o''\in\Delta_{f'} $ and the pair $(Z',\Delta_{f'})$ is plt at $o''$.
This is impossible again by Corollary~\xref{cor:cb:plt}.

Thus $\Gamma$ corresponds to either $\Theta_1$ or $\Theta_2$ 
and $Z'$ has a unique singular point~$o'$, which must be of type~ \typeDu{A}2.
By the classification (see Table~\xref{tab:sing}) the germ of $X'$ over $o'$ is of type \typeci{k2A}3.
In this case, the pair $(Z',\Delta_{f'})$ is lc but not plt at~$o'$.
In particular, $\Delta_{f'}$ has two analytic components passing through $o'$.
If $\Gamma$ corresponds to $\Theta_1$, then $\Gamma\subset \Delta_{f'}$.
But in this case the pair $(Z',\Delta_2'+\Gamma)$ is not lc at~$o'$, a contradiction.
Therefore, $\Gamma$ corresponds to $\Theta_2$ and $\Gamma\not\subset \Delta_{f'}$ by Corollary~\ref{cor:cb:plt4}.
This implies that $f'$ is a standard conic bundle outside $o'$ (see Proposition~\ref{prop:cb:uv})
and so $X'$ has exactly two singular points and they are of type $\frac13(1,1,2)$.
\end{proof}

\section{$\QQ$-conic bundles of type \typeb{IA^\vee{+}IA^\vee}}
\label{Sect:IAIA}
In this section
we consider the following setup.
\begin{setup}
Let $f: X\to Z$ be a $\QQ$-conic bundle, which is of type \typeb{IA^\vee{+}IA^\vee} over a point $o\in Z$
and it is a standard conic bundle outside~$o$. Let $C:=f^{-1}(o)_{\mathrm{red}}$ be the central fiber.
Recall that in this situation
the tubular neighborhood $(X\supset C)$ is biholomorphic to a $\mumu_2$-quotient of
a complete intersection of two hypersurfaces in $\PP(1^3,2)\times \CC^2$ as described in Table~\ref{tab:sing}.
Moreover, applying analytic coordinate changes 
we may assume that $(X\supset C)$
is the quotient of the following variety in $\PP(1^3,2)\times \CC^2$ 
\begin{equation}
\label{eq:IA+IA}
\begin{cases}
x_1^2+x_3^2+\phi x_2^2+u x_4=0
\\
x_2^2+x_3^2+ \xi_{1,1}x_1^2+2\,\xi_{1,2}x_1x_2+2\,\xi_{1,3}x_1x_3+2\,\xi_{2,3}x_2x_3 +v x_4=0
\end{cases}
\end{equation} 
by $\mumu_2(1,1,0,1; 1,1)$, where
$\phi,\,\xi_{1,1},\, \xi_{1,2} \in \mathfrak{m}^2\subset \CC\{u,\, v\}$ are invariants of $\mumu_2$
and $\xi_{1,3},\,\xi_{2,3} \in \mathfrak{m}$ are semi-invariants of $\mumu_2$-weight $1$.
The variety~$X$ has a unique singular point~$P$ that is a cyclic quotient of index $4$ and
the central fiber $C$ has two components $C_1$ and $C_2$ meeting at~$P$.
The point $o\in Z$ is of type \typeDu{A}1 and
$-K_X\cdot C_i=1/2$.
\end{setup}

\begin{sclaim}[Local description]
It follows from \eqref{eq:IA+IA} that up to analytic isomorphism in a neighborhood of~$P$
the pair $C\subset X$ has the following form
\[
\big(\{x_1^2+x_3^2=x_2^2+x_3^2=0\}\subset \CC^3\big) /\mumu_4(1,1,3).
\]
\end{sclaim}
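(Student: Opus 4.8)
The plan is to lift $(C\subset X)$ to the $\mumu_2$-cover $(C^{\flat}\subset X^{\flat})$ of Theorem~\xref{thm:tab:sing}, to work in the orbifold chart $\{x_4\neq 0\}$ of $\PP(1^3,2)$ around the preimage $P^{\flat}=(0,0,0,1;0,0)$ of $P$, and to perform there one explicit coordinate change. In this chart the ambient cover is $\CC^3_{x_1,x_2,x_3}\times\CC^2_{u,v}$ (put $x_4=1$), acted on by the residual $\mumu_2$ of the weighted projective structure via $(x_1,x_2,x_3)\mapsto(-x_1,-x_2,-x_3)$, and $X^{\flat}$ is cut out there by the two left-hand sides $F_1,F_2$ of \eqref{eq:IA+IA}.

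The first and only nontrivial point is the choice of which variables to eliminate. Since the coefficient of $x_4$ in each equation ($u$, respectively $v$) vanishes at $P^{\flat}$, one cannot solve for $x_4$; instead, because $\phi,\xi_{1,1},\xi_{1,2}\in\mathfrak{m}^2$ and $\xi_{1,3},\xi_{2,3}\in\mathfrak{m}$, the Jacobian $\partial(F_1,F_2)/\partial(u,v)$ equals the identity at $P^{\flat}$, so
\[
\Phi\colon (x_1,x_2,x_3,u,v)\ \longmapsto\ (x_1,x_2,x_3,F_1,F_2)
\]
is a local analytic automorphism of $(\CC^5,0)$ carrying $X^{\flat}=\{F_1=F_2=0\}$ isomorphically onto $\CC^3_{x_1,x_2,x_3}$. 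Using that $\phi,\xi_{1,1},\xi_{1,2}$ are $\mumu_2$-invariant while $\xi_{1,3},\xi_{2,3}$ are $\mumu_2$-semi-invariant of weight $1$, a direct check shows that $\Phi$ is equivariant for the residual weighted-projective $\mumu_2$ and for the $\mumu_2$ of Table~\xref{tab:sing} with weights $(1,1,0,1;1,1)$, the functions $F_1,F_2$ being semi-invariant of the appropriate weight.

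It then remains to track the central fibre and to read off the group. As $C=f^{-1}(o)_{\mathrm{red}}$, we have $C^{\flat}=X^{\flat}\cap\{u=v=0\}$; evaluating \eqref{eq:IA+IA} at $u=v=0$ annihilates the $x_4$-terms together with $\phi,\xi_{1,1},\xi_{1,2},\xi_{1,3},\xi_{2,3}$ (all in $\mathfrak{m}$), leaving $C^{\flat}=\{x_1^2+x_3^2=x_2^2+x_3^2=u=v=0\}$, which is the reduced complete intersection of two quadrics (four distinct lines through the origin). Applying $\Phi$ then identifies the pair $(C^{\flat}\subset X^{\flat})$ with $\bigl(\{x_1^2+x_3^2=x_2^2+x_3^2=0\}\subset\CC^3_{x_1,x_2,x_3}\bigr)$. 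Finally, in the chart the residual $\mumu_2$ and the Table~\xref{tab:sing} $\mumu_2$ fuse into a single cyclic group of order $4$ (the square of a suitable generator of the latter, after the rescaling needed to normalise $x_4=1$, equals the former) acting on $\CC^3_{x_1,x_2,x_3}$ with weights $(3,3,1)\equiv(1,1,3)$; this gives the asserted identification $(C\subset X)\cong\bigl(\{x_1^2+x_3^2=x_2^2+x_3^2=0\}\subset\CC^3\bigr)/\mumu_4(1,1,3)$. The only place requiring genuine attention is the elimination step above (solving for $x_4$ is tempting but impossible here), together with the attendant equivariance bookkeeping and the absorption of the residual $\mumu_2$ into a $\mumu_4$.
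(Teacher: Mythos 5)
Your argument is correct, and it is precisely the computation the paper leaves implicit: the claim is stated without proof as a direct consequence of \eqref{eq:IA+IA}, and your write-up (pass to the cover $X^{\flat}$, work in the chart $x_4\neq 0$, use that $\partial(F_1,F_2)/\partial(u,v)$ is the identity at $P^{\flat}$ to eliminate $u,v$ rather than $x_4$, then fuse the residual $\mumu_2$ of $\PP(1^3,2)$ with the Table~\xref{tab:sing} action into $\mumu_4(1,1,3)$) is exactly the intended verification. The equivariance and weight bookkeeping are right, and the resulting four lines falling into two $\mumu_4$-orbits is consistent with $C$ having two components meeting at $P$.
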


\begin{sclaim}
\label{claim:IA+IA}
The discriminant curve $\Delta_f$ has at $o$ exactly three analytic branches
$\Delta_1$, $\Delta_2$, $\Delta_3$. They are smooth and their proper transforms on the minimal resolution do not meet each other.
\end{sclaim}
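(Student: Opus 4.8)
The plan is to read off the equation of $\Delta_f$ from the complete intersection~\eqref{eq:IA+IA} using Proposition~\ref{prop:comp-discrim}, and then analyze the resulting plane curve singularity. First I would compute, working upstairs before taking the $\mumu_2$-quotient, the discriminant as the vanishing of the determinant of the ternary quadratic form $vq_1-uq_2$, where $q_1=x_1^2+x_3^2+\phi x_2^2$ and $q_2=x_2^2+x_3^2+\xi_{1,1}x_1^2+2\xi_{1,2}x_1x_2+2\xi_{1,3}x_1x_3+2\xi_{2,3}x_2x_3$ (the coefficients $\theta_1,\theta_2$ of $x_4$ being $u,v$ here). Since $\phi,\xi_{1,1},\xi_{1,2}\in\mathfrak m^2$ and $\xi_{1,3},\xi_{2,3}\in\mathfrak m$, the lowest-order part of $vq_1-uq_2$ is $v(x_1^2+x_3^2)-u(x_2^2+x_3^2)$, and its determinant's leading term is the cubic $vu(v-u)$ in $(u,v)$; more precisely I would show $\det(vq_1-uq_2)=uv(v-u)+(\text{terms of degree}\ge 4)$. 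Hence downstairs on $Z=\CC^2/\mumu_2(1,-1)$ the curve $\Delta_f$ is the image of a plane curve whose Newton boundary shows three distinct tangent directions $\{u=0\}$, $\{v=0\}$, $\{u=v\}$ at the origin.

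Next I would invoke the Weierstrass preparation / branch-separation argument: because the cubic leading term $uv(u-v)$ is a product of three pairwise non-proportional linear forms, the curve upstairs has exactly three smooth analytic branches $\tilde\Delta_1,\tilde\Delta_2,\tilde\Delta_3$, each tangent to one of these three lines, hence pairwise transverse. The $\mumu_2$-action $(u,v)\mapsto(-u,-v)$ fixes each of the three tangent directions, so it either fixes each branch or could in principle swap two branches tangent to the same line — but the three tangent lines are distinct, so each branch is $\mumu_2$-stable, and its image $\Delta_i$ in $Z$ is again a smooth analytic branch (the quotient map is étale away from $o$ and the normalization of $\Delta_i$ is a smooth disk on which $\mumu_2$ acts as a reflection or trivially). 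Thus $\Delta_f=\Delta_1\cup\Delta_2\cup\Delta_3$ with each $\Delta_i$ smooth.

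Finally, for the statement about the minimal resolution I would argue as follows. The point $o\in Z$ is of type $\typeDu{A}1$ (an ordinary double point), so the minimal resolution $\al\colon\tilde Z\to Z$ has a single exceptional $(-2)$-curve $\Theta$. The three branches $\Delta_i$, being the images of three pairwise transverse smooth branches through the $\typeDu{A}1$-point, have proper transforms $\tilde\Delta_i$ on $\tilde Z$ that are smooth and each meets $\Theta$ transversally in one point; I would check, using the explicit $\mumu_2$-quotient coordinates $(u^2,uv,v^2)$ or equivalently the minimal resolution chart of $\frac12(1,1)$, that the three intersection points $\tilde\Delta_i\cap\Theta$ are \emph{distinct} on $\Theta\cong\PP^1$ — this follows because the three directions $\{u=0\},\{v=0\},\{u=v\}$ are distinct and the exceptional curve of the resolution parametrizes directions. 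Consequently no two of the $\tilde\Delta_i$ meet each other. The main obstacle I anticipate is the bookkeeping in this last step: one must pass carefully from the affine cone picture $\{uv(u-v)=0\}\subset\CC^2$ through the $\mumu_2$-quotient and then blow up, making sure the three branches really do separate on the single $(-2)$-curve and that the higher-order corrections in the Newton expansion do not merge two of them; this is a routine but slightly delicate computation with the resolution charts of the $\typeDu{A}1$-singularity.
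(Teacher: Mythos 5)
Your proposal is correct and follows essentially the same route as the paper: the paper's entire proof consists of the computation, via Proposition~\ref{prop:comp-discrim} and \eqref{eq:IA+IA}, that $\Delta_f=\{uv(u-v)+(\text{terms of degree}\ge 5)=0\}/\mumu_2$, leaving the branch analysis implicit. Your additional steps (the reduced tangent cone $uv(u-v)$ forces three smooth pairwise-transverse branches, each preserved by $\mumu_2$ since the three tangent lines are individually invariant, and their proper transforms separate on the $(-2)$-curve of the \typeDu{A}{1}-resolution because they hit it at the three distinct points corresponding to the distinct tangent directions) are exactly the routine verifications the paper omits; note only that your bound ``degree $\ge 4$'' is automatically ``degree $\ge 5$'' since the determinant is $\mumu_2$-anti-invariant, though degree $\ge 4$ already suffices for the tangent-cone argument.
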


\begin{proof}
It follows from Proposition~\ref{prop:comp-discrim} and \eqref{eq:IA+IA} that
\[
\Delta_f=\left\{
uv(u-v) +(\text{terms of degree $\ge 5$})=0\right\}/\mumu_2.\qedhere
\]
\end{proof}

\begin{theorem}
Let $f: X\to Z$ be a $\QQ$-conic bundle, which is of type \typeb{IA^\vee{+}IA^\vee} over a point $o\in Z$
and is a standard conic bundle outside~$o$.
Let $p: \tilde X\to X$ be the Kawamata blowup of the unique singular point $P\in X$.
Then $p$ can be completed to an md-link \eqref{eq:link:0},
where 
$\chi$ is the flip in the proper transform $\tilde C\subset \tilde X$ of the central fiber $C:=f^{-1}(o)_{\mathrm{red}}$,
$\al$ is the minimal resolution of $o\in Z$, the singular locus of $X'$ consists of
one ordinary double point, and the discriminant curve $\Delta_{f'}$ is
the set-theoretic preimage of $\Delta_f$.
\end{theorem}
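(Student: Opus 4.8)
The argument will follow the same pattern as Theorems~\ref{thm:Tr}, \ref{thm:k2A} and the theorem of type~\typeb{IE^\vee}. By the local description $P\in X$ is a cyclic quotient singularity of type~$\frac14(1,1,3)$, so by Theorem~\ref{thm:Kblup} the Kawamata blowup $p\colon\tilde X\to X$ is forced to be the $\frac14(1,1,3)$-weighted blowup; its exceptional divisor is $E\simeq\PP(1,1,3)$ and $\tilde X$ has a single (terminal cyclic quotient) singular point~$\tilde P$ of index~$3$, at the vertex $\{x_1=x_2=0\}$ of~$E$. Since $C_1,C_2$ are the two lines cut out by $x_1^2+x_3^2=x_2^2+x_3^2=0$ and both are tangent to the weight-$3$ direction, their proper transforms $\tilde C_1,\tilde C_2$ meet~$E$ only at~$\tilde P$; in particular $\tilde C=\tilde C_1\cup\tilde C_2$ is connected, the two components meeting precisely at~$\tilde P$. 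A local computation in the affine chart of the weighted blowup containing~$\tilde P$ gives the value of $E\cdot\tilde C_i$, and then \eqref{equation-computation-K.C-2} yields $K_{\tilde X}\cdot\tilde C_i<0$, so $\tilde C$ is a $(-E)$-flipping curve. As $P$ is the unique non-Gorenstein point of~$X$, hence of maximal index, Proposition~\ref{prop:Q:w-blowup} shows that $-K_{\tilde X}$ is nef over~$Z$, and Proposition~\ref{prop:link} then produces the md-link~\eqref{eq:link:0}, in which $\chi$ is a composition of $(-E)$-flips in~$\tilde C$ and $\al$ is a crepant contraction with $\uprho(Z'/Z)=1$. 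Since $o\in Z$ is of type~\typeDu{A}1, Proposition~\ref{proposition-Morrison} forces $\al$ to be the minimal resolution; thus $Z'$ is smooth and $\Gamma=\al^{-1}(o)$ is a single $(-2)$-curve.

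Next I would compute Shokurov's difficulty: $\dif(X)=3$ for type~\typeb{IA^\vee{+}IA^\vee} by Corollary~\ref{cor:eq:difSho}, and $\dif(\tilde X)=2$ by Corollary~\ref{cor:Kblup:di}. The flipping curve $\tilde C$ is reducible with exactly two components meeting only at the unique non-Gorenstein point~$\tilde P$ of~$\tilde X$, which is a cyclic quotient of index~$3$; hence Proposition~\ref{prop:flip:2comp} applies to $\chi\colon(\tilde X\supset\tilde C)\dashrightarrow(X'\supset\tilde C^+)$ and shows that $X'$ is Gorenstein along~$\tilde C^+$. Since $\tilde X$, and hence $X'$, is smooth away from $\tilde C$ (resp.~$\tilde C^+$), the variety $X'$ is Gorenstein, so $\dif(X')=0$; by Corollary~\ref{cor:flips} this forces $\chi$ to be a single flip, namely the flip in~$\tilde C$. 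Proposition~\ref{prop:flip:2comp} also gives the dichotomy: if $\tilde C^+$ is reducible then $X'$ is smooth along~$\tilde C^+$, while if $\tilde C^+$ is irreducible then $X'$ has exactly one singular point on~$\tilde C^+$ (and $\tilde C^+$ has at most two components by Theorem~\ref{thm:flips:comp}).

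It then remains to identify~$\Delta_{f'}$, exclude the reducible case, and name the singular point. Since $f$ and $f'$ agree over $Z\setminus\{o\}\cong Z'\setminus\Gamma$, one has $\Delta'\subseteq\Delta_{f'}\subseteq\Delta'\cup\Gamma$, where $\Delta'$ is the proper transform of~$\Delta_f$; by Claim~\ref{claim:IA+IA}, near~$\Gamma$ the curve $\Delta'$ consists of three pairwise disjoint smooth branches, each meeting~$\Gamma$ transversally at a distinct point. Suppose $\Delta_{f'}=\Delta'$. Then $\Delta_{f'}$ is smooth near~$\Gamma$, so by Proposition~\ref{prop:cb:uv}\ref{prop:cb:uv2} $X'$ is smooth near $f'^{-1}(\Gamma)$, hence smooth everywhere and $f'$ is a standard conic bundle with $\Gamma\not\subset\Delta_{f'}$. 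Let $E'\subset X'$ be the proper transform of~$E$; then $E'$ is an irreducible surface with $f'(E')=\Gamma$, so a dimension count forces $E'=f'^{-1}(\Gamma)$ set-theoretically, and by Lemma~\ref{lemma:St-c-b} it is a normal conic bundle surface over $\Gamma\simeq\PP^1$ with reducible fibers (Corollary~\ref{cor:cb:plt}) over the at least three points of $\Gamma\cap\Delta'$, so its topological Euler characteristic is $\ge 4+3=7$. On the other hand $\chi$ modifies $E\simeq\PP(1,1,3)$ only at the single point~$\tilde P$, so the Euler characteristic of~$E'$ equals that of $(E\setminus\{\tilde P\})\sqcup(E'\cap\tilde C^+)$, which is at most $2+3=5$ since $\tilde C^+$ is a tree of at most two rational curves --- a contradiction. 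Therefore $\Delta_{f'}=\Delta'\cup\Gamma$ is the set-theoretic preimage of~$\Delta_f$, and $\tilde C^+$ is irreducible, so $X'$ has a unique singular point~$P'$, which is Gorenstein terminal and lies on~$\tilde C^+$. Finally, $Z'$ is smooth and $\Delta_{f'}=\Delta'\cup\Gamma$ has a normal crossing at $o':=f'(P')\in\Gamma\cap\Delta'$, so by Proposition~\ref{prop:cb:uv}\ref{prop:cb:uv3} the $\QQ$-conic bundle $f'$ is of type~\typeci{IF}2 near~$o'$; hence $P'$ is an ordinary double point.

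The step I expect to be the main obstacle is this last contradiction, i.e.\ showing that $X'$ is genuinely singular rather than smooth: Shokurov's difficulty does not distinguish ``$X'$ smooth'' from ``$X'$ has one ordinary double point'' (both give $\dif=0$), so one really has to exploit the incompatibility between $E'$ being a small modification of~$\PP(1,1,3)$ and $E'$ being a conic bundle surface over~$\Gamma$ with at least three reducible fibers, as sketched above. The other, purely technical, point will be getting the local structure of~$\tilde X$ at~$\tilde P$ and the intersection number $E\cdot\tilde C_i$ right in the charts of the $\frac14(1,1,3)$-weighted blowup, but this is routine toric bookkeeping of the kind carried out in the previous sections.
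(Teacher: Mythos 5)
Your proposal tracks the paper's proof closely through the construction of the link: the identification of $p$ with the $\frac14(1,1,3)$-weighted blowup, the local computation at the index-$3$ point $\tilde P$ (through which both components of $\tilde C$ pass), the identification of $\al$ with the minimal resolution, and the use of Proposition~\ref{prop:flip:2comp} to see that the flip of the reducible curve $\tilde C$ already produces a Gorenstein variety, so that $\chi$ is a single flip. (A minor slip: Corollary~\ref{cor:flips} alone does not force a single flip, since $\dif$ could drop from $2$ to $0$ in two steps; the correct reason is that a Gorenstein germ is never flipping, Proposition~\ref{prop:Gor}\ref{prop:Gor1}.) The genuine gap is in the endgame, exactly at the step you yourself flag as the main obstacle. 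Your Euler-characteristic contradiction is carried out only under the hypothesis $\Delta_{f'}=\Delta'$, i.e.\ $\Gamma\not\subset\Delta_{f'}$: there $E'$ is normal by Lemma~\ref{lemma:St-c-b} and is a conic bundle surface with at least three reducible fibres, whence $\chi_{\mathrm{top}}(E')\ge 7$. This correctly rules out that sub-case and yields $\Gamma\subset\Delta_{f'}$. But the next assertion, that ``$\tilde C^+$ is irreducible,'' is unsupported: you have not excluded the scenario in which $\tilde C^+$ is reducible, $X'$ is smooth (as Proposition~\ref{prop:flip:2comp} would then give), $f'$ is a standard conic bundle, and $\Gamma\subset\Delta_{f'}$. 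In that scenario $E'$ is non-normal (its generic fibre over $\Gamma$ is a pair of lines), Lemma~\ref{lemma:St-c-b} does not apply, and the bound $\chi_{\mathrm{top}}(E')\ge 7$ fails — so the contradiction evaporates precisely where you need it, and the conclusion that $X'$ is singular is not established.

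The missing case can be closed, e.g.\ as in the proof of Corollary~\ref{cor:cb:plt4}: for a standard conic bundle with $\Gamma\subset\Delta_{f'}$ the double cover $\hat\Gamma\to\Gamma\simeq\PP^1$ parameterizing components of degenerate fibres is branched exactly at the nodes of $\Delta_{f'}$ lying on $\Gamma$, i.e.\ at the three points of $\Gamma\cap\Delta'$ — an odd number, which is impossible. (Alternatively, extend your Euler count to the non-normal case: set-theoretically the fibres of $E'\to\Gamma$ are pairs of lines generically and double lines over the three nodes, giving $\chi_{\mathrm{top}}(E')=3$, while the flip picture gives $\chi_{\mathrm{top}}(E')\ge 4$.) For comparison, the paper takes a different and case-free route to this key point: it shows that $-(K_{X'}+E')$ is ample over $Z$, so $E'$ is a Gorenstein del Pezzo surface; if $E'$ were normal it would have Du Val singularities and map birationally onto $E\simeq\PP(1,1,3)$, which is not Du Val — hence $E'$ is non-normal, its normalization is $\FF_3$ by \cite{Reid:dP94}, the flipped curve is the image of the negative section and therefore irreducible, and Proposition~\ref{prop:flip:2comp} then yields the single ordinary double point. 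Your final identification of the singularity as a node via Proposition~\ref{prop:cb:uv}\ref{prop:cb:uv3} and type \typeci{IF}2 is fine once irreducibility of the flipped curve is in place.
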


\begin{proof}
In our case the extraction $p: \tilde X\to X$ must be the $\frac 14 (1,1,3)$-weighted blowup of~$P$ (see Theorem~\ref{thm:Kblup}).
Then $\tilde{X}$ has a unique singular point $\tilde{P}$ which is a 
cyclic quotient of index $3$. 
Both components $\tilde C_1,\, \tilde C_2\subset \tilde C$ passes through $\tilde{P}$ and in the corresponding affine chart we have
\[
\tilde C= \{\tilde{x}^2_1+\tilde{x}_3=\tilde{x}^2_2+\tilde{x}_3=0\}/\mumu_3(1,1,2),\qquad E=\{\tilde{x}_3=0\}/\mumu_3(1,2,1).
\]
Therefore, $E\cdot \tilde C_i=2/3$ and $-K_{\tilde{X}}\cdot \tilde C_i=1/3$
by \eqref{equation-computation-K.C-2}. By Proposition~\ref{prop:link} we obtain an md-type link,
where $\al: Z'\to Z$ is the minimal resolution. Moreover, the transformation $\chi$ starts with 
the flip along $\tilde C$. Since $\dif(\tilde X)=2$, by Proposition~\ref{prop:flip:2comp} the map $\chi$ is a single flip and 
$X'$ is Gorenstein. 
Hence the surface $E'$ is Gorenstein as well (because it is a Cartier divisor in $X'$).
Let $C'\subset X'$ be the flipped curve.
Since $(K_{\tilde X}+E)\cdot \tilde C=2/3>0$, we have $(K_{X'}+E')\cdot C'<0$.
Note also that $K_{X'}+E'$ is negative on the fibers of $f'$.
Since $\uprho(X'/Z)=2$, the divisor $-(K_{X'}+E')$ is ample over~$Z$.
By the adjunction $-K_{E'}$ is ample.
Therefore, $E'$ is a Gorenstein (singular) del Pezzo surface.

Assume that $E'$ is normal. Then by Zariski's main theorem the map 
$\chi^{-1}_E:E'\dashrightarrow E=\PP(1,1,3)$
is a birational morphism. In this case, the singularities of $E'$ must be rational, hence they are Du Val
because $E'$ is a Gorenstein.
Since $-K_{E'}$ is ample, so are the singularities of $E$, a contradiction.
Therefore, $E'$ is not normal. The non-normal locus is contained in $C'$, hence it is $f'$-horizontal
and irreducible.
Let $\nu: E''\to E'$ be the normalization of 
$E'$. Then $E''$ is a smooth ruled surface, $E''\simeq \FF_3$ and the induced morphism $E''\to E\simeq \PP(1,1,3)$
is the contraction of the negative section~$\Sigma$ (cf. \cite[Theorem~1.1]{Reid:dP94}).
Thus $C'=\nu(\Sigma)$. In particular, $C'$ is irreducible and coincides with the non-normal locus of~$E'$.
By Proposition~\ref{prop:flip:2comp} the variety $X'$ is singular and its singular locus consists of 
one ordinary double point.
Finally, by Claim~\ref{claim:IA+IA} the discriminant curve $\Delta_f$ has three smooth analytic branches $\Delta_1$, $\Delta_2$, $\Delta_3$ at~$o$ so that
their proper transforms $\Delta_1', \Delta_2', \Delta_3'\subset Z'$ are disjoint.
Since $E'$ is not normal, the curve $\Gamma=f'(E')$ is contained in the discriminant.
Thus, $\Delta_{f'}=\Gamma+\Delta_1'+\Delta_2'+\Delta_3'$.
\end{proof}

\section{$\QQ$-conic bundles of type \typeb{IA^\vee}}
\label{Sect:IA}

In this section we adopt the following setup.
\begin{setup}
Let $f: X\to Z$ be a $\QQ$-conic bundle, which is of type \typeb{IA^\vee} over a point $o\in Z$
and is a standard conic bundle outside~$o$. Let $C:=f^{-1}(o)_{\mathrm{red}}$ be the central fiber.
Recall that in this situation
the tubular neighborhood $(X\supset C)$ is biholomorphic to the $\mumu_2$-quotient of
a complete intersection of two hypersurfaces in $\PP(1^3,2)\times \CC^2$ as described in Table~\ref{tab:sing}.
Moreover, applying analytic coordinate changes 
we may assume that $(X\supset C)$
is the quotient of the following variety in $\PP(1^3,2)\times \CC^2$
\begin{equation}
\label{eq:IAdual}
\begin{cases}
x_1^2+x_2^2+\phi x_3^2+ux_4 =0
\\
x_3^2+ \xi_{1,1}x_1^2+\xi_{2,2}x_2^2+2 \xi_{1,2}x_1x_2+2 \xi_{1,3}x_1x_3+2 \xi_{2,3}x_2x_3 +vx_4 =0
\end{cases}
\end{equation}
by $\mumu_2(0,1,0,1; 1,1)$, where $\phi, \xi_{1,1}, \xi_{2,2}, \xi_{1,3}\in \mathfrak{m}^2\subset \CC\{u,\, v\}$ are invariants and
$\xi_{1,2},\, \xi_{2,3} \in \mathfrak{m}$
are semi-invariants of $\mumu_2$.
The point $o\in Z$ is of type \typeDu{A}1, hence 
$-K_X\cdot C=1/2$ (see \cite[Corollary~4.7.4]{MP:1pt}).
The variety~$X$ has a unique non-Gorenstein point $P$, which is a cyclic quotient of index $4$. 
\end{setup}

\begin{sclaim}
\label{claimIA:sing}
If $\xi_{1,2}$ contains the term $u$, then $P$ is the only singular point of~$X$.
Otherwise~$X$ has a Gorenstein singular point. 
\end{sclaim}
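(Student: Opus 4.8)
The plan is to pull the statement back to the variety $X^{\flat}\subset\PP(1^3,2)\times\CC^2$ defined by \eqref{eq:IAdual}, of which $(X\supset C)$ is the $\mumu_2$-quotient, and to read off $\Sing(X)$ from a Jacobian computation on $X^{\flat}$. The first point to settle is the fixed locus of $\mumu_2$ on $X^{\flat}$. A point is $\mumu_2$-fixed only if $u=v=0$, and over $u=v=0$ the two equations \eqref{eq:IAdual} degenerate (using $\phi,\xi_{i,j}\in\mathfrak{m}$) to $x_1^2+x_2^2=x_3^2=0$; a short case analysis of the weighted-projective fixed points of the $\mumu_2$-action with weights $(0,1,0,1;1,1)$ then shows that the only fixed point of $X^{\flat}$ is $P^{\flat}=(0,0,0,1;0,0)$, the unique non-Gorenstein (index-$2$) point of $X^{\flat}$, which lies over $P$ (see Theorem~\ref{th-index=2}). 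Consequently the quotient morphism $q\colon X^{\flat}\to X$ is étale away from $P^{\flat}$. Since $X$ is a standard conic bundle over $Z\setminus\{o\}$ it is smooth outside $C$, so $X^{\flat}$ is smooth outside $C^{\flat}$, and the singular points of $X$ other than $P$ correspond bijectively to the $\mumu_2$-orbits of singular points of $X^{\flat}$ lying on $C^{\flat}\setminus\{P^{\flat}\}$; moreover each such point of $X$ is automatically Gorenstein, since $P$ is the only non-Gorenstein point.

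Next I would describe $C^{\flat}$ near $P^{\flat}$. From $x_1^2+x_2^2=x_3^2=0$, set-theoretically $C^{\flat}$ is the union of the two weighted lines $\{x_1=\pm i\,x_2,\ x_3=0\}$, which meet only at $P^{\flat}$. Both of these lie in the affine chart $\{x_1\ne0\}\cong\CC^3_{x_2,x_3,x_4}$, where $C^{\flat}\setminus\{P^{\flat}\}$ consists of the two affine lines $\{x_2=\pm i,\ x_3=0\}$ parametrised by $x_4$. So, away from $P^{\flat}$, the only candidates for singular points of $X^{\flat}$ are the points $(1,\varepsilon,0,t;0,0)$ with $\varepsilon=\pm i$ and $t\in\CC$ (all of which indeed lie on $X^{\flat}$).

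The core step is then a Jacobian computation at such a point. Write $\xi_{1,2}=a\,u+b\,v+\cdots$, so that ``$\xi_{1,2}$ contains the term $u$'' means $a\ne0$; the essential input is that $\phi,\xi_{1,1},\xi_{2,2},\xi_{1,3}\in\mathfrak{m}^2$ (hence they and their first partials vanish at the origin), whereas $\xi_{1,2},\xi_{2,3}$ need only lie in $\mathfrak{m}$. Evaluating at $(1,\varepsilon,0,t;0,0)$, the $2\times5$ Jacobian of the two equations of \eqref{eq:IAdual} in the coordinates $(x_2,x_3,x_4,u,v)$ has rows
\[
(2\varepsilon,\ 0,\ 0,\ t,\ 0)\qquad\text{and}\qquad(0,\ 0,\ 0,\ 2a\varepsilon,\ 2b\varepsilon+t).
\]
The first row has nonzero first entry and the second has zero first entry, so the rank drops iff the second row vanishes, i.e. iff $a=0$ and $t=-2b\varepsilon$. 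Therefore, if $\xi_{1,2}$ contains $u$, then $X^{\flat}$ is smooth along $C^{\flat}\setminus\{P^{\flat}\}$, so $\Sing(X^{\flat})=\{P^{\flat}\}$ and $X$ has only the singular point $P$; if $\xi_{1,2}$ does not contain $u$, then $X^{\flat}$ is singular exactly at the two points $(1,\pm i,0,\mp2bi;0,0)$, which are distinct and interchanged by $\mumu_2$, so their common image $Q\in X$ is a singular point, necessarily Gorenstein since $Q\ne P$ (one expects $Q$ to be an ordinary double point, but only Gorenstein-ness is needed here).

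The genuinely delicate step is the first one: confirming that the $\mumu_2$-fixed locus on $X^{\flat}$ is the single point $P^{\flat}$, so that the Jacobian criterion upstairs really governs $\Sing(X)$ away from $P$. Granting that, what remains is the explicit description of $C^{\flat}$ together with a one-line rank check on a $2\times5$ matrix.
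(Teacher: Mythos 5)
Your proposal is correct and carries out exactly the ``direct computations'' that the paper's one-line proof alludes to: the Jacobian rows you exhibit at the points $(1,\varepsilon,0,t;0,0)$ are right, the rank drops precisely when $\xi_{1,2}$ has no $u$-term and $t=-2b\varepsilon$, and the reduction to $X^{\flat}$ is justified since the $\mumu_2$-action is free outside $P^{\flat}$ (as also recorded in Theorem~\ref{thm:tab:sing}). Nothing is missing; the Gorenstein-ness of the extra singular point indeed follows from $P$ being the unique non-Gorenstein point, as stated in the setup.
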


\begin{proof}
Follows from direct computations. 
\end{proof}

\begin{sclaim}[Local description]
It follows from~\eqref{eq:IAdual} that near
the point $P\in X$ there is the following identification
\[
(C\subset X) =\big(\{x_1^2+x_2^2=x_3=0\}\subset \CC^3\big) /\mumu_4(1,3,1).
\]
\end{sclaim}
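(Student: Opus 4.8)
The statement is local at the non-Gorenstein point $P$, which is the image in $X=X^{\flat}/\mumu_2$ of $P^{\flat}=(0,0,0,1;0,0)\in X^{\flat}$, so the plan is to read everything off \eqref{eq:IAdual} in a coordinate chart. First I would pass to the affine chart $\{x_4\neq 0\}$ of $\PP(1^3,2)$, which is $\CC^3_{x_1,x_2,x_3}/\mumu_2$ with $\mumu_2$ acting by $x_i\mapsto -x_i$; lifting to the double cover $\CC^3_{x_1,x_2,x_3}$ and keeping the coordinates $u,v$, the threefold $X^{\flat}$ becomes, near $P^{\flat}$, the complete intersection $\{F_1=F_2=0\}\subset\CC^5_{x_1,x_2,x_3,u,v}$, where $F_1=x_1^2+x_2^2+\phi x_3^2+u$ and $F_2$ is the left-hand side of the second equation of \eqref{eq:IAdual} with $x_4=1$.

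Next I would eliminate $u$ and $v$. Since $\phi$ and all the $\xi_{i,j}$ lie in $\mathfrak m$, the Jacobian $\partial(F_1,F_2)/\partial(u,v)$ is the identity matrix at the origin, so by the holomorphic implicit function theorem the projection $\{F_1=F_2=0\}\to\CC^3_{x_1,x_2,x_3}$ is an isomorphism near the origin, with inverse $x\mapsto(x,u(x),v(x))$ for uniquely determined holomorphic functions $u(x),v(x)$. Tracking the group actions, the $\mumu_2$ of the weighted chart and the $\mumu_2$ of Table~\ref{tab:sing} with weights $(0,1,0,1;1,1)$ combine, on $\CC^5$, into the cyclic group generated by the single order-four transformation $(x_1,x_2,x_3,u,v)\mapsto(\zeta x_1,\zeta^{-1}x_2,\zeta x_3,\zeta^2u,\zeta^2v)$, where $\zeta$ is a primitive fourth root of unity (the rescaling needed to return to the chart $x_4=1$ contributes the factor $\zeta$, and the square of this transformation is the involution of the weighted chart). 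As the projection above is equivariant for this group, a neighbourhood of $P$ in $X$ is identified with $\CC^3_{x_1,x_2,x_3}/\mumu_4(1,3,1)$, which in particular recovers the stated quotient type of $P$. Finally, the preimage in this $\CC^3$ of $C=f^{-1}(o)_{\mathrm{red}}$ is the reduced curve underlying $\{u(x)=v(x)=0\}$; by uniqueness in the implicit function theorem $u(x)=v(x)=0$ holds exactly when $F_1(x,0,0)=F_2(x,0,0)=0$, and since $\phi(0,0)=\xi_{i,j}(0,0)=0$ this last condition reads $x_1^2+x_2^2=0$, $x_3^2=0$; the reduced curve it defines is $\{x_1^2+x_2^2=x_3=0\}$. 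Combining the two identifications gives the asserted description of $(C\subset X)$ near $P$.

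I do not expect a genuinely hard step here. The one place that requires care is the bookkeeping around the weighted projective chart: verifying that the two $\mumu_2$-actions fuse into $\mumu_4$ with weights exactly $(1,3,1)$ and that the elimination of $u,v$ is equivariant for it. Beyond that, one only needs the routine observation that passing to reduced structures is compatible with the quotient maps involved, which holds because these maps are étale away from the single point $P$ lying on the curve.
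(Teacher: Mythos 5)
Your computation is correct and is exactly the chart calculation the paper leaves implicit (the claim is stated as an immediate consequence of \eqref{eq:IAdual} with no written proof): pass to the chart $x_4\neq 0$ of $\PP(1^3,2)$, eliminate $u,v$ by the implicit function theorem, and observe that the residual $\mumu_2$ of the weighted chart and the $\mumu_2(0,1,0,1;1,1)$ of Table~\ref{tab:sing} fuse into $\mumu_4(1,3,1)$, while setting $u=v=0$ yields $x_1^2+x_2^2=x_3^2=0$, whose reduction is $\{x_1^2+x_2^2=x_3=0\}$. All the details you supply (the identity Jacobian in $u,v$, the lift $\tau$ with $\tau^2$ equal to the chart involution, and the compatibility of reduction with the quotient) check out.
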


\begin{sclaim}
\label{claimIA:discr}
In the above notation for the discriminant curve there is a decomposition 
\[
\Delta_f=\Delta_1+\Delta_2, 
\]
where 
$\Delta_1$ is smooth and irreducible and $\Delta_2$ is singular \textup(and possibly analytically reducible\textup).
Moreover,
the proper transforms $\Delta_1', \Delta_2'\subset Z'$ of $\Delta_1$ and $\Delta_2$ on the minimal resolution $\al: Z'\to Z$ are disjoint, 
$\Delta_1'$ meets the exceptional divisor $\Gamma$ transversely, $\Delta_2'\cap \Gamma$ is a single point,
and $\Delta_2'\cdot \Gamma=2$.
Moreover, $\Delta_2'$ is smooth if and only if $P$ is the only singular point of~$X$.
\end{sclaim}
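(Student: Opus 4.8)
The plan is to push everything up to the $\mumu_2$-cover $Z^{\flat}=\CC^2_{u,v}$, where the discriminant is cut out by a single determinant. Write $\pi\colon Z^{\flat}\to Z=Z^{\flat}/\mumu_2$ for the quotient and $o^{\flat}=\pi^{-1}(o)$. By Proposition~\ref{prop:comp-discrim} applied to \eqref{eq:IAdual} with $\theta_1=u$, $\theta_2=v$, the curve $\Delta_f^{\flat}:=\pi^{-1}(\Delta_f)$ equals $\{\det M=0\}$, where $M$ is the symmetric matrix of $uq_2-vq_1$ with $q_1=x_1^2+x_2^2+\phi x_3^2$ and $q_2=x_3^2+\xi_{1,1}x_1^2+\xi_{2,2}x_2^2+2\xi_{1,2}x_1x_2+2\xi_{1,3}x_1x_3+2\xi_{2,3}x_2x_3$. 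First I would expand $\det M$. Using $\phi,\xi_{1,1},\xi_{2,2},\xi_{1,3}\in\mathfrak m^2$ and $\xi_{1,2},\xi_{2,3}\in\mathfrak m$ one checks that $\det M$ is a $\mumu_2$-semi-invariant of weight $1$ (so all its homogeneous parts have odd degree) with lowest-order term $uv^2$, and more precisely
\[
\det M=uv^2-v^3\phi^{(2)}-u^2v\bigl(\xi_{1,1}^{(2)}+\xi_{2,2}^{(2)}-(\xi_{2,3}^{(1)})^2\bigr)-u^3(\xi_{1,2}^{(1)})^2+(\text{order}\ge 7),
\]
where superscripts denote leading homogeneous parts; in particular $\det M(u,0)=-\alpha^2u^5+(\text{order}\ge 7)$, with $\alpha$ the coefficient of $u$ in $\xi_{1,2}$.

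From the tangent cone $uv^2$ I read off the factorization: $\Delta_f^{\flat}$ has multiplicity $3$ at $o^{\flat}$, and since the reduced factor $u$ is coprime to $v^2$ there is a unique branch $\Delta_1^{\flat}$ tangent to $\{u=0\}$, which is smooth, while the complementary curve $\Delta_2^{\flat}$ has tangent cone $v^2$, hence multiplicity $2$, hence is singular. As $\mumu_2$ fixes every tangent direction at $o^{\flat}$, both curves are $\mumu_2$-invariant ($\Delta_1^{\flat}$ defined by a weight-$1$ semi-invariant, $\Delta_2^{\flat}$ by an invariant); put $\Delta_i:=\Delta_i^{\flat}/\mumu_2$, so $\Delta_f=\Delta_1+\Delta_2$. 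A $\mumu_2$-quotient of a smooth germ whose only fixed point is its centre is again smooth, so $\Delta_1$ is smooth and irreducible; the singularity of $\Delta_2$ will reappear on the resolution.

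Since $o\in Z$ is Du Val of type \typeDu{A}{1}, the minimal resolution $\al\colon Z'\to Z$ is a single blowup with a $(-2)$-curve $\Gamma$, and I would realize it equivariantly: blowing up $o^{\flat}$ gives $\beta\colon\widetilde{Z^{\flat}}\to Z^{\flat}$ with exceptional $E^{\flat}\cong\PP^1$, on which $\mumu_2(1,1)$ acts trivially, so $Z'=\widetilde{Z^{\flat}}/\mumu_2$, $\Gamma=E^{\flat}/\mumu_2$, and the quotient map $\widetilde{Z^{\flat}}\to Z'$ is a double cover branched along $\Gamma$. Computing proper transforms in the two charts of $\beta$: in the chart $u=u_1v$ one finds that $\det M/v^3$ has leading term $u_1$, so the proper transform of $\Delta_1^{\flat}$ is smooth and meets $E^{\flat}$ transversely at the point of direction $[0:1]$; in the chart $v=uv'$ one finds $\det M/u^3=(v')^2-u^2\psi(u,v')$ with $\psi$ even in $u$ and $\psi(0,0)=\alpha^2$, so the proper transform of $\Delta_2^{\flat}$ meets $E^{\flat}$ only at the point of direction $[1:0]$, with intersection number $\mult_{o^{\flat}}\Delta_2^{\flat}=2$. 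Passing to $Z'$ (replacing $v^2$, resp.\ $u^2$, by the local coordinate $w$ along $\Gamma=\{w=0\}$): $\Delta_1'$ meets $\Gamma$ transversely at one point, and near $\Gamma$ one has $\Delta_2'=\{(v')^2-w\psi_0(v')-w^2\psi_1(v')-\cdots=0\}$ with $\psi_0(0)=\alpha^2$, so $\Delta_2'\cap\Gamma=\{v'=w=0\}$ is a single point and $\Delta_2'\cdot\Gamma=\ord_{v'}((v')^2)=2$. Because $\Delta_1$ and $\Delta_2$ meet only at $o$ while the two proper transforms hit $\Gamma$ at the distinct points $[0:1]$ and $[1:0]$, we get $\Delta_1'\cap\Delta_2'=\varnothing$.

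For the last assertion, the gradient of $(v')^2-w\psi_0(v')-\cdots$ at $v'=w=0$ is $(-\alpha^2,0)$: hence $\Delta_2'$ is smooth at its point on $\Gamma$ exactly when $\alpha\ne0$, i.e.\ exactly when $\xi_{1,2}$ contains the term $u$, which by Claim~\ref{claimIA:sing} is precisely the condition that $P$ be the only singular point of $X$. Since $\Delta_2'$ is isomorphic to the smooth germ $\Delta_2\setminus\{o\}$ away from $\Gamma$, this gives the equivalence, and incidentally confirms that $\Delta_2$ is singular in all cases (either $\Delta_2'$ is already singular on $\Gamma$, or $\Delta_2'$ is smooth but tangent to $\Gamma$, so $\Delta_2=\al(\Delta_2')$ acquires a cusp). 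I expect the main obstacle to be the determinant expansion: one must carry it far enough to see at once that the tangent cone is $uv^2$ and that the coefficient of $u^5$ equals $-\alpha^2$, since it is exactly this coefficient that ties the smoothness of $\Delta_2'$ to the presence of the term $u$ in $\xi_{1,2}$, hence to Claim~\ref{claimIA:sing}.
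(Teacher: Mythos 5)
Your argument is correct and is essentially the paper's own proof: the same expansion of $\det(uq_2-vq_1)$ with leading part $uv^2$, the same factorization $\Delta_1=\{u+\cdots\}$, $\Delta_2=\{v^2+\cdots\}$, the same chart computation on the equivariant blowup identifying the coefficient of $u^2$ (i.e.\ $w$) with $-\alpha^2$, and the same appeal to Claim~\ref{claimIA:sing}. The only substantive addition is your explicit check that $\Delta_2$ itself is singular downstairs (via $\Delta_2'$ being either singular or smooth but tangent to the $(-2)$-curve $\Gamma$), a point the paper leaves implicit in the equation $v^2+(\text{terms of degree}\ge 4)$.
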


\begin{proof}
It follows from Proposition~\ref{prop:comp-discrim} and \eqref{eq:IAdual} that 
\[
\Delta_f=\left\{
u v^2
-v^3\phi +u^2v( \xi_{2,3}^2 - \xi_{1,1} - \xi_{2,2}) -u^3\xi_{1,2}^2 
+(\text{terms of degree $\ge 7$})
=0
\right\}/\mumu_2(1,1).
\]
Hence we have an analytic decomposition $\Delta_f=\Delta_1+\Delta_2$, where 
\[
\Delta_1=\{u+(\text{terms of degree $\ge 3$})=0 \}/\mumu_2 \quad \text{and}\quad 
\Delta_2=\{v^2+(\text{terms of degree $\ge 4$})=0 \}/\mumu_2.
\]
In our situation, the minimal resolution $\al: Z'\to Z$ is the usual blowup of the origin.
Clearly, we have $\Delta_1'\cap \Delta_2'=\varnothing$.
In the chart  $U_u=\{u\neq 0\}$ we have $\Gamma=\{u=0\}/\mumu_2$, $\Delta_1'\cap U_u=\varnothing$, and 
\[
\Delta_2'=\left\{
v^2-v^3\phi(u,uv) +v\big( \xi_{2,3}(u,uv)^2 - \xi_{1,1}(u,uv) - \xi_{2,2}(u,uv)\big) -\xi_{1,2}(u,uv)^2 
+\cdots
\right\}/\mumu_2(1,0).
\]
Hence $\Delta_2'\cdot \Gamma=2$.
Moreover, $\Delta_2'$ is smooth if and only if its equation contains the term $u^2$.
\end{proof}

\begin{theorem}
Let $f: X\to Z$ be a $\QQ$-conic bundle, which is of type \typeb{IA^\vee} over a point $o\in Z$
and is a standard conic bundle outside~$o$.
Let $p: \tilde X\to X$ be the Kawamata blowup of the unique non-Gorenstein point $P\in X$.
Then $p$ can be completed to an md-link \eqref{eq:link:0},
where 
$\chi$ is the flip in the proper transform $\tilde C\subset \tilde X$ of the central fiber $C:=f^{-1}(o)_{\mathrm{red}}$,
$\al$ is the minimal resolution of $o\in Z$, and the discriminant curve $\Delta_{f'}$ is
the set-theoretic preimage of $\Delta_f$.
Moreover, if $P$ is the only singular point of~$X$, then $X'$ is Gorenstein.
\end{theorem}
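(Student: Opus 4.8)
The plan is to run, for this case, the same scheme used above for \typeb{IA^\vee{+}IA^\vee} and \typeb{IE^\vee}; the only genuinely new feature is that the central fibre $C$ is irreducible.

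\emph{The blowup.} Since $P\in X$ is the terminal cyclic quotient $\frac14(1,3,1)$, Theorem~\ref{thm:Kblup} forces $p\colon\tilde X\to X$ to be the weighted blowup with weights $\frac14(1,3,1)$; then $E\simeq\PP(1,1,3)$ and $\tilde X$ has a unique non-Gorenstein point $\tilde P\in E$, a terminal cyclic quotient of index $3$. (If $\xi_{1,2}$ does not contain $u$, then by Claim~\ref{claimIA:sing} the Gorenstein singular point of $X$ survives on $\tilde X$, lying on $\tilde C$; it plays no essential role below.) From the local description of $(C\subset X)$ near $P$ one reads off that $C$ is smooth at $P$, so by Remark~\ref{rem:EC} the curve $\tilde C$ is irreducible and meets $E$ at $\tilde P$. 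A chart computation gives $E\cdot\tilde C=\tfrac23$, whence by \eqref{equation-computation-K.C-2}
\[
-K_{\tilde X}\cdot\tilde C=-K_X\cdot C-\tfrac14\,E\cdot\tilde C=\tfrac12-\tfrac16=\tfrac13>0,
\]
so $\tilde C$ is a flipping curve. By Proposition~\ref{prop:Q:w-blowup}, $-K_{\tilde X}$ is nef over $Z$; it is in fact ample over $Z$ since $-K_{\tilde X}|_E$ is proportional to $-\OOO_E(E)$ and hence ample.

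\emph{The link.} Proposition~\ref{prop:link} now produces the md-link \eqref{eq:link:0} with $f'$ a $\QQ$-conic bundle and $\al\colon Z'\to Z$ crepant with $\uprho(Z'/Z)=1$, in which $\chi$ begins with the flip of $\tilde C$ (a flip, not a flop, because $-K_{\tilde X}$ is ample over $Z$). Since $o\in Z$ is a Du Val point of type \typeDu{A}{1}, the only crepant contraction with $\uprho(Z'/Z)=1$ is its minimal resolution (Proposition~\ref{proposition-Morrison}); hence $Z'$ is smooth and $\Gamma:=\al^{-1}(o)$ is a single $(-2)$-curve, $\Gamma\simeq\PP^1$.

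\emph{The discriminant.} As $f'$ is a standard conic bundle away from $\Gamma$ and $C$ lies over $o$, we have $\Delta_{f'}\subseteq\al^{-1}(\Delta_f)=\Delta_1'\cup\Delta_2'\cup\Gamma$ (notation of Claim~\ref{claimIA:discr}), and plainly $\Delta_1',\Delta_2'\subseteq\Delta_{f'}$. For $\Gamma\subseteq\Delta_{f'}$ one studies $E'=f'^{-1}(\Gamma)$, the proper transform of $E\simeq\PP(1,1,3)$: by adjunction $-K_{E'}$ is ample, so $E'$ is a del Pezzo surface carrying a birational map onto $\PP(1,1,3)$; if $E'$ were normal, Zariski's main theorem would turn this into a birational morphism, and since a Gorenstein del Pezzo surface has only Du Val (hence canonical) singularities this would force $\PP(1,1,3)$ to be canonical — impossible, its singular point being $\frac13(1,1)$. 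Hence $E'$ is non-normal, its generic fibre over $\Gamma$ is degenerate, and $\Gamma\subseteq\Delta_{f'}$. Thus $\Delta_{f'}=\Delta_1'+\Delta_2'+\Gamma$ is the set-theoretic preimage of $\Delta_f$.

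\emph{$\chi$ is a single flip; the Gorenstein statement.} By Corollaries~\ref{cor:eq:difSho} and~\ref{cor:Kblup:di}, $\dif(\tilde X)=\dif(X)-1=2$, so $\chi$ is a composition of at most two flips. To prove it is the single flip $\chi_1$, one must understand the flip of the germ $(\tilde X\supset\tilde C)$ — whose flipping curve $\tilde C$ is irreducible and carries a single index-$3$ cyclic quotient point, with $-K_{\tilde X}\cdot\tilde C=\tfrac13$ — and check that the flipped variety $X_2$ carries no non-Gorenstein point on the flipped curve and has $-K_{X_2}$ nef over $Z$, so that the relative MMP halts at $X_2=X'$. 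I expect this to be the main obstacle: since $\tilde C$ is irreducible one cannot invoke Proposition~\ref{prop:flip:2comp} (as in the case \typeb{IA^\vee{+}IA^\vee}), so one argues either by an explicit toric-type description of this flip or by a cyclic-covering trick in the spirit of Proposition~\ref{prop:flip:smooth}, using also Corollary~\ref{cor:flips:irr} and Theorem~\ref{thm:flips:comp}. Finally, when $P$ is the only singular point of $X$, Claim~\ref{claimIA:discr} gives that $\Delta_2'$ is smooth, so $\Delta_{f'}=\Delta_1'+\Delta_2'+\Gamma$ has multiplicity at most $2$ at every point of $Z'$ — clearly off $\Gamma$, at the transverse point $\Gamma\cap\Delta_1'$, and at $\Gamma\cap\Delta_2'$ because there $\Delta_{f'}$ is the union of the two smooth branches $\Gamma$ and $\Delta_2'$ with $\Delta_2'\cdot\Gamma=2$ — and therefore $X'$ is Gorenstein near every fibre of $f'$ by Proposition~\ref{prop:cb:uv}\ref{prop:cb:uv1}, i.e. $X'$ is Gorenstein.
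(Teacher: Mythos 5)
Your construction of the blowup, the existence of the link via Propositions~\ref{prop:Q:w-blowup} and~\ref{prop:link}, the identification of $\al$ with the minimal resolution, the non-normality argument for $E'$ forcing $\Gamma\subset\Delta_{f'}$, and the final deduction that $X'$ is Gorenstein all follow the paper's route. The genuine gap is exactly where you locate it: the claim that $\chi$ is a \emph{single} flip is left as an expectation, and the tools you point to cannot close it. Proposition~\ref{prop:flip:smooth} requires $-K_V\cdot L=(r-1)/r$, whereas here $r=3$ and $-K_{\tilde X}\cdot\tilde C=1/3\neq 2/3$; Proposition~\ref{prop:flip:2comp} requires a reducible flipping curve, whereas $\tilde C$ is irreducible; and one cannot hope to show that the flipped variety is smooth (or even Gorenstein) along the flipped curve, since the theorem itself allows $X'$ to be non-Gorenstein when $X$ has an extra singular point, and the paper shows that even in the remaining case $X'$ acquires a singular point over $\Delta_2'\cap\Gamma$.

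The paper closes this step by a global contradiction rather than a local analysis of the flip. If $\chi$ were a composition of two flips, then $\dif(X')=0$, so $X'$ is Gorenstein, and a count of discrepancy-one divisors over the flipped locus (Markushevich plus Theorem~\ref{thm:flips}) shows $X'$ is in fact smooth; hence $f'$ is standard and $\Delta_{f'}$ is a normal crossing curve, which forces $\Gamma\not\subset\Delta_{f'}$ because $\Delta_2'\cdot\Gamma=2$ at a single point; then $E'=f'^{-1}(\Gamma)$ is normal by Lemma~\ref{lemma:St-c-b}, normality propagates back to the intermediate surface $E^+$, Zariski's main theorem gives $\uprho(E^+)=2$ and hence $\uprho(E')=2$, contradicting the reducible fibre of $E'\to\Gamma$ over $\Delta_1'\cap\Gamma$, which forces $\uprho(E')\ge 3$. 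You would need to supply an argument of this kind. A secondary, repairable gap: in your normality dichotomy you invoke ``Gorenstein del Pezzo'' before knowing $E'$ is Gorenstein; the paper first deduces from normality of $E'$ that $\Gamma\not\subset\Delta_{f'}$, hence that $\Delta_{f'}$ has multiplicity $\le 2$ everywhere and $X'$ is Gorenstein by Proposition~\ref{prop:cb:uv}\ref{prop:cb:uv1}, and only then concludes that $E'$ is a Gorenstein del Pezzo surface with rational, hence Du Val, singularities, contradicting the non-Du Val point of $\PP(1,1,3)$.
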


\begin{proof}
As in the case \typeb{IA^\vee{+}IA^\vee}, the extraction $p: \tilde X\to X$ must be the $\frac 14 (1,3,1)$-weighted blowup of~$P$ (see Theorem~\ref{thm:Kblup}).
Then $\tilde{X}$ has a unique non-Gorenstein point, which is 
a cyclic quotient singularity $\tilde{P}$ of index $3$. 
The curve $\tilde C$ passes through $\tilde{P}$ and in the corresponding affine chart we have
\[
\tilde C= \{\tilde{x}^2_1+\tilde{x}_2=\tilde{x}_3=0\}/\mumu_3(1,2,1),\qquad E=\{\tilde{x}_2=0\}/\mumu_3(1,2,1).
\]
Therefore, $E\cdot \tilde C=2/3$ and $K_{\tilde{X}}\cdot \tilde C=-1/3$
by \eqref{equation-computation-K.C-2}. By Proposition~\ref{prop:link} we obtain an md-type link,
where $\al: Z'\to Z$ is the minimal resolution. Moreover, the transformation $\chi$ starts with 
a flip along $\tilde C$.
We have $\dif(\tilde{X})=2$, hence $\dif(X')\le 1$.
Denote $o_1:=\Delta_1'\cap \Gamma$ and $o_2:=\Delta_2'\cap \Gamma$ (see Claim~\ref{claimIA:discr}).

We claim that $\chi$ is a single flip in $\tilde C$.
Assume the converse. Since $\dif(\tilde X)=2$, the map $\chi$ must be a composition of two flips (see \eqref{eq:flip}):
\[
\chi: \tilde X \overset{\chi_1} \dashrightarrow X^+ \overset{\chi_2}\dashrightarrow X',
\]
where $\dif(X_2)=1$ and $\dif(X')=0$. Thus 
$X^+$ is of index $2$ and $X'$ is Gorenstein.
Let $C^+\subset X^+$ (resp. $L^+\subset X^+$) be the $\chi_1$-flipped (resp. $\chi_2$-flipping) curve,
let $C':=\chi_{2 *}C^+$, and let $L'\subset X'$ be the $\chi_2$-flipped curve.
By our construction, $\tilde X$ is smooth outside $\tilde C$, hence $X^+$ is smooth outside $C^+$
and $X'$ is smooth outside $C'\cup L'$.
If $X'$ has a singular point, say $Q'$, then there are at least three exceptional divisors over $C'\cup L'\subset X'$ 
of discrepancy $1$: these are divisors with centers $C'$, $L'$,
and $Q'$ \cite{Markushevich96:discr}. But then $\dif(\tilde X)\ge 3$ by Theorem~\ref{thm:flips}, a contradiction.
Therefore, $X'$ is smooth and $f'$ is a standard conic bundle.
In particular, $\Delta_{f'}$ is a normal crossing curve.
Since $\Delta_2'\cdot \Gamma=2$ and $\Delta_2'\cap \Gamma$ is a single point, we have $\Gamma\not\subset \Delta_{f'}$. 
Since $\Gamma$ is a smooth curve, the surface $E'=f'^{-1}(\Gamma)$ is normal (see Lemma~\ref{lemma:St-c-b}). 
We claim that the surface $E^+:=\chi_{1 *}E$ is normal as well. Indeed, otherwise $E^+$ must be singular along the flipped curve
$\tilde C^+$. On the other hand, $\chi_2$ is an isomorphism at the general point of $\tilde C^+$,
hence $E'$ is also singular along the image of $\tilde C^+$, a contradiction. Thus $E^+$ is normal.
By Zariski's main theorem the map $\chi_1^{-1}|_{E^+}:E^+\dashrightarrow E=\PP(1,1,3)$
is a morphism whose exceptional divisor coincides with $\tilde C^+$. Hence, $\uprho(E^+)=2$.
Note that $E^+$ is the whole fiber over~$o$ of $X^+\to Z$. In this situation the flipping curve
$L^+\subset X^+$ must be contained in $E^+$. Clearly, the flipped curve $L'\subset X'$ is contained in $E'=f'^{-1}(\Gamma)$.
Thus $E^+\setminus L\simeq E'\setminus L'$, where the curves $L$ an $L'$ are irreducible (see \cite[Theorem~4.2]{KM:92}).
This shows that $\uprho(E')=2$.
On the other hand, 
the fiber of $f'_{E'}: E'\to \Gamma$ over $\Delta_1'\cap \Gamma$ is reducible and so $\uprho(E')\ge 3$, a contradiction.

Therefore,
$\chi$ is a single flip.
Consider the case where $E'$ is normal. Then the curve $\Gamma=f'(E')$ is not contained in the discriminant
and $f'$ is a standard conic bundle over $Z' \setminus \{o_2\}$ (see Proposition ~\ref{prop:cb:uv}).
Moreover, again by Proposition~\ref{prop:cb:uv} the variety $X'$ is Gorenstein and so $E'$ is.
Let $C'\subset X'$ be the flipped curve. Then the birational map $\chi_E:E\dashrightarrow E'$ does not contract divisors.
By Zariski's main theorem the map $\chi^{-1}_E:E'\dashrightarrow E=\PP(1,1,3)$
is a morphism.
Then the singularities of $E'$ must be rational, hence they are Du Val.
Since $(K_{\tilde X}+E)\cdot \tilde C>0$, we have $(K_{X'}+E')\cdot C'<0$.
Since $K_{X'}+E'$ is negative on the fibers of $f'$
and $\uprho(X'/Z)=2$, the divisor $-(K_{X'}+E')$ is ample over~$Z$.
By the adjunction $-K_{E'}$ is ample. 
Then the singularities of $E$ must be Du Val, a contradiction.

Therefore, $E'$ is not normal and $\Gamma\subset \Delta_{f'}$. 
In this case $\Delta_{f'}$ is not a normal crossing curve at $o_2$. This implies that $X'$ has a singular point on $f'^{-1}(o_2)$.
If $P$ is the only singular point of~$X$, then
$\Delta_2'$ is a smooth curve and $\mult_{o_2}(\Delta_{f'})=2$  
by Claims~\ref{claimIA:sing} and~\ref{claimIA:discr}.
Hence, 
$X'$ is Gorenstein
by Proposition~\ref{prop:cb:uv}\ref{prop:cb:uv1}.
\end{proof}

\section{$\QQ$-conic bundles of type \typeb{ID^\vee}}
\label{Sect:ID}

In this section we adopt the following setup.

\begin{setup}
Let $f: X\to Z$ be a $\QQ$-conic bundle, which is of type \typek{ID}{} over a point $o\in Z$
and is a standard conic bundle outside~$o$. Let $C:=f^{-1}(o)_{\mathrm{red}}$ be the central fiber.
Recall that in this situation
the tubular neighborhood $(X\supset C)$ is biholomorphic to the quotient of the hypersurface
\[
\{ x_1^2+x_2^2+\theta(u,v) x_3^2=0\}\subset\PP^2_{x_1,x_2,x_3}\times\CC^2_{u,v}
\]
by the $\mumu_2$-action with weights
\[
\wt(x_1,x_2,x_3;\, u,v)=(0, 1,0;\,1,1),
\]
where $\theta\in \mathfrak{m}^2\subset \CC\{u,\, v\}$ is a $\mumu_2$-invariant. Recall that
$(X\supset C)$ is said to be \typek{ID}{m} if the rank of 
the quadratic part $\theta_{(2)}$ of $\theta$ equals $m$.

The variety~$X$ has a unique singular point, say~$P$, which is of type \typee{cA}2 in the cases \typek{ID}{2} and \typek{ID}{1},
and is of type \typee{cAx}2 in the case \typek{ID}{0}.
The point $o\in Z$ is of type \typeDu{A}1, hence 
$-K_X\cdot C=1$ (see \cite[Corollary~4.7.4]{MP:1pt}).
\end{setup}

\begin{sclaim}[Local description]
Near~$P$ we have the following identification
\begin{equation}
\label{eq:cDvee:eq}
(C\subset X)=\left(\{u=v=x_1^2+x_2^2=0\}\subset\{x_1^2+x_2^2+\theta(u,v)=0\}\subset \CC^4\right) /\mumu_2(0,1,1,1).
\end{equation}
\end{sclaim}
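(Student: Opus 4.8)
The plan is to reduce the statement to an elementary computation in an affine chart of $\PP^2$, the only genuine point being the identification of $P$. First I would unwind the Setup: $(X\supset C)$ is the $\mumu_2$-quotient of $(\hat X\supset\hat C)$, where $\hat X:=\{x_1^2+x_2^2+\theta(u,v)x_3^2=0\}\subset\PP^2_{x_1,x_2,x_3}\times\CC^2_{u,v}$, the morphism $\hat f:\hat X\to\CC^2$ is the second projection, and $\hat C:=\hat f^{-1}(0)_{\mathrm{red}}$. Since $\theta\in\mathfrak m^2$ we have $\theta(0,0)=0$, so $\hat f^{-1}(0)=\{x_1^2+x_2^2=0\}\subset\PP^2$ is a pair of lines meeting at $[0{:}0{:}1]$, and these lines are interchanged by the $\mumu_2$-action of weights $(0,1,0)$ on $\PP^2$; in particular $C$ passes through the image of $\hat P:=([0{:}0{:}1],0)\in\hat X$.

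Next I would check that $P$ is the image of $\hat P$. A direct computation gives the $\mumu_2$-fixed locus of $\PP^2\times\CC^2$ as $(\{x_2=0\}\cup\{[0{:}1{:}0]\})\times\{0\}$; intersecting with $\hat X$ and using $\theta(0,0)=0$ shows that $\hat P$ is the only point of $\hat X$ fixed by $\mumu_2$. Since $\hat X$ is a hypersurface in $\PP^2\times\CC^2$, hence Gorenstein, and the quotient morphism $\hat X\to X$ is \'etale away from $\hat P$, the non-Gorenstein locus of $X$ is contained in the image of $\hat P$; as the point $P$ of the Setup is the unique singular point of $X$ and has index $2$, it coincides with the image of $\hat P$, and in particular $P\in C$.

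Finally I would carry out the chart computation. Working in the chart $\{x_3\ne0\}$ of $\PP^2$ and normalising $x_3=1$ — which is $\mumu_2$-equivariant because $x_3$ has weight $0$ — the group $\mumu_2$ acts on $\CC^4$ with coordinates $(x_1,x_2,u,v)$ through the weights $(0,1,1,1)$, the hypersurface $\hat X$ is cut out by $x_1^2+x_2^2+\theta(u,v)=0$, the point $\hat P$ is the origin, and (again by $\theta(0,0)=0$) the curve $\hat C$ becomes $\{u=v=x_1^2+x_2^2=0\}$. As $P$ lies in the image of this chart, passing to the $\mumu_2$-quotient in a neighbourhood of $\hat P$ gives precisely \eqref{eq:cDvee:eq}. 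The whole argument is routine; the only mildly delicate step is the bookkeeping that matches $P$ with the image of $\hat P$ rather than with some other singular point of $X$, which is settled by the uniqueness of the singular point stated in the Setup.
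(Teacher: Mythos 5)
Your proposal is correct and matches the paper's (implicit) argument: the paper states this Local description without proof, treating it as immediate from the Setup by restricting to the $\mumu_2$-invariant affine chart $x_3\neq 0$, which is exactly your computation. Your extra bookkeeping — computing the fixed locus of the action on $\hat X$ to confirm that $P$ is the image of $([0{:}0{:}1],0)$ — is the right way to justify the one step the paper leaves tacit.
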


\begin{sclaim}
For the discriminant curve we have
\begin{equation}
\label{eq:IDvee:discrim}
\Delta_f=\{\theta(u,v)=0\}/\mumu_2.
\end{equation}
\end{sclaim}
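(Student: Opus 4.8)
The plan is to reduce the computation to the $\mumu_2$-cover supplied by Theorem~\ref{thm:tab:sing}. Recall from the Setup (cf. Table~\ref{tab:sing}) that $(X\supset C)=(X^{\flat}\supset C^{\flat})/\mumu_2$, where $X^{\flat}=\{x_1^2+x_2^2+\theta(u,v)x_3^2=0\}\subset\PP^2_{x_1,x_2,x_3}\times\CC^2_{u,v}$, the contraction $f^{\flat}\colon X^{\flat}\to Z^{\flat}:=\CC^2_{u,v}$ is the projection, $Z=Z^{\flat}/\mumu_2$, and the $\mumu_2$-action with weights $(0,1,0;1,1)$ is free in codimension two; in fact a direct check shows that its only fixed point on $X^{\flat}$ is $P^{\flat}=(0:0:1;0,0)$ and its only fixed point on $Z^{\flat}$ is the origin $o^{\flat}$, which maps to $o$. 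Note also that $\theta\not\equiv 0$, as otherwise $X^{\flat}$ would be reducible; hence $\{\theta=0\}\subset Z^{\flat}$ is a curve.

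First I would compute $\Delta_{f^{\flat}}$. The fibre of $f^{\flat}$ over $(u_0,v_0)\in Z^{\flat}$ is the plane conic $\{x_1^2+x_2^2+\theta(u_0,v_0)x_3^2=0\}\subset\PP^2$, whose symmetric matrix is $\operatorname{diag}(1,1,\theta(u_0,v_0))$. Exactly as in the proof of Proposition~\ref{prop:cb:uv} (the equation of the discriminant being the vanishing of the determinant of this matrix), or simply by inspecting the partial derivatives of $x_1^2+x_2^2+\theta x_3^2$, the morphism $f^{\flat}$ is smooth over $(u_0,v_0)$ if and only if this conic is smooth, i.e. if and only if $\det\operatorname{diag}(1,1,\theta(u_0,v_0))=\theta(u_0,v_0)\neq 0$. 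Therefore $\Delta_{f^{\flat}}=\{\theta(u,v)=0\}$.

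It remains to descend this to $Z$, which is the only point requiring a little care. Over $Z^{\flat}\setminus\{o^{\flat}\}$ the $\mumu_2$-action is free, so the quotient morphisms $Z^{\flat}\setminus\{o^{\flat}\}\to Z\setminus\{o\}$ and $X^{\flat}\setminus(f^{\flat})^{-1}(o^{\flat})\to X\setminus f^{-1}(o)$ are \'etale, and $f$ is the morphism induced by $f^{\flat}$; hence for $z\in Z\setminus\{o\}$ the morphism $f$ is smooth over $z$ if and only if $f^{\flat}$ is smooth over the two preimages of $z$. Consequently $\Delta_f\setminus\{o\}$ equals the image of $\Delta_{f^{\flat}}\setminus\{o^{\flat}\}$, i.e. $\bigl(\{\theta=0\}\setminus\{o^{\flat}\}\bigr)/\mumu_2$. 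Passing to closures can only add the point $o$, and indeed $o\in\{\theta=0\}/\mumu_2$ since $\theta\in\mathfrak{m}^2$; this yields $\Delta_f=\{\theta(u,v)=0\}/\mumu_2$, which is \eqref{eq:IDvee:discrim}.
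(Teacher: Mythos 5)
Your proof is correct and follows the argument the paper intends: the paper states this claim without proof, the point being exactly the determinant computation you carry out (the fibre of $f^{\flat}$ is the conic with matrix $\operatorname{diag}(1,1,\theta)$, so $\Delta_{f^{\flat}}=\{\theta=0\}$, cf.\ the proof of Proposition~\ref{prop:cb:uv}), followed by descent along the quotient, which is harmless since the $\mumu_2$-action is free outside $o^{\flat}$. Your extra care about $\theta\not\equiv 0$ and about recovering the point $o$ after passing to closures is sound but not a new idea.
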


\begin{theorem}
\label{thm:IDvee}
Let $f: X\to Z$ be a $\QQ$-conic bundle, which is of type \typek{ID}{} over a point $o\in Z$
and is a standard conic bundle outside~$o$.
Let $p: \tilde X\to X$ be a Kawamata blowup of the unique singular point $P\in X$.
Then $p$ can be completed to an md-link \eqref{eq:link:0},
where 
$\chi$ is the flip in the proper transform of the central fiber $C:=f^{-1}(o)_{\mathrm{red}}$,
$\al$ is the minimal resolution of $o\in Z$, and
$f'$ is a Gorenstein $\QQ$-conic bundle. For the discriminant curve $\Delta_{f'}$ we have
\begin{itemize}
\item 
$\Delta_{f'}$ is the set-theoretic preimage of $\Delta_f$ in the case \typek{ID}2
and in the case \typek{ID}0 with $\dif(X)=1$;
\item 
$\Delta_{f'}$ is the proper transform of $\Delta_f$ in the case \typek{ID}1 
and in the case \typek{ID}0  with $\dif(X)=2$.
\end{itemize}
\end{theorem}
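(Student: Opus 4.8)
The plan is to follow the scheme of Sections~\ref{Sect:IE}--\ref{Sect:IA}. First I would make the Kawamata blowup $p\colon\tilde X\to X$ of the singular point $P$ explicit: since $P$ has index~$2$, Theorem~\ref{thm:Kblup} together with Hayakawa's classification of Kawamata blowups of $\typee{cA}{n}$ and $\typee{cAx}{2}$ singularities (\cite{Hayakawa:blowup1}, \cite{Hayakawa:blowup2}) identifies $p$ with a definite weighted blowup, read off from the local equation \eqref{eq:cDvee:eq}. From this one writes down the singularities of $\tilde X$ — all of them lying on $E$, since $X$ is smooth away from $P$ — and the local form of $\tilde C$ at the point $E\cap\tilde C$; Remark~\ref{rem:EC} and \eqref{equation-computation-K.C-2}, together with $-K_X\cdot C=1$, then give
\[
K_{\tilde X}\cdot\tilde C=-1+\frac12\,E\cdot\tilde C ,
\]
and one checks that $0<E\cdot\tilde C<2$ in each case, so that $\tilde C$ is a flipping curve (recall that $-K_{\tilde X}$ is nef over~$Z$ by Proposition~\ref{prop:Q:w-blowup}).

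Next, Propositions~\ref{prop:link} and~\ref{prop:Q:w-blowup} produce an md-link \eqref{eq:link:0} in which $\al\colon Z'\to Z$ is a crepant birational contraction with $\uprho(Z'/Z)=1$; as $o\in Z$ is a Du Val point of type~$\typeDu{A}{1}$, Proposition~\ref{proposition-Morrison} forces $\al$ to be the minimal resolution, i.e. the blowup of~$o$, with irreducible $(-2)$-curve $\Gamma$ as exceptional divisor. By Proposition~\ref{prop:link} the transformation $\chi$ begins with a flip along $\tilde C$: since $\tilde C$ is irreducible and $K_{\tilde X}\cdot\tilde C<0$, this first step is a genuine flip and not a flop. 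To show that $\chi$ is in fact a \emph{single} flip and that $X'$ is Gorenstein I would argue with Shokurov's difficulty: $\dif(X)$ is as in Corollary~\ref{cor:eq:difSho}, $\dif(\tilde X)$ is computed from the explicit list of singular points of $\tilde X$, and then Corollary~\ref{cor:flips} bounds the number of flips from above while Corollary~\ref{cor:flips:irr} (and, when the singular point of $\tilde X$ on $\tilde C$ happens to be a cyclic quotient of the appropriate index, Proposition~\ref{prop:flip:smooth}) controls the singularities of $X'$ on the flipped curve. One concludes that a single flip along $\tilde C$ already reaches the $\QQ$-conic bundle $X'$, with $\dif(X')=0$, so $X'$ is Gorenstein (Examples~\ref{ex:diff}).

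It remains to determine $\Delta_{f'}$. Since the link is an isomorphism away from the central fibres, $f'$ is a standard conic bundle over $Z'\setminus\Gamma$, and $\Delta_{f'}$ equals either the proper transform $\Delta'\subset Z'$ of $\Delta_f$ or $\Delta'+\Gamma$; everything comes down to deciding whether $\Gamma\subseteq\Delta_{f'}$. For this I would describe $\Delta_f=\{\theta(u,v)=0\}/\mumu_2$ on $Z'$ exactly as in Claim~\ref{claimIA:discr}, using that $\theta$ is a $\mumu_2$-invariant in $\mathfrak{m}^2$ with $m=\rk(\theta_{(2)})$: in type~$\typek{ID}{2}$ the curve $\Delta_f$ has two smooth branches whose proper transforms meet $\Gamma$ transversally at two distinct points, whereas in types $\typek{ID}{1}$ and $\typek{ID}{0}$ the contact of $\Delta'$ with $\Gamma$ is higher. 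Since $X'$ is Gorenstein, Proposition~\ref{prop:cb:uv}\ref{prop:cb:uv1} forces $\Delta_{f'}$ to have multiplicity $\le 2$ at every point; comparing this with the computed multiplicities of $\Gamma+\Delta'$ along $\Gamma$, and using Corollary~\ref{cor:cb:plt} together with the normality analysis of $E'=f'^{-1}(\Gamma)$ (as in Sections~\ref{Sect:IAIA}--\ref{Sect:IA}), one finds that $\Gamma\subseteq\Delta_{f'}$ — so $\Delta_{f'}=\Delta'+\Gamma$ is the set-theoretic preimage — precisely in type~$\typek{ID}{2}$ and in type~$\typek{ID}{0}$ with $\dif(X)=1$, and $\Gamma\not\subseteq\Delta_{f'}$ — so $\Delta_{f'}=\Delta'$ is the proper transform — in the remaining cases.

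The main obstacle is type~$\typek{ID}{0}$, where $\dif(X)\in\{1,2\}$ is not determined by the type alone: one must read off \emph{both} the value of $\dif(X)$ and the contact order of $\Delta'$ with $\Gamma$ from the higher-order terms of $\theta$ and verify that they always match the dichotomy in the statement. A secondary difficulty is making the Kawamata blowup, and hence the singularities of $\tilde X$, explicit enough in the $\typee{cAx}{2}$ case to carry out the difficulty count and the normality argument for $E'$; here I expect to rely on Hayakawa's tables and on the fact that $\tilde X$ is smooth away from the flipping curve.
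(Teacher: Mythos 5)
Your skeleton --- reading off the Kawamata blowup from Hayakawa's lists, computing $K_{\tilde X}\cdot\tilde C=-1+\frac12 E\cdot\tilde C$ via \eqref{equation-computation-K.C-2}, invoking Propositions~\ref{prop:link} and~\ref{prop:Q:w-blowup} for the link, Proposition~\ref{prop:flip:smooth} for the single flip and the Gorenstein property of $X'$, and Proposition~\ref{proposition-Morrison} for $\al$ --- matches the paper. (A small caveat: the difficulty count alone gives ``at most $\dif(\tilde X)$ flips'', and $\dif(\tilde X)=k\ge 2$ in type \typek{ID}{0}, so Proposition~\ref{prop:flip:smooth} is not an optional refinement but the actual mechanism in the cases \typek{ID}{1} and \typek{ID}{0}.) The genuine gap is in the determination of $\Delta_{f'}$, which is the real content of the dichotomy. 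Your criterion ``$X'$ Gorenstein $\Rightarrow\mult\Delta_{f'}\le 2$'' is the \emph{converse} of Proposition~\ref{prop:cb:uv}\ref{prop:cb:uv1}, which is neither stated nor proved in the paper; and even granting it, it cannot decide whether $\Gamma\subset\Delta_{f'}$. In type \typek{ID}{2} the two branches of $\Delta'$ meet $\Gamma$ transversally at two distinct points, so both $\Delta'$ and $\Delta'+\Gamma$ have multiplicity $\le 2$ everywhere; the paper has to force $\Gamma\subset\Delta_{f'}$ by a Picard-number count on $E'=f'^{-1}(\Gamma)$ (a smooth conic bundle surface with two degenerate fibres would have $\uprho(E')=4$, whereas the flip only allows $\uprho(E')\le\uprho(E)+1=2$).

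More seriously, in type \typek{ID}{0} the square/non-square dichotomy for $\theta_{(2k)}$ is not detected through the contact order of $\Delta'$ with $\Gamma$ at all, but through the normality of $E'$, and that analysis cannot be imported from Sections~\ref{Sect:IAIA}--\ref{Sect:IA}: there $E\simeq\PP(1,1,3)$ and non-normality of $E'$ is forced by the non--Du Val singularity of $E$, whereas here $E$ is a hypersurface in $\PP(k,k+1,1,1)$ (resp.\ $\PP(k+2,k+1,1,1)$), a weighted cone over the curve $\Upsilon=\{x_1^2+\theta_{(2k)}=0\}\subset\PP(k,1,1)$. The paper's argument is that the rulings $L_\lambda$ satisfy $E\cdot L_\lambda\ge -1$ and $E\cdot L_\lambda<0$, so Cartier-ness of $E'$ together with Lemma~\ref{lemma:flip} and nefness of $-E'$ forces $E'\cdot L'_\lambda=0$, i.e.\ the $L'_\lambda$ become fibres of $E'\to\Gamma$; then in the non-square case smoothness of $E'$ along the flipped curve would make a section of $E'\to\Gamma$ (hence $\Upsilon$) a smooth rational curve, contradicting that $\Upsilon$ is hyperelliptic of positive genus or singular, so $E'$ is non-normal and $\Gamma\subset\Delta_{f'}$; while in the square case a curve $\Lambda'$ is exhibited that is a degree-$2$ cover of $\Gamma$ with $(L'_\lambda\cdot\Lambda')_{E'}=2$, so the general fibre of $E'\to\Gamma$ is irreducible, $E'$ is normal and $\Gamma\not\subset\Delta_{f'}$. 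None of this is visible from multiplicities of $\Delta'+\Gamma$, so your outline does not establish the dichotomy precisely in the case where it is at stake.
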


\begin{proof}
We consider the cases \typek{ID}2, \typek{ID}1, \typek{ID}{0} separately.

\subsection*{Case \typek{ID}2}
Then up to analytic coordinate change we may assume that $\theta(u,v)=u^2+v^2$. 
According to \cite{Hayakawa:blowup1} in this case the only choice for the Kawamata blowup of $P\in X$ is the weighted blowup
with weight
\[\textstyle
w:=\frac12 (2,1,1,1).
\]
The variety
$\tilde{X}$ has a unique singularity, say $\tilde{P}$, which is a cyclic quotient of index $2$.
One can show that $\tilde{C}$ passes through $\tilde{P}$ and $E\cdot \tilde{C}=1$.
Then by \eqref{equation-computation-K.C-2} we have 
\[\textstyle
K_{\tilde{X}}\cdot \tilde{C}=K_X\cdot C+\frac12 E\cdot \tilde{C}=-\frac 12.
\]
Hence $\tilde{C}$ is a flipping curve and the link \eqref{eq:link:0} exists by 
Proposition~\ref{prop:link}.
Moreover, by \cite[Theorem~4.2]{KM:92} the variety $X'$ is smooth and $\chi: 
\tilde{X}\dashrightarrow X'$ is a single flip.
Hence $f'$ is a standard conic bundle. 
It remains to show that $\Delta_{f'}=\al^*\Delta_{f}$, i.e. $\Gamma\subset \Delta_{f'}$.
Assume the converse. Then $\Gamma$ meets $\Delta_{f'}$ transversely at two points.
It this case 
the surface $E'=f^{-1}(\Gamma)$ must be smooth and $\uprho (E')=4$.
On the other hand, the birational map  $\chi_E: E \dashrightarrow E'$ is 
an isomorphism on the subsets $E\setminus \tilde P$  and $E'\setminus C'$, 
where $C'$ is the flipped curve. Hence, $\uprho (E')\le \uprho (E)+1=2$, a contradiction.

\subsection*{Case \typek{ID}1}
Then  we may assume that $\theta(u,v)=v^{2k}-u^2$, where $k\ge 2$.
Apply to \eqref{eq:cDvee:eq} the following coordinate change:
\[
x= x_1+u,\quad y=x_1-u, \quad t= x_2.
\]
Then \eqref{eq:cDvee:eq} can be rewritten as follows

\[
(C \subset X)=\big(0\in \{x-y=v=0\}\subset\{xy+v^{2k}+t^2=0\}\subset \CC^4\big) /\mumu_2(1,1,1,0).
\]
Consider the weighted blowup
$p: \tilde{X}\to X$ of the point $P\in X$
with weight
\[\textstyle
w:=\frac12 (1,3,1,2).
\]
According to \cite{Kawamata-1992-e-app} or \cite{Hayakawa:blowup1} \
$p$ is a 
Kawamata blowup.\footnote{In this case 
there are exactly two choices for the extraction $p$: it can be weighted blowup with weights either
$\frac12 (1,3,1,2)$ or $\frac12 (3,1,1,2)$ \cite{Hayakawa:blowup1}.} 
The $p$-exceptional divisor $E$ is given in $\PP(1,3,1,2)$ by the equation 
\[
xy+\updelta(k,2) v^{2k}+t^2=0, 
\]
where $\updelta$ is the Kronecker delta.
Then $E$ is covered by the following affine charts:
\[
\renewcommand{\arraystretch}{1.5}
\begin{array}{rcl}
U_{x}&\simeq& \{\tilde y+\tilde v^{2k}\tilde x^{k-2}+\tilde t^2=0 \},
\\
U_{y}&\simeq& \{\tilde x+\tilde v^{2k}\tilde y^{k-2}+\tilde t^2=0\}
/\mumu_3(1,1,1,2),
\\
U_{v}&\simeq& \{\tilde x\tilde y+\tilde v^{k-2}+\tilde t^2=0\}.
\end{array}
\]
Thus 
$\tilde{X}$ has a unique non-Gorenstein point $\tilde{P}$ that is a cyclic quotient of index $3$
and has at worst isolated \type{cA} singularities outside~$\tilde{P}$.
In particular, $\dif(\tilde X)=2$.
The curve $\tilde C$ passes through $\tilde{P}$ and 
in the corresponding affine chart $U_{y}\simeq \CC^3_{\tilde y,\tilde v,\tilde t}/\mumu_3(1,1,2)$ we have 
\[
E=\{\tilde y=0\}/\mumu_3,
\quad 
\tilde C= \{\tilde v=\tilde y + \tilde t^2 =0\}/\mumu_3.
\]
Therefore, $E\cdot \tilde C=2/3$ and $K_{\tilde{X}}\cdot \tilde C=-2/3$
by \eqref{equation-computation-K.C-2}.
In particular, $-K_{\tilde X}$ is ample over~$Z$ and so
$\tilde C$ is a flipping curve. By Proposition~\ref{prop:flip:smooth} the flipped variety is Gorenstein. 
Thus we obtain an md-type link,
where 
$\chi$ is a single flip along $\tilde C$. 
Here $\al$ is a crepant contraction, so $Z'$ is smooth and 
$\al: Z'\to Z$ is the minimal resolution.
Moreover, by \eqref{eq:IDvee:discrim} the proper transform $\Delta'\subset Z'$ of $\Delta_f$ meets $\Gamma$ at a single point, say $o'$. By Corollary~\ref{cor:cb:plt4}
the curve $\Gamma$ is not contained in $\Delta_{f'}$ and
by Proposition~\ref{prop:cb:uv} the variety $X'$ is smooth outside $f'^{-1}(o')$.

\subsection*{Case \typek{ID}{0}}
Then $\mult_0(\theta)=2k$ with $k\ge 2$. Thus the equation of $X$ in \eqref{eq:cDvee:eq} admits an expansion 
\[
x_1^2+x_2^2+ \theta(u,v)= x_1^2+x_2^2+\theta_{(2k)}(u,v)+ \theta_{(2k+2)}(u,v)+\cdots=0,
\]
where $\theta_{(m)}$, as usual, denotes the homogeneous part of degree $m$.
We assume that $k$ is even. The case where $k$ is odd is treated in a similar way.
Following \cite[\S~8]{Hayakawa:blowup1} we distinguish two subcases:
\begin{itemize}
\item[\typek{ID'}{0}] 
the polynomial $\theta_{(2k)}$  is not a square and then $\dif(X)=1$,
\item [\typek{ID''}{0}] 
the polynomial $\theta_{(2k)}$  is  a square and then $\dif(X)=2$.
\end{itemize}
Consider these possibilities separately. 

\subsubsection*{Subcase: \typek{ID'}{0}, i.e $\theta_{(2k)}$  is not a square} 
Then there is only one choice for 
a Kawamata blowup of $P$: according to 
\cite[\S~8]{Hayakawa:blowup1} it must be the weighted blowup with weight
\[
\textstyle
w:=\frac12 (k,k+1,1,1).
\]
Clearly, the exceptional divisor $E$ is isomorphic to the hypersurface
\begin{equation}
\label{eq:hyp-curve}
\{x_1^2 + \theta_{(2k)}(u,v)=0\}\subset \PP(k,k+1,1,1).
\end{equation} 
It is covered by the following affine charts:
\[
\renewcommand{\arraystretch}{1.5}
\begin{array}{rcl}
U_{x_2}&\simeq& \{\tilde x_1^2+\tilde x_2+ \theta_{(2k)}(\tilde u,\tilde v)+ \tilde x_2 \theta_{(2k+2)}(\tilde u,\tilde v)+\cdots=0\}
/\mumu_{k+1}(k,-2,1,1),
\\
U_{u}&\simeq& \{\tilde x_1^2+\tilde x_2^2 \tilde u+ \theta_{(2k)}(1,\tilde v)+ \tilde u \theta_{(2k+2)}(1,\tilde v)+\cdots=0\},
\\
U_{v}&\simeq& \{\tilde x_1^2+\tilde x_2^2 \tilde v+ \theta_{(2k)}(\tilde u,1)+ \tilde v \theta_{(2k+2)}(\tilde u,1)+\cdots=0\}.
\end{array}
\]
The variety
$\tilde{X}$ has a unique non-Gorenstein point $\tilde{P}$ that is a cyclic quotient of type $\frac 1{k+1}(k,1,1)$
and has at worst isolated \type{cDV} singularities outside~$\tilde{P}$.
The curve $\tilde C$ passes through $\tilde{P}$ and in the corresponding affine chart $U_{x_2}\simeq \CC^3_{\tilde x_1,\tilde u,\tilde v}/\mumu_{k+1}(k,1,1)$ we have 
\[
E=\{\tilde x_1^2+ \theta_{(2k)}(\tilde u,\tilde v)+ \tilde x_2 \theta_{(2k+2)}(\tilde u,\tilde v)+\cdots=0\}/\mumu_{k+1},
\quad 
\tilde C= \{\tilde u=\tilde v=0\}/\mumu_{k+1}.
\]
Therefore, $E\cdot \tilde C=2/(k+1)$ and $K_{\tilde{X}}\cdot \tilde C=-k/(k+1)$
by \eqref{equation-computation-K.C-2}.
Hence $-K_{\tilde X}$ is ample over~$Z$ and 
$\tilde C$ is a flipping curve. By Proposition~\ref{prop:flip:smooth} the flipped variety is Gorenstein.
As in the case \typee{cA}2 we obtain an md-type link,
where 
$\chi$ is a single flip along $\tilde C$ and 
$\al: Z'\to Z$ is the minimal resolution (see Proposition~\ref{prop:flip:smooth}). 

Now we examine the surface $E'$.
It follows from \eqref{eq:hyp-curve} that the restriction $\varphi: E \dashrightarrow \PP(k,1,1)$ of the projection $\PP(k,k+1,1,1) \dashrightarrow \PP(k,1,1)$ is a rational map whose fibers 
$L_{\lambda}$ are 
rational curves and the image is the curve $\Upsilon\subset \PP(k,1,1)$ given by 
the same equation \eqref{eq:hyp-curve}. Moreover, 
$E$ is a weighted cone over $\Upsilon$. The curve $\Upsilon$ is smooth and hyperelliptic if 
$\theta_{(2k)}$ has no multiple factors and it is singular otherwise. Let $L_{\lambda}'\subset X'$ be the proper transform of $L_{\lambda}$. 
We have 
\[
E\cdot L_{\lambda}=
(\OOO(2) \cdot L_{\lambda})_{\PP(k,k+1,1,1)}=-1/(k+1). 
\]
Since $E'$ is a Cartier divisor the number $E'\cdot L_{\lambda}'$ must be integral.
Hence, $E'\cdot L_{\lambda}'=0$ by Lemma~\ref{lemma:flip} and because $-E'$ is nef.
This means that the proper transforms of fibers of $\varphi$ are contained in the fibers of $E'\to \Gamma$.
Now consider the curve $\Lambda:=\{x_2=x_1u\}\cap E$ and let $\Lambda'\subset E'$ be its proper transform.
The projection $\varphi: E \dashrightarrow \PP(k,1,1)$ induces an isomorphism $\Lambda\simeq \Upsilon$.
Since $\Lambda$ does not pass through $\tilde P$, we have an isomorphism $\Lambda\simeq \Lambda'$.
We claim that $E'$ is smooth at a general point of the flipped curve $C'$. Indeed, otherwise $f'_{E'}: E'\to \Gamma$ is a generically 
$\PP^1$-bundle and $\Lambda'$ is its section. In this case, $\Lambda'\simeq \Gamma$ is a smooth rational curve, 
and so $\Upsilon$ is also smooth. But then $\Upsilon$ must be a hyperelliptic (or elliptic) curve by \eqref{eq:hyp-curve}, a contradiction. 
Therefore, $E'$ is singular along the $f'$-horizontal curve $C'$ and so $\Gamma\subset \Delta_{f'}$. 

\subsubsection*{Subcase:  \typek{ID''}{0}, i.e  $ \theta_{(2k)}$ is a square}
Then there are two choices for Kawamata blowup \cite[\S~8]{Hayakawa:blowup1}. Since they are symmetric, we consider only one of them. 
Write $\theta_{(2k)}(u,v)=-\xi(u,v)^2$, where $\xi$ is a homogeneous polynomial of degree $k$.
Apply the following coordinate change to \eqref{eq:cDvee:eq}:
\[
y_1=x_1-\xi(u,v). 
\]
Then near $P$ the analytic neighborhood  $(C\subset X)$ has the following form
\[
\big(\{u=v=y_1^2+x_2^2=0\}\subset\{y_1^2+x_2^2+2y_1\xi(u,v) + \theta_{(2k+2)}(u,v)+\cdots =0\}\subset \CC^4\big) /\mumu_2(1,0,1,1).
\]
As above, consider the weighted blowup 
$p: \tilde{X}\to X$ of~$P$ with weight
\[
\textstyle
w=\frac12 (k+2,k+1,1,1).
\]
The exceptional divisor $E$ is isomorphic to the hypersurface
\begin{equation}
\label{eq:hyp-curve:prime}
\{x_2^2 + 2y_1\xi(u,v)+ \theta_{(2k+2)}(u,v)=0\}\subset \PP(k+2,k+1,1,1).
\end{equation} 
It is covered by the following affine charts:
\[
\renewcommand{\arraystretch}{1.5}
\begin{array}{rcl}
U_{y_1}&\simeq& \{\tilde y_1+\tilde x_2^2+2\xi(\tilde u,\tilde v)+ \theta_{(2k+2)}(\tilde u,\tilde v)+\cdots=0\}
/\mumu_{k+2}(-2,k+1,1,1),
\\
U_{u}&\simeq& \{\tilde y_1^2\tilde u+\tilde x_2^2 +2\tilde y_1\xi(1,\tilde v)+ \tilde u \theta_{(2k+2)}(1,\tilde v)+\cdots=0\},
\\
U_{v}&\simeq& \{\tilde y_1^2 \tilde v+\tilde x_2^2+2\tilde y_1\xi(\tilde u,1)+ \tilde v \theta_{(2k+2)}(\tilde u,1)+\cdots=0\}.
\end{array}
\]
As above,
$\tilde{X}$ has a unique non-Gorenstein point $\tilde{P}$ that is a cyclic quotient of type $\frac 1{k+2}(k+1,1,1)$
and has at worst isolated \type{cDV} singularities outside~$\tilde{P}$.
The curve $\tilde C$ passes through $\tilde{P}$ and
in the corresponding affine chart $U_{x_2}\simeq \CC^3_{\tilde y_1,\tilde u,\tilde v}/\mumu_{k+1}(k,1,1)$ we have 
\[
E=\{\tilde x_2^2+2y_1\xi(\tilde u,\tilde v)+ \theta_{(2k+2)}(\tilde u,\tilde v)+\cdots=0\}/\mumu_{k+2},
\quad 
\tilde C= \{\tilde u=\tilde v=0\}/\mumu_{k+2}.
\]
Therefore, $E\cdot \tilde C=2/(k+2)$ and $K_{\tilde{X}}\cdot \tilde C=-(k+1)/(k+2)$
by \eqref{equation-computation-K.C-2}.
This implies that $-K_{\tilde X}$ is ample over~$Z$ and~$\tilde C$ is a flipping curve. 
As above we obtain an md-type link,
where 
$\chi$ is a single flip along $\tilde C$ and 
$\al: Z'\to Z$ is the minimal resolution (see Proposition~\ref{prop:flip:smooth}).
Now consider the pencil of curves 
\[
L_{\lambda}:=\{u=\lambda v\}\cap E
\]
and let $L_{\lambda}'\subset E'$ be their proper transforms.
We have 
\[
\textstyle
E\cdot L_{\lambda}=(\OOO(2) \cdot L_{\lambda})_{\PP(k+2,k+1,1,1)}=-\frac{4}{k+2}\ge -1. 
\]
Since $E'$ is a Cartier divisor the number $E'\cdot L_{\lambda}'$ must be integral.
Hence, $E'\cdot L_{\lambda}'=0$ by Lemma~\ref{lemma:flip} and because $-E'$ is nef.
This means that a general curve $L_{\lambda}'$ is contained in a fiber of $E'\to \Gamma$.
Now consider the curve 
\[
\Lambda:=\{y_1=\phi(u,v)\}\cap E, 
\]
where $\phi$ is a 
general homogeneous polynomial of degree $k+2$, and let $\Lambda'\subset E'$ be its proper transform.
Note that the intersection $L_{\lambda}\cap \Lambda$ is contained in the smooth part of $E$ and
$\Lambda$ does not pass through $\tilde P$ and so the intersection $L_{\lambda}'\cap \Lambda'$ is contained in the smooth part of $E'$. Then the map $\chi$ is an isomorphism near $\Lambda$.
Hence 
\[
f'^*\Gamma\cdot \Lambda'=E'\cdot \Lambda'=E\cdot \Lambda=(\OOO(2) \cdot \Lambda)_{\PP(k+2,k+1,1,1)}= -4
\]
Since  $\Gamma^2=-2$, the restriction $f'_{\Lambda'}:\Lambda'\to \Gamma$ is 
finite of degree $2$.
On the other hand, $(L_{\lambda}'\cdot \Lambda')_{E'}=(L_{\lambda}\cdot \Lambda)_{E}=2$, 
hence $L_{\lambda}'$ is a whole fiber of $E'/\Gamma$.
This means that a general geometric fiber of $E'/\Gamma$ is irreducible.
Therefore, $E'$ is normal and $\Gamma\not\subset \Delta_{f'}$. 
\end{proof}


\end{document}